\documentclass[10pt]{amsart}
\usepackage[utf8]{inputenc}
\usepackage[T1]{fontenc}
\usepackage{amsfonts,amsmath,amssymb,amsthm,tikz}
\usepackage[shortlabels]{enumitem}

\usetikzlibrary{decorations.pathreplacing}
\usepackage{float} 
\usepackage[pagebackref]{hyperref}

\usepackage{tikz} 
\usepackage{tikz-3dplot}

\usepackage[normalem]{ulem}

\usepackage[a4paper]{geometry}

\geometry{top=2.5cm, bottom=2.5cm, left=2.0cm, right=2.0cm, marginparsep=1cm}
\setlength{\parindent}{5mm}
\linespread{1.1}

\newtheorem{theorem}{Theorem}[section]
\newtheorem{lemma}[theorem]{Lemma}
\newtheorem{proposition}[theorem]{Proposition}
\newtheorem{corollary}[theorem]{Corollary}

\theoremstyle{definition}
\newtheorem{definition}[theorem]{Definition}

\newtheorem{remark}[theorem]{Remark}
\newtheorem{example}[theorem]{Example}

\numberwithin{equation}{section}

\makeatletter
\def\subsection{\@startsection{subsection}{2}%
 \z@{.5\linespacing\@plus.7\linespacing}{.3\linespacing}%
 {\normalfont\bfseries}}
\makeatother


\newcommand{\Cc}{\mathbb{C}} 
\newcommand{\Pp}{\mathbb{P}}
\newcommand{\Rr}{\mathbb{R}}
\newcommand{\Nn}{\mathbb{N}}
\newcommand{\Zz}{\mathbb{Z}}
\newcommand{\Qq}{\mathbb{Q}}

\newcommand{\Kk}{\mathbb{K}}

\newcommand{\calc}{\mathcal{C}}
\newcommand{\cald}{\mathcal{D}}
\newcommand{\cale}{\mathcal{E}}
\newcommand{\calo}{\mathcal{O}}
\newcommand{\calp}{\mathcal{P}}
\newcommand{\calv}{\mathcal{V}}

\newcommand{\ord}{\mathrm{ord}} 
\newcommand{\divis}{\mathrm{div}}
\newcommand{\maxid}{\mathfrak{m}}

\newcommand{\de}{\mathbf{i}}
\newcommand{\ex}{\mathbf{e}}
\newcommand{\ic}{\mathbf{c}}
\newcommand{\cZ}{\mathcal{Z}}

\newcommand{\red}[1]{\textcolor{red}{#1}}

\newcommand{\blue}[1]{\textcolor{blue}{#1}}

\newcommand{\violet}[1]{\textcolor{violet}{#1}}
\newcommand{\orange}[1]{\textcolor{orange}{#1}}

\usepackage{todonotes}

\title{Lotuses as computational architectures}
\author[Garc\'{\i}a Barroso]{Evelia R. Garc\'{\i}a Barroso}
\author[Gonz\'alez P\'erez]{Pedro D. Gonz\'alez P\'erez}
\author[Popescu-Pampu]{Patrick Popescu-Pampu}

\email{ergarcia@ull.es}
\email{pgonzalez@mat.ucm.es}
\email{patrick.popescu-pampu@univ-lille.fr}

\address{(Evelia R. Garc\'{\i}a Barroso) Universidad de La Laguna. IMAULL.
Departamento de Matem\'aticas, Estad\'{\i}stica e I.O., Apartado de Correos 456.
38200, La Laguna, Tenerife, Espa\~na}
\address{(Pedro D. Gonz\'alez P\'erez) Universidad Complutense de Madrid. Instituto de Matem\'atica Interdisciplinar y Departamento de \'Algebra,  Geometr\' \i a y Topolog\'\i a.,
 Plaza de las Ciencias 3, Madrid 28040, Espa\~na}
\address{(Patrick Popescu-Pampu) Universit\'e de Lille, CNRS, Laboratoire Paul Painlev\'e, 59000 Lille, France}

\keywords{Blowup, Characteristic exponent, Delta invariant, Dual graph, Eggers-Wall tree, Embedded resolution, Infinitely near point, Log-discrepancy, Lotus, Milnor number, Multiplicity, Newton-Puiseux series, Plane curve singularity, Semigroup, Valuation}

\thanks{\emph{Acknowledgments}. 
The authors gratefully acknowledge the support of Universidad de La Laguna (Tenerife, Spain), where part of this work was done (Spanish grant PID2019-105896GB-I00 funded by
MCIN/AEI/10.13039/501100011033). 
This work was also supported by the Labex CEMPI (ANR-11-LABX-0007-01), the ANR SINTROP (ANR-22-CE40-0014) and the Spanish grant of MCIN 
(PID2020-114750GB-C32/AEI/10.13039/501100011033). The third author is grateful to the scientific committee of the conference {\em 115 AM: Algebraic and topological interplay of algebraic varieties} hold at Jaca in 2023 between June 12 and 17 in honor of the 60th Birthday of Enrique Artal and the 55th Birthday of Alejandro Melle for having invited him to give a mini-course about lotuses. This text originates from it. We are grateful to the anonymous referee and to Jules Chenal for their remarks and to Antoni Rangachev for his explanations about the delta invariant. We thank warmly Christopher-Lloyd Simon for his careful reading of a previous version of this paper.}

\date{February 24, 2025}

\begin{document}

\begin{abstract}
    \emph{Lotuses} are certain types of finite contractible simplicial complexes, obtained by identifying vertices of polygons subdivided by diagonals. As we explained in a previous paper, each time one resolves a complex reduced plane curve singularity by a sequence of toroidal modifications with respect to suitable local coordinates, one gets a naturally associated lotus, which allows to unify the classical trees used to encode the combinatorial type of the singularity. In this paper we explain how to associate a lotus to each \emph{constellation of crosses}, which is a finite constellation of infinitely near points endowed with compatible germs of normal crossings divisors with two components, and how this lotus may be seen as a \emph{computational architecture}. Namely, if the constellation of crosses is associated to an embedded resolution of a complex reduced plane curve singularity $A$, one may compute progressively as vertex and edge weights on the lotus the log-discrepancies of the exceptional divisors, the orders of vanishing on them of the starting coordinates, the multiplicities of the strict transforms of the branches of $A$, the orders of vanishing of a defining function of $A$, the associated Eggers-Wall tree, the delta invariant and the Milnor number of $A$, etc. We illustrate these computations using three recurrent examples. Finally,  we describe the changes to be done when one works in positive characteristic.
\end{abstract}

{\bf This paper will appear in {\em Algebraic and Topological Interplay of Algebraic Varieties.
A tribute to the work of E. Artal and A. Melle}. Contemporary Maths., Amer. Math. Soc.}
\bigskip

\maketitle

\medskip
{\em \hfill We dedicate this paper to Enrique Artal Bartolo and Alejandro Melle Hern\'andez. \smallskip} 

\tableofcontents

\section{Introduction}
\label{sec:Intro}

The third author introduced in \cite{PP 11} certain types of simplicial complexes of dimension at most two, which he called {\em kites},  showing that they unified the Enriques diagrams and the dual graphs associated to finite sequences of blowups of infinitely near points of a smooth point on a complex analytic  surface. Kites were constructed by gluing {\em lotuses} and {\em ropes}. He had the idea of doing such a construction after having learnt from the paper \cite[Section 6.4]{DV 98} of de Jong and van Straten that Christophersen \cite[Section 1.1]{C 91} and  Stevens \cite[Section 1]{S 91bis} had represented {\em certain} finite sequences of blowups by polygons triangulated using diagonals (see also \cite[Remark 8.16]{PP 11}). Namely, he wanted to associate a generalization of such a triangulated polygon to {\em any} finite sequence of blowups. This led him to glue several triangulated polygons in a tree-like fashion, producing  what we call in this paper a {\em lotus} (a name initially suggested by Teissier when the third author drew him by hand an analog of Figure \ref{fig:Unilotus}).

A typical example of lotus is shown in Figure \ref{fig:exlotus}:

 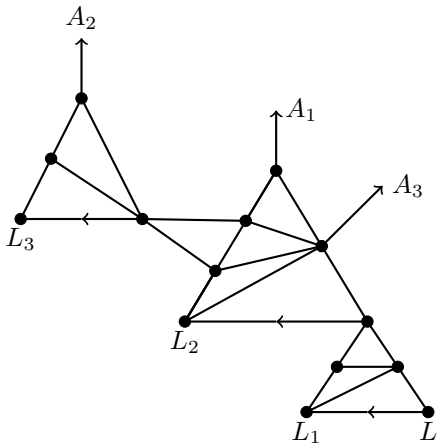
\begin{figure}[h!]
    \begin{center}
\begin{tikzpicture}[scale=0.8]

 \draw [->, color=black, thick](1,0) -- (0,0);
    \draw [-, color=black, thick](0,0) -- (-1,0);
    \draw [-, color=black, thick](0,1.5) -- (-1,0);
      \draw [-, color=black, thick](0,1.5) -- (1,0);
       \draw [-, color=black, thick](-0.5,3/4) -- (0.5,3/4);
       \draw [-, color=black, thick](-1,0) -- (0.5,3/4);
       \draw [-, color=black, thick](-5.2,4.2) -- (-3.7,3.2);

        \draw [->, color=black, thick](0,1.5) -- (-1.5,1.5);
   \draw [-, color=black, thick](-1.5,1.5) -- (-3,1.5);
     \draw [-, color=black, thick](0,1.5) -- (-1.5,4);
      \draw [-, color=black, thick](-1.5,4) -- (-3,1.5);
    \draw [->, color=black, thick](-1.5,4) -- (-1.5,5);
     \draw [-, color=black, thick](-3/4,11/4) -- (-3,1.5);
     \draw [-, color=black, thick](-3/4,11/4) -- (-2.5,2.34);
       \draw [-, color=black, thick](-3/4,11/4) -- (-2,3.17);
       \draw [->, color=black, thick](-3/4,11/4) -- (1/4,15/4);
        \draw [-, color=black, thick](-2,3.17) -- (-1.5,4);
        \draw [-, color=black, thick](-3,1.5) -- (-2.5,2.34);
        \draw [-, color=black, thick](-3,1.5) -- (-2.5,2.34);
        \draw [-, color=black, thick](-3.7,3.2) -- (-2,3.17);
        \draw [-, color=black, thick](-3.7,3.2) -- (-2.5,2.34);
        \draw [->, color=black, thick](-3.7,3.2) -- (-4.7,3.2);
        \draw [-, color=black, thick](-5.7,3.2) -- (-4.7,3.2);
           \draw [->, color=black, thick](-4.7,5.2) -- (-4.7,6.2);
            \draw [-, color=black, thick](-3.7,3.2) -- (-4.7,5.2);
            \draw [-, color=black, thick](-5.7,3.2) -- (-4.7,5.2);
       
  \node[draw,circle, inner sep=1.5pt,color=black, fill=black] at (-1,0){};
   \node[draw,circle, inner sep=1.5pt,color=black, fill=black] at (1,0){};
     \node[draw,circle, inner sep=1.5pt,color=black, fill=black] at (0,1.5){};
      \node[draw,circle, inner sep=1.5pt,color=black, fill=black] at (-0.5,3/4){};
       \node[draw,circle, inner sep=1.5pt,color=black, fill=black] at (0.5,3/4){};
        \node[draw,circle, inner sep=1.5pt,color=black, fill=black] at (-3,1.5){};
         \node[draw,circle, inner sep=1.5pt,color=black, fill=black] at (-1.5,4){};
         \node[draw,circle, inner sep=1.5pt,color=black, fill=black] at (-3/4,11/4){};
         \node[draw,circle, inner sep=1.5pt,color=black, fill=black] at (-2.5,2.34){};
         \node[draw,circle, inner sep=1.5pt,color=black, fill=black] at (-2,3.17){};
          \node[draw,circle, inner sep=1.5pt,color=black, fill=black] at (-3.7,3.2){};
          \node[draw,circle, inner sep=1.5pt,color=black, fill=black] at (-5.7,3.2){};
           \node[draw,circle, inner sep=1.5pt,color=black, fill=black] at (-4.7,5.2){};
           \node[draw,circle, inner sep=1.5pt,color=black, fill=black] at (-5.2,4.2){};
       
   \node [below, color=black] at (-1,0) {$L_{1}$};
\node [below, color=black] at (1,0) {$L$};
\node [below, color=black] at (-3,1.5) {$L_{2}$};
\node [right, color=black] at (1/4,15/4) {$A_{3}$};
\node [right, color=black] at (-1.5,5) {$A_{1}$};
 \node [below, color=black] at (-5.7,3.2) {$L_{3}$};
 \node [above, color=black] at (-4.7,6.2) {$A_{2}$};

 \end{tikzpicture}
\end{center}
 \caption{An example of lotus}
\label{fig:exlotus}
   \end{figure}

The authors of the present paper simplified in \cite{GBGPPP 20} the constructions and terminology of \cite{PP 11}, droping the terms {\em kite} and {\em rope} and speaking only about {\em lotuses}. They showed how to associate a lotus to each toroidal resolution process of plane curve singularities, obtained by considering successive Newton polygons of the singularity and of its strict transforms on the intermediate surfaces of the process. They emphasized the fact that:
\begin{quote}
    {\em The lotus may be seen as the space-time representation of the evolution of dual graphs of total transforms of a suitable completion of the given plane curve singularity.}
\end{quote}
They also explained how to get, starting from the lotus, the dual graph of the associated embedded resolution, its Enriques diagram and the Eggers-Wall tree of the completed plane curve singularity. By contrast with the previous types of graphs, which are all endowed with supplementary structures equivalent to decorations by integers, lotuses are devoid of such decorations. They only have several distinguished oriented edges, as shown in Figure \ref{fig:exlotus}. 

Overviews of the paper \cite{GBGPPP 20} may be found in \cite[Section 7.1]{GBGPPP 20} and \cite{PP 23}.

The aim of this paper is to explain that lotuses are also very convenient {\em computational architectures}. Namely, they allow to determine very easily by hand many basic combinatorial invariants of a  reduced plane curve singularity $A$, when its combinatorial type is given through a process of embedded resolution by blowups of infinitely near points: 
  \begin{itemize}
      \item 
         the {\em dual graph} weighted by the {\em self-intersection numbers of the irreducible components of the exceptional divisor} of the embedded resolution (see Section \ref{sec:dualgr}); 
      \item 
         the {\em log-discrepancies} of those components (see Section \ref{sec:ldov}); 
      \item 
          the {\em orders of vanishing of a defining function of $A$} along those components (see Sections \ref{sec:ldov} and \ref{sec:ovint});
    \item 
         the {\em multiplicities of the strict transforms} of $A$ at all the infinitely near points blown up during the embedded resolution process (see Section \ref{sec:multstr});
      \item 
          the {\em intersection numbers of the branches of $A$} (see Sections \ref{sec:ovint} and \ref{sec:intfromEW});
      \item 
          the {\em Eggers-Wall tree of $A$} relative to the initial vertex of the lotus, therefore also examples of Newton-Puiseux series defining a plane curve singularity with the given combinatorial type (see Section \ref{sec:EWfromlot});
      \item 
          conversely, lotuses may be constructed from Eggers-Wall trees (see Section \ref{sec:lotfromEW});
      \item 
          the {\em generating sequences of the semigroups} of those branches (see Section \ref{sec:computgensg});
       \item  
          the {\em delta invariant} and the {\em Milnor number} of $A$ (see Section \ref{sec:deltaMiln}). 
  \end{itemize}

  The computations performed using lotuses are based on the fact that all these invariants of plane curve singularities may be seen also as invariants of the associated lotuses, measuring the sizes of suitable parts of them.  We illustrate each type of computation using three recurrent examples, one of which is the lotus shown in Figure \ref{fig:exlotus}, the other two being sublotuses of it. The simplest of those sublotuses is associated to the standard cusp singularity, which is the fundamental example on which any general method of singularity theory is usually first illustrated. We recommend to the reader who wants to learn  performing such computations to treat each example as an exercise: just make the computations on the corresponding lotus and compare the result with our figure. 

Plane curve singularities remain an important experimental ground for studying phenomena concerning singularities of arbitrary dimension or codimension. Our experience taught us that lotuses are very practical for  experiments and may lead to discoveries. For instance, the main theorems of  Castellini's thesis \cite{C 15} had been discovered and are formulated using lotuses. They concern delta-constant deformations of plane curve singularities obtained by the method of A'Campo from \cite{A 75}. We believe that lotuses could be used fruitfully in the study of other problems about deformations of singularities.  

We tried to make this text self-contained, in order to help learning from it a way of  thinking geometrically about the basic invariants of plane curve singularities. For this reason, we recall the definitions of the combinatorial invariants mentioned above and of their properties which are useful in the computations using lotuses. We also indicate some proofs, when we find them particularly enlightening. 

\medskip
Let us outline the structure of this article. The first paragraph of each section describes more precisely its content. The recurrent notations are included inside \boxed{\rm boxes}. For simplicity of reference, we do the same with the rules explaining how to make computations on the lotus (see Propositions \ref{prop:graphprox},  \ref{prop:dualgraph} and Corollaries \ref{cor:spacetimerep}, \ref{cor:reclord}, \ref{cor:multedges}, \ref{cor:firstmethint}, \ref{cor:secmethint}, \ref{cor:lotustoEW}, \ref{cor:mingenseq}, \ref{cor:deltamincomp}).

In Section \ref{sec:properaction} we explain basic facts about effective divisors and their intersection numbers on smooth complex analytic surfaces. In Section \ref{sec:plcsembres} we introduce vocabulary concerning the embedded resolutions of plane curve singularities by blowups of points.  

By contrast with the paper \cite{GBGPPP 20}, in which lotuses were constructed starting from Newton polygons, here we build them starting from finite sequences of blowups of infinitely near points of a smooth point $O$ of a complex analytic surface $S$, with the supplementary datum of a {\em cross} (two smooth branches intersecting transversally) at each point being blown up.  Those crosses must satisfy a suitable compatibility condition (see Definition \ref{def:constcrosses}). We say that such a finite set of infinitely near points endowed with crosses and the information of which infinitely near points have to be blown up is a {\em finite active constellation of crosses}. The vocabulary needed to speak with precision about them is explained in Section \ref{sec:activeconst} and their lotuses are defined in Section \ref{sec:lotusactive}. One may also associate a lotus to every finite set of positive rational numbers. Such {\em Newton lotuses} are the building blocks of all lotuses and are presented in Section \ref{sec:Nlot}.

With the exception of Section \ref{sec:Nlot}, Sections \ref{sec:dualgr}--\ref{sec:deltaMiln} explain how to use the previous lotuses in order to compute the basic invariants of plane curve singularities of the list above. 

In the last two sections \ref{sec:charp} and \ref{sec:existNP}, we explain how to adapt our setting to reduced formal plane curve singularities defined over algebraically closed fields of positive characteristic. Those sections contain several new contributions, namely, a definition of {\em Eggers-Wall trees} for all such singularities, possibly devoid of Newton-Puiseux roots (see Definition \ref{Ew-p-lotus}), a second definition of {\em Newton-Puiseux-Eggers-Wall trees} when all their branches have Newton-Puiseux roots (see Definition \ref{def:EWp}), a characterisation of the singularities having this property (see Corollary \ref{cor:NP-p}) and a comparison of the two kinds of trees (see Corollary \ref{cor:compartrees}).

\vfill

\medskip
\section{Actions of proper morphisms on effective divisors}  \label{sec:properaction}

In this section we recall basic facts about effective divisors on smooth complex analytic surfaces: {\em intersection numbers}, {\em direct images} and {\em total transforms} by proper morphisms (see Definition \ref{def:transfdiv}), the {\em projection formula} (see Theorem \ref{thm:projform}) and an application to {\em blowups} (see Proposition \ref{prop:relblowup}). 

\medskip
Excepted in the last two sections \ref{sec:charp} and \ref{sec:existNP} of the paper,  we work in complex analytic geometry, speaking for instance of {\em biholomorphisms} and {\em bimeromorphic maps}, but our considerations may be transposed without difficulties to  formal geometry over an arbitrary algebraically closed field of characteristic zero.

If $\Sigma$ is a smooth complex analytic surface, a {\bf divisor} on $\Sigma$ is a finite linear combination with integral coefficients of closed irreducible curves on $\Sigma$.
 Its {\bf support} is equal to the union of the irreducible curves appearing with non-zero coefficients in this linear combination.  A divisor is called {\bf compact} if its support is compact. It is called {\bf effective} if its coefficients are non-negative. If $h$ is a holomorphic function on $\Sigma$, we denote by $\boxed{Z(h)}$ its associated effective divisor, which is the zero locus of $h$, each of its irreducible components being counted with its multiplicity.

Recall that a continuous map $\pi$ between Hausdorff spaces is called {\bf proper} if the inverse image $\pi^{-1}(K)$ of any compact subset $K$ of the target is again compact. Divisors may be pushed forward and pulled back by proper morphisms between smooth surfaces:

\begin{definition}
   \label{def:transfdiv}
    Let $\pi : \Sigma_1 \to \Sigma_2$ be a proper morphism between two smooth complex analytic surfaces and let $D_1, D_2$ be two divisors on $\Sigma_1$ and $\Sigma_2$ respectively. 
    
        \noindent  $\bullet$
           If $D_1$ is irreducible, its {\bf direct image} $\boxed{\pi_*(D_1)}$ is: 
          \[\deg(\pi|_{D_1} : D_1 \to \pi(D_1)) \ \pi(D_1). \]
        Here $\pi(D_1)$ is seen as a reduced subspace of $\Sigma_2$, which is either a point or a closed irreducible curve, by the properness of $\pi$, and $\boxed{\deg(\pi|_{D_1} : D_1 \to \pi(D_1))} \ \in \Zz_{\geq 0}$ denotes the degree of the restriction of $\pi$ to $D_1$. Then  $\pi_*$ is extended by linearity to arbitrary effective divisors on $\Sigma_1$. 
        
         \noindent $\bullet$
           The {\bf pull-back} or {\bf total transform} $\boxed{\pi^*(D_2)}$ of $D_2$ by $\pi$ is defined by gluing the divisors $Z(\pi^* h)$ of local defining functions $h$ of $D_2$. Here $\boxed{\pi^* h} := h \circ \pi$ is the {\bf pull-back of $h$ by $\pi$}. 
  
\end{definition}

If $D_1$ is irreducible and $\pi(D_1)$ is a point, then $\deg(\pi|_{D_1} : D_1 \to \pi(D_1)) =0$, therefore $\pi_*(D_1)=0$. The direct image of any compact divisor is compact. Similarly,  the properness of $\pi$ implies that the pull-back of any compact divisor is also compact.

If $D, E$ are two divisors on a smooth surface $\Sigma$ and at least one of them is compact, then one may associate to them an {\bf intersection number} $\boxed{D \cdot E} \in \Zz$ (see \cite[Section II.10]{BHPV 04} for the complex analytic setting and \cite[Theorem V.1.1]{H 77} for the algebraic geometric setting).

Direct images and total transforms of divisors are connected by the following  {\bf projection formula} (see \cite[Proposition 8.3]{F 84} for a more general statement):

\begin{theorem}  
   \label{thm:projform}
    Let $\pi : \Sigma_1 \to \Sigma_2$ be a proper morphism between two smooth complex analytic surfaces and $D_1, D_2$ be two divisors on $\Sigma_1$ and $\Sigma_2$ respectively, at least one of which is compact. Then:
      \[ D_1 \cdot \pi^*(D_2) = \pi_*(D_1) \cdot D_2. \] 
\end{theorem}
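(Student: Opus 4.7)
The plan is to reduce the projection formula to a statement about degrees of line bundles restricted to compact curves. By bilinearity of both sides in $D_1$ and in $D_2$, reduce to the case where $D_1$ and $D_2$ are each irreducible and reduced. Without loss of generality I may assume $D_1$ is compact, the case of $D_2$ compact being analogous (and in that situation $\pi^*(D_2)$ is itself compact by properness of $\pi$). The core input is the classical identification, valid for a compact irreducible curve $E$ and any divisor $D$ on a smooth surface, of the intersection number $D \cdot E$ with the degree $\deg\bigl(\calo_\Sigma(D)|_E\bigr)$ of the associated line bundle restricted to $E$; this degree is well-defined whether or not $D$ and $E$ share an irreducible component.

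Granting this identification, I distinguish two cases according to the dimension of $\pi(D_1)$. If $\pi(D_1)$ is a single point $p$, then $\pi_*(D_1)=0$ by Definition~\ref{def:transfdiv}, so the right-hand side of the projection formula vanishes; for the left-hand side, the restriction $\pi|_{D_1} : D_1 \to \{p\}$ is constant, so $\pi^*\calo_{\Sigma_2}(D_2)|_{D_1}$ is the pullback of a line bundle by a constant map, hence trivial, and consequently has degree zero, giving $D_1 \cdot \pi^*(D_2) = 0$ as well. If instead $\pi(D_1)=C$ is a curve, then $\pi|_{D_1} : D_1 \to C$ is a proper dominant morphism of irreducible curves, hence finite of some degree $d \geq 1$, and Definition~\ref{def:transfdiv} gives $\pi_*(D_1) = d \cdot C$. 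The multiplicativity of degrees under finite pullback then yields
$$ D_1 \cdot \pi^*(D_2) \;=\; \deg\bigl(\pi^*\calo_{\Sigma_2}(D_2)|_{D_1}\bigr) \;=\; d \cdot \deg\bigl(\calo_{\Sigma_2}(D_2)|_C\bigr) \;=\; d \cdot (C \cdot D_2) \;=\; \pi_*(D_1) \cdot D_2. $$

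The main obstacle will be to justify rigorously the degree-of-line-bundle interpretation of intersection numbers in the complex analytic category, especially when $D_1$ and $\pi^*(D_2)$ share an irreducible component (which typically occurs when $\pi(D_1) = D_2$). In the algebraic or K\"ahler projective setting this is entirely classical; for general smooth complex analytic surfaces one must either verify that the local definition of $D \cdot E$ via $\dim_{\Cc} \calo_{\Sigma,p}/(f,g)$ extends coherently to the case of shared components by a local moving argument on an open neighbourhood of the compact intersection locus, or take the line-bundle degree formula as the primary definition and check that it recovers the na\"ive local one whenever the latter applies. Once this point is granted, the remainder of the argument is purely formal functoriality, and the proof transfers without modification to the formal algebraic setting over an arbitrary algebraically closed field of characteristic zero considered by the authors.
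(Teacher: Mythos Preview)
The paper does not prove this theorem; it simply cites Fulton's \emph{Intersection Theory} \cite[Proposition~8.3]{F 84} for a more general statement and then uses the result as a black box in the proofs of Propositions~\ref{prop:relblowup} and~\ref{prop:intbr}. So there is no ``paper's own proof'' to compare against.

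Your argument is the standard one and is correct in outline: reduce by bilinearity to irreducible $D_1$, identify $D_1\cdot\pi^*(D_2)$ with $\deg\bigl((\pi|_{D_1})^*\calo_{\Sigma_2}(D_2)\bigr)$, and invoke the multiplicativity of line-bundle degree under finite maps of compact curves (with the constant-map case giving zero on both sides). Two small remarks. First, your claim that the case ``$D_2$ compact, $D_1$ not'' is strictly analogous is a little quick: in that case it is $\pi^*(D_2)$ that is compact, and one should run the degree argument with the irreducible components of $\pi^*(D_2)$ playing the role of the compact curve, rather than simply swapping $D_1$ and $D_2$; the bookkeeping is slightly different but the conclusion is the same. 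Second, your final paragraph correctly isolates the only genuine technical point---the line-bundle interpretation of $D\cdot E$ for compact $E$ on a smooth complex analytic surface, valid even when $D$ and $E$ share a component---and once that is granted (as in \cite[Section~II.10]{BHPV 04}) the rest is formal.
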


We will apply this theorem in the proofs of Propositions \ref{prop:relblowup} and \ref{prop:intbr}.

For us, particularly important proper morphisms are the {\em blowups} of points of smooth surfaces (see \cite[Section 8.4]{BK 86}, \cite[Section 3.2]{W 04},  \cite[Section 1.2.4]{GBGPPP 20}):

\begin{definition}
    \label{def:blowup}
    Let $P$ be a point of the smooth surface $\Sigma$. Its associated {\bf blowup morphism}  is the unique proper bimeromorphic morphism $\boxed{\pi_P} : \boxed{\Sigma_P} \to \Sigma$ which is an isomorphism above $\Sigma \smallsetminus \{P\}$ and which is defined in local coordinates $(x,y)$ at $P$ as the projection onto $\Cc^2$ of the closure of the graph of the projectivisation map $\Cc^2 \dashrightarrow \Cc\Pp^1$. Its {\bf exceptional divisor} $\boxed{E_P}$ is the preimage $\pi_P^{-1}(P) \hookrightarrow \Sigma_P$. If $C \hookrightarrow \Sigma$ is a closed reduced complex curve, its {\bf strict transform} $\boxed{C_P}$ by the blowup morphism $\pi_P$ is the closure of $\pi_P^{-1}(C \smallsetminus \{P\})$ inside $\Sigma_P$. 
\end{definition}

By working in local coordinates $(x,y)$, one may see that the second projection $\Cc^2 \times \Cc\Pp^1 \to \Pp^1$ establishes an isomorphism $E_P \simeq \Cc\Pp^1$. Keeping the notations above, one has the following behaviour 
of intersection numbers, which will be essential for us in Section \ref{sec:dualgr}:

\begin{proposition}   
   \label{prop:relblowup}
    Assume that the reduced curve $C \hookrightarrow \Sigma$ is {\rm compact} and {\rm smooth at $P$}. Then the self-intersection number $C\cdot C$ of $C$ in $\Sigma$ and the self-intersection numbers $E_P\cdot E_P$, $C_P \cdot C_P$ of $E_P$ and $C_P$ in $\Sigma_P$ satisfy the following relations:
      \begin{enumerate}[(a)]
          \item 
              $E_P \cdot E_P = -1$.
          \item 
             $C_P \cdot C_P = C \cdot C -1$.
      \end{enumerate}
\end{proposition}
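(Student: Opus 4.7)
The plan is to derive both equalities simultaneously from the projection formula (Theorem \ref{thm:projform}), using the given curve $C$ as the "test divisor." The key preparatory step is to understand the total transform $\pi_P^*(C)$ and the intersection $C_P \cdot E_P$ via a local computation.

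First, I work in local coordinates $(x,y)$ at $P$ chosen so that $C = Z(y)$ locally (possible because $C$ is smooth at $P$). In the two standard charts of the blowup, with coordinates $(x, v)$ and $(u, y)$ related by $y = xv$ and $x = uy$, the exceptional divisor $E_P$ is cut out by $x = 0$ and $y = 0$ respectively, while $C_P$ is cut out by $v = 0$ in the first chart and does not meet $E_P$ in the second. Reading off the zero divisor $Z(\pi_P^* y)$, I obtain the local identity $\pi_P^*(C) = C_P + E_P$; in particular $E_P$ appears with multiplicity one, because the local defining function of $C$ at $P$ has order one. From the same chart, I read that $C_P$ meets $E_P$ transversally at a single point, hence $C_P \cdot E_P = 1$.

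Next, I apply the projection formula to the compact divisor $E_P$ and to $D_2 = C$. Since $\pi_P(E_P) = \{P\}$ is a point, $(\pi_P)_*(E_P) = 0$, whence
\[ 0 \;=\; (\pi_P)_*(E_P) \cdot C \;=\; E_P \cdot \pi_P^*(C) \;=\; E_P \cdot C_P + E_P \cdot E_P \;=\; 1 + E_P \cdot E_P, \]
giving assertion (a). I then apply the projection formula to $D_1 = C_P$ and $D_2 = C$. Because $\pi_P$ restricts to a bimeromorphism $C_P \to C$, one has $(\pi_P)_*(C_P) = C$, so
\[ C \cdot C \;=\; (\pi_P)_*(C_P) \cdot C \;=\; C_P \cdot \pi_P^*(C) \;=\; C_P \cdot C_P + C_P \cdot E_P \;=\; C_P \cdot C_P + 1, \]
which yields assertion (b).

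The main obstacle I expect is the local step: one must justify carefully both the equality of divisors $\pi_P^*(C) = C_P + E_P$ (checking that the multiplicity of $E_P$ is exactly one, which uses the smoothness of $C$ at $P$ in an essential way) and the transversality $C_P \cdot E_P = 1$. Everything else is a direct application of Theorem \ref{thm:projform} combined with the observation that the direct image kills divisors contracted to points and preserves birational images of irreducible curves.
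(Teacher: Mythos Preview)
Your proof is correct and rests on the same two ingredients as the paper's: the local identities $\pi_P^*(C) = C_P + E_P$ and $C_P \cdot E_P = 1$, together with the projection formula. The only difference is in how you obtain (a). The paper applies the projection formula twice, with $D_1 = C_P$ and then with $D_1 = \pi_P^*(C)$, and subtracts the two resulting equalities to reach $E_P \cdot (E_P + C_P) = 0$. You instead apply the projection formula once with $D_1 = E_P$, using $(\pi_P)_*(E_P) = 0$ to get $E_P \cdot \pi_P^*(C) = 0$ directly. Your route is a step shorter; the paper's has the minor side benefit of also recording $(\pi_P^*C)^2 = C \cdot C$ along the way. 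Part (b) is identical in both proofs.
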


\begin{proof}
  We give the proof of this basic property of blowups in order to illustrate the power of the {\em projection formula} (see Theorem \ref{thm:projform}). 
  We will use the following consequences of the {\em smoothness of $C$ at $P$} :
    \begin{equation}
        \label{eq:totransfsm}
       \pi_P^*(C) = E_P + C_P. 
    \end{equation}
     \begin{equation}
        \label{eq:transv}
         E_P \cdot C_P = 1. 
    \end{equation}
    Both equalities are in fact equivalent to the smoothness of $C$ at $P$ and may be proved by working in local coordinates $(x,y)$ such that $C$ is equal to $Z(x)$ in a neighborhood of $P$. Note that \eqref{eq:transv}   means that $C_P$ intersects $E_P$ transversally  at a single point.

  \begin{enumerate}[(a)]
          \item 
            By applying the projection formula to the proper morphism $\pi_P$, the divisor $C_P$ on $\Sigma_P$ and the divisor $C$ on $\Sigma$, we get $ C_P \cdot \pi_P^*(C) = (\pi_P)_*(C_P) \cdot C$. 
            Therefore, by \eqref{eq:totransfsm}:
                \begin{equation}   \label{eq:firstint}
                   C_P \cdot (E_P + C_P) = C \cdot C.
                \end{equation}
            By applying the projection formula to the same morphism, but replacing $C_P$ by the total transform $\pi_P^*(C)$ of $C$, we get:
            $\pi_P^*(C) \cdot \pi_P^*(C) = (\pi_P)_*(\pi_P^*(C)) \cdot C$.
            Therefore, again by \eqref{eq:totransfsm}, as $(\pi_P)_*(\pi_P^*(C)) = C$:           \begin{equation}  
                  \label{eq:secondint}
                   (E_P + C_P) \cdot (E_P + C_P) = C \cdot C.
             \end{equation}
            By substracting (\ref{eq:firstint}) from (\ref{eq:secondint}), we get $E_P \cdot (E_P + C_P) = 0$, thus by \eqref{eq:transv}:
               \[ E_P \cdot E_P = - E_P \cdot C_P = -1.\]

          \item 
          By (\ref{eq:firstint}), we have $C_P \cdot C_P = C \cdot C - C_P \cdot E_P$. 
        Using again the equality \eqref{eq:transv}, we get the desired relation: 
          \[C_P \cdot C_P = C \cdot C -1. \]         
      \end{enumerate}
\end{proof}

\medskip
\section{Plane curve singularities and their embedded resolutions}  \label{sec:plcsembres}

In this section we recall the notions of {\em abstract resolution} and {\em embedded resolution} of a plane curve singularity (see Definition \ref{def:resol}), the existence of a {\em minimal embedded resolution}, and we illustrate it with the example of the standard {\em cusp} (see Example \ref{ex:cuspembres}).

\medskip
Throughout the paper, $(S, O)$ denotes a germ of {\em smooth} complex analytic surface, with local ring $\boxed{\mathcal{O}_{S, O}}$, consisting of the germs of holomorphic functions on $(S,O)$. Let $\boxed{\maxid_{S,O}}$ be its maximal ideal, consisting of those germs of functions which vanish at the point $O$.  A {\bf local coordinate system} on $(S,O)$ is a pair $(x,y)$ which generates the maximal ideal $\maxid_{S,O}$. It identifies the local ring $\mathcal{O}_{S, O}$ with the complex algebra $\Cc\{x,y\}$ of convergent power series in two variables.

\begin{definition} \label{def:plcurvesing}
       A {\bf plane curve singularity} is a non-zero germ of effective divisor $A$ 
       on a smooth germ of surface $(S, O)$. A {\bf defining function} of  $A$ 
       is a function germ $f \in \calo_{S,O}$ such that $A = Z(f)$. 
       Such a function germ is unique up to multiplication by a unit of $\calo_{S,O}$. The quotient $\boxed{\calo_A} := \mathcal{O}_{S, O}/ (f)$ is the local ring of $A$. 
       One has a canonical {\bf restriction morphism} $\calo_{S,O} \to \calo_A$.  
       A {\bf branch} is an irreducible plane curve singularity.
\end{definition}

One of the main methods of study of plane curve singularities is to {\em resolve} them. One distinguishes two types of {\em resolutions}:

\begin{definition}
   \label{def:resol}
    Let $A \hookrightarrow (S,O)$ be a {\em reduced} plane curve singularity. 

    \noindent $\bullet$ 
        A {\bf model} of $(S,O)$ is a germ along $E_{\pi}$ of a morphism $\boxed{\pi : (S_{\pi}, E_{\pi}) \to (S,O)}$ obtained as a composition of blowups of points above $O$. Here $E_{\pi}$ denotes the {\bf exceptional divisor} of $\pi$, that is, the preimage $\pi^{-1}(O)$, seen as a reduced divisor. The sum $\boxed{A_{\pi}}$ of irreducible components of the total transform $\pi^* (A)$ which are not contained in $E_{\pi}$ is called the {\bf strict transform} of $A$ by $\pi$. 
         
    \noindent $\bullet$ 
         An {\bf abstract resolution of $A$} is a proper bimeromorphic morphism  $\pi : A_{\pi} \to A$ such that  $A_{\pi}$ is smooth. Notice that if $A$ is not irreducible, then $A_{\pi}$ is a {\bf multigerm}, that is, a finite disjoint union of germs. 

    \noindent $\bullet$ 
         An {\bf embedded resolution of $A$} is a model  $\pi : S_{\pi} \to S$ such that the total transform $\pi^* (A)$ of $A$ on $S_{\pi}$ in the sense of Definition \ref{def:transfdiv}  is a normal crossings divisor.  

 If $A \hookrightarrow (S,O)$ is a {\em not necessarily reduced} plane curve singularity, a {\bf resolution} of it of either kind is a resolution of its reduction, each branch of its strict transform being endowed with the same multiplicity as its image in $A$.
\end{definition}

If $\pi : S_{\pi} \to S$ is an embedded resolution of $A$, then the restriction $A_{\pi} \to A$ of $\pi$ to the strict transform $A_{\pi}$ of $A$ by $\pi$ is an abstract resolution of $A$. The simplest non-trivial model of $(S,O)$ is the blowup of $O$ of Definition \ref{def:blowup}.  

Given a reduced plane curve singularity, one may get an embedded resolution of it by a finite sequence of blowups of points (see \cite[Section 8.4, page 496, Theorem 9]{BK 86} for a complex analytic proof and \cite[Theorem V.3.9]{H 77} for a proof in algebraic geometry):

\begin{theorem}  \label{thm:blowupprocess}
   Let $A \hookrightarrow (S,O)$ be a reduced plane curve singularity. Then there exists an embedded resolution of $A$ obtained as a finite composition of blowups of points. 
\end{theorem}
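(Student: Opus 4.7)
The plan is to split the construction into two stages: first, an \emph{abstract} resolution that makes every branch of the strict transform smooth; then, further blowups at the finitely many points where the total transform still fails to be a normal crossings divisor. Throughout both stages, it is essential that at every intermediate model the locus where additional work remains is a finite set of points, so that one may blow up these points one at a time.

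For the first stage, I would induct on a non-negative integer invariant of the singular locus of the current strict transform that strictly decreases under each blowup of a singular point. The most conceptual choice is the total delta invariant $\sum_P \delta(A', P)$ over the singular points of the current strict transform $A'$, since $\delta$ is additive over branches, vanishes exactly at smooth points of a reduced curve, and satisfies the strict inequality $\sum_{P' \in \pi_P^{-1}(P)} \delta(A'_P, P') < \delta(A', P)$ at any singular point $P$ (the difference being $\binom{m_P(A')}{2} \geq 1$). A more elementary route is to track the multiplicity $m$ at a singular point $P$ being blown up: the strict transform meets the new exceptional divisor $E_P$ only at the finitely many points corresponding to the distinct linear factors of the tangent cone of $A'$ at $P$, and at each such point its multiplicity is at most $m$, strictly smaller whenever several distinct tangent directions contribute. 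The delicate part is to rule out an infinite sequence of blowups along which the multiplicity of a single branch remains equal to $m$; using a Puiseux parametrisation $t \mapsto (t^m, \varphi(t))$ with $\ord \varphi > m$, successive blowups at the unique intersection point of the branch with the new exceptional divisor replace $\varphi(t)$ by $\varphi(t)/t^m$ in suitable local charts, and finitely many such steps eventually lower its order below $m$.

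For the second stage, once the strict transform is smooth the total transform is a union of finitely many smooth curves, and the normal crossings condition can fail only at a finite set of points where either two smooth components meet non-transversely, or three or more components concur. At a point $P$ where two smooth components $C, C'$ meet with local intersection number $(C \cdot C')_P > 1$, the equality $\pi_P^*(C) = E_P + C_P$ from the proof of Proposition \ref{prop:relblowup}, combined with the projection formula (Theorem \ref{thm:projform}), yields $(C_P \cdot C'_P)_{P'} = (C \cdot C')_P - 1$ at the unique point $P'$ of $E_P$ through which both strict transforms pass, so iteration forces transversality after finitely many blowups. Points where three or more components concur are handled by a single blowup that separates them onto distinct points of the new exceptional divisor, reducing the issue to the two-component case. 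I expect the main obstacle to be the finiteness claim in the first stage; the cleanest justification anticipates the Newton-Puiseux parametrisations used later in the paper, which fits naturally into the lotus-based framework developed in the rest of the text.
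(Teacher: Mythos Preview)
The paper does not actually prove Theorem~\ref{thm:blowupprocess}: it simply records the statement and refers the reader to \cite[Section 8.4, page 496, Theorem 9]{BK 86} and \cite[Theorem V.3.9]{H 77} for proofs. So there is no in-paper argument to compare against; your proposal is additional content rather than a reconstruction.

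As a sketch, your two-stage plan is standard and essentially correct, but two points deserve care if you were to flesh it out. First, invoking the $\delta$-invariant for Stage~1 risks circularity within this paper: the formula $\delta(A)=\sum_P \binom{e_P(A)}{2}$ is only stated later as Theorem~\ref{thm:deltaformula}, and the references given for it (\cite{C 00}, \cite{DP 00}) prove it \emph{via} an embedded resolution. Your alternative route through Puiseux parametrisations avoids this and is the safer choice here, since the Newton--Puiseux theorem (Section~\ref{sec:EWfromlot}) is independent of resolution. Second, your Stage~2 claim that ``a single blowup separates three or more concurrent components onto distinct points'' is too quick: it holds only when the components have pairwise distinct tangent directions, and even then the new exceptional curve $E_P$ passes through every one of those points, so one must check that no new triple points are created. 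In the actual situation after Stage~1, the exceptional locus is already simple normal crossings and at most one smooth branch of the strict transform passes through any given point, so the only bad configurations are (i) a strict-transform branch tangent to one exceptional component, or (ii) a strict-transform branch passing through a node of two exceptional components; a cleaner invariant for termination is the total excess $\sum_P\bigl((\pi^*A\cdot \pi^*A)_P - 1\bigr)$ over bad points, or simply the pairwise intersection numbers you already track in case~(i) together with an induction on the number of type-(ii) points.
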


There is always a {\bf minimal embedded resolution}, in the sense that any other embedded resolution factors canonically through it. It is obtained by blowing up only points of the strict transforms of $A$: at each step, blow up only the points at which the total transform of $A$ is {\em not} locally a normal crossings divisor.

\begin{example} \label{ex:cuspembres}
    Assume that $(x,y)$ is a local coordinate system on $(S,O)$ and that the plane curve singularity $A \hookrightarrow (S,O)$ is defined by the equation $f(x,y) = 0$ in this coordinate system, where:
      \[ f(x,y) = y^2 - x^3.\]
    That is, $A$ is a {\bf cusp singularity} at $O$. Its {\em minimal} embedded resolution process by blowups of points is illustrated in Figure \ref{fig:embrescusp} (see \cite[Section 1.2.5]{GBGPPP 20} for the corresponding computations). The irreducible components of the successive exceptional divisors are represented as black segments and the strict transforms of $A$ as orange arcs or segments labeled again by $A$. The points lying on the successive strict transforms of $A$ are labeled $O_0 := O, O_1, O_2, O_3$. Only the first three ones are blown up, the exceptional divisor created by the blowup of $O_j$ being denoted $E_j$. We keep labeling by $E_j$ its strict transforms by ulterior blowups. After a single blowup, the strict transform of $A$ is already smooth, which means that one has already got an {\em abstract resolution} of $A$. But this is not an {\em embedded resolution}, because the total transform of $A$ is not a normal crossings divisor: the strict transform is tangent to the exceptional divisor. One needs two more blowups in order to get a normal crossings divisor. 
\end{example}


   
     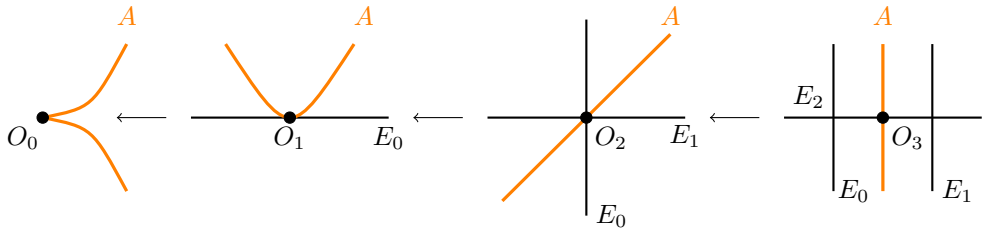
\begin{figure}[h!]
    \begin{center}
\begin{tikzpicture}[scale=0.65]

\begin{scope}[shift={(1,0)}]
 \draw[very thick, color=orange](0,0) .. controls (1,0.2) ..(1.7,1.5); 
\draw[very thick, color=orange](0,0) .. controls (1,-0.2) ..(1.7,-1.5); 
\node [above, color=orange] at (1.7,1.7) {$A$};

\node[draw,circle, inner sep=1.5pt,color=black, fill=black] at (0,0){};
\node [below, color=black] at (-0.4,0) {$O_0$};
\end{scope}


\begin{scope}[shift={(6,0)}]
     \draw [-, color=black, thick](-2,0) -- (2,0);
\draw[very thick, color=orange] (-1.3,1.5) .. controls (0,-0.5) .. (1.3,1.5);
\node [above, color=orange] at (1.5, 1.7) {$A$};

\node [below, color=black] at (0,0) {$O_1$};
\node [below, color=black] at (2,0) {$E_0$};
\node[draw,circle, inner sep=1.5pt,color=black, fill=black] at (0,0){};

\end{scope}


\begin{scope}[shift={(12,0)}]
     \draw [-, color=black, thick](-2,0) -- (2,0);
\draw [-, color=black, thick] (0,-2)--(0,2);
\node [right, color=black] at (0,-2) {$E_{0}$};
\node [above, color=black] at (2,-0.8) {$E_1$};
\draw[very thick, color=orange](-1.7,-1.7) -- (1.7, 1.7); 
\node [above, color=orange] at (1.7,1.7) {$A$};

\node[draw,circle, inner sep=1.5pt,color=black, fill=black] at (0,0){};
\node [below, color=black] at (0.5,0) {$O_2$};
\end{scope}


\begin{scope}[shift={(18,0)}]
     \draw [-, color=black, thick](-2,0) -- (2,0);
\draw [-, color=orange, very thick] (0,-1.5)--(0,1.5);
\node [above, color=orange] at (0,1.7) {$A$};
\node [above, color=black] at (-1.5,0) {$E_2$};
\node [right, color=black] at (-1.1,-1.5) {$E_0$};
\node [right, color=black] at (1,-1.5) {$E_1$};
\node[draw,circle, inner sep=1.5pt,color=black, fill=black] at (0,0){};
\node [below, color=black] at (0.5,0) {$O_3$};
\draw[thick, color=black](1,-1.5) -- (1, 1.5); 
\draw[thick, color=black](-1,-1.5) -- (-1, 1.5); 
\end{scope}


      \draw[<-](2.5,0)--(3.5,0);
     
      \draw[<-](8.5,0)--(9.5,0);
     
      \draw[<-](14.5,0)--(15.5,0);

\end{tikzpicture}
\end{center}
 \caption{The process of minimal embedded resolution of a cusp singularity $A$}
\label{fig:embrescusp}
   \end{figure}

\medskip
\section{Active constellations of crosses}  \label{sec:activeconst}

The terminology introduced in this section is necessary in order to speak with precision about the embedded resolution processes of plane curve singularities and in order to associate them lotuses in Section \ref{sec:lotusactive}. 
We first define {\em models of $(S,O)$, infinitely near points of $O$, constellations of infinitely near points} and their natural partial order relations (see Definition \ref{def:infnear}). Then we define {\em active constellations} and their {\em models} (see Definition \ref{def:actconst}). We extend these notions to {\em active constellations of crosses} (see Definition \ref{def:constcrosses}). Finally, we define active constellations of crosses {\em adapted to a plane curve singularity} and the {\em completion} of the curve singularity relative to them (see Definition \ref{def:adaptedactconst}).

\medskip
The points which are blown up during a process of embedded resolution of a plane curve singularity on $(S,O)$ are {\em infinitely near} $O$ on various models of $(S,O)$ in the sense of Definition \ref{def:resol}, and their set is a {\em constellation} (a terminology we borrow from \cite{CGL 96}), in the following sense:

\begin{definition} 
    \label{def:infnear}
    Consider a smooth germ of surface $(S,O)$. 
 
     \begin{enumerate}

    \item \label{toappear}
         Let $\pi_i : (S_{\pi_i}, E_{\pi_i}) \to (S,O)$ for $i \in \{1, 2\}$ be two models of $(S,O)$. An irreducible component  $E_1$ of $E_{\pi_1}$ is said {\bf to appear on $S_{\pi_2}$} if the bimeromorphic transform of $E_1$ on $S_{\pi_2}$ is also an irreducible component of $E_{\pi_2}$ (and not a point of it).

     \item  \label{iteminfnear}
        An {\bf infinitely near point} of $O$ is either $O$ or a point of the exceptional divisor of a model of $(S,O)$. Two such points, on two models, are considered to be the same, if the associated bimeromorphic maps between the two models are biholomorphisms in their neighborhoods. 

    \item 
        If $P$ is an infinitely near point of $O$, then we denote by $\boxed{S^P}$ the minimal model containing $P$.

    \item \label{passthroughin}
        If $P$ is an infinitely  near point of $O$ and $A$ is a curve singularity on $(S,O)$, we say that {\bf $A$ passes through $P$} or that {\bf $P$ lies on $A$} if the strict transform of $A$ on $S^P$ contains $P$.

    \item  \label{proxnot}
        If $O_1$ and $O_2$ are two infinitely near points of $O$, then one says that $O_2$ {\bf is proximate to}  $O_1$, written $\boxed{O_2 \rightarrow O_1}$, if $O_2$ belongs to the strict transform of the irreducible rational curve created by blowing up $O_1$. If moreover there is no point $O_3$ such that $O_2 \rightarrow O_3 \rightarrow O_1$, one says that $O_1$ {\bf is the parent of $O_2$}.

     \item
        A (finite or infinite) {\bf constellation (above $O$)} is a set $\calc$ of infinitely near points of $O$, closed under the operation of taking the parent of a point different from $O$.

    \item  \label{itempartorder}
        The {\bf constellation of all infinitely near points of $O$} is denoted by $\boxed{\calc_{S, O}}$. It is endowed with a strict partial order relation $\boxed{\preceq}$ generated by the proximity binary relation. That is,  whenever $P, Q \in \calc_{S, O}$, one has $P \preceq Q$ if and only if there exists a chain $Q = P_n \rightarrow P_{n-1} \rightarrow \cdots \rightarrow P_1 = P$. If it exists, this chain is unique. The relation $P \preceq Q$ means that {\bf $Q$ is infinitely near $P$ in the sense of \eqref{iteminfnear}}.  

    \item 
       Consider $P, Q \in \calc_{S, O}$ such that $P \preceq Q$. We denote by $\boxed{\pi^Q_P : S^Q \to S^P}$ the associated morphism of minimal models of $(S,O)$ containing $Q$  and $P$ respectively.  It is obtained by blowing up recursively the points $P_1, \dots, P_{n-1}$ of the chain above. 
 \end{enumerate}
\end{definition}

Note that the proximity binary relation $\rightarrow$ is not a strict partial order relation,  because it is not transitive. The simplest example of this fact may be obtained by varying slightly the choice of infinitely near points of Example \ref{ex:cuspembres}, illustrated in Figure \ref{fig:embrescusp}, whose notations we keep. Namely, start from the origin $O_0$ of $\Cc^2$,  choose an arbitrary point $O_1$ on the exceptional divisor $E_0$ of the blowup of $\Cc^2$ at $O_0$, then a point $O_2' \neq O_2$ on the exceptional divisor $E_1$ of the blowup at $O_1$. The inequality $O_2' \neq O_2$ is equivalent to the fact that $O_2'$ does not belong to the strict transform of $E_0$. Therefore $O_2' \rightarrow O_1 \rightarrow O_0$, but $O_2'$ is not proximate to $O_0$. 

Given a finite constellation endowed with the restriction of the partial order of Definition \ref{def:infnear} \eqref{itempartorder}, we may construct various models from it by blowing up all its non-maximal infinitely near points and also some of its maximal points. In order to specify which maximal points have to be blown up, we declare {\em active} all the elements of the constellation which we blow up:

\begin{definition}    
\label{def:actconst}
    Let $(S,O)$ be a smooth germ of surface. 

     \begin{enumerate}
    \item 
       Let $\calc$ be a finite constellation above $O$. An {\bf active subset} of it is any subset of $\calc$ which contains all non-maximal elements of the subposet $(\calc, \preceq)$ of $(\calc_{S, O}, \preceq)$. A constellation endowed with a preferred active subset is called an {\bf active constellation}.  Each constellation may be seen canonically as an active constellation by declaring active only its non-maximal points. The points of an active constellation which are not active are called {\bf inactive}. 

    \item  \label{itemodelconst}
        Let $\calc$ be a finite active constellation above $O$. We denote by $\boxed{\pi_{\calc}: S_{\calc} \to S}$ the composition of blowups of all active elements of $\calc$ and by $\boxed{E_{\calc}}$ its exceptional divisor. $S_{\calc}$ is called the {\bf model of the active constellation} $\calc$. 
    \end{enumerate}
\end{definition}


   
     \begin{figure}[h!]
    \begin{center}
\begin{tikzpicture}[scale=0.65]

\begin{scope}[shift={(2,0)}]

\node[draw,circle, inner sep=1.5pt,color=black, fill=black] at (0,0){};
\node [below, color=black] at (-0.4,0) {$O_0$};
\end{scope}


\begin{scope}[shift={(6,0)}]
     \draw [-, color=black, thick](-2,0) -- (2,0);
\node [below, color=black] at (0,0) {$O_1$};
\node [below, color=black] at (2,0) {$E_0$};
\node[draw,circle, inner sep=1.5pt,color=blue, fill=black] at (0,0){};

\end{scope}


\begin{scope}[shift={(12,0)}]
     \draw [-, color=black, thick](-2,0) -- (2,0);
\draw [-, color=black, thick] (0,-2)--(0,2);
\node [right, color=black] at (0,-2) {$E_{0}$};
\node [above, color=black] at (2,-0.8) {$E_1$};
\node[draw,circle, inner sep=1.5pt,color=black, fill=black] at (0,0){};
\node [below, color=black] at (0.5,0) {$O_2$};
\end{scope}


\begin{scope}[shift={(18,0)}]
     \draw [-, color=black, thick](-2,0) -- (2,0);
\node [above, color=black] at (-1.5,0) {$E_2$};
\node [right, color=black] at (-1.1,-1.5) {$E_0$};
\node [right, color=black] at (1,-1.5) {$E_1$};
\node[draw,circle, inner sep=1.5pt,color=black, fill=black] at (0,0){};
\node [below, color=black] at (0.5,0) {$O_3$};
\draw[thick, color=black](1,-1.5) -- (1, 1.5); 
\draw[thick, color=black](-1,-1.5) -- (-1, 1.5); 
\end{scope}


      \draw[<-](2.5,0)--(3.5,0);
     
      \draw[<-](8.5,0)--(9.5,0);
     
      \draw[<-](14.5,0)--(15.5,0);

\end{tikzpicture}
\end{center}
 \caption{The blowup process leading to the model of the active constellation $\{O_0, \dots, O_3\}$ in which $O_3$ is {\em inactive}}
\label{fig:assconstcusp}
   \end{figure}
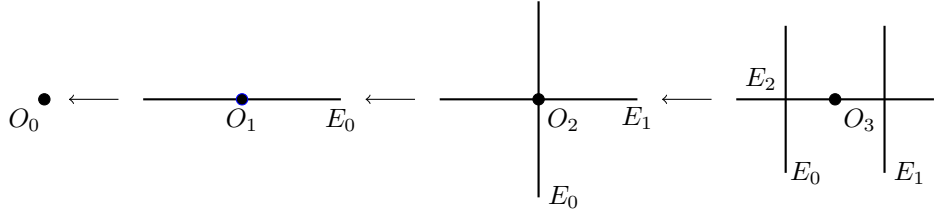


   
     \begin{figure}[h!]
    \begin{center}
\begin{tikzpicture}[scale=0.65]

\begin{scope}[shift={(2,0)}]

\node[draw,circle, inner sep=1.5pt,color=black, fill=black] at (0,0){};
\node [below, color=black] at (-0.4,0) {$O_0$};
\end{scope}


\begin{scope}[shift={(6,0)}]
     \draw [-, color=black, thick](-2,0) -- (2,0);
\node [below, color=black] at (0,0) {$O_1$};
\node [below, color=black] at (2,0) {$E_0$};
\node[draw,circle, inner sep=1.5pt,color=blue, fill=black] at (0,0){};

\end{scope}


\begin{scope}[shift={(12,0)}]
     \draw [-, color=black, thick](-2,0) -- (2,0);
\draw [-, color=black, thick] (0,-2)--(0,2);
\node [right, color=black] at (0,-2) {$E_{0}$};
\node [above, color=black] at (2,-0.8) {$E_1$};
\node[draw,circle, inner sep=1.5pt,color=black, fill=black] at (0,0){};
\node [below, color=black] at (0.5,0) {$O_2$};
\end{scope}


\begin{scope}[shift={(18,0)}]
     \draw [-, color=black, thick](-2,0) -- (2,0);
\node [above, color=black] at (-1.5,0) {$E_2$};
\node [right, color=black] at (-1.1,-1.5) {$E_0$};
\node [right, color=black] at (1,-1.5) {$E_1$};
\node[draw,circle, inner sep=1.5pt,color=black, fill=black] at (0,0){};
\node [below, color=black] at (0.5,0) {$O_3$};
\draw[thick, color=black](1,-1.5) -- (1, 1.5); 
\draw[thick, color=black](-1,-1.5) -- (-1, 1.5); 
\end{scope}



\begin{scope}[shift={(18,-6)}]
     \draw [-, color=black, thick](-2,0) -- (2,0);
\node [above, color=black] at (-1.5,0) {$E_2$};
\node [right, color=black] at (-1.1,-1.5) {$E_0$};
\node [right, color=black] at (1,-1.5) {$E_1$};
\node [right, color=black] at (0,-1.5) {$E_3$};
\draw[thick, color=black](1,-1.5) -- (1, 1.5); 
\draw[thick, color=black](-1,-1.5) -- (-1, 1.5); 

\draw[thick, color=black](0,-1.5) -- (0, 1.5); 
\end{scope}


      \draw[<-](2.5,0)--(3.5,0);
     
      \draw[<-](8.5,0)--(9.5,0);
     
      \draw[<-](14.5,0)--(15.5,0);

      \draw[<-](18,-3)--(18,-4);

\end{tikzpicture}
\end{center}
 \caption{The blowup process leading to the model of the active constellation $\{O_0, \dots, O_3\}$ in which $O_3$ is {\em active}}
\label{fig:assconstcuspbis}
   \end{figure}
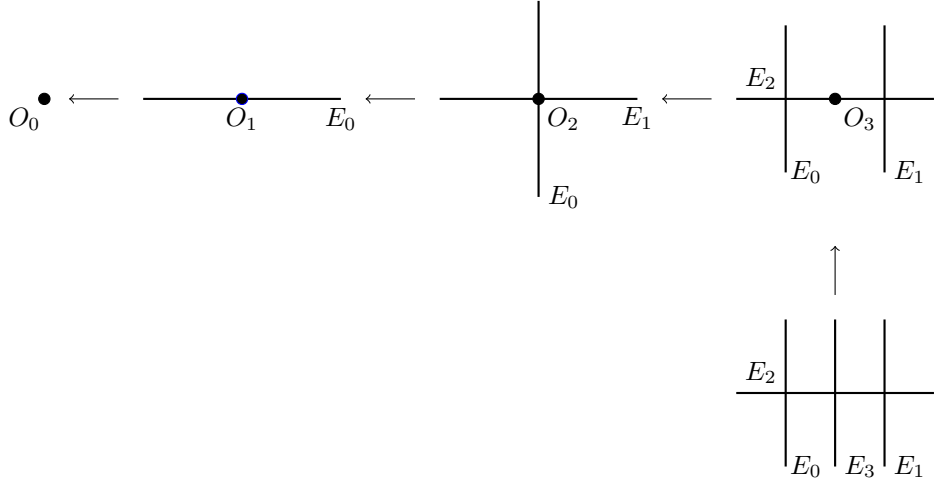

\begin{example}  \label{ex:assconstex}
    We return to Example \ref{ex:cuspembres},  
    that is, we consider again the minimal embedded resolution process of a cusp singularity $A$, represented in Figure \ref{fig:embrescusp}. $\calc = \{O_0, O_1, O_2, O_3\} $ is the constellation of infinitely near points lying on $A$ and its strict transforms till one achieves the minimal embedded resolution of $A$. Their proximity relations are $O_1 \to O_0$, $O_2 \to O_0$, $O_2 \to O_1$ and $O_3 \to O_2$. The point $O_3$ is the only maximal point of $(\calc, \preceq)$. It may be either {\em inactive} or {\em active}, the other points of $\calc$ being by contrast necessarily active. In Figures \ref{fig:assconstcusp} and \ref{fig:assconstcuspbis}  we represent the associated evolutions of the exceptional divisors during the blowup process of all active points, leading to the model of the active constellation $\calc$, in the sense of Definition \ref{def:actconst} \eqref{itemodelconst}. 
    \end{example}

Usually, one performs the blowup process leading to the minimal embedded resolution of a reduced plane curve singularity by computing the strict transforms in coordinate charts in which the exceptional divisors are coordinate axes. That is, each time one blows up an infinitely near point of $O$, there is also at that point an accompanying germ of normal crossings divisor with two components. We call such germs {\em crosses} and a constellation whose points are accompanied by crosses a {\em constellation of crosses} (this notion, as well as that of {\em active constellation}, seems to be new):

\begin{definition}  
    \label{def:constcrosses}
    Let $(S,O)$ be a smooth germ of surface. 
     \begin{enumerate}[(a)]
        \item  \label{orcross}
          A {\bf cross} on $(S,O)$ is a reduced
           germ of normal crossings divisor 
           with two branches. 
           An {\bf ordered cross} on $(S,O)$  is a cross endowed with 
           a total order of its branches.

        \item A {\bf constellation of crosses $\boxed{\hat{\calc}}$ (above $O$)} is a constellation $\calc$ above $O$ such that for each point $P \in \calc$ is given a cross $\boxed{X_P}$ on $(S^P, P)$ with the property that if $P, Q$  belong to $\calc$, and $P \preceq Q$ then:
          \begin{equation}  \label{eq:inclcrosses}
          ((\pi^Q_P)^{-1} (X_P), Q)  \subseteq   X_Q . 
          \end{equation}
        Here $((\pi^Q_P)^{-1} (X_P), Q)$ denotes the germ at $Q$ of the reduced total transform $(\pi^Q_P)^{-1} (X_P)$ of $X_P$ on $S^Q$.  

         \item The constellation of crosses $\hat{\calc}$ is called {\bf active} if the underlying constellation $\calc$ is active in the sense of Definition \ref{def:actconst}. 

        \item \label{totransfactcross}
          If $\hat{\calc}$ is a finite active constellation of crosses, we denote by $\boxed{X_{\hat{\calc}}}$ the total transform of all the crosses of $\hat{\calc}$ on the model $S_{\calc}$, and we call it the {\bf total transform of $\hat{\calc}$}. One has the inclusion $E_{\calc} \subset X_{\hat{\calc}}$, where $E_{\calc}$ is the exceptional divisor of the model of the finite active constellation underlying $\hat{\calc}$, in the sense of Definition \ref{def:actconst} \eqref{itemodelconst}. 
     \end{enumerate}
\end{definition}

\begin{remark}  \label{rem:ordcross}
    The notion of {\em ordered cross} is needed when one wants to speak of the Newton polygon of an effective divisor relative to a cross, as in \cite[Definition 1.4.14]{GBGPPP 20}. As Newton polygons were essential in the treatment of lotuses described in that paper, there {\em crosses} were assumed to be ordered. By contrast, in the present paper, crosses will be assumed unordered, unless otherwise stated. 
    More precisely, the only ordered crosses will be those corresponding to the base edges of lotuses, in the sense of Definition \ref{def:vocablotus}.
\end{remark}

Note that if $(\calc, (X_P)_{P \in \calc})$ is a constellation of crosses and if $Q \in \calc$ is different from $O$, then $X_Q$ contains the germ at $Q$ of the exceptional divisor of $\pi^Q_O$. This results from the particular case 
  \[ ((\pi^Q_O)^{-1} (X_O), Q)  \subseteq   X_Q \]
of relation (\ref{eq:inclcrosses}). Therefore, $X_Q$ is an {\em admissible cross} in the following sense:

\begin{definition}  \label{def:admcross}
    Let $\pi: (S_{\pi}, E_{\pi}) \to (S,O)$ be a model of $(S, O)$. Let $Q \in E_{\pi}$. An {\bf admissible cross} on $(S_{\pi}, Q)$ is a cross which contains the germ $(E_{\pi}, Q)$.
\end{definition}

Let us recall the following standard notion which will simplify the statement of Proposition \ref{prop:fincc} (see \cite[Remark 7.8]{GBGPPP 19} for historical information about this terminology):

\begin{definition} 
   \label{def:curvetta}
    Let $\pi : (S_{\pi}, E_{\pi})  \to (S,O)$ be a model of $(S,O)$ and let $Q$ be a smooth point of $E_{\pi}$. A {\bf curvetta} at $Q$ is a germ of smooth curve on $(S_{\pi}, Q)$ transversal to $E_{\pi}$. 
\end{definition}

Therefore, an admissible cross in the sense of Definition \ref{def:admcross} consists either of the germ $(E_{\pi}, Q)$ of the exceptional divisor $E_{\pi}$ of $\pi$ at a singular point $Q$ of $E_{\pi}$ or of the germ of the sum $E_{\pi} + C$ at a smooth point of $E_{\pi}$, where $C$ is a curvetta at $Q$.

{\em Finite} constellations of crosses may be alternatively described as resulting from the following kind of recursive processes:

\begin{proposition} \label{prop:fincc}
   Any finite active constellation of crosses may be obtained recursively by starting from a cross $X_O$ on $(S,O)$, by declaring $O$ inactive and by performing a finite number of times (possibly $0$) one of the following elementary  extension steps of a given active constellation of crosses  $\hat{\cald}$ with underlying active constellation $\cald$: 
  \begin{enumerate}[(a)]

      \item \label{stepactivate} 
        Choose an inactive point of $\cald$ and activate it. 
      
      \item \label{stepcross} 
        Choose a point $Q \in E_{\cald}$ which does not belong to $\cald$ and: 
         \begin{enumerate}[(b1)] 
            \item \label{donttouch} If $(X_{\hat{\cald}}, Q)$ is a cross, then define $X_Q := (X_{\hat{\cald}}, Q)$.
            
            \item \label{addcurvetta} If $(X_{\hat{\cald}}, Q)$ has only one component, which is necessarily the germ $(E_{\cald}, Q)$ at $Q$ of the exceptional divisor $E_{\cald}$, then add a curvetta $L_Q$ at $Q$ and define $X_Q := (E_{\cald}, Q) + L_Q$. 
         \end{enumerate}
            
    \noindent The resulting constellation of crosses is $\hat{\cald} \cup \{(Q, X_Q)\}$, its active points being those of $\hat{\cald}$.
  \end{enumerate}
  \end{proposition}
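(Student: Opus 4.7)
The plan is to prove both implications by induction, the forward one on the number of steps of the recursive procedure and the reverse one on the pair $(|\calc|, \text{number of active points})$ ordered lexicographically.

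For the forward direction, one checks that the base state $\{(O, X_O)\}$ with $O$ inactive is trivially a finite active constellation of crosses, and that each elementary step preserves the defining properties. Step (a), activating an inactive point, only enlarges the active subset, so the property that every non-maximal element is active is preserved. Step (b) adjoins a new point $Q\in E_{\cald}$, declared inactive; this $Q$ is maximal in the enlarged proximity order, and the cross $X_Q$ constructed by either sub-case contains $(X_{\hat{\cald}}, Q)$ by definition, which in turn contains the germs $((\pi^Q_P)^{-1}(X_P), Q)$ for every $P\preceq Q$ in $\cald$. Hence the compatibility condition \eqref{eq:inclcrosses} holds for the enlarged constellation.

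For the reverse direction, given a finite active constellation of crosses $\hat{\calc}$ distinct from the base state, the plan is to exhibit a last step to undo. If $\calc$ has at least one inactive point $Q$, then $Q$ is necessarily maximal (by the very definition of active subset in Definition \ref{def:actconst}), and removing it yields a valid active constellation $\hat{\cald}$: the constellation axiom is preserved because a maximal element was removed, the compatibility conditions restrict to the remaining crosses, and the active subset of $\cald$ still contains all non-maximal elements. If instead every point of $\calc$ is active, one picks a maximal such point and deactivates it; since it has no proper descendants in $\calc$, this preserves the axioms of an active constellation and strictly decreases the number of active points. In either situation, the inductive hypothesis applies to the reduced active constellation of crosses, and prepending the appropriate elementary step recovers $\hat{\calc}$.

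The heart of the argument, and the step I expect to be the main obstacle, is verifying in the first sub-case that the pre-existing cross $X_Q$ has precisely the form prescribed by (b1) or (b2) relative to $\hat{\cald}$. The structural claim to establish is the inclusion $(X_{\hat{\cald}}, Q)\subseteq X_Q$. The germ $(X_{\hat{\cald}}, Q)$ automatically contains $(E_{\cald}, Q)$, which is one or two transversal smooth branches according to whether $Q$ is a smooth or a singular point of $E_{\cald}$. Any further component of $(X_{\hat{\cald}}, Q)$ must be the strict transform on $S_{\cald}$ of some branch of some cross $X_P$ with $P\in\cald$. Tracking the successive positions of such a branch through the blowups of active points of $\cald$, one sees that if its strict transform on $S_{\cald}$ meets $Q$, then the sequence of points visited furnishes a proximity chain from $P$ to $Q$, so $P\preceq Q$; the compatibility axiom for $\hat{\calc}$ then forces that component to lie inside $X_Q$. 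Since $X_Q$ is a cross, that is, the union of exactly two transversal smooth branches, and $(X_{\hat{\cald}}, Q)$ is a nonempty reduced sub-divisor of it containing $(E_{\cald}, Q)$, only two possibilities remain: either $(X_{\hat{\cald}}, Q)$ already exhausts $X_Q$ and we are in case (b1), or $(X_{\hat{\cald}}, Q)=(E_{\cald}, Q)$ consists of a single branch and $X_Q$ is obtained from it by adjoining the complementary branch, which is a curvetta transversal to $E_{\cald}$ at $Q$, matching case (b2) with $L_Q$ equal to this curvetta.
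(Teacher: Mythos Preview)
The paper states this proposition without proof, so there is nothing to compare against directly. Your argument is correct: the forward direction checks that each elementary step preserves the axioms of an active constellation of crosses, and the reverse direction inducts on $(|\calc|, \text{number of active points})$ lexicographically, undoing either a step~(b) by removing an inactive maximal point or a step~(a) by deactivating a maximal active one. The verification you single out as the crux---that the given cross $X_Q$ at an inactive maximal $Q$ matches the form prescribed by (b1) or (b2) relative to $\hat{\cald}=\hat{\calc}\setminus\{(Q,X_Q)\}$---is handled correctly: since the infinitely near points through which a fixed branch on $(S,O)$ passes form a chain under $\preceq$, any non-exceptional component of $X_{\hat{\cald}}$ through $Q$ must come from some cross $X_P$ with $P$ comparable to $Q$, hence $P\preceq Q$ by maximality of $Q$; the compatibility axiom \eqref{eq:inclcrosses} then gives $(X_{\hat{\cald}},Q)\subseteq X_Q$, and the case split on the number of branches of $(X_{\hat{\cald}},Q)$ recovers (b1) or (b2).
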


\begin{example}  \label{ex:assconstcrossex}
   Let us go back to Example \ref{ex:assconstex}. 
    In Figure \ref{fig:extconstcrosscusp} is represented an active constellation of crosses whose underlying constellation $\calc= \{ O_0, \dots, O_3 \}$ is that whose blowup process is shown in Figure \ref{fig:assconstcusp}. That is, the infinitely near point $O_3$ is declared inactive. Each cross is drawn using bold blue segments, excepted the last one which has an orange branch, representing the final strict transform of the starting branch $A$ of the embedded resolution of the cusp. This strict transform is a curvetta on $S_{\calc}$, in the sense of Definition \ref{def:curvetta}. The intermediate strict transforms of $A$ are drawn as dashed orange arcs or segments.
    
    Let us describe how to obtain $\calc$ by iterating steps of type \ref{stepactivate} or \ref{stepcross} of  Proposition \ref{prop:fincc}: 
      \begin{itemize}
        \item Start from the active constellation   of crosses $(O_0, X_{O_0} := L + L_1)$, in which $O_0$ is inactive.
        \item Perform step \ref{stepactivate} on    $O_0$, that is, activate it. 
        \item Choose $O_1$ as the intersection      point of the exceptional divisor $E_0$ of the blowup of $O_0$ and the strict transform of $L_1$. Then perform step \ref{donttouch}, that is, take $X_{O_1} := E_0 + L_1$. 
        \item Perform step \ref{stepactivate} on    $O_1$, that is, activate it. 
        \item Choose $O_2$ as the intersection      point of the exceptional divisor $E_1$ of the blowup of $O_1$ and the strict transform of $E_0$. Then perform step \ref{donttouch}, that is, take $X_{O_2} := E_0 + E_1$. 
        \item Perform step \ref{stepactivate} on    $O_2$, that is, activate it. 
        \item Choose $O_3$ as the intersection      point of the exceptional divisor $E_2$ of the blowup of $O_2$ and the strict transform of the branch $A$. Then perform step \ref{addcurvetta}, by taking $X_{O_3} := E_2 + A$. 
      \end{itemize}
\end{example}


   
     \begin{figure}[h!]
    \begin{center}
\begin{tikzpicture}[scale=0.65]

\begin{scope}[shift={(1,0)}]
\node [below, color=black] at (-0.4,0) {$O_0$};
\node [above, color=black] at (-1,0) {$L_{1}$};
\node [above, color=black] at (0,1) {$L$};
  \draw [-, color=blue, line width=2.5pt](-1,0) -- (1,0);
    \draw [-, color=blue, line width=2.5pt](0,-1) -- (0,1);
    \node[draw,circle, inner sep=1.5pt,color=black, fill=black] at (0,0){};
    
    \draw[dashed, color=orange, line width=1pt](0,0) .. controls (1,0.2) ..(1.7,1.5); 
\draw[dashed, color=orange, line width=1pt](0,0) .. controls (1,-0.2) ..(1.7,-1.5); 
\end{scope}


\begin{scope}[shift={(6,0)}]
    
\node [below, color=black] at (2,0) {$E_0$};

\node [above, color=black] at (1.5,1) {$L$};
\node [above, color=black] at (0,1) {$L_{1}$};
  \draw [-, color=blue, line width=2.5pt](-1,0) -- (1,0);
    \draw [-, color=blue, line width=2.5pt](0,-1) -- (0,1);
    \draw [-, color=black, thick](1.5,-1) -- (1.5,1);
     \draw [-, color=black, thick](-2,0) -- (2,0);
     \node[draw,circle, inner sep=1.5pt,color=blue, fill=black] at (0,0){};
\node [below, color=black] at (-0.4,0) {$O_1$};

\draw[dashed, color=orange, line width=1pt] (-1.3,1.5) .. controls (0,-0.5) .. (1.3,1.5);

\end{scope}


\begin{scope}[shift={(12,0)}]
     \draw [-, color=black, thick](-2,0) -- (2,0);
\draw [-, color=black, thick] (0,-2)--(0,2);
\node [below, color=black] at (0,-2) {$E_{0}$};
\node [above, color=black] at (2,-0.8) {$E_1$};
 \draw [-, color=blue, line width=2.5pt](-1,0) -- (1,0);
    \draw [-, color=blue, line width=2.5pt](0,-1) -- (0,1);
    \draw [-, color=black, thick](-1,-1.5) -- (1,-1.5);
    \draw [-, color=black, thick](-1.4,-1) -- (-1.4,1);
\node[draw,circle, inner sep=1.5pt,color=black, fill=black] at (0,0){};
\node [below, color=black] at (0.5,0) {$O_2$};
\node [right, color=black] at (1,-1.5) {$L$};
\node [above, color=black] at (-1.4,1) {$L_1$};

\draw[dashed, color=orange, line width=1pt](-1.3,-1.3) -- (1.3, 1.3); 
\end{scope}


\begin{scope}[shift={(18,0)}]
     \draw [-, color=black, thick](-2,0) -- (2,0);
\node [above, color=black] at (-1.8,0) {$E_2$};
\node [below, color=black] at (-1.2,-2) {$E_0$};
\node [right, color=black] at (1.4,-1.5) {$E_1$};
 \draw [-, color=blue, line width=2.5pt](-1,0) -- (1,0);
\draw [-, color=black, thick](-2,-1.5) -- (-0.3,-1.5);
\node [left, color=black] at (-2,-1.5) {$L$};
\node[draw,circle, inner sep=1.5pt,color=black, fill=black] at (0,0){};
\node [below, color=black] at (0.5,0) {$O_3$};
\draw[thick, color=black](1.4,-1.5) -- (1.4, 1.5); 
\draw[thick, color=black](-1.2,-2) -- (-1.2, 2); 
\draw [-, color=black, thick](0.6,1) -- (2.2,1);
\node [right, color=black] at (2.2,1) {$L_1$};

\draw [-, color=orange, line width=2.5pt](0,-1) -- (0,1);
\node [above, color=black] at (0,1) {$A$};

\end{scope}


      \draw[<-](2.5,0)--(3.5,0);
     
      \draw[<-](8.5,0)--(9.5,0);
     
      \draw[<-](14.5,0)--(15.5,0);

\end{tikzpicture}
\end{center}
 \caption{Extension of the active constellation of Figure \ref{fig:assconstcusp} into an active  constellation of crosses}
\label{fig:extconstcrosscusp}
   \end{figure}
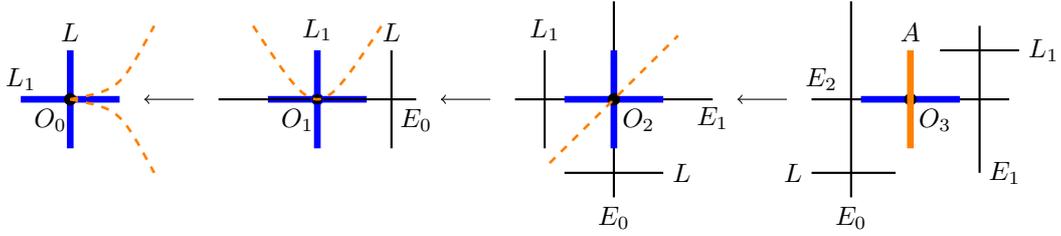

In the sequel we will be mainly interested in active constellations of crosses {\em adapted} to reduced plane curve singularities, in the following sense:

\begin{definition}  
  \label{def:adaptedactconst}
    Let $A \hookrightarrow (S,O)$ be a reduced plane curve singularity. Consider the constellation $\calc_A$ of infinitely near points lying on the strict transforms of $A$ on all the models lying between $S$ and an 
    embedded resolution $\pi : S_{\pi} \to S$ of $A$. Look at it as an active constellation, by declaring all its maximal points to be inactive. We say that an active constellation of crosses $\hat{\calc}_A$ with underlying active constellation $\calc_A$ is {\bf adapted to $A$} if one has the inclusion:
    \[ \pi^{-1}(A) \subseteq X_{\hat{\calc}_A}. \]
    That is, we ask that during the blowup process dictated by $\calc_A$, whenever  the total transform of $A$ is a cross at a point $P \in \calc_A$, the cross $X_P$ of $\hat{\calc}_A$ coincides with it. In this case, the {\bf completion of $A$ with respect to $\hat{\calc}_A$} is the projection  $\boxed{\hat{A}} := \pi ( X_{\hat{\calc}_A}) \hookrightarrow (S,O)$.
\end{definition}

The curve singularity $A$ is contained in $\hat{A}$ but it may have other branches corresponding to components of crosses which are not branches of $A$.  For instance, the active constellation of crosses of Example \ref{ex:assconstcrossex} is adapted to the cusp singularity $A$ and the completion of $A$ with respect to it is the sum of $A$ and the germs at $O$ of coordinate axes (see also Example \ref{ex:reconstrlotus} below). We introduced a very similar notion of {\em completion} in \cite[Definition 1.4.15]{GBGPPP 20}. Note that, unlike completions of local rings or metric spaces, completions of plane curve singularities are not canonical. 

Completions $\hat{A}$ of plane curve singularities $A$ will be important in Proposition \ref{prop:fromlotustoEW} and its Corollary \ref{cor:lotustoEW} and will lead us to introduce {\em complete Eggers-Wall trees} in Definition \ref{def:EW-complete}.

\medskip
\section{The lotus of a finite active constellation of crosses}  \label{sec:lotusactive}

In this section we explain how to associate a {\em lotus} to each finite active constellation of crosses (see Definition \ref{def:lotusfacc} and Proposition \ref{prop:interprlotus}). This notion is illustrated with three examples (see Figures \ref{fig:asslotuscusp}, \ref{fig:lotusbranch}, \ref{fig:lotus3branches}), which will then serve repeatedly in the remaining sections to illustrate various uses of lotuses as computational architectures.

\medskip
One associates classically a {\em dual graph} to every curve lying on a smooth surface which has only normal crossing singularities (see \cite{PP 22} for a historical presentation of the use of such graphs in singularity theory and Definition \ref{def:dualgraph} for a precise formulation). The simplest dual graphs are associated to crosses: 

\begin{definition}  \label{def:dualsegm}
 The {\bf dual segment of a cross} is an affine compact segment whose vertices are associated to the components of the cross (see Figure \ref{fig:dual segment}).
\end{definition}

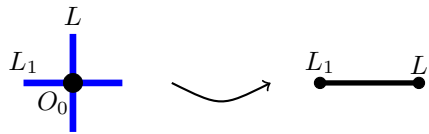
\begin{figure}[h!]
    \begin{center}
\begin{tikzpicture}[scale=0.65]

\begin{scope}[shift={(1,0)}]
\node [below, color=black] at (-0.4,0) {$O_0$};
\node [above, color=black] at (-1,0) {$L_{1}$};
\node [above, color=black] at (0,1) {$L$};
  \draw [-, color=blue, line width=2.5pt](-1,0) -- (1,0);
    \draw [-, color=blue, line width=2.5pt](0,-1) -- (0,1);
    \node[draw,circle, inner sep=2.5pt,color=black, fill=black] at (0,0){};
    \end{scope}
    \draw[->][thick, color=black](3,0) .. controls (4,-0.5) ..(5,0); 
    \begin{scope}[shift={(7,0)}]
    \draw [-, color=black, line width=2pt](-1,0) -- (1,0);
    \node[draw,circle, inner sep=1.5pt,color=black, fill=black] at (-1,0){};
     \node[draw,circle, inner sep=1.5pt,color=black, fill=black] at (1,0){};
     \node [above, color=black] at (-1,0) {$L_{1}$};
\node [above, color=black] at (1,0) {$L$};
    \end{scope}
    \end{tikzpicture}
    \end{center}
     \caption{A cross and it dual segment}
\label{fig:dual segment}
    \end{figure}

The following definition is the central one of the paper. It associates a {\em lotus} -- a special type of two-dimensional simplicial complex -- to every finite active constellation of crosses:

  \begin{definition}  \label{def:lotusfacc}
      Let $\hat{\calc}$ be a finite active  constellation of crosses. Its {\bf lotus} $\boxed{\Lambda(\hat{\calc})}$ is the two-dimensional simplicial complex constructed recursively as follows, starting from the dual segment of the cross $X_O$ of $\hat{\calc}$ at $O$ and following the elementary extension steps of a given active constellation of crosses $\hat{\cald}$ explained in  Proposition \ref{prop:fincc}, whose notations we keep using:
      
      \begin{enumerate}[(A)]

      \item \label{(A)}
         When an inactive point $P$ of $\hat{\cald}$ is activated as in step \ref{stepactivate}, consider a two-dimensional simplex called a {\bf petal}, whose vertices are labeled by the branches of the cross $X_P$ and by the exceptional divisor $E_P$ of the blowup of $P$. The {\bf base edge} of this  petal is the edge labeled by the branches of $X_P$. Then, glue affinely the base edge of the petal with 
      the dual segment of the cross $X_P$ seen in $\Lambda(\hat{\cald})$, identifying vertices with the same label.

      \item When a point $Q$ is chosen on $E_{\cald} \smallsetminus \cald$ as in step \ref{stepcross}, we consider two cases:

        \begin{enumerate}[(B1)]  \label{lotuscaseB}
            \item \label{(B1)}
            If $X_Q = (X_{\hat{\cald}})_Q$, then the dual segment of $(X_{\hat{\cald}})_Q$ is already an edge of the lotus $\Lambda(\hat{\cald})$.
            
            \item \label{(B2)}
            If $X_Q = (E_{\cald}, Q) + L_Q$, then glue the dual segment of the cross $X_Q$ to the lotus $\Lambda(\hat{\cald})$ by identifying the vertices corresponding  to $(E_{\cald}, Q)$. 
      \end{enumerate}
  \end{enumerate}
  \end{definition}

\begin{example}  \label{ex:reconstrlotus}
    In Figure \ref{fig:asslotuscusp} is represented the recursive construction of the lotus of the finite active constellation of crosses from Figure \ref{fig:extconstcrosscusp}, which illustrates Example \ref{ex:assconstcrossex}.  We have drawn as bold segments the edges of the lotus which are the dual segments of the crosses of the constellation. Starting from the dual segment $[LL_1]$ of the cross $X_O$, one performs successively three steps of type \ref{(A)} and one step of type \ref{(B2)}. 
\end{example}


   
     \begin{figure}[h!]
    \begin{center}
\begin{tikzpicture}[scale=0.8]

\begin{scope}[shift={(-4,0)}]
 \draw [-, color=black, line width=2.5pt](-1,0) -- (1,0);
  \node[draw,circle, inner sep=1.5pt,color=black, fill=black] at (-1,0){};
   \node[draw,circle, inner sep=1.5pt,color=black, fill=black] at (1,0){};
   \node [below, color=black] at (-1,0) {$L_{1}$};
\node [below, color=black] at (1,0) {$L$};
\end{scope}

\begin{scope}[shift={(0,0)}]
\draw [fill=pink!25](-1,0) -- (1,0)--(0,1.5);
 \draw [-, color=black, line width=2.5pt](-1,0) -- (1,0);
  \draw [-, color=black, thick](-1,0) -- (0,1.5);
  \draw [-, color=black, thick](1,0) -- (0,1.5);
  \node[draw,circle, inner sep=1.5pt,color=black, fill=black] at (-1,0){};
   \node[draw,circle, inner sep=1.5pt,color=black, fill=black] at (1,0){};
    \node[draw,circle, inner sep=1.5pt,color=black, fill=black] at (0,1.5){};
   \node [below, color=black] at (-1,0) {$L_{1}$};
\node [below, color=black] at (1,0) {$L$};
\node [right, color=black] at (0,1.5) {$E_{0}$};
\end{scope}

\begin{scope}[shift={(4,0)}]
\draw [fill=pink!25](-1,0) -- (1,0)--(0,1.5);
\draw [fill=pink!50](-0.7,2.7)--(-1,0)--(0,1.5);
 \draw [-, color=black, line width=2.5pt](-1,0) -- (1,0);
  \draw [-, color=black, line width=2.5pt](-1,0) -- (0,1.5);
  \draw [-, color=black, thick](1,0) -- (0,1.5);
   \draw [-, color=black, thick](-0.7,2.7) -- (0,1.5);
     \draw [-, color=black, thick](-0.7,2.7) -- (-1,0);
  \node[draw,circle, inner sep=1.5pt,color=black, fill=black] at (-1,0){};
   \node[draw,circle, inner sep=1.5pt,color=black, fill=black] at (1,0){};
    \node[draw,circle, inner sep=1.5pt,color=black, fill=black] at (0,1.5){};
     \node[draw,circle, inner sep=1.5pt,color=black, fill=black] at (-0.7,2.7){};
   \node [below, color=black] at (-1,0) {$L_{1}$};
\node [below, color=black] at (1,0) {$L$};
\node [right, color=black] at (0,1.5) {$E_{0}$};
\node [left, color=black] at (-1,3) {$E_{1}$};
\end{scope}

\begin{scope}[shift={(8,0)}]
\draw [fill=pink!25](-1,0) -- (1,0)--(0,1.5);
\draw [fill=pink!50](-0.7,2.7)--(-1,0)--(0,1.5);
\draw [fill=pink!75](-0.7,2.7)--(1,3)--(0,1.5);
 \draw [-, color=black, line width=2.5pt](-1,0) -- (1,0);
  \draw [-, color=black, line width=2.5pt](-1,0) -- (0,1.5);
  \draw [-, color=black, thick](1,0) -- (0,1.5);
   \draw [-, color=black, line width=2.5pt](-0.7,2.7) -- (0,1.5);
     \draw [-, color=black, thick](-0.7,2.7) -- (-1,0);
     \draw [-, color=black, thick](0,1.5) -- (1,3);
     \draw [-, color=black, thick](-0.7,2.7) -- (1,3);
  \node[draw,circle, inner sep=1.5pt,color=black, fill=black] at (-1,0){};
   \node[draw,circle, inner sep=1.5pt,color=black, fill=black] at (1,0){};
    \node[draw,circle, inner sep=1.5pt,color=black, fill=black] at (0,1.5){};
     \node[draw,circle, inner sep=1.5pt,color=black, fill=black] at (-0.7,2.7){};
     \node[draw,circle, inner sep=1.5pt,color=black, fill=black] at (1,3){};
   \node [below, color=black] at (-1,0) {$L_{1}$};
\node [below, color=black] at (1,0) {$L$};
\node [right, color=black] at (0,1.5) {$E_{0}$};
\node [left, color=black] at (-0.7,2.7) {$E_{1}$};
\node [right, color=black] at (1,3) {$E_{2}$};
\end{scope}

\begin{scope}[shift={(12,0)}]
\draw [fill=pink!25](-1,0) -- (1,0)--(0,1.5);
\draw [fill=pink!50](-0.7,2.7)--(-1,0)--(0,1.5);
\draw [fill=pink!75](-0.7,2.7)--(1,3)--(0,1.5);
 \draw [-, color=black, line width=2.5pt](-1,0) -- (1,0);
  \draw [-, color=black, line width=2.5pt](-1,0) -- (0,1.5);
  \draw [-, color=black, thick](1,0) -- (0,1.5);
   \draw [-, color=black, line width=2.5pt](-0.7,2.7) -- (0,1.5);
     \draw [-, color=black, thick](-0.7,2.7) -- (-1,0);
      \draw [-, color=black, thick](0,1.5) -- (1,3);
     \draw [-, color=black, thick](-0.7,2.7) -- (1,3);
      \draw [-, color=black, line width=2.5pt](1,3) -- (0.7,4);
  \node[draw,circle, inner sep=1.5pt,color=black, fill=black] at (-1,0){};
   \node[draw,circle, inner sep=1.5pt,color=black, fill=black] at (1,0){};
    \node[draw,circle, inner sep=1.5pt,color=black, fill=black] at (0,1.5){};
     \node[draw,circle, inner sep=1.5pt,color=black, fill=black] at (-0.7,2.7){};
     \node[draw,circle, inner sep=1.5pt,color=black, fill=black] at (1,3){};
    \node[draw,circle, inner sep=1.5pt,color=black, fill=black] at (0.7,4){};
   \node [below, color=black] at (-1,0) {$L_{1}$};
\node [below, color=black] at (1,0) {$L$};
\node [right, color=black] at (0,1.5) {$E_{0}$};
\node [left, color=black] at (-0.7,2.7) {$E_{1}$};
\node [right, color=black] at (1,3) {$E_{2}$};
\node [above, color=black] at (0.7,4) {$A$};
\end{scope}

\end{tikzpicture}
\end{center}
 \caption{From left to right: construction of the lotus of the active constellation of crosses of Figure \ref{fig:extconstcrosscusp},  according to the algorithm described in Definition \ref{def:lotusfacc}}
\label{fig:asslotuscusp}
   \end{figure}
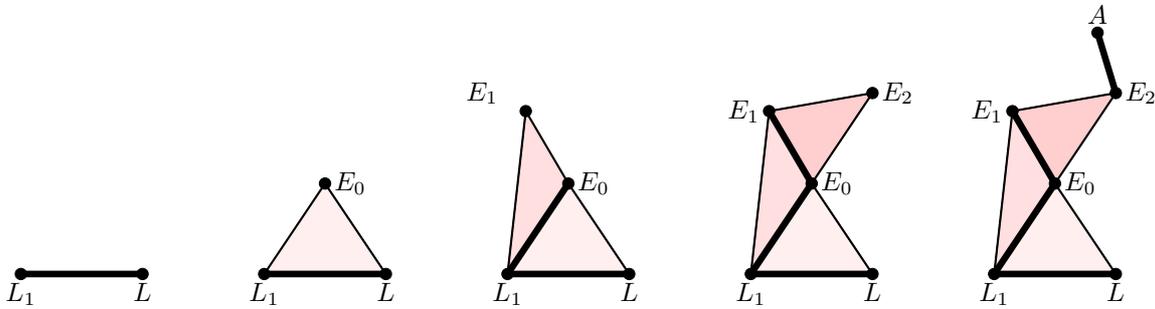

The lotus of a finite active constellation of crosses may be also described as follows:

  \begin{proposition}
    \label{prop:interprlotus}
      Let $\hat{\calc}$ be a finite active constellation of crosses. Then its lotus is the simplicial complex obtained in the following way:
        \begin{itemize}
            \item Its vertices are the irreducible components of $X_{\hat{\calc}}$. 

             \item Two vertices are joined by an edge if and only if the strict transforms of the corresponding components intersect on one of the surfaces $S^P$, when $P$ varies among the elements of the constellation $\hat{\calc}$. 

             \item If three points are pairwise joined by edges in the resulting graph, then they are the vertices of a unique two-dimensional simplex.
        \end{itemize}
  \end{proposition}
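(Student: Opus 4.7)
My plan is to proceed by induction on the number of elementary extension steps of Proposition \ref{prop:fincc} used to build $\hat{\calc}$ from the trivial active constellation consisting of the inactive point $O$ equipped with its starting cross $X_O$. At each intermediate stage $\hat{\cald}$, I maintain as inductive claim that the simplicial complex $\Lambda(\hat{\cald})$ produced by Definition \ref{def:lotusfacc} coincides with the abstract complex attached to $\hat{\cald}$ by the three bullets of the statement. The base case is immediate: the dual segment of $X_O$ has as vertices the two branches of $X_O = X_{\hat{\cald}}$, which are the only irreducible components of the total transform; they meet only at $O \in \calc$, so the single edge is correct and there are no triangles to check.

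For the inductive step I would examine the three types of extensions from Proposition \ref{prop:fincc} separately. In step \ref{(A)}, activating an inactive $P$, the unique new component of $X_{\hat{\calc}}$ is the exceptional divisor $E_P$. Thanks to the relation $\pi_P^*(C) = E_P + C_P$ used in the proof of Proposition \ref{prop:relblowup} and to the smoothness at $P$ of each branch of $X_P$, the two branches meet $E_P$ transversally at a single point on $S^P_P$, contributing exactly one new edge from $E_P$ each. The constellation-of-crosses condition (\ref{eq:inclcrosses}), applied recursively to every $R \preceq P$ with $R \in \hat{\cald}$, guarantees that the only components of $X_{\hat{\calc}}$ whose strict transforms pass through $P$ on $S^P$ are the two branches of $X_P$ themselves, so no further new edges from $E_P$ are needed; this matches exactly the petal attached in Definition \ref{def:lotusfacc} \ref{(A)}. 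Step \ref{(B1)} neither introduces a new component of $X_{\hat{\calc}}$ nor a new intersection point, and the corresponding dual segment is by induction already an edge of $\Lambda(\hat{\cald})$. Step \ref{(B2)} adds the single new component $L_Q$, a curvetta at the smooth point $Q$ of $E_{\cald}$ which meets transversally the unique branch of $E_{\cald}$ through $Q$ and nothing else in $X_{\hat{\calc}}$, so exactly one new edge is needed, namely the one described in \ref{(B2)}.

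The delicate and, in my view, main point is the triangle condition: any three pairwise joined vertices must span a $2$-simplex. One direction is automatic, since the only $2$-simplices of $\Lambda(\hat{\calc})$ are the petals, whose three vertices are pairwise joined by the three edges of the petal. For the converse, given three components $V_1, V_2, V_3$ of $X_{\hat{\calc}}$ pairwise joined in $\Lambda(\hat{\calc})$, I would single out the one introduced latest in the recursive construction. This latest component cannot be a branch of $X_O$ (they are the earliest), and cannot be a curvetta $L_Q$ added in step \ref{(B2)} either: at the moment $L_Q$ is added, its only intersection with already built components is with the unique branch of $E_{\cald}$ through $Q$, so $L_Q$ could be joined to at most one previously built vertex, not two. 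Hence the latest of the $V_i$ is an exceptional divisor $E_Q$ produced in a step \ref{(A)} activating some $Q \in \calc$; by the analysis of that step, its two new neighbors in the lotus are exactly the two branches of $X_Q$. The two other $V_i$'s must therefore be these two branches, and $\{V_1, V_2, V_3\}$ is the vertex set of the petal attached at $Q$, which is thus a $2$-simplex of $\Lambda(\hat{\calc})$ as required.
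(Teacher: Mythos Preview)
The paper states Proposition~\ref{prop:interprlotus} without proof, passing directly to Corollary~\ref{cor:spacetimerep}, so there is no argument to compare against; your induction along the elementary extension steps of Proposition~\ref{prop:fincc} is exactly the natural way to verify the statement and is essentially correct. Your handling of the triangle condition by singling out the latest-born vertex is clean and is the key combinatorial point.

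One small clarification is worth making explicit. In step~\ref{(B2)} you assert that the new curvetta $L_Q$ meets ``nothing else in $X_{\hat{\calc}}$'', and in step~\ref{(A)} you invoke \eqref{eq:inclcrosses} only for $R \preceq P$. For the proposition to hold one must read ``strict transforms of the corresponding components intersect on some $S^P$'' as meaning: each component is considered only on models where it has already been introduced in the recursive construction (for an exceptional divisor $E_R$ this is automatic, since $E_R$ does not exist on $S^{P'}$ unless $R \prec P'$; for a curvetta $L_Q$ it means one does not push its image down to surfaces $S^{P'}$ with $P' \prec Q$). Under this reading your argument goes through verbatim; without it the second bullet of the proposition would be false, since for instance the image of the curvetta $A$ in Figure~\ref{fig:asslotuscusp} meets $L$ and $L_1$ on $S^{O_0}$ although $[L,A]$ and $[L_1,A]$ are not edges of the lotus. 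You might add one sentence making this interpretation explicit.
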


Therefore:

\medskip
\begin{center}
\fbox{
\begin{minipage}{0.75\textwidth}
   \begin{corollary} 
      \label{cor:spacetimerep}
   The lotus of a finite active constellation of crosses is a space-time representation of the evolution of the dual graphs of total transforms of the crosses of the constellation, as one performs the blowups leading to its model.
   \end{corollary}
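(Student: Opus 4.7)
The plan is to unwind the informal statement into a precise inductive claim: at every intermediate stage $\hat{\cald}$ of the recursive construction of $\hat{\calc}$ described in Proposition \ref{prop:fincc}, the dual graph of the current total transform $X_{\hat{\cald}}$ on the model $S_{\cald}$ is canonically isomorphic to the subcomplex of $\Lambda(\hat{\calc})$ spanned by those vertices which label components appearing on $S_{\cald}$, together with the edges between pairs whose strict transforms on $S_{\cald}$ intersect. Proving this by induction on the number of elementary extension steps used to build $\hat{\calc}$ from $\{(O, X_O)\}$ will yield the corollary, since the evolving sequence of such subcomplexes is by construction the evolving sequence of dual graphs.

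For the base case, when $\hat{\cald}$ consists only of $(O, X_O)$ with $O$ inactive, no blowup has been performed, $X_{\hat{\cald}}$ equals the cross $X_O$, and its dual graph is precisely the dual segment of Definition \ref{def:dualsegm} that initialises $\Lambda(\hat{\calc})$.

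For the inductive step I would analyse each of the elementary extensions of Proposition \ref{prop:fincc} in turn. Activating a point $P$ triggers the blowup $\pi_P$: by the local analysis underlying Proposition \ref{prop:relblowup}, the new exceptional curve $E_P$ meets transversally each of the two branches of $X_P$ at a distinct smooth point of $E_P$, while the strict transforms of those two branches no longer intersect one another. The dual graph therefore loses the edge joining the two branches of $X_P$ and gains a new vertex $E_P$ together with two new edges from $E_P$ to the strict transforms of those branches; this is exactly what the petal glued by Definition \ref{def:lotusfacc}(A) contributes to the relevant subcomplex, with the base edge of the petal marking the edge that has just been destroyed. Case (B1) leaves the divisor unchanged, and case (B2) attaches a curvetta $L_Q$ at a smooth point of $E_{\cald}$, adding exactly one new vertex and one new edge both to the dual graph and to the lotus, in agreement with Definition \ref{def:lotusfacc}(B). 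The combinatorial description of Proposition \ref{prop:interprlotus} is precisely what guarantees that the subcomplex extracted at each stage coincides with the one produced by these local updates.

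The main conceptual obstacle, and the reason for the word \emph{space-time}, is that $\Lambda(\hat{\calc})$ is strictly larger than any individual dual graph encountered during the process: it retains the base edges of all petals even though, geometrically, each such edge disappears from the dual graph the moment the corresponding activation is performed. The inductive statement therefore has to specify which subcomplex of $\Lambda(\hat{\calc})$ is alive at each time, and I would handle this by recording for every edge of $\Lambda(\hat{\calc})$ the unique interval of stages during which it belongs to the dual graph, namely from its creation (as part of the initial dual segment, as a non-base edge of a petal, or as a curvetta attachment) until the activation of a point whose petal takes it as base edge, if any. Once this bookkeeping is made precise, Proposition \ref{prop:interprlotus}, which describes the whole lotus globally, specialises at each stage to the dual graph of that stage, giving the claimed space-time interpretation.
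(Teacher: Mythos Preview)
Your proposal is correct and is essentially a careful unpacking of what the paper leaves implicit: in the paper the corollary is stated with no proof beyond the word ``Therefore:'' after Proposition~\ref{prop:interprlotus}, treating it as an immediate reformulation of the recursive Definition~\ref{def:lotusfacc} together with that proposition. Your inductive argument along the elementary steps of Proposition~\ref{prop:fincc}, tracking which edges are ``alive'' at each stage, is exactly the natural way to make the slogan precise, and it matches the paper's intended reasoning.
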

\end{minipage}
}
\end{center}
\medskip

We will use the following terminology about certain subsets of the lotus of an active constellation:

  \begin{definition}  
     \label{def:vocablotus}
      Let $\hat{\calc}$ be a finite active constellation of crosses and let 
      $\Lambda := \Lambda(\hat{\calc})$ be its lotus, in the sense of Definition \ref{def:lotusfacc}. The edges of $\Lambda$ are distinguished as follows into types: 

     \noindent $\bullet$ 
          A {\bf base edge} is the dual segment of the cross $X_O$ of $\hat{\calc}$ at $O$ or of a  cross obtained by step \ref{(B2)} of Definition \ref{def:lotusfacc}. Each base edge is {\em oriented}, either conventionally in what concerns the dual segment of $X_O$, or from its vertex representing an exceptional curve towards the vertex representing a curvetta, in the sense of Definition \ref{def:curvetta}. In this way, the cross $X_O$ and the crosses obtained by step \ref{(B2)} of Definition \ref{def:lotusfacc} are {\em ordered}, in the sense of Definition \ref{def:constcrosses} \ref{orcross}.
             
     \noindent $\bullet$ 
         A {\bf lateral edge} is the dual segment of a cross obtained as the germ of the total transform of $\hat{\calc}$ at a singular point of it, provided that it is not a base edge (see Definition \ref{def:constcrosses}\ref{totransfactcross}).

     \noindent $\bullet$ 
         An {\bf internal edge} is the dual segment of a cross at an active point of $\hat{\calc}$. 

   The {\bf  lateral boundary}  
   $\boxed{\partial_+ \Lambda}$ of the lotus  
   $\Lambda$ is  the union of its lateral edges  and  of its base edges which are dual to the crosses at inactive points.

   The {\bf initial vertex} of $\Lambda$, denoted $\boxed{L}$,  is the vertex of the dual segment of $X_O$ which is the starting vertex of the chosen orientation. A {\bf base vertex} is a vertex of the lotus $\Lambda$ corresponding to an element of the divisor $X_{\hat{\calc}}$ which is a curvetta. The other vertices of $\Lambda$ are called {\bf lateral vertices}. A {\bf rupture vertex} is a vertex of $\Lambda$ whose removal disconnects the lotus. A {\bf membrane} of $\Lambda$ is the closure inside $\Lambda$ of a connected component of the complement of the set of rupture vertices of $\Lambda$. A membrane contains a unique base edge of $\Lambda$. The {\bf initial vertex} of a membrane is the starting vertex of its unique base edge.
  \end{definition}

We leave to the reader the task of  proving by induction on the number of steps described in Definition \ref{def:lotusfacc} needed to construct the lotus $\Lambda(\hat{\calc})$ that the types  {\em base/lateral/internal} define a partition of its set of edges. Similarly, the types {\em base/lateral} define a partition of the set of vertices. The rupture vertices are particular lateral vertices.

  There is a bijection from the set of active points of an active constellation of crosses $\hat{\calc}$ and the set of lateral vertices of its lotus $\Lambda(\hat{\calc})$. This bijection sends each active point $P$ of $\calc$ to the vertex representing the irreducible component $E_P$ (born by blowing up $P$) of the exceptional divisor $E_{\calc}$ introduced in Definition \ref{def:constcrosses}. One may prove  by induction that:

  \medskip
\begin{center}
\fbox{
\begin{minipage}{0.75\textwidth}
\begin{proposition}
    \label{prop:graphprox}
   The total subgraph of the $1$-skeleton of $\Lambda(\hat{\calc})$ joining its lateral vertices corresponds under the bijection $P \mapsto E_P$ to the graph of the proximity binary relation of Definition \ref{def:infnear} \eqref{proxnot} on the set of active points of $\hat{\calc}$. By definition, this graph joins by an edge each pair of proximate active points of the constellation. 
\end{proposition}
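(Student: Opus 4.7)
The plan is to prove the proposition by induction on the number of elementary extension steps (of types \ref{(A)}, \ref{(B1)}, \ref{(B2)} from Definition \ref{def:lotusfacc}) used to build $\Lambda(\hat{\calc})$, following the recursive construction provided by Proposition \ref{prop:fincc}. I would maintain the inductive hypothesis that after $k$ steps, an edge of the lotus joins two lateral vertices $E_{P_1}$ and $E_{P_2}$ if and only if one of $P_1, P_2$ is proximate to the other. The base case is the dual segment of $X_O$ before any activation: it has no lateral vertices, so the claim holds vacuously.

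For the inductive step, I would analyze each of the three step types separately. For a step of type \ref{(B1)}, no new vertices or edges are added. For a step of type \ref{(B2)}, the new edge joins the lateral vertex corresponding to a component of $E_{\cald}$ to the new base vertex corresponding to the curvetta $L_Q$; therefore no new edge between lateral vertices is created. The crucial case is a step of type \ref{(A)}, in which an inactive point $P$ is activated, producing a petal with base edge equal to the dual segment of $X_P$ and new apex $E_P$. Here, the two new edges of the $1$-skeleton go from $E_P$ to the two vertices representing the branches of $X_P$. One must show that these new edges between lateral vertices correspond exactly to the proximity relations involving $P$ as the more recent point.

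The key calculation is the identification of the components of $X_P$ for $P \neq O$. Since $\hat{\calc}$ is a constellation of crosses and $X_P$ is admissible in the sense of Definition \ref{def:admcross}, each irreducible component of $X_P$ is either a curvetta at $P$ (giving a base vertex, hence not relevant) or the germ at $P$ of some exceptional divisor $E_{P'}$ with $P' \prec P$ already active. By the very definition of the proximity relation (Definition \ref{def:infnear} \eqref{proxnot}), the active points $P'$ such that $(E_{P'}, P)$ is a component of $X_P$ are exactly the points with $P \rightarrow P'$. So the two edges from $E_P$ to components of $X_P$ which are lateral vertices record precisely the proximity relations $P \rightarrow P'$. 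Moreover, step \ref{(A)} when applied to the initial point $P = O$ adds only edges from $E_O$ to base vertices (both branches of $X_O$ are curvettas), and $O$ is by convention not proximate to any point, which is consistent.

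The main obstacle I anticipate is bookkeeping: justifying that the edges produced by steps of type \ref{(A)} are the \emph{only} source of edges between lateral vertices, and that each proximity relation $Q \rightarrow P$ in the final constellation is produced by the step activating $Q$ (not by the step activating $P$, at which time $Q$ did not yet exist as an infinitely near point). This temporal asymmetry — that the proximity edge for a pair $P \prec Q$ is born exactly when $Q$ is activated — is the content of the induction and should be emphasized in the write-up, since it is what matches the space-time interpretation recalled in Corollary \ref{cor:spacetimerep}.
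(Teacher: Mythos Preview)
Your proposal is correct and follows exactly the approach the paper itself indicates: immediately before the boxed statement, the paper simply writes ``One may prove by induction that:'' and gives no further details. Your induction on the elementary extension steps of Proposition~\ref{prop:fincc}, with the case analysis \ref{(A)}/\ref{(B1)}/\ref{(B2)} and the identification of the exceptional components of $X_P$ with the points $P'$ satisfying $P \rightarrow P'$ via admissibility of the cross, is precisely the argument the paper leaves to the reader; your final paragraph on the temporal asymmetry (the edge for $P' \prec P$ is born when $P$ is activated, and at that moment no active $Q$ can satisfy $Q \rightarrow P$ because $P$ was maximal while inactive) is the one point worth making explicit in a write-up.
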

\end{minipage}
}
\end{center}
\medskip

\begin{example}  
  \label{ex:typeselements}
    Consider the final lotus $\Lambda$ of the construction process described in Figure \ref{fig:asslotuscusp}. It has three petals, filled in shades of pink. Its base edges are $[L L_1]$ and $[E_2 A]$ and its lateral edges are $[L E_0]$, $[E_0 E_2]$, $[E_2 E_1]$ and $[E_1 L_1]$, therefore its lateral boundary  is: 
     \[\partial_+ \Lambda = [L E_0] \cup [E_0 E_2] \cup [E_2 E_1] \cup [E_1 L_1] \cup [E_2 A]. \]
    Its internal edges are $[L_1 E_0]$ and $[E_0 E_1]$. Its base vertices are $L, L_1$ and $A$. Its lateral vertices are $E_0, E_1, E_2$. The triangle with vertices $E_0, E_1, E_2$ is the total subgraph of the $1$-skeleton of $\Lambda$ joining its lateral vertices. It represents therefore the proximity binary relation on the set $\{O_0, O_1, O_2\}$ of active points of the active constellation of crosses of Example \ref{ex:assconstcrossex}. Indeed: $O_1 \rightarrow O_0$, $O_2 \rightarrow O_0$ and $O_2 \rightarrow O_1$.
    The only rupture vertex is $E_2$. 
     There are two membranes, the segment $[E_2 A]$ and the closed triangulated polygon with vertices $L, L_1, E_1, E_2$ and $E_0$ whose base edge is $[L L_1]$.
\end{example}

 \begin{figure}[h!]
    \begin{center}
\begin{tikzpicture}[scale=1]

 \draw [->, color=black, thick](1,0) -- (0,0);
    \draw [-, color=black, thick](0,0) -- (-1,0);
    \draw [-, color=black, thick](0,1.5) -- (-1,0);
      \draw [-, color=black, thick](0,1.5) -- (1,0);
       \draw [-, color=black, thick](-0.5,3/4) -- (0.5,3/4);
       \draw [-, color=black, thick](-1,0) -- (0.5,3/4);

        \draw [->, color=black, thick](0,1.5) -- (-1.5,1.5);
   \draw [-, color=black, thick](-1.5,1.5) -- (-3,1.5);
     \draw [-, color=black, thick](0,1.5) -- (-1.5,4);
      \draw [-, color=black, thick](-1.5,4) -- (-3,1.5);
    \draw [->, color=black, thick](-1.5,4) -- (-1.5,5);
     \draw [-, color=black, thick](-3/4,11/4) -- (-3,1.5);
     \draw [-, color=black, thick](-3/4,11/4) -- (-2.5,2.34);
       \draw [-, color=black, thick](-3/4,11/4) -- (-2,3.17);

  \node[draw,circle, inner sep=1.5pt,color=black, fill=black] at (-1,0){};
   \node[draw,circle, inner sep=1.5pt,color=black, fill=black] at (1,0){};
     \node[draw,circle, inner sep=1.5pt,color=black, fill=black] at (0,1.5){};
      \node[draw,circle, inner sep=1.5pt,color=black, fill=black] at (-0.5,3/4){};
       \node[draw,circle, inner sep=1.5pt,color=black, fill=black] at (0.5,3/4){};
        \node[draw,circle, inner sep=1.5pt,color=black, fill=black] at (-3,1.5){};
         \node[draw,circle, inner sep=1.5pt,color=black, fill=black] at (-1.5,4){};
         \node[draw,circle, inner sep=1.5pt,color=black, fill=black] at (-3/4,11/4){};
         \node[draw,circle, inner sep=1.5pt,color=black, fill=black] at (-2.5,2.34){};
         \node[draw,circle, inner sep=1.5pt,color=black, fill=black] at (-2,3.17){};

   \node [below, color=black] at (-1,0) {$L_{1}$};
   \node [below, color=black] at (1,0) {$L$};
   \node [below, color=black] at (-3,1.5) {$L_{2}$};
   \node [right, color=black] at (-1.5,5) {$A_{1}$};

 \end{tikzpicture}
\end{center}
 \caption{The lotus of an active constellation of crosses adapted to a branch $A_1$}
 \label{fig:lotusbranch}
   \end{figure}
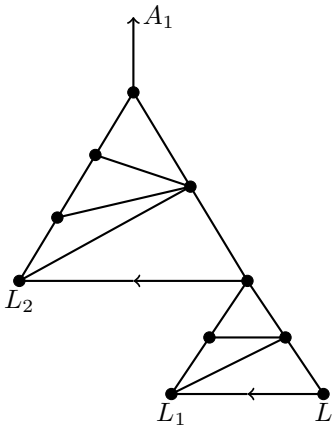

 \begin{figure}[h!]
    \begin{center}
\begin{tikzpicture}[scale=1]

 \draw [->, color=black, thick](1,0) -- (0,0);
    \draw [-, color=black, thick](0,0) -- (-1,0);
    \draw [-, color=black, thick](0,1.5) -- (-1,0);
      \draw [-, color=black, thick](0,1.5) -- (1,0);
       \draw [-, color=black, thick](-0.5,3/4) -- (0.5,3/4);
       \draw [-, color=black, thick](-1,0) -- (0.5,3/4);
       \draw [-, color=black, thick](-5.2,4.2) -- (-3.7,3.2);

        \draw [->, color=black, thick](0,1.5) -- (-1.5,1.5);
   \draw [-, color=black, thick](-1.5,1.5) -- (-3,1.5);
     \draw [-, color=black, thick](0,1.5) -- (-1.5,4);
      \draw [-, color=black, thick](-1.5,4) -- (-3,1.5);
    \draw [->, color=black, thick](-1.5,4) -- (-1.5,5);
     \draw [-, color=black, thick](-3/4,11/4) -- (-3,1.5);
     \draw [-, color=black, thick](-3/4,11/4) -- (-2.5,2.34);
       \draw [-, color=black, thick](-3/4,11/4) -- (-2,3.17);
       \draw [->, color=black, thick](-3/4,11/4) -- (1/4,15/4);
        \draw [-, color=black, thick](-2,3.17) -- (-1.5,4);
        \draw [-, color=black, thick](-3,1.5) -- (-2.5,2.34);
        \draw [-, color=black, thick](-3,1.5) -- (-2.5,2.34);
        \draw [-, color=black, thick](-3.7,3.2) -- (-2,3.17);
        \draw [-, color=black, thick](-3.7,3.2) -- (-2.5,2.34);
        \draw [->, color=black, thick](-3.7,3.2) -- (-4.7,3.2);
        \draw [-, color=black, thick](-5.7,3.2) -- (-4.7,3.2);
           \draw [->, color=black, thick](-4.7,5.2) -- (-4.7,6.2);
            \draw [-, color=black, thick](-3.7,3.2) -- (-4.7,5.2);
            \draw [-, color=black, thick](-5.7,3.2) -- (-4.7,5.2);
       
  \node[draw,circle, inner sep=1.5pt,color=black, fill=black] at (-1,0){};
   \node[draw,circle, inner sep=1.5pt,color=black, fill=black] at (1,0){};
     \node[draw,circle, inner sep=1.5pt,color=black, fill=black] at (0,1.5){};
      \node[draw,circle, inner sep=1.5pt,color=black, fill=black] at (-0.5,3/4){};
       \node[draw,circle, inner sep=1.5pt,color=black, fill=black] at (0.5,3/4){};
        \node[draw,circle, inner sep=1.5pt,color=black, fill=black] at (-3,1.5){};
         \node[draw,circle, inner sep=1.5pt,color=black, fill=black] at (-1.5,4){};
         \node[draw,circle, inner sep=1.5pt,color=black, fill=black] at (-3/4,11/4){};
         \node[draw,circle, inner sep=1.5pt,color=black, fill=black] at (-2.5,2.34){};
         \node[draw,circle, inner sep=1.5pt,color=black, fill=black] at (-2,3.17){};
          \node[draw,circle, inner sep=1.5pt,color=black, fill=black] at (-3.7,3.2){};
          \node[draw,circle, inner sep=1.5pt,color=black, fill=black] at (-5.7,3.2){};
           \node[draw,circle, inner sep=1.5pt,color=black, fill=black] at (-4.7,5.2){};
           \node[draw,circle, inner sep=1.5pt,color=black, fill=black] at (-5.2,4.2){};
       
   \node [below, color=black] at (-1,0) {$L_{1}$};
\node [below, color=black] at (1,0) {$L$};
\node [below, color=black] at (-3,1.5) {$L_{2}$};
\node [right, color=black] at (1/4,15/4) {$A_{3}$};
\node [right, color=black] at (-1.5,5) {$A_{1}$};
 \node [below, color=black] at (-5.7,3.2) {$L_{3}$};
 \node [above, color=black] at (-4.7,6.2) {$A_{2}$};

 \end{tikzpicture}
\end{center}
 \caption{The lotus of an active constellation of crosses adapted to a sum $A_1 + A_2 + A_3$ of three branches}
\label{fig:lotus3branches}
   \end{figure}
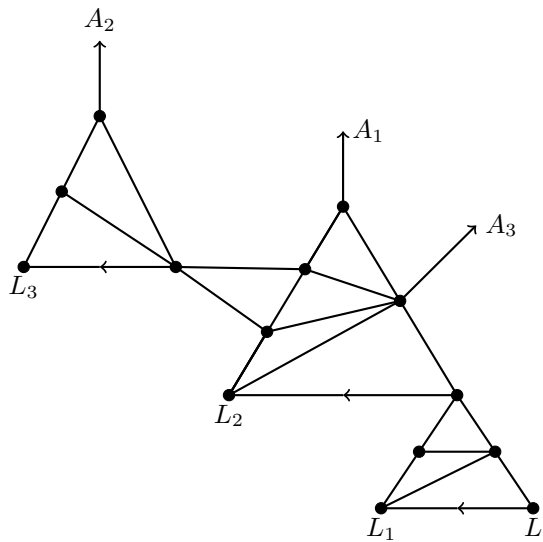

   \begin{example}  \label{ex:lotusbranch}
    In Figure \ref{fig:lotusbranch} is represented the lotus of an active constellation of crosses adapted to a branch $A_1$, in the sense of Definition \ref{def:adaptedactconst}. Its base vertices are $L, L_1, L_2$ and $A_1$. It has $2$ rupture vertices, $3$ base edges, which wear arrows, $9$ lateral edges and $5$ internal ones.  In Figure \ref{fig:lotus3branches} is represented the lotus of an active constellation of crosses adapted to a sum $A_1 + A_2 + A_3$ of three branches. Its base vertices are $L, L_1, L_2, L_3, A_1, A_2, A_3$. It has $5$ rupture vertices, $6$ base edges, which wear arrows, $13$ lateral edges and $7$ internal ones. It is not obvious to find defining series of branches having active constellation of crosses whose lotuses are shown in Figures \ref{fig:lotusbranch} and \ref{fig:lotus3branches}. We will explain in Example \ref{ex:EWtoseries} how to get such series. 
\end{example}

In the sequel, we will illustrate all the computations of invariants on the final lotus of Figure \ref{fig:asslotuscusp} and in the lotuses of Figures \ref{fig:lotusbranch} and \ref{fig:lotus3branches}. Starting from Figure \ref{fig:lotusbranch}, we do not fill any more the petals of the lotuses, as they are easily visible. In both Figures \ref{fig:lotusbranch} and \ref{fig:lotus3branches} we have represented with arrowheads the vertices corresponding to the branches of the plane curve singularity to which the given active constellation of crosses is adapted, in the sense of Definition \ref{def:adaptedactconst}. This is a standard convention in the representation of dual graphs of total transforms of plane curve singularities by their embedded resolutions. We will use this convention throughout the paper.

\begin{remark}
    \label{rem:semipet}
      One may also associate a type of lotus to a constellation which is not endowed with crosses, by gluing not only {\em petals} and {\em dual segments} but also {\em semipetals}, as was explained by the third author in \cite{PP 11}. There he called the resulting object a {\em sail}, a terminology we use neither here nor in \cite[Section 1.5.5]{GBGPPP 20}, where such generalized lotuses were examined in the context of embedded resolutions of plane curve singularities. We work with constellation of crosses instead of just constellations precisely because their lotuses are simpler, in the sense that they do not need semipetals in order to be constructed. 
\end{remark}

\medskip
\section{How to see the weighted dual graph inside the lotus}  
\label{sec:dualgr}

In this section we explain how to see the weighted dual graph of the total transform of a finite active constellation of crosses inside the associated lotus (see Proposition \ref{prop:dualgraph}).

\medskip
Each configuration of smooth curves intersecting normally in a smooth complex surface has an associated {\em dual graph}, which generalizes the dual segment of a cross introduced in Definition \ref{def:dualsegm}:

\begin{definition}  \label{def:dualgraph}
    A reduced normal crossing divisor on a smooth complex surface is called {\bf simply normal crossing} if all its irreducible components are smooth. The {\bf weighted dual graph} $\boxed{G(D)}$ of a simply normal crossing divisor $D$ is constructed in the following way: 
    
     \noindent $\bullet$ 
           its {\bf vertices} correspond bijectively with the irreducible components of $D$; 

     \noindent $\bullet$ 
            the {\bf edges} between two given vertices are the dual segments of the crosses contained in the union of the components of $D$ corresponding to these two vertices;

     \noindent $\bullet$ 
           each vertex corresponding to a compact irreducible component of $D$ is {\bf weighted} by the self-intersection number of that component in the ambient surface. 
\end{definition}

   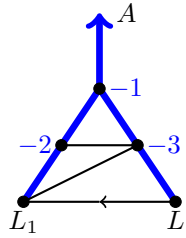
\begin{figure}[h!]
    \begin{center}
\begin{tikzpicture}[scale=1]

 \draw [->, color=black, thick](1,0) -- (0,0);
    \draw [-, color=black, thick](0,0) -- (-1,0);
    \draw [-, color=blue, line width=2.5pt](0,1.5) -- (-1,0);
      \draw [-, color=blue, line width=2.5pt](0,1.5) -- (1,0);
       \draw [-, color=black, thick](-0.5,3/4) -- (0.5,3/4);
       \draw [-, color=black, thick](-1,0) -- (0.5,3/4);
        \draw [->, color=blue, line width=2.5pt](0,1.5) -- (0,2.5);

  \node[draw,circle, inner sep=1.5pt,color=black, fill=black] at (-1,0){};
   \node[draw,circle, inner sep=1.5pt,color=black, fill=black] at (1,0){};
     \node[draw,circle, inner sep=1.5pt,color=black, fill=black] at (0,1.5){};
      \node[draw,circle, inner sep=1.5pt,color=black, fill=black] at (-0.5,3/4){};
       \node[draw,circle, inner sep=1.5pt,color=black, fill=black] at (0.5,3/4){};
\node [below, color=black] at (-1,0) {$L_{1}$};
\node [below, color=black] at (1,0) {$L$};
\node [right, color=black] at (0.1,2.5) {$A$};
 \node [right, color=blue] at (0,1.5) {$\mathbf -1$};   
  \node [left, color=blue] at (-0.5,3/4) {$\mathbf -2$};
   \node [right, color=blue] at (0.5,3/4) {$\mathbf -3$};

  \end{tikzpicture}
\end{center}
 \caption{The weighted dual graph of the active constellation of crosses of Example \ref{ex:assconstcrossex}}
\label{fig:dualgrcusp}
   \end{figure}


 \begin{figure}[h!]
    \begin{center}
\begin{tikzpicture}[scale=1]

 \draw [->, color=black, thick](1,0) -- (0,0);
    \draw [-, color=black, thick](0,0) -- (-1,0);
    \draw [-, color=blue, line width=2.5pt](0,1.5) -- (-1,0);
      \draw [-, color=blue, line width=2.5pt](0,1.5) -- (1,0);
       \draw [-, color=black, thick](-0.5,3/4) -- (0.5,3/4);
       \draw [-, color=black, thick](-1,0) -- (0.5,3/4);

        \draw [->, color=black, thick](0,1.5) -- (-1.5,1.5);
   \draw [-, color=black, thick](-1.5,1.5) -- (-3,1.5);
     \draw [-, color=blue, line width=2.5pt](0,1.5) -- (-1.5,4);
      \draw [-, color=black, thick](-1.5,4) -- (-3,1.5);
    \draw [->, color=blue, line width=2.5pt](-1.5,4) -- (-1.5,5);
     \draw [-, color=black, thick](-3/4,11/4) -- (-3,1.5);
     \draw [-, color=black, thick](-3/4,11/4) -- (-2.5, 2.34);
       \draw [-, color=black, thick](-3/4,11/4) -- (-2, 3.17);
      
        \draw [-, color=blue, line width=2.5pt](-2,3.17) -- (-1.5,4);
        \draw [-, color=blue, line width=2.5pt](-3,1.5) -- (-2.5, 2.34);
        \draw [-, color=blue, line width=2.5pt](-3,1.5) -- (-2.5,2.34);

        \draw [-, color=blue, line width=2.5pt](-2, 3.17) -- (-2.5,2.34);

  \node[draw,circle, inner sep=1.5pt,color=black, fill=black] at (-1,0){};
   \node[draw,circle, inner sep=1.5pt,color=black, fill=black] at (1,0){};
     \node[draw,circle, inner sep=1.5pt,color=black, fill=black] at (0,1.5){};
      \node[draw,circle, inner sep=1.5pt,color=black, fill=black] at (-0.5,3/4){};
       \node[draw,circle, inner sep=1.5pt,color=black, fill=black] at (0.5,3/4){};
        \node[draw,circle, inner sep=1.5pt,color=black, fill=black] at (-3,1.5){};
         \node[draw,circle, inner sep=1.5pt,color=black, fill=black] at (-1.5,4){};
         \node[draw,circle, inner sep=1.5pt,color=black, fill=black] at (-3/4,11/4){};
         \node[draw,circle, inner sep=1.5pt,color=black, fill=black] at (-2.5,2.34){};
         \node[draw,circle, inner sep=1.5pt,color=black, fill=black] at (-2,3.17){};

   \node [below, color=black] at (-1,0) {$L_{1}$};
\node [below, color=black] at (1,0) {$L$};
\node [below, color=black] at (-3,1.5) {$L_{2}$};
\node [right, color=black] at (-1.5,5) {$A_{1}$};
 \node [right, color=blue] at (0,1.5) {$\mathbf -2$};   
  \node [left, color=blue] at (-0.5,3/4) {$\mathbf -2$};
   \node [right, color=blue] at (0.5,3/4) {$\mathbf -3$};
     \node [right, color=blue] at (-1.45,4) {$\mathbf -1$}; 
    \node [right, color=blue] at (-3/4,11/4) {$\mathbf -4$};

   \node [left, color=blue] at (-2.5,2.3) {$\mathbf -2$};
   
     \node [above, color=blue] at (-2.2,3.2) {$\mathbf -2$};
 \end{tikzpicture}
\end{center}
 \caption{The weighted dual graph of the active constellation of crosses whose lotus is shown in Figure  \ref{fig:lotusbranch}}
\label{fig:dualgrbranch}
   \end{figure}


 \begin{figure}[h!]
    \begin{center}
\begin{tikzpicture}[scale=1]

 \draw [->, color=black, thick](1,0) -- (0,0);
    \draw [-, color=black, thick](0,0) -- (-1,0);
    \draw [-, color=blue, line width=2.5pt](0,1.5) -- (-1,0);
      \draw [-, color=blue, line width=2.5pt](0,1.5) -- (1,0);
       \draw [-, color=black, thick](-0.5,3/4) -- (0.5,3/4);
       \draw [-, color=black, thick](-1,0) -- (0.5,3/4);
       \draw [-, color=black, thick](-5.2,4.2) -- (-3.7,3.2);

        \draw [->, color=black, thick](0,1.5) -- (-1.5,1.5);
   \draw [-, color=black, thick](-1.5,1.5) -- (-3,1.5);
     \draw [-, color=blue, line width=2.5pt](0,1.5) -- (-1.5,4);
      \draw [-, color=black, thick](-1.5,4) -- (-3,1.5);
    \draw [->, color=blue, line width=2.5pt](-1.5,4) -- (-1.5,5);
     \draw [-, color=black, thick](-3/4,11/4) -- (-3,1.5);
     \draw [-, color=black, thick](-3/4,11/4) -- (-2.5,2.34);
       \draw [-, color=black, thick](-3/4,11/4) -- (-2,3.17);
       \draw [->, color=blue, line width=2.5pt](-3/4,11/4) -- (1/4,15/4);
        \draw [-, color=blue, line width=2.5pt](-2,3.17) -- (-1.5,4);
        \draw [-, color=blue, line width=2.5pt](-3,1.5) -- (-2.5,2.34);
        \draw [-, color=blue, line width=2.5pt](-3,1.5) -- (-2.5,2.34);
        \draw [-, color=blue, line width=2.5pt](-3.7,3.2) -- (-2,3.17);
        \draw [-, color=blue, line width=2.5pt](-3.7,3.2) -- (-2.5,2.34);
        \draw [->, color=black, thick](-3.7,3.2) -- (-4.7,3.2);
        \draw [-, color=black, thick](-5.7,3.2) -- (-4.7,3.2);
           \draw [->, color=blue, line width=2.5pt](-4.7,5.2) -- (-4.7,6.2);
            \draw [-, color=blue, line width=2.5pt](-3.7,3.2) -- (-4.7,5.2);
            \draw [-, color=blue, line width=2.5pt](-5.7,3.2) -- (-4.7,5.2);
       
  \node[draw,circle, inner sep=1.5pt,color=black, fill=black] at (-1,0){};
   \node[draw,circle, inner sep=1.5pt,color=black, fill=black] at (1,0){};
     \node[draw,circle, inner sep=1.5pt,color=black, fill=black] at (0,1.5){};
      \node[draw,circle, inner sep=1.5pt,color=black, fill=black] at (-0.5,3/4){};
       \node[draw,circle, inner sep=1.5pt,color=black, fill=black] at (0.5,3/4){};
        \node[draw,circle, inner sep=1.5pt,color=black, fill=black] at (-3,1.5){};
         \node[draw,circle, inner sep=1.5pt,color=black, fill=black] at (-1.5,4){};
         \node[draw,circle, inner sep=1.5pt,color=black, fill=black] at (-3/4,11/4){};
         \node[draw,circle, inner sep=1.5pt,color=black, fill=black] at (-2.5,2.34){};
         \node[draw,circle, inner sep=1.5pt,color=black, fill=black] at (-2,3.17){};
          \node[draw,circle, inner sep=1.5pt,color=black, fill=black] at (-3.7,3.2){};
          \node[draw,circle, inner sep=1.5pt,color=black, fill=black] at (-5.7,3.2){};
           \node[draw,circle, inner sep=1.5pt,color=black, fill=black] at (-4.7,5.2){};
           \node[draw,circle, inner sep=1.5pt,color=black, fill=black] at (-5.2,4.2){};
       
   \node [below, color=black] at (-1,0) {$L_{1}$};
\node [below, color=black] at (1,0) {$L$};
\node [below, color=black] at (-3,1.5) {$L_{2}$};
\node [right, color=black] at (1/4,15/4) {$A_{3}$};
\node [right, color=black] at (-1.5,5) {$A_{1}$};
 \node [below, color=black] at (-5.7,3.2) {$L_{3}$};
 \node [above, color=black] at (-4.7,6.2) {$A_{2}$};

 \node [right, color=blue] at (0,1.5) {$\mathbf -2$};   
  \node [left, color=blue] at (-0.5,3/4) {$\mathbf -2$};
   \node [right, color=blue] at (0.5,3/4) {$\mathbf -3$};
  
     \node [right, color=blue] at (-1.45,4) {$\mathbf -1$}; 
    \node [right, color=blue] at (-3/4,11/4) {$\mathbf -4$}; 
    
    \node [right, color=blue] at  (-4.7,5.2){$\mathbf -1$}; 
    
    \node [below, color=blue] at (-4,3.2) {$\mathbf -3$};  
  
   \node [left, color=blue] at (-5.2,4.2) {$\mathbf -2$};

   \node [left, color=blue] at (-2.6,2.3) {$\mathbf -3$};
   
     \node [above, color=blue] at (-2.2,3.2) {$\mathbf -3$};
 \end{tikzpicture}
\end{center}
 \caption{The weighted dual graph of the active constellation of crosses whose lotus is shown in Figure  \ref{fig:lotus3branches}}
\label{fig:dualgr3branches}
   \end{figure}
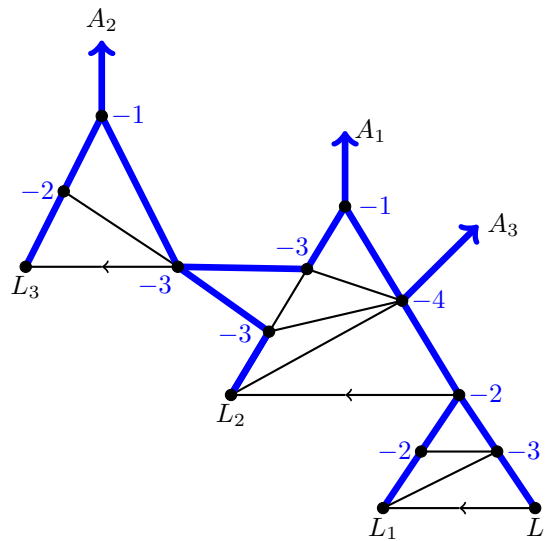

We are interested in the dual graphs of the total transforms of finite active constellation of crosses, in the sense of Definition \ref{def:constcrosses}. Their irreducible components are obtained by blowing up infinitely near points of the base point $O$ of the initial cross. Therefore, their weights may be computed recursively using Proposition \ref{prop:relblowup}. One obtains the following way of seeing the corresponding dual graph inside the associated lotus:

\medskip
\begin{center}
\fbox{
\begin{minipage}{0.75\textwidth}
   \begin{proposition}  
   \label{prop:dualgraph}
    Let $\hat{\calc}$ be a finite active constellation of crosses and let $\Lambda(\hat{\calc})$ be its associated lotus. Then the dual graph $G(X_{\hat{\calc}})$ of the total transform $X_{\hat{\calc}}$ of $\hat{\calc}$ in the sense of Definition \ref{def:constcrosses} \ref{totransfactcross} is isomorphic to 
    the lateral boundary $\partial_+ \Lambda( \hat{\calc} )$ of $\Lambda( \hat{\calc} )$ in the sense of Definition \ref{def:vocablotus}. 
    The weight of a lateral vertex of the dual graph is equal to the opposite of the number of petals incident to it, when we see it as a vertex of the lotus. 
\end{proposition}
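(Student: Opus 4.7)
The plan is to argue by induction on the number of elementary extension steps from Proposition \ref{prop:fincc} needed to build $\hat{\calc}$, running both assertions (the combinatorial identification $G(X_{\hat{\calc}}) \simeq \partial_+ \Lambda(\hat{\calc})$ and the weight formula for lateral vertices) in parallel. For the base case, $\hat{\calc}$ consists of the single inactive point $O$ endowed with its cross $X_O$: the lotus $\Lambda(\hat{\calc})$ is the dual segment of $X_O$, which lies in $\partial_+\Lambda$ as a base edge at an inactive point and coincides combinatorially with $G(X_O)$; there are no compact components, so the weight statement is vacuous.

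For the inductive step I would treat separately the three types of elementary extensions. Step \ref{(B1)} changes neither the divisor $X_{\hat{\calc}}$ nor the underlying simplicial structure of $\Lambda(\hat{\calc})$, so there is nothing to verify. Step \ref{(B2)} extends $X_{\hat{\calc}}$ by adjoining a new curvetta $L_Q$ attached transversally to a single exceptional component of $E_{\cald}$ at a smooth point $Q$, which adds to $G(X_{\hat{\calc}})$ exactly one vertex and one edge; simultaneously, $\Lambda$ acquires a new base vertex and a new base edge at the inactive point $Q$, which enters $\partial_+\Lambda$ by Definition \ref{def:vocablotus}. Because no blowup is performed, the self-intersections and incident-petal counts of the preexisting lateral vertices are preserved.

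The substantive case is \ref{(A)}. Activating an inactive (hence maximal) point $P$ of $\cald$ amounts to performing one additional blowup, namely that of $P$ on $S_{\cald}$, where the two branches of $X_P$ pass through $P$ transversally. After blowup the intersection encoded by the edge of $G(X_{\hat{\cald}})$ through $P$ disappears, the new vertex $E_P$ is inserted, and two new edges join $E_P$ to the strict transforms of the two branches of $X_P$. In the lotus this matches exactly the gluing of the new petal with base edge equal to the dual segment of $X_P$: the former base edge at $P$ becomes an internal edge (since $P$ is now active) and leaves $\partial_+\Lambda$, while the two new lateral edges of the petal enter $\partial_+\Lambda$. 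For the weights, Proposition \ref{prop:relblowup}(a) gives $E_P \cdot E_P = -1$, matching the unique petal incident to $E_P$; Proposition \ref{prop:relblowup}(b) shows that the self-intersection of each compact branch of $X_P$ decreases by exactly $1$, matching the gain of exactly one new incident petal for the corresponding lateral vertex. Every other lateral vertex has a neighborhood disjoint from $P$, so its self-intersection and its petal count are both unchanged.

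The main obstacle I expect is the bookkeeping of the \emph{type} of each edge (base, lateral, or internal) after each extension step, so as to correctly track which edges belong to $\partial_+\Lambda$. One must in particular justify that activating $P$ converts the base edge dual to $X_P$ into an internal edge, and that the two new edges of the petal incident to $E_P$ are genuine lateral edges, dual to crosses at singular points of the total transform on the blownup surface; both facts follow directly from Definition \ref{def:vocablotus} and from the description of $X_{\hat{\calc}}$ in Definition \ref{def:constcrosses}\ref{totransfactcross}. Once these identifications are in place, the two claims propagate cleanly through all three types of extensions.
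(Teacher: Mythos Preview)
Your proposal is correct and follows essentially the approach the paper itself indicates: the paper does not spell out a full proof of this proposition, but only remarks before stating it that the weights ``may be computed recursively using Proposition~\ref{prop:relblowup}''. Your induction on the elementary extension steps of Proposition~\ref{prop:fincc}, with Proposition~\ref{prop:relblowup}(a),(b) governing the weight changes in step~\ref{(A)}, is exactly the recursive argument the paper is pointing to, made explicit. One small terminological caution: when you write ``the former base edge at $P$ becomes an internal edge'', you are using ``base edge'' in the sense of Definition~\ref{def:lotusfacc}~\ref{(A)} (base of the new petal), whereas in the sense of Definition~\ref{def:vocablotus} that edge may have been either a base edge or a lateral edge before activation; either way it lay in $\partial_+\Lambda$ and becomes internal afterwards, so your conclusion is unaffected.
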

\end{minipage}
}
\end{center}
\medskip

\begin{example}   
   \label{ex:dualgraphcuspres}
    Consider again the finite active constellation of crosses from Example \ref{ex:assconstcrossex}. 
    The dual graph of its total transform is represented in Figure \ref{fig:dualgrcusp} using bold blue segments. Near each lateral vertex in the sense of Definition \ref{def:vocablotus} is shown the corresponding weight, which is equal to the opposite of the number of petals incident to it, as explained in Proposition \ref{prop:dualgraph}. 
    In Figures \ref{fig:dualgrbranch} and \ref{fig:dualgr3branches} are represented the dual graphs of the lotuses of Figures \ref{fig:lotusbranch} and \ref{fig:lotus3branches} respectively, obtained in the same way. 
\end{example}

\medskip
\section{Log-discrepancies and orders of vanishing of the first axis} \label{sec:ldov}

We start this section by recalling basic facts about {\em semivaluations} and {\em valuations} of commutative rings (see Definition \ref{def:semival}), illustrated by the semivaluations of $\calo_{S,O}$ which will be important for us. We define then the notion of {\em log-discrepancy} of a branch on a model of $(S,O)$ (see Definition \ref{def:logdiscr}). Finally, we explain how to compute on the lotus $\Lambda(\hat{\calc})$ of a finite active constellation of crosses $\hat{\calc}$ both the log-discrepancy $\lambda(E)$ and the order of vanishing $\ord_E(L)$ corresponding to each vertex $E$ of the lotus (see Corollary \ref{cor:reclord}).

\medskip
In the sequel we consider the standard addition and  total order of $\Rr_{\geq 0} = [0, + \infty)$ to be extended as usual to the set $[0, + \infty] := \Rr_{\geq 0} \cup \{+ \infty\}$. 

The following notion of {\em semivaluation} will allow us to look at branches on $(S,O)$, at infinitely near points of $O$ and at irreducible components of exceptional divisors of models of $(S,O)$ as particular cases of a single concept (see Remark \ref{rem:annvpt}): 

\begin{definition}  
   \label{def:semival}
    Let $R$ be a commutative ring. A {\bf semivaluation of 
    $R$} is a function $\nu : R \to [0, \infty]$ such that:
    \begin{enumerate}
        \item \label{itemprod} 
          For every $f, g \in R$, $\nu(fg) = \nu(f) + \nu(g)$.
        
         \item \label{itemsum} 
            For every $f, g \in R$, $\nu(f+g) \geq \min \{ \nu(f) , \nu(g) \}$.

         \item \label{itemone} 
             $\nu(1) =0$. 

         \item \label{itemzero} 
             $\nu(0) = + \infty$. 
    \end{enumerate}
   A semivaluation $\nu$ is called a {\bf valuation} if $0$ is the only element $f$ of $R$ such that $\nu(f) = + \infty$. 
\end{definition}

As a first manipulation of these axioms, let us show that if $u \in R$ is a unit and $\nu$ a semivaluation of $R$, then $\nu(u) =0$. By the definition of units, there exists $v \in R$ such that $1 = u \cdot v$.  Therefore, by conditions \eqref{itemone} and \eqref{itemprod}:
   \[ 0 = \nu(1) = \nu(u) + \nu(v).\]
As $\nu$ takes values in $[0, \infty]$, we deduce that both $\nu(u)$ and $\nu(v)$ vanish. In particular, $\nu(u) =0$, as announced.

Let now $\nu$ be a semivaluation of the local ring $\mathcal{O}_{S, O}$. Consider a plane curve singularity $A \hookrightarrow (S,O)$. Define:
  \[  \boxed{\nu(A)}:= \nu(f_A) \in \Rr_{\geq 0}, \] 
where $f_A \in \mathcal{O}_{S, O}$ is a defining germ of $A$. This real number is independent of the choice of the defining germ $f_A$. Indeed, if $g_A$ is a second such germ, then $g_A = u \cdot f_A$, where $u$ is a unit of the ring $\mathcal{O}_{S, O}$. Therefore, by condition \eqref{itemprod} of Definition \ref{def:semival}:
  \[ \nu(g_A) = \nu(u) + \nu(f_A) = \nu(f_A),\]
where the second equality is a consequence of the fact that $u$ is a unit of $\mathcal{O}_{S, O}$.

\begin{definition}  \label{def:dival}
     Let $\pi : (S_{\pi}, E_{\pi})  \to (S,O)$ be a model of $(S,O)$ in the sense of 
     Definition \ref{def:resol} and let $C$ be a branch on $S_{\pi}$ at a point of $E_{\pi}$ 
     (the branch $C$ may be included in $E_{\pi}$). We denote by:
          \[ \boxed{\ord_C} : \mathcal{O}_{S, O} \to \Zz_{\geq 0} \cup \{+ \infty\}\]
     the {\bf divisorial valuation} of $\mathcal{O}_{S, O}$ which associates to $f \in \mathcal{O}_{S, O}$ the order of vanishing on $C$ of the pull-back $\pi^* f$. In other words, $\ord_C(f)$ is the multiplicity of $C$ as a component of the principal effective divisor on the surface $S_{\pi}$ defined by the function $\pi^* f$. 
     Notice that if $f\ne 0$, then $\ord_C (f)$ is a nonnegative integer.  
     This shows that $\ord_C$ is indeed a valuation.
\end{definition}

 If $C$ is a germ at some point of $E_{\pi}$ of an irreducible component $E$ of the exceptional divisor $E_{\pi}$, then $\ord_C(f)$ depends only on $E$ and not on the point at which the germ $C$ is taken. For this reason, it will be also denoted $\boxed{\ord_E(f)}$. This integer is unchanged if one replaces $E$ by a strict transform on any other model on which $E$ appears (in the sense of Definition \ref{def:infnear} (\ref{toappear})). Therefore, the knowledge of the valuation $\ord_E$ is equivalent to the knowledge of $E$ and of its strict transforms on all the models on which it appears. It is also equivalent to the knowledge of the infinitely near point from which this class of irreducible exceptional divisors arises by blowup. 

Similarly, if $C$ is not included in the exceptional divisor $E_{\pi}$, then it is the strict transform of the branch $\pi(C)$ on $S$ and $\ord_C(f) = \ord_{\pi(C)}(f)$.

\begin{remark}
 \label{rem:annvpt}
     We have reached the announced viewpoint from which one may look at branches, at irreducible exceptional divisors (more precisely, at bimeromorphic classes of such branches and divisors on the various models of $(S,O)$) and at infinitely near points of $O$ as particular cases of the same concept: that of semivaluation of the local ring $\mathcal{O}_{S, O}$. For a thorough study of the space of semivaluations of $\calo_{S,O}$, one may consult Favre and Jonsson's book \cite{FJ 04}. In this paper, that space will appear in the statement of Proposition \ref{prop:ew-emb}. 
\end{remark}

For any  branch $A \hookrightarrow (S,O)$,  besides the  divisorial valuation $\ord_A$ of Definition \ref{def:dival}, one has also an associated semivaluation which is not a valuation:

\begin{definition}
      \label{def:intsv}
      Let $A \hookrightarrow (S,O)$ be a branch. 
The {\bf intersection semivaluation} 
     \[ \boxed{I_A} : \mathcal{O}_{S, O} \to \Zz_{\geq 0} \cup \{+ \infty\}\]
is defined by $I_A (D) := (A\cdot D)_O$ for any curve singularity $D \hookrightarrow (S, O)$ (see Definition \ref{def:intnumber}). 
\end{definition}

The semivaluation $I_A$ is not a valuation because $I_A(A) = \infty$. Intersection semivaluations will be used in the statement of Proposition \ref{prop:ew-emb}.

 Let us define now the notion of {\em log-discrepancy} of a divisorial semivaluation. This important concept appears for instance in the definition of the {\em topological zeta function} of a plane curve singularity (see Wall \cite[Section 8.4]{W 04} or Veys \cite{V 24}), and for us it will play a crucial role in the statements of Propositions \ref{prop:ew-emb} and \ref{prop:fromlotustoEW}: 

\begin{definition}  
    \label{def:logdiscr}
    Let $\pi : (S_{\pi}, E_{\pi})  \to (S,O)$ be a model of $(S,O)$ and let $C \hookrightarrow S_{\pi}$ be a branch at a point of $E_{\pi}$. 
    Let $\omega$ be a germ of non-vanishing differential form of degree $2$ on $(S,O)$. Consider the divisor $\divis(\pi^* \omega)$ of its pull-back to the model $S_{\pi}$. The {\bf log-discrepancy} $\boxed{\lambda(C)}$ of $C$ is defined as:
      \[ 1 + \ord_C(\divis(\pi^* \omega)).\]
\end{definition}

Note that $\lambda(C)$ depends only on the valuation $\ord_C$, that is,   it is independent of the choice of non-vanishing differential form $\omega$. One has $\lambda(C) \geq 1$, with equality if and only if $C$ is not contained in the exceptional divisor $E_{\pi}$, that is, if and only if the valuation $\ord_C$ is defined by a branch on $(S,O)$. 

The following statement may be proved by working in local coordinates. It shows that the orders of vanishing of the smooth branch $L$ and the log-discrepancies of the various branches on models of $(S,O)$ may be computed in the same way recursively during a process of blowups of points, the only difference lying in their values taken on branches already present on $(S,O)$:

\begin{proposition}  
   \label{prop:addlotus}
      Let $\hat{\calc}$ be a finite active constellation of crosses over $O$, in the sense of Definition \ref{def:constcrosses}, with initial cross $X_O = L + L_1$. 
    Let $P \in \calc$ 
     and let $X_P = F_1 + F_2$ be its corresponding  cross on $(S^P, P)$.   
    Then:
       \begin{enumerate}
          \item 
              $\ord_{E_P}(L) = \ord_{F_1}(L) + \ord_{F_2}(L),$
          \item   \label{addlogdis}
              $\lambda(E_P) = \lambda(F_1) + \lambda(F_2).$
      \end{enumerate}
    Moreover, for every branch $C$ on $(S,O)$, one has:
    \begin{enumerate}
         \item[(3)]
            $\ord_{C}(L) = 
          \left\{ \begin{array}{cc}
              0 & \mbox{ if } L \neq C,\\
              1 & \mbox{ if } L = C.
          \end{array}\right.$
        \item[(4)]
          $\lambda(C) = 1.$
      \end{enumerate}   
\end{proposition}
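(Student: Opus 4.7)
The plan is to split the four claims into two groups. Claims (3) and (4), concerning a branch $C$ on $(S,O)$ itself, follow immediately from the definitions upon choosing $\pi = \mathrm{id}_S$ in Definitions \ref{def:dival} and \ref{def:logdiscr}: the divisor $Z(f_L)$ on $(S,O)$ is the reduced curve $L$, whose multiplicity along another smooth irreducible $C$ equals $1$ if and only if $C = L$; and the divisor of a non-vanishing two-form on $(S,O)$ is zero, so $\lambda(C) = 1 + 0 = 1$.

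The heart of the argument is claims (1) and (2), which I would prove in local coordinates. Choose $(u,v)$ at $P$ on $S^P$ such that $F_1 = Z(u)$ and $F_2 = Z(v)$, and work in the chart $(u,w) \mapsto (u, uw)$ of the blowup $\pi_P$ of $P$, in which the new exceptional divisor $E_P$ is cut out by $u$. The key structural input is that the two divisors to be tracked, namely $(\pi^P_O)^*Z(f_L)$ and $\divis((\pi^P_O)^*\omega)$, are supported on $F_1 + F_2$ near $P$. For the first, this uses the compatibility condition \eqref{eq:inclcrosses} in Definition \ref{def:constcrosses} applied to $O \preceq P$: since $L$ is a component of $X_O$, the inclusion $((\pi^P_O)^{-1}(X_O), P) \subseteq X_P = F_1 \cup F_2$ forces
\[
(\pi^P_O)^* Z(f_L) \;=\; \ord_{F_1}(L) \cdot F_1 + \ord_{F_2}(L) \cdot F_2
\]
as divisors in a neighborhood of $P$. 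For the second, the divisor of the pull-back of a non-vanishing two-form is supported in the exceptional locus of $\pi^P_O$, which near $P$ lies in $F_1 \cup F_2$, yielding the analogous expression with coefficients $\lambda(F_1)-1$ and $\lambda(F_2)-1$.

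Pulling back these two local divisor equalities by $\pi_P$ and using the formula $\pi_P^* F_i = \widetilde{F}_i + E_P$ from Proposition \ref{prop:relblowup}, together with the extra $+1$ along $E_P$ coming from the identity $\pi_P^*(du \wedge dv) = u\, du \wedge dw$ in the two-form case, one reads off the coefficient of $E_P$ on each side: $\ord_{F_1}(L) + \ord_{F_2}(L)$ for (1), and $(\lambda(F_1)-1) + (\lambda(F_2)-1) + 1$ for $\ord_{E_P}(\divis(\cdot))$, which after adding the initial $1$ from Definition \ref{def:logdiscr} yields $\lambda(F_1) + \lambda(F_2)$, as required for (2).

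The one subtlety worth flagging is the support assertion for $(\pi^P_O)^* Z(f_L)$: it is not automatic but relies essentially on the compatibility axiom \eqref{eq:inclcrosses} of a constellation of crosses, which propagates the inclusion $L \hookrightarrow X_O$ up the proximity chain from $O$ to $P$. Care must also be taken when one of $F_1, F_2$ is a curvetta rather than an exceptional component of $\pi^P_O$, but this does not affect the formula: the associated coefficient is $0$ in the log-discrepancy calculation (by claim (4)) and equals $0$ or $1$ in the $\ord(L)$ calculation depending on whether the curvetta is the strict transform of $L$. Once this support claim is granted, everything reduces to the standard behaviour of smooth divisors and volume forms under a single point blowup.
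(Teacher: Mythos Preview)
Your proof is correct and follows exactly the approach the paper indicates: the paper does not give a detailed proof of this proposition, merely stating that it ``may be proved by working in local coordinates''. You have carried out precisely this local-coordinate computation, including the key observation that the compatibility axiom \eqref{eq:inclcrosses} is what guarantees that the germ at $P$ of $(\pi^P_O)^{-1}(L)$ (and likewise of the exceptional locus of $\pi^P_O$) is contained in $X_P = F_1 \cup F_2$, so that the relevant divisors near $P$ are supported on the cross. Your treatment of the curvetta case is also correct and worth including, since it confirms that claims (3) and (4) feed consistently into the recursion of (1) and (2).
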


\begin{remark}   
   \label{rem:ldconven}
     Relation \eqref{addlogdis} of Proposition \ref{prop:addlotus} is the reason why {\em log-discrepancies} are  more convenient for computations than the {\em discrepancies}, which are defined by $d(C) := \ord_C(\divis(\pi^* \omega))$. Indeed, written in terms of discrepancies, this relation becomes $d(E_P) = 1 + d(F_1) + d(F_2)$. Note that if one chooses local coordinates on $S_\pi$ at a point $P$ of $E_\pi$ and on $(S,O)$, then the discrepancy $d(C)$ of a branch $C \hookrightarrow (S_\pi, P)$ is the order of vanishing along $C$ of the jacobian determinant of $\pi$ relative to those local coordinates.
\end{remark}

 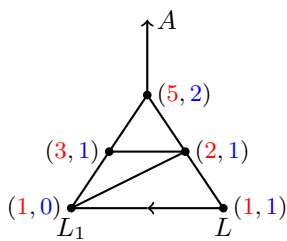
\begin{figure}[h!]
    \begin{center}
\begin{tikzpicture}[scale=1]
   
    \draw [->, color=black, thick](1,0) -- (0,0);
    \draw [-, color=black, thick](0,0) -- (-1,0);
    \draw [-, color=black, thick](0,1.5) -- (-1,0);
      \draw [-, color=black, thick](0,1.5) -- (1,0);
       \draw [-, color=black, thick](-0.5,3/4) -- (0.5,3/4);
       \draw [-, color=black, thick](-1,0) -- (0.5,3/4);
       \draw [->, color=black, thick](0,1.5) -- (0,2.5);
       
  \node[draw,circle, inner sep=1pt,color=black, fill=black] at (-1,0){};
   \node[draw,circle, inner sep=1pt,color=black, fill=black] at (1,0){};
     \node[draw,circle, inner sep=1pt,color=black, fill=black] at (0,1.5){};
      \node[draw,circle, inner sep=1pt,color=black, fill=black] at (-0.5,3/4){};
       \node[draw,circle, inner sep=1pt,color=black, fill=black] at (0.5,3/4){};

\node [right, color=black] at (0,2.5) {$A$};
\node [below, color=black] at (-1,0) {$L_{1}$};
\node [below, color=black] at (1,0) {$L$};

\node [left, color=black] at (-1,0) {\small{$(\red{1},\blue{0})$}};  
\node [right, color=black] at (1,0) {\small{$(\red{1},\blue{1})$}};  
 \node [right, color=black] at (0,1.5) {\small{$(\red{5},\blue{2}) $}};  
 \node [left, color=black] at (-0.5,3/4) {\small{$(\red{3},\blue{1})$}}; 
  \node [right, color=black] at (0.5,3/4) {\small{$(\red{2},\blue{1})$}}; 
  
 \end{tikzpicture}
\end{center}
 \caption{Near each vertex $E \neq A$ of the lotus of Figure \ref{fig:asslotuscusp} is written the pair $(\lambda(E), \ord_E (L))$}
\label{fig:logdisord1}
   \end{figure}

 \begin{figure}[h!]
    \begin{center}
\begin{tikzpicture}[scale=1]

    \draw [->, color=black, thick](1,0) -- (0,0);
    \draw [-, color=black, thick](0,0) -- (-1,0);
    \draw [-, color=black, thick](0,1.5) -- (-1,0);
      \draw [-, color=black, thick](0,1.5) -- (1,0);
       \draw [-, color=black, thick](-0.5,3/4) -- (0.5,3/4);
       \draw [-, color=black, thick](-1,0) -- (0.5,3/4);

        \draw [->, color=black, thick](0,1.5) -- (-1.5,1.5);
   \draw [-, color=black, thick](-1.5,1.5) -- (-3,1.5);
     \draw [-, color=black, thick](0,1.5) -- (-1.5,4);
      \draw [-, color=black, thick](-1.5,4) -- (-3,1.5);
      \draw [->, color=black, thick](-1.5,4) -- (-1.5,5);
     \draw [-, color=black, thick](-3/4,11/4) -- (-3,1.5);
     \draw [-, color=black, thick](-3/4,11/4) -- (-2.5,2.34);
       \draw [-, color=black, thick](-3/4,11/4) -- (-2,3.17);
        \draw [-, color=black, thick](-2,3.17) -- (-1.5,4);
        \draw [-, color=black, thick](-3,1.5) -- (-2.5,2.34);
        \draw [-, color=black, thick](-3,1.5) -- (-2.5,2.34);
     
       \node[draw,circle, inner sep=1pt,color=black, fill=black] at (-1,0){};
   \node[draw,circle, inner sep=1pt,color=black, fill=black] at (1,0){};
     \node[draw,circle, inner sep=1pt,color=black, fill=black] at (0,1.5){};
      \node[draw,circle, inner sep=1pt,color=black, fill=black] at (-0.5,3/4){};
       \node[draw,circle, inner sep=1pt,color=black, fill=black] at (0.5,3/4){};
        \node[draw,circle, inner sep=1pt,color=black, fill=black] at (-3,1.5){};
         \node[draw,circle, inner sep=1pt,color=black, fill=black] at (-1.5,4){};
         \node[draw,circle, inner sep=1pt,color=black, fill=black] at (-3/4,11/4){};
         \node[draw,circle, inner sep=1pt,color=black, fill=black] at (-2.5,2.34){};
         \node[draw,circle, inner sep=1pt,color=black, fill=black] at (-2,3.17){};

   \node [below, color=black] at (-1,0) {$L_{1}$};
\node [below, color=black] at (1,0) {$L$};
\node [below, color=black] at (-3,1.5) {$L_{2}$};
\node [right, color=black] at (-1.5,5) {$A_1$};

\node [left, color=black] at (-1,0) {\small{$(\red{1},\blue{0})$}};  
\node [right, color=black] at (1,0) {\small{$(\red{1},\blue{1})$}};  
 \node [right, color=black] at (0,1.5) {\small{$(\red{5},\blue{2}) $}};  
 \node [left, color=black] at (-0.5,3/4) {\small{$(\red{3},\blue{1})$}}; 
  \node [right, color=black] at (0.5,3/4) {\small{$(\red{2},\blue{1})$}}; 
     \node [left, color=black] at (-3,1.5) {\small{$(\red{1},\blue{0})$}}; 
     \node [left, color=black] at (-1.5,4.2) {\small{$(\red{19},\blue{6})$}}; 
    \node [right, color=black] at (-3/4,11/4) {\small{$(\red{6},\blue{2})$}}; 
   \node [left, color=black] at (-2.5,2.34) {\small{$(\red{7},\blue{2})$}}; 
   \node [left, color=black] at (-1.8,3.4) {\small{$(\red{13},\blue{4})$}}; 
 \end{tikzpicture}
\end{center}
 \caption{Near each vertex $E \neq A_1$ of the lotus of Figure \ref{fig:lotusbranch} is  written the pair $(\lambda(E), \ord_E (L))$}
\label{fig:logdisord2}
   \end{figure}

 \begin{figure}[h!]
    \begin{center}
\begin{tikzpicture}[scale=1]
\begin{scope}[shift={(-20,0)}]
   
    \draw [->, color=black, thick](1,0) -- (0,0);
    \draw [-, color=black, thick](0,0) -- (-1,0);
    \draw [-, color=black, thick](0,1.5) -- (-1,0);
      \draw [-, color=black, thick](0,1.5) -- (1,0);
       \draw [-, color=black, thick](-0.5,3/4) -- (0.5,3/4);
       \draw [-, color=black, thick](-1,0) -- (0.5,3/4);
       \draw [-, color=black, thick](-5.2,4.2) -- (-3.7,3.2);

        \draw [->, color=black, thick](0,1.5) -- (-1.5,1.5);
   \draw [-, color=black, thick](-1.5,1.5) -- (-3,1.5);
     \draw [-, color=black, thick](0,1.5) -- (-1.5,4);
      \draw [-, color=black, thick](-1.5,4) -- (-3,1.5);
      \draw [->, color=black, thick](-1.5,4) -- (-1.5,5);
     \draw [-, color=black, thick](-3/4,11/4) -- (-3,1.5);
     \draw [-, color=black, thick](-3/4,11/4) -- (-2.5,2.34);
       \draw [-, color=black, thick](-3/4,11/4) -- (-2,3.17);
       \draw [->, color=black, thick](-3/4,11/4) -- (1/4,15/4);
        \draw [-, color=black, thick](-2,3.17) -- (-1.5,4);
        \draw [-, color=black, thick](-3,1.5) -- (-2.5,2.34);
        \draw [-, color=black, thick](-3,1.5) -- (-2.5,2.34);
        \draw [-, color=black, thick](-3.7,3.2) -- (-2,3.17);
        \draw [-, color=black, thick](-3.7,3.2) -- (-2.5,2.34);
        \draw [->, color=black, thick](-3.7,3.2) -- (-4.7,3.2);
        \draw [-, color=black, thick](-5.7,3.2) -- (-4.7,3.2);
           \draw [->, color=black, thick](-4.7,5.2) -- (-4.7,6.2);
            \draw [-, color=black, thick](-3.7,3.2) -- (-4.7,5.2);
            \draw [-, color=black, thick](-5.7,3.2) -- (-4.7,5.2);
       
  \node[draw,circle, inner sep=1pt,color=black, fill=black] at (-1,0){};
   \node[draw,circle, inner sep=1pt,color=black, fill=black] at (1,0){};
     \node[draw,circle, inner sep=1pt,color=black, fill=black] at (0,1.5){};
      \node[draw,circle, inner sep=1pt,color=black, fill=black] at (-0.5,3/4){};
       \node[draw,circle, inner sep=1pt,color=black, fill=black] at (0.5,3/4){};
        \node[draw,circle, inner sep=1pt,color=black, fill=black] at (-3,1.5){};
         \node[draw,circle, inner sep=1pt,color=black, fill=black] at (-1.5,4){};
         \node[draw,circle, inner sep=1pt,color=black, fill=black] at (-3/4,11/4){};
         \node[draw,circle, inner sep=1pt,color=black, fill=black] at (-2.5,2.34){};
         \node[draw,circle, inner sep=1pt,color=black, fill=black] at (-2,3.17){};
          \node[draw,circle, inner sep=1pt,color=black, fill=black] at (-3.7,3.2){};
          \node[draw,circle, inner sep=1pt,color=black, fill=black] at (-5.7,3.2){};
           \node[draw,circle, inner sep=1pt,color=black, fill=black] at (-4.7,5.2){};
           \node[draw,circle, inner sep=1pt,color=black, fill=black] at (-5.2,4.2){};
       
   \node [below, color=black] at (-1,0) {$L_{1}$};
\node [below, color=black] at (1,0) {$L$};
\node [below, color=black] at (-3,1.5) {$L_{2}$};
\node [right, color=black] at (1/4,15/4) {$A_3$};
\node [right, color=black] at (-1.5,5) {$A_1$};
 \node [below, color=black] at (-5.7,3.2) {$L_{3}$};
 \node [above, color=black] at (-4.7,6.2) {$A_2$};

\node [left, color=black] at (-1,0) {\small{$(\red{1},\blue{0})$}};  
\node [right, color=black] at (1,0) {\small{$(\red{1},\blue{1})$}};  
 \node [right, color=black] at (0,1.5) {\small{$(\red{5},\blue{2}) $}};  
 \node [left, color=black] at (-0.5,3/4) {\small{$(\red{3},\blue{1})$}}; 
  \node [right, color=black] at (0.5,3/4) {\small{$(\red{2},\blue{1})$}}; 
     \node [left, color=black] at (-3,1.5) {\small{$(\red{1},\blue{0})$}}; 
     \node [left, color=black] at (-1.5,4.2) {\small{$(\red{19},\blue{6})$}}; 
    \node [right, color=black] at (-3/4,11/4) {\small{$(\red{6},\blue{2})$}}; 
    \node [below, color=black] at (-4,3.2) {\small{$(\red{20},\blue{6})$}}; 
   \node [left, color=black] at (-2.5,2.34) {\small{$(\red{7},\blue{2})$}}; 
   \node [left, color=black] at (-1.8,3.4) {\small{$(\red{13},\blue{4})$}}; 
   \node [left, color=black] at (-5.7,3.2) {\small{$(\red{1},\blue{0})$}};  
   \node [left, color=black] at (-4.7,5.2) {\small{$(\red{41},\blue{12})$}}; 
   \node [left, color=black] at (-5.2,4.2) {\small{$(\red{21},\blue{6})$}};  
   
   \end{scope}
   
 \end{tikzpicture}
\end{center}
 \caption{Near each vertex $E \notin \{A_1, A_2, A_3\}$ of the lotus of Figure \ref{fig:lotus3branches}
is  written the pair $(\lambda(E), \ord_E (L))$}
\label{fig:logdisord3}
   \end{figure}
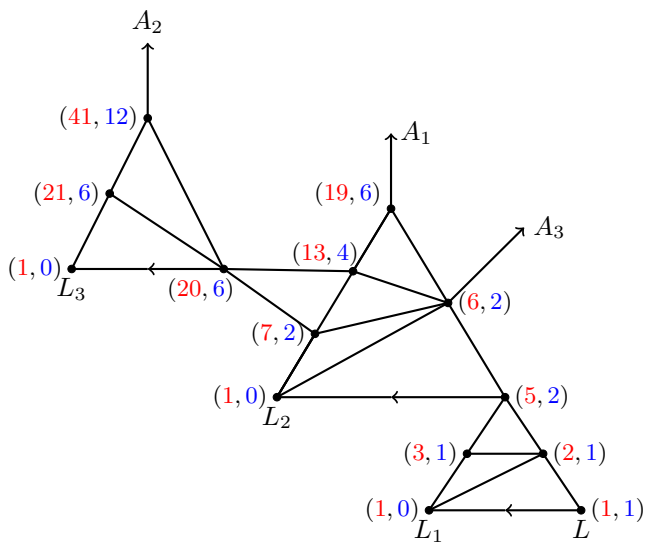

   Before transforming Proposition \ref{prop:addlotus} into an algorithm of computation of the pairs $(\lambda(E), \ord_E(L))$ using lotuses, let us make an important remark about three sets which are naturally in one-to-one correspondences:

\begin{remark}
    \label{rem:oneone}
      Let $\Lambda(\hat{\calc})$ be the lotus of a finite active constellation of crosses $\hat{\calc}$. Then the following sets are naturally in one-to-one correspondences:
        \begin{itemize}
            \item 
              the set of edges of $\Lambda(\hat{\calc})$ which are bases of petals;
            
            \item 
               the set of lateral vertices of $\Lambda(\hat{\calc})$;
               
            \item the set of active points of $\hat{\calc}$.
        \end{itemize}
     Each base of a petal of $\Lambda(\hat{\calc})$ corresponds to its opposite vertex, and such vertices are exactly the lateral vertices of the lotus. Moreover, the bases of petals are exactly the dual segments of the crosses based at active points of $\hat{\calc}$. Therefore, the partial order relation $\preceq$ on $\calc$ induces partial orders on the two other sets. 
\end{remark}

Using the partial order on the set of lateral vertices of $\Lambda(\hat{\calc})$ described in Remark \ref{rem:oneone}, we may express in the following way the algorithm of computation deduced from Proposition \ref{prop:addlotus}:

\medskip
\begin{center}
\fbox{
\begin{minipage}{0.75\textwidth}
   \begin{corollary}
     \label{cor:reclord}
       Let $\Lambda$ be the lotus of a finite active constellation of crosses, with initial vertex $L$. Then the pairs $(\lambda(E), \ord_E(L))$ associated to its vertices $E$ may be computed recursively by the following algorithm: 
          \begin{itemize}
              \item 
                 The pair associated to the initial vertex $L$ is equal to $(1,1)$. 
              \item
                 The pair associated to every other basic vertex $L_i$ is equal to $(1,0)$. 
               \item 
                  Following increasingly the partial order on the set of lateral vertices of $\Lambda(\hat{\calc})$ defined in Remark \ref{rem:oneone}, associate to each lateral vertex the vector sum of the pairs associated to the vertices of the unique petal base opposite to it. 
          \end{itemize}
    \end{corollary}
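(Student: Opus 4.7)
The plan is to reduce the corollary directly to Proposition \ref{prop:addlotus} by exploiting the structural description of the lotus recalled in Proposition \ref{prop:interprlotus} and in Remark \ref{rem:oneone}. Each lateral vertex of $\Lambda$ is of the form $E_P$ for a unique active point $P \in \calc$, and it is the apex of a unique petal whose base edge is the dual segment of the cross $X_P = F_1 + F_2$. Thus the two base-endpoints of this petal represent exactly the components $F_1$ and $F_2$ of $X_P$, and the additive rule claimed in the third bullet becomes nothing else than the simultaneous additivity relations (1) and (2) of Proposition \ref{prop:addlotus}, applied component-wise to the pair $(\lambda, \ord_{\cdot}(L))$.

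Next I would verify the two base cases. The base vertices of $\Lambda$ are those vertices of $X_{\hat\calc}$ that correspond to curvettas of $\hat\calc$, namely the two branches of the initial cross $X_O = L + L_1$ together with each curvetta $L_Q$ added at an inactive point $Q$ by a step of type (B2). Since such a curvetta is not contained in the exceptional divisor $E_{\calc}$ of $\pi := \pi_{\calc}$, its divisorial valuation coincides with $\ord_{\pi(L_Q)}$, and $\pi(L_Q)$ is a smooth branch on $(S,O)$. Parts (3) and (4) of Proposition \ref{prop:addlotus} then yield $\lambda(L_Q) = 1$; they also give $\ord_{L_Q}(L) = 0$, since by the hypothesis of step (B2) the point $Q$ does not lie on the strict transform of $L$, so $\pi(L_Q) \neq L$. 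The same argument applied to $L$ and $L_1$ (with $\pi$ locally the identity around their generic points) produces the pair $(1,1)$ for the initial vertex $L$ and $(1,0)$ for every other initial-cross branch.

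Finally I would justify that the recursion is well-founded. When we process the lateral vertex $E_P$, we require that its two opposite base-vertices $F_1, F_2$ already carry assigned pairs. By the construction in Definition \ref{def:lotusfacc}, the petal responsible for $E_P$ was glued in at the activation step (A) of $P$, so both $F_1$ and $F_2$ already belong to the sublotus built from the active constellation existing immediately before activating $P$. Under the bijection of Remark \ref{rem:oneone}, $F_1$ and $F_2$ are therefore either base vertices or lateral vertices $E_Q$ with $Q \prec P$, and so they are treated earlier in the proposed ordering. The main subtle point in the whole argument is precisely this compatibility between the inductive gluing procedure of the lotus and the proximity partial order of Definition \ref{def:infnear}; everything else is a direct transcription of Proposition \ref{prop:addlotus} onto the labels of the lotus.
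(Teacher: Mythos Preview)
Your proposal is correct and follows exactly the route the paper indicates: the paper presents Corollary~\ref{cor:reclord} as ``the algorithm of computation deduced from Proposition~\ref{prop:addlotus}'' via the partial order of Remark~\ref{rem:oneone}, without spelling out a separate proof. You have simply made explicit the three ingredients implicit in that sentence (the identification of the petal base opposite $E_P$ with the dual segment of $X_P$, the initialization from parts (3)--(4) of Proposition~\ref{prop:addlotus}, and the well-foundedness of the recursion), so your argument is the paper's intended one written out in full.
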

\end{minipage}
}
\end{center}
\medskip

\begin{example}
    \label{ex:logdiscorders}
    In Figures \ref{fig:logdisord1}, \ref{fig:logdisord2} and \ref{fig:logdisord3} (whose lotuses are those of Figures \ref{fig:asslotuscusp}, \ref{fig:lotusbranch} and \ref{fig:lotus3branches}) we represent near each vertex $E$ of the lotus different from the arrowheads  $A_l$ the pair $(\lambda(E), \ord_E (L))$ formed by the log-discrepancy of $E$ and the order of vanishing of $L$ along $E$. The computations are performed using Corollary \ref{cor:reclord}, starting from the values $(\lambda(L), \ord_L (L)) = (1,1)$ and $(\lambda(L_i), \ord_{L_i} (L)) = (1,0)$ for every $i \geq 1$. For instance, in Figure \ref{fig:logdisord2}, the pairs of the two rupture vertices in the sense of Definition \ref{def:vocablotus} are computed as:
      \[(5,2) = (3,1) + (2, 1), \ \  (19,6) = (13,4) + (6,2). \]
\end{example}

\medskip
\section{Multiplicities of strict transforms} 
\label{sec:multstr}

In this section we explain how to compute on an associated lotus $\Lambda(\hat{\calc}_A)$ the multiplicities of a reduced plane curve singularity $A \hookrightarrow (S,O)$ at all infinitely near points lying on it (see Corollary \ref{cor:multedges}). This computation is based on the {\em proximity equalities} of Theorem \ref{thm:proxeq}.

\medskip
Let us recall the definition of the {\em multiplicity} of a plane curve singularity: 

\begin{definition}  
    \label{def:mult}
    Let $A \hookrightarrow (S,O)$ be a plane curve singularity, with defining function 
    $f_A \in \maxid_{S, O}$. By choosing local coordinates $(x,y)$ on $(S,O)$, $f_A$ 
    becomes a series in $\Cc \{x,y\}$. The {\bf multiplicity} $\boxed{e_O(A)} \in \Zz_{> 0}$ 
    of $A$ at $O$ is the smallest degree of the monomials appearing in $f_A$. 
\end{definition}

 For instance, one has $e_O(A) = 2$ for the cusp singularity $A$ of Example \ref{ex:cuspembres}, defined by $f_A = y^2 - x^3$.

\begin{remark}  
   \label{rem:multind}
       Definition \ref{def:mult} is independent of the choices of defining function $f_A$ and of local coordinates. This notion may be thought as a local analog of the degree of an effective divisor in a complex projective plane. Indeed, both have similar geometric interpretations: the degree is the intersection number of the projective curve with a line not contained in it and the multiplicity of a plane curve singularity $A$ in $(\Cc^2,0)$ is the intersection number of $A$ at $O$ with a line not tangent to $A$. 
\end{remark}

Generalizing equation \eqref{eq:totransfsm}, multiplicities at $O$ may also be computed as orders of vanishing, as may be readily seen by working in local coordinates:

\begin{proposition} 
    \label{prop:ordvanish}
      Let $A \hookrightarrow (S,O)$ be a plane curve singularity. Consider the blowup morphism $\pi_O: S_O \to S$ of $S$ at $O$, with exceptional divisor $E_O$. Then:
         \[e_O(A) = \ord_{E_O}(A).\]
\end{proposition}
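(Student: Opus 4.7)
The plan is to prove this by direct computation in local coordinates on the two affine charts covering the blowup $\pi_O : S_O \to S$. First, fix a local coordinate system $(x,y)$ on $(S,O)$, which identifies $\calo_{S,O}$ with $\Cc\{x,y\}$, and decompose a defining function $f_A \in \maxid_{S,O}$ into its homogeneous components:
\[
f_A(x,y) \;=\; \sum_{k \geq m} f_k(x,y), \qquad m := e_O(A),
\]
where each $f_k$ is homogeneous of degree $k$ in $(x,y)$ and $f_m \not\equiv 0$ by the very definition of multiplicity.

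Next, I would compute $\pi_O^* f_A$ in a chart of $S_O$. Recall the blowup is covered by two affine charts, say the chart with coordinates $(s, y)$ in which $\pi_O(s,y) = (sy, y)$ and the local equation of $E_O$ is $y = 0$. Pulling back and using homogeneity of each $f_k$ yields
\[
\pi_O^* f_A (s,y) \;=\; \sum_{k \geq m} f_k(sy, y) \;=\; y^m \Bigl( f_m(s,1) + y \, f_{m+1}(s,1) + y^2 f_{m+2}(s,1) + \cdots \Bigr).
\]
The key point is that $f_m(s,1)$ is a \emph{nonzero} polynomial in $s$: indeed, any nonzero monomial $x^i y^{m-i}$ occurring in $f_m$ contributes a nonzero monomial $s^i$ to $f_m(s,1)$. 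Thus the factor in parentheses is a holomorphic germ that does not vanish identically on the irreducible curve $\{y=0\}$, i.e.\ on the portion of $E_O$ visible in this chart.

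Consequently, the multiplicity of $E_O$ as a component of $Z(\pi_O^* f_A)$ is exactly $m$, proving $\ord_{E_O}(A) = m = e_O(A)$. The only subtle point is justifying that the factor $f_m(s,1) + y f_{m+1}(s,1) + \cdots$ is truly not divisible by $y$, which is what the non-vanishing of $f_m(s,1)$ ensures; this is really the heart of the verification. If one prefers, one may also check the statement in the other chart $(x,t)$ with $y = xt$, where an analogous formula $\pi_O^* f_A (x,t) = x^m (f_m(1,t) + x f_{m+1}(1,t) + \cdots)$ holds, and at least one of the two polynomials $f_m(s,1)$, $f_m(1,t)$ is nonzero since $f_m$ is. Independence of the result from the choice of local coordinates $(x,y)$ is automatic because $\ord_{E_O}$ is intrinsically defined.
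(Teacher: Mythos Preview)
Your proof is correct and follows exactly the approach the paper indicates: the paper does not spell out a proof of this proposition but simply states that it ``may be readily seen by working in local coordinates,'' which is precisely the computation you carry out in the blowup charts.
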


The notion of multiplicity may be extended to infinitely near points of $O$:

\begin{definition} 
   \label{def:multinpoint}
    Let $A \hookrightarrow (S,O)$ be a plane curve singularity and $P$ be an infinitely near point of $O$. The {\bf multiplicity} $\boxed{e_P(A)}$ of $A$ at $P$ is the multiplicity at $P$ of the strict transform of $A$ on the minimal model $S_P$ containing $P$. 
\end{definition}

The {\bf proximity equalities} of the following theorem (see \cite[Theorem 3.5.3]{C 00}) relate the multiplicities of a plane curve singularity at their various infinitely near points: 

\begin{theorem}
  \label{thm:proxeq}
     Let $A \hookrightarrow (S,O)$ be a plane curve singularity and $P$ be an infinitely near point of $O$ lying on $A$ in the sense of Definition \ref{def:infnear}(\ref{passthroughin}). Then, with the notations of Definition \ref{def:infnear} (\ref{proxnot}):
       \[   e_P(A) = \sum_{Q : Q \to P} e_Q(A).  \]   
\end{theorem}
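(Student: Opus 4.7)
The plan is to deduce the proximity equality by tracking the intersection number $A^{(k)} \cdot E_P^{(k)}$ of the strict transforms of $A$ and of $E_P$ as one performs successive blowups, starting from the blowup of $P$ itself and exhausting all infinitely near points of $A$ that are proximate to $P$. The base case is the identity $A_P \cdot E_P = e_P(A)$ on $S^P_P$: it follows from Proposition \ref{prop:ordvanish}, which yields the decomposition $\pi_P^*(A) = A_P + e_P(A) \cdot E_P$, combined with the projection formula exactly as in the proof of Proposition \ref{prop:relblowup}. Indeed, $(\pi_P)_*(E_P) = 0$ forces $\pi_P^*(A) \cdot E_P = 0$, and $E_P \cdot E_P = -1$ then gives the desired equality.

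Next I would record the effect of a single additional blowup $\pi_Q$ at a point $Q$ on the intersection of two divisors. Expanding $\pi_Q^*(D_i) = D_i^{\mathrm{st}} + m_Q(D_i)\, E_Q$ and using the projection formula twice, together with the relations $E_Q^2 = -1$ and $D_i^{\mathrm{st}} \cdot E_Q = m_Q(D_i)$ (both analogues of Proposition \ref{prop:relblowup}), one obtains the classical formula
\[ D_1^{\mathrm{st}} \cdot D_2^{\mathrm{st}} = D_1 \cdot D_2 - m_Q(D_1)\, m_Q(D_2). \]
Specializing to $D_1 = A^{(k)}$ and $D_2 = E_P^{(k)}$, each blowup at a point $Q$ decreases the running intersection by $m_Q(A^{(k)}) \cdot m_Q(E_P^{(k)})$. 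Since $E_P^{(k)}$ is the strict transform of a smooth rational curve by successive blowups of points lying on it, it remains smooth throughout, so $m_Q(E_P^{(k)}) = 1$ exactly when $Q \in E_P^{(k)}$; by Definition \ref{def:infnear}(\ref{proxnot}) this is precisely the condition $Q \to P$. In that case the decrement equals $e_Q(A)$, and it vanishes otherwise.

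Finally, I would blow up in succession every infinitely near point of $A$ in an order compatible with $\preceq$, continuing until a sufficiently high model is reached. Such a model exists by Theorem \ref{thm:blowupprocess}, and once every infinitely near point of $A$ proximate to $P$ has been blown up, the strict transforms of $A$ and of $E_P$ become disjoint: any remaining intersection point would itself be a point of $A$ proximate to $P$, contradicting maximality of the process. Summing the decrements then yields $e_P(A) = \sum_{Q \to P} e_Q(A)$, under the convention that $e_Q(A) = 0$ whenever $Q$ does not lie on $A$. I expect the main subtlety to lie in this termination and bookkeeping step — verifying that $E_P^{(k)}$ stays smooth and that the intersection locus is exhausted in finitely many blowups — while the arithmetic of intersection numbers is a straightforward iteration of projection-formula computations of the kind already used for Proposition \ref{prop:relblowup}.
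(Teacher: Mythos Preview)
The paper does not supply its own proof of this theorem: it simply states the proximity equalities and refers to \cite[Theorem~3.5.3]{C 00} for a proof. So there is no in-paper argument to compare against, only the external reference.

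Your approach is correct and is in fact the standard one. The key ingredients are exactly those you identify: the identity $A_P \cdot E_P = e_P(A)$ obtained from Proposition~\ref{prop:ordvanish} and the projection formula; the blowup formula $D_1^{\mathrm{st}} \cdot D_2^{\mathrm{st}} = D_1 \cdot D_2 - m_Q(D_1)\, m_Q(D_2)$, which is precisely Theorem~\ref{thm:intmult} specialized to a single blowup; the observation that $E_P^{(k)}$ remains smooth so that $m_Q(E_P^{(k)}) \in \{0,1\}$ detects proximity; and the termination argument. For the latter you can bypass the appeal to Theorem~\ref{thm:blowupprocess}: since $A^{(k)} \cdot E_P^{(k)}$ is a nonnegative integer that strictly decreases at each blowup centered on the intersection locus, the process halts after at most $e_P(A)$ steps, and this simultaneously shows that only finitely many infinitely near points of $A$ are proximate to $P$. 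With that small sharpening, the bookkeeping subtlety you flag dissolves.
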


Theorem \ref{thm:proxeq} shows that the multiplicities of the strict transforms of a plane curve singularity $A$ may be computed recursively on the lotus $\Lambda(\hat{\calc}_A)$ of any active constellation of crosses $\hat{\calc}_A$ adapted to it in the sense of Definition \ref{def:adaptedactconst}, as weights
on the set of edges appearing in Remark \ref{rem:oneone}:

\medskip
\begin{center}
\fbox{
\begin{minipage}{0.75\textwidth}
   \begin{corollary}
    \label{cor:multedges}
    Let $\Lambda(\hat{\calc}_A)$ be the lotus of a finite active constellation of crosses adapted to $A$, in the sense of Definition \ref{def:adaptedactconst}. 
      Denote by $\calp$ the set of edges of $\Lambda(\hat{\calc}_A)$ which are bases of petals, partially ordered as explained in Remark \ref{rem:oneone}. Perform the following algorithm of computation of weights of the edges in $\calp$ in a descending way relative to this partial order:
  \begin{enumerate}
    \item 
       One starts by attaching the weight $1$ to each base edge which corresponds to an inactive point of the underlying constellation. 
    \item 
       One computes the weight of a lower edge $e$ as the sum of weights of the higher edges which are incident to the vertex opposite to $e$ in the petal with base $e$. 
  \end{enumerate}
  Then, each edge gets decorated by the multiplicity of $A$ at the infinitely near point which corresponds to it as explained in Remark \ref{rem:oneone}.
\end{corollary}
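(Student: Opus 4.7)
The plan is to prove the claim by descending induction on the partial order $\preceq$ restricted to $\calc_A$, using the proximity equalities of Theorem~\ref{thm:proxeq} as the engine of the recursion and the dictionary between the proximity graph on active points and the combinatorial structure of the lotus, established in Proposition~\ref{prop:graphprox}, to translate this engine into the weight-propagation rule on edges.

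First I would dispose of the base case, namely the maximal points of $\calc_A$, which by Definition~\ref{def:adaptedactconst} are exactly the inactive ones. At such a point $P$, the total transform $\pi^{-1}(A)$ is a normal crossings divisor in a neighborhood of $P$, so the strict transform of $A$ at $P$ is a smooth curvetta, giving $e_P(A) = 1$ and matching the initialization of step~(1). Moreover, since maximality forbids $P$ from being a corner of the exceptional divisor (that would prevent the total transform from being normal crossings), the point $P$ is proximate to a unique active parent $P'$, its cross has the form $X_P = E_{P'} + L_P$, and the base edge of the lotus dual to $X_P$ is incident to the lateral vertex $E_{P'}$.

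For the inductive step, let $P$ be an active point and assume that the weights attached to edges corresponding to points strictly above $P$ agree with the multiplicities $e_Q(A)$. By Theorem~\ref{thm:proxeq},
\[ e_P(A) = \sum_{Q\,:\,Q \to P} e_Q(A). \]
The petal created when $P$ is activated, as in Definition~\ref{def:lotusfacc}(A), has base equal to the dual segment of $X_P$ and opposite vertex equal to the lateral vertex $E_P$. I must therefore identify the set of higher weighted edges incident to $E_P$ with $\{Q : Q \to P\}$. Proposition~\ref{prop:graphprox} yields a bijection between active $Q$'s proximate to $P$ and active lateral vertices $E_Q$ joined to $E_P$ by an edge, and each such edge is a side of the petal of $Q$, hence lies on the base of the $Q$-petal (incident to $E_P$). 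The proximate inactive maximal points contribute base edges of the lotus incident to $E_P$, as noted in the base case. All of these are strictly above $P$ in $\preceq$, because proximity implies succession.

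The main obstacle in this last step is bookkeeping: one must verify both that no internal or lateral edge at $E_P$ is spuriously counted (they carry no weight by construction), and that no proximate point is missed. Once this identification is in place, the induction hypothesis combined with the proximity equality gives
\[ \text{weight of the base of the }P\text{-petal} \;=\; \sum_{Q \to P} e_Q(A) \;=\; e_P(A), \]
which is exactly the rule imposed by step~(2) and completes the induction.
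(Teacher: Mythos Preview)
Your proposal is correct and follows the same approach as the paper, which simply presents the corollary as an immediate consequence of the proximity equalities (Theorem~\ref{thm:proxeq}) without a detailed proof; you have essentially written out that implicit argument in full. One small slip in your inductive step: the edge $[E_P\,E_Q]$ joining the two lateral vertices is a \emph{lateral side} of the $Q$-petal, not its base; what you actually need is that the \emph{base} of the $Q$-petal (the dual segment of $X_Q$) has $E_P$ as one of its endpoints, which holds precisely because $Q \to P$ forces $E_P$ to be a branch of the cross $X_Q$.
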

\end{minipage}
}
\end{center}
\medskip

 \begin{figure}[h!]
    \begin{center}
\begin{tikzpicture}[scale=1]
   
    \draw [->, color=orange, thick](1,0) -- (0,0);
    \draw [-, color=orange, thick](0,0) -- (-1,0);
    \draw [-, color=black, thick](0,1.5) -- (-1,0);
      \draw [-, color=black, thick](0,1.5) -- (1,0);
       \draw [-, color=orange, thick](-0.5,3/4) -- (0.5,3/4);
       \draw [-, color=orange, thick](-1,0) -- (0.5,3/4);
        \draw [->, color=orange, thick](0,1.5) -- (0,2.5);
           
  \node[draw,circle, inner sep=1pt,color=black, fill=black] at (-1,0){};
   \node[draw,circle, inner sep=1pt,color=black, fill=black] at (1,0){};
     \node[draw,circle, inner sep=1pt,color=black, fill=black] at (0,1.5){};
      \node[draw,circle, inner sep=1pt,color=black, fill=black] at (-0.5,3/4){};
       \node[draw,circle, inner sep=1pt,color=black, fill=black] at (0.5,3/4){};

   \node [below, color=black] at (-1,0) {$L_{1}$};
   \node [below, color=black] at (1,0) {$L$};
   \node [above, color=black] at (0,2.5) {$A$};

   \node [right, color=black] at (0.5,3/4) {$E_{1}$};
   \node [left, color=black] at (-0.5,3/4) {$E_2$};
   \node [right, color=black] at (0,1.5) {$E_3$};

\node [below, color=black] at (0,0) {\small{$\orange{2}$}};  
\node [below, color=black] at (0,0.55) {\small{$\orange{1}$}};
\node [above, color=black] at (0,0.7) {\small{$\orange{1}$}};
\node [right, color=black] at (0,2) {\small{$\orange{1}$}};
   \end{tikzpicture}
\end{center}
 \caption{Computation of multiplicities at infinitely near points of the branch $A$ of Figure \ref{fig:asslotuscusp}}
\label{fig:multbranch1}
   \end{figure}
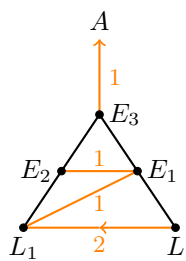


 \begin{figure}[h!]
    \begin{center}
\begin{tikzpicture}[scale=1]
   
    \draw [->, color=orange, thick](1,0) -- (0,0);
    \draw [-, color=orange, thick](0,0) -- (-1,0);
    \draw [-, color=black, thick](0,1.5) -- (-1,0);
      \draw [-, color=black, thick](0,1.5) -- (1,0);
       \draw [-, color=orange, thick](-0.5,3/4) -- (0.5,3/4);
       \draw [-, color=orange, thick](-1,0) -- (0.5,3/4);

        \draw [->, color=orange, thick](0,1.5) -- (-1.5,1.5);
   \draw [-, color=orange, thick](-1.5,1.5) -- (-3,1.5);
     \draw [-, color=black, thick](0,1.5) -- (-1.5,4);
      \draw [-, color=black, thick](-1.5,4) -- (-3,1.5);
      \draw [->, color=orange, thick](-1.5,4) -- (-1.5,5);
     \draw [-, color=orange, thick](-3/4,11/4) -- (-3,1.5);
     \draw [-, color=orange, thick](-3/4,11/4) -- (-2.5,2.34);
       \draw [-, color=orange, thick](-3/4,11/4) -- (-2,3.17);
           
  \node[draw,circle, inner sep=1pt,color=black, fill=black] at (-1,0){};
   \node[draw,circle, inner sep=1pt,color=black, fill=black] at (1,0){};
     \node[draw,circle, inner sep=1pt,color=black, fill=black] at (0,1.5){};
      \node[draw,circle, inner sep=1pt,color=black, fill=black] at (-0.5,3/4){};
       \node[draw,circle, inner sep=1pt,color=black, fill=black] at (0.5,3/4){};
        \node[draw,circle, inner sep=1pt,color=black, fill=black] at (-3,1.5){};
         \node[draw,circle, inner sep=1pt,color=black, fill=black] at (-1.5,4){};
         \node[draw,circle, inner sep=1pt,color=black, fill=black] at (-3/4,11/4){};
         \node[draw,circle, inner sep=1pt,color=black, fill=black] at (-2.5,2.34){};
         \node[draw,circle, inner sep=1pt,color=black, fill=black] at (-2,3.17){};

   \node [below, color=black] at (-1,0) {$L_{1}$};
\node [below, color=black] at (1,0) {$L$};
\node [below, color=black] at (-3,1.5) {$L_{2}$};
\node [above, color=black] at (-1.5,5) {$A_1$};

\node [below, color=black] at (0,0) {\small{$\orange{6}$}};  
\node [below, color=black] at (0,0.55) {\small{$\orange{3}$}};
\node [above, color=black] at (0,0.7) {\small{$\orange{3}$}};
\node [below, color=black] at (-1.5,1.5) {\small{$\orange{3}$}}; 
\node [below, color=black] at (-1.7,2.3) {\small{$\orange{1}$}};  
\node [above, color=black] at (-1.5,2.5) {\small{$\orange{1}$}}; 
\node [above, color=black] at (-1.5,3) {\small{$\orange{1}$}}; 
\node [right, color=black] at (-1.5,4.5) {\small{$\orange{1}$}};

   \end{tikzpicture}
\end{center}
 \caption{Computation of multiplicities at infinitely near points of the branch $A_1$ of Figure \ref{fig:lotusbranch}}
\label{fig:multbranch2}
   \end{figure}

 \begin{figure}[h!]
    \begin{center}
\begin{tikzpicture}[scale=1]
   
    \draw [->, color=orange, thick](1,0) -- (0,0);
    \draw [-, color=orange, thick](0,0) -- (-1,0);
    \draw [-, color=black, thick](0,1.5) -- (-1,0);
      \draw [-, color=black, thick](0,1.5) -- (1,0);
       \draw [-, color=orange, thick](-0.5,3/4) -- (0.5,3/4);
       \draw [-, color=orange, thick](-1,0) -- (0.5,3/4);
       \draw [-, color=orange, thick](-5.2,4.2) -- (-3.7,3.2);

        \draw [->, color=orange, thick](0,1.5) -- (-1.5,1.5);
   \draw [-, color=orange, thick](-1.5,1.5) -- (-3,1.5);
     \draw [-, color=black, thick](0,1.5) -- (-1.5,4);
      \draw [-, color=black, thick](-1.5,4) -- (-3,1.5);
      \draw [->, color=orange, thick](-1.5,4) -- (-1.5,5);
     \draw [-, color=orange, thick](-3/4,11/4) -- (-3,1.5);
     \draw [-, color=orange, thick](-3/4,11/4) -- (-2.5,2.34);
     \draw [-, color=orange, thick](-2,3.17) -- (-2.5,2.34);
       \draw [-, color=orange, thick](-3/4,11/4) -- (-2,3.17);
       \draw [->, color=orange, thick](-3/4,11/4) -- (1/4,15/4);
        \draw [-, color=black, thick](-2,3.17) -- (-1.5,4);
        \draw [-, color=black, thick](-3,1.5) -- (-2.5,2.34);
        \draw [-, color=black, thick](-3,1.5) -- (-2.5,2.34);
        \draw [-, color=black, thick](-3.7,3.2) -- (-2,3.17);
        \draw [-, color=black, thick](-3.7,3.2) -- (-2.5,2.34);
        \draw [->, color=orange, thick](-3.7,3.2) -- (-4.7,3.2);
        \draw [-, color=orange, thick](-5.7,3.2) -- (-4.7,3.2);
        \draw [->, color=orange, thick](-4.7,5.2) -- (-4.7,6.2);
        \draw [-, color=black, thick](-3.7,3.2) -- (-4.7,5.2);
        \draw [-, color=black, thick](-5.7,3.2) -- (-4.7,5.2);
       
  \node[draw,circle, inner sep=1pt,color=black, fill=black] at (-1,0){};
   \node[draw,circle, inner sep=1pt,color=black, fill=black] at (1,0){};
     \node[draw,circle, inner sep=1pt,color=black, fill=black] at (0,1.5){};
      \node[draw,circle, inner sep=1pt,color=black, fill=black] at (-0.5,3/4){};
       \node[draw,circle, inner sep=1pt,color=black, fill=black] at (0.5,3/4){};
        \node[draw,circle, inner sep=1pt,color=black, fill=black] at (-3,1.5){};
         \node[draw,circle, inner sep=1pt,color=black, fill=black] at (-1.5,4){};
         \node[draw,circle, inner sep=1pt,color=black, fill=black] at (-3/4,11/4){};
         \node[draw,circle, inner sep=1pt,color=black, fill=black] at (-2.5,2.34){};
         \node[draw,circle, inner sep=1pt,color=black, fill=black] at (-2,3.17){};
          \node[draw,circle, inner sep=1pt,color=black, fill=black] at (-3.7,3.2){};
          \node[draw,circle, inner sep=1pt,color=black, fill=black] at (-5.7,3.2){};
           \node[draw,circle, inner sep=1pt,color=black, fill=black] at (-4.7,5.2){};
           \node[draw,circle, inner sep=1pt,color=black, fill=black] at (-5.2,4.2){};
       
   \node [below, color=black] at (-1,0) {$L_{1}$};
\node [below, color=black] at (1,0) {$L$};
\node [below, color=black] at (-3,1.5) {$L_{2}$};
\node [right, color=black] at (1/4,15/4) {$A_3$};
\node [above, color=black] at (-1.5,5) {$A_1$};
 \node [below, color=black] at (-5.7,3.2) {$L_{3}$};
 \node [above, color=black] at (-4.7,6.2) {$A_2$};

\node [below, color=black] at (0,0) {\small{$\orange{20}$}};  
\node [below, color=black] at (0,0.55) {\small{$\orange{10}$}};
\node [above, color=black] at (0,0.7) {\small{$\orange{10}$}};
\node [below, color=black] at (-1.5,1.5) {\small{$\orange{10}$}}; 
\node [below, color=black] at (-1.7,2.3) {\small{$\orange{5}$}};  
\node [above, color=black] at (-1.5,2.5) {\small{$\orange{3}$}}; 
\node [left, color=black] at (-1.85,2.8) {\small{$\orange{2}$}}; 
\node [above, color=black] at (-1.5,3) {\small{$\orange{1}$}}; 
\node [below, color=black] at (0,14/4) {\small{$\orange{1}$}}; 
\node [below, color=black] at (-1.2,4.7) {\small{$\orange{1}$}}; 
\node [below, color=black] at (-4.7,3.2) {\small{$\orange{1}$}};
\node [below, color=black] at (-4.7,4.3) {\small{$\orange{1}$}};
\node [below, color=black] at (-4.4,6) {\small{$\orange{1}$}};
   \end{tikzpicture}
\end{center}
 \caption{Computation of multiplicities at infinitely near points of the plane curve singularity $A_1+ A_2 + A_3$ of Figure \ref{fig:lotus3branches}}
\label{fig:multbranch3}
   \end{figure}
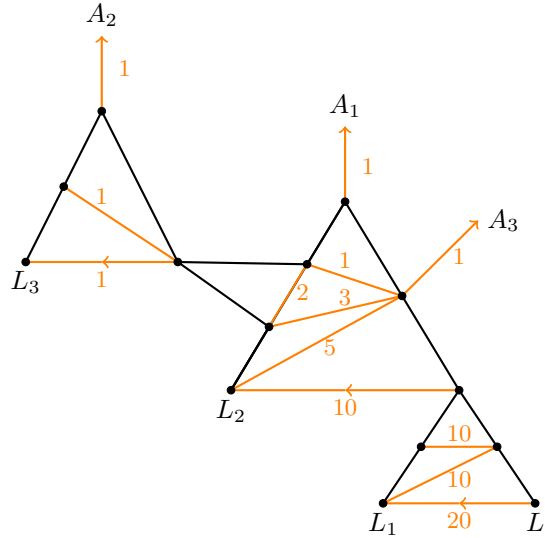

\begin{example}  \label{excompmult}
    The method of computation of multiplicities explained in Corollary \ref{cor:multedges} is illustrated in Figures \ref{fig:multbranch1}, \ref{fig:multbranch2} and \ref{fig:multbranch3}. Let us give details in the case of Figure \ref{fig:multbranch1}: 
     \begin{itemize}
         \item 
           One starts by attaching the weight $1$ to the edge $[E_3 A]$.
         \item 
           Then one computes successively the weights of the edges $[E_1 E_2]$, $[L_1 E_1]$ and $[L L_1]$, as explained now.
         \item 
           The triangle $E_1 E_2 E_3$ is the petal having  base $[E_1 E_2]$, therefore the opposite vertex of this base is $E_3$. There is only one higher edge incident to it, namely $[E_3 A]$, whose weight is $1$. Therefore the weight of $[E_1 E_2]$ is also $1$. 
         \item 
            Similarly, the weight of $[L_1 E_1]$ is the sum of weights of the edges incident to $E_2$, which is the vertex opposite $[L_1 E_1]$ in the petal $L_1 E_1 E_2$. There is only one edge of this type, namely $[E_1 E_2]$, whose weight we just computed to be $1$. Therefore the weight of $[L_1 E_1]$ is also $1$.
         \item 
             The vertex opposite $[L L_1]$ is $E_1$. There are two higher edges incident to it, namely $[L_1 E_1]$ and $[E_1 E_2]$. The weight of $[L L_1]$ is the sum of their weights, that is $2$. 
     \end{itemize}

     Similarly, in Figure \ref{fig:multbranch3}, the weight $10$ on the base of the membrane with vertex $L_2$ is computed as the sum $1 + 1 + 3 + 5$ of weights of the edges which are incident to its opposite vertex.
\end{example}

\medskip
\section{Orders of vanishing and intersection numbers}  
\label{sec:ovint}

In this section we explain how to compute the orders of vanishing $\ord_E(A)$ of a reduced  plane curve singularity $A$ at the vertices $E$ of an associated lotus, as well as the intersection numbers $(A_l \cdot A_m)_O$ of pairs of distinct branches of $A$ either from the multiplicities of those branches at infinitely near points or as particular orders of vanishing (see Corollaries \ref{cor:firstmethint} and \ref{cor:secmethint}).

\medskip

We start recalling the definition of the intersection number of two plane curve singularities at the same point:

\begin{definition}   
        \label{def:intnumber}
      Let $A, B \hookrightarrow (S,O)$ be two plane curve singularities. Let $f_A, f_B \in \calo_{S,O}$ be defining functions for them and let $\boxed{(f_A, f_B)}$ be the ideal of $\calo_{S,O}$ generated by $f_A$ and $f_B$. The {\bf intersection number of $A$ and $B$ at $O$}, denoted $\boxed{(A \cdot B)_O} \in \Zz_{\geq 0} \cup \{ \infty\}$  is the dimension of the quotient vector space
        \[ \calo_{S,O}/(f_A, f_B).  \]
       
\end{definition}

Note that $(A \cdot B)_O = \infty$ if and only if $A$ and $B$ have at least one common branch.

  As the ideal $(f_A, f_B)$ does not depend on the choice of defining germs of $A$ and $B$, the intersection number $(A \cdot B)_O$ is well-defined. It is the local contribution to the computation of the global intersection number of two effective divisors without common irreducible components in a smooth surface, at least one of which is non-compact (see Section \ref{sec:properaction}). Indeed, if $A$ and $B$ are two such divisors in a smooth surface $\Sigma$, then: 
    \[ A \cdot B = \sum_{P \in \Sigma} (A \cdot B)_P.\]

 B\'ezout's theorem for curves $A, B$ in $\Cc\Pp^2$, which states that:
    \[ A \cdot B = \deg(A) \cdot \deg(B),\]
  has the following local analog for $A, B \hookrightarrow (S,O)$:
    \[ (A \cdot B)_O \geq e_O(A) \cdot e_O(B),\]
  with equality if and only if $A$ and $B$ are transversal, in the sense that they have no common tangent line. This disymmetry between B\'ezout's theorem in $\Cc\Pp^2$ (which is an equality) and its local analog (which is an inequality) may be repaired by looking also at the infinitely near points common to the two curve singularities (see \cite[Section 8.4, Theorem 13]{BK 86}):

\begin{theorem} \label{thm:intmult}
    Let $A, B \hookrightarrow (S,O)$ be two plane curve singularities. Then their intersection number $(A \cdot B)_O$ may be computed as follows:
    \[ (A \cdot B)_O = \sum e_P(A) e_P(B),  \]
    the sum being taken over the infinitely near points $P$ common to (the strict transforms of) $A$ and $B$.
\end{theorem}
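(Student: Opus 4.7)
My plan is to prove the formula by induction on the intersection number $(A\cdot B)_O$, reducing the global statement to a local recursion on the single blowup $\pi_O:S_O\to S$ of $O$. One first disposes of the trivial cases: if $A$ and $B$ share a common branch then both sides are $+\infty$ (a common branch yields infinitely many common infinitely near points); and if one of $A,B$ does not pass through $O$, then one of $e_O(A),e_O(B)$ vanishes, the set of common infinitely near points is empty, and both sides vanish. So it suffices to treat $A,B$ without common branch, both passing through $O$.

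The heart of the argument is the recursive formula
\[
(A\cdot B)_O \ = \ e_O(A)\cdot e_O(B) \ +\ \sum_{Q\in E_O}(A_{\pi_O}\cdot B_{\pi_O})_Q,
\]
from which the theorem follows by induction: because $e_O(A)e_O(B)\geq 1$, each $(A_{\pi_O}\cdot B_{\pi_O})_Q<(A\cdot B)_O$, the inductive hypothesis applies at $Q$, and the infinitely near points of $A_{\pi_O}$ and $B_{\pi_O}$ at $Q$ are exactly the infinitely near points $P\succeq Q$ common to $A$ and $B$, with multiplicities preserved (by Definition~\ref{def:multinpoint}). Summing the inductive identities over all $Q\in E_O$ and restoring the contribution $e_O(A)e_O(B)$ coming from $P=O$ yields $\sum_P e_P(A)e_P(B)$.

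To establish the recursion, I would extend equation~\eqref{eq:totransfsm} to arbitrary curve germs by means of Proposition~\ref{prop:ordvanish}: one has $\pi_O^*(A)=e_O(A)\,E_O+A_{\pi_O}$ and similarly for $B$. Choosing a Stein neighborhood $U$ of $O$ small enough that $A\cap B=\{O\}$ in $U$, the strict transforms $A_{\pi_O}$ and $B_{\pi_O}$ meet inside $\pi_O^{-1}(U)$ only at points of the compact divisor $E_O$, so intersection numbers against $E_O$ or between these strict transforms are well defined and localize at points of $E_O$. The projection formula (Theorem~\ref{thm:projform}) then gives $A_{\pi_O}\cdot \pi_O^*(B)=(\pi_O)_*(A_{\pi_O})\cdot B=(A\cdot B)_O$; expanding $\pi_O^*(B)=e_O(B)\,E_O+B_{\pi_O}$ reduces the identity to the relation $A_{\pi_O}\cdot E_O=e_O(A)$. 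This last relation is in turn deduced by applying the projection formula to $\pi_O^*(A)\cdot E_O=A\cdot (\pi_O)_*E_O=0$, using $(\pi_O)_*E_O=0$ and $E_O\cdot E_O=-1$ from Proposition~\ref{prop:relblowup}(a).

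The main obstacle will be the careful justification of this local use of the projection formula, since Theorem~\ref{thm:projform} is stated for divisors at least one of which is globally compact, whereas $A,B$ are only germs. The standard way around this is precisely what I outlined: shrink to a neighborhood where $A\cap B=\{O\}$, so that all the intersections of strict transforms we need to control are concentrated on the compact divisor $E_O$; then the compactly supported pieces to which the projection formula genuinely applies are the contributions of $E_O$, while the remaining divisor contributions localize at finitely many points and can be summed pointwise. Once that bookkeeping is in place, the induction is immediate.
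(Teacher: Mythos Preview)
The paper does not prove Theorem~\ref{thm:intmult}; it simply cites \cite[Section~8.4, Theorem~13]{BK 86}. Your argument is correct and is essentially the classical one: reduce to the blowup recursion
\[
(A\cdot B)_O \;=\; e_O(A)\,e_O(B)\;+\;\sum_{Q\in E_O}(A_{\pi_O}\cdot B_{\pi_O})_Q
\]
and induct on the intersection number. Your derivation of this recursion via the projection formula together with $E_O\cdot E_O=-1$ is clean and matches the style the paper itself adopts when proving Propositions~\ref{prop:relblowup} and~\ref{prop:intbr}. The compactness caveat you flag is genuine, but your workaround is the right one: after restricting to a representative where $A\cap B=\{O\}$, every intersection you need is supported on the compact divisor $E_O$, so Theorem~\ref{thm:projform} applies as stated.
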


This theorem gives a first method of computation on a lotus of the intersection number of two branches $A_l$ and $A_m$ of $A$: 

\medskip
\begin{center}
\fbox{
\begin{minipage}{0.75\textwidth}
    \begin{corollary}
       \label{cor:firstmethint}
          Let $\Lambda(\hat{\calc}_A)$ be the lotus of a finite active constellation of crosses adapted to $A$, in the sense of Definition \ref{def:adaptedactconst}. Once the multiplicities of the strict transforms of two distinct branches $A_l$ and $A_m$ of $A$ are computed as edge decorations (see Corollary \ref{cor:multedges}), the intersection number $(A_l \cdot A_m)_O$ is the sum of the products of multiplicities associated to the same edge. 
    \end{corollary}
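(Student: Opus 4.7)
My approach will be to combine Theorem \ref{thm:intmult} with the bijection of Remark \ref{rem:oneone} between active points of $\hat{\calc}_A$ and bases of petals of $\Lambda(\hat{\calc}_A)$, and then to apply Corollary \ref{cor:multedges} branch by branch in order to identify the multiplicities $e_P(A_l)$ and $e_P(A_m)$ with the edge decorations.

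The starting point is Theorem \ref{thm:intmult}, which expresses $(A_l\cdot A_m)_O$ as $\sum_P e_P(A_l)\, e_P(A_m)$, summed over the infinitely near points $P$ lying on both strict transforms of $A_l$ and $A_m$. The key observation is that, since $A_l \neq A_m$ and $\hat{\calc}_A$ is adapted to $A$ in the sense of Definition \ref{def:adaptedactconst}, the strict transforms of $A_l$ and $A_m$ terminate as curvettas at two \emph{distinct} inactive (maximal) points of $\calc_A$; any common infinitely near point must therefore be a non-maximal point, that is, an active point of $\hat{\calc}_A$. Declaring $e_P(A_l)\, e_P(A_m) = 0$ whenever an active $P$ fails to lie on one of the two branches, I may extend the sum without altering its value to range over \emph{all} active points:
\[
(A_l\cdot A_m)_O \;=\; \sum_{P\text{ active}} e_P(A_l)\, e_P(A_m).
\]

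To conclude, I would invoke Remark \ref{rem:oneone} to reindex this sum over the bases of petals of $\Lambda(\hat{\calc}_A)$, and apply Corollary \ref{cor:multedges} separately to the reduced curves $A_l$ and $A_m$, viewed as subcurves of $A$. The recursive algorithm of that corollary is a direct translation of the proximity equalities of Theorem \ref{thm:proxeq}, which are linear in the divisor argument; it therefore applies to each individual branch, with the sole modification that the initialization assigns weight $1$ to the unique inactive base edge corresponding to the maximal point of $\calc_A$ at which the chosen branch terminates, and weight $0$ to the remaining inactive base edges. The resulting decoration on each petal base edge is then precisely $e_P(A_l)$ (respectively $e_P(A_m)$) at the associated active point $P$, and multiplying the two decorations on each edge and summing gives the announced formula.

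The delicate step is the one asserting that every infinitely near point common to two distinct branches of $A$ is active. This relies on the fact that an embedded resolution separates distinct branches before halting the blowup process, so only the maximal (hence inactive) points of $\calc_A$ can be endpoints of the strict transforms of the branches, and two different branches end at two different such points; once this is in hand, the remainder of the argument is a straightforward re-indexing.
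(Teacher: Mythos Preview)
Your proof is correct and follows the same route the paper intends: the paper presents Corollary~\ref{cor:firstmethint} as an immediate consequence of Theorem~\ref{thm:intmult}, without spelling out any further argument. You have simply made explicit the two points the paper leaves tacit---that every infinitely near point common to two distinct branches of $A$ is active in $\hat{\calc}_A$, and that the recursive algorithm of Corollary~\ref{cor:multedges} applies branch by branch once one adjusts the initialization at the inactive base edges.
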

\end{minipage}
}
\end{center}
\medskip

We consider that this method is rather tedious on examples. For this reason, we do not illustrate it with figures. We find a second method more convenient. Let us explain it.

 \begin{figure}[h!]
    \begin{center}
\begin{tikzpicture}[scale=1]
   
    \draw [->, color=orange, thick](1,0) -- (0,0);
    \draw [-, color=orange, thick](0,0) -- (-1,0);
    \draw [-, color=black, thick](0,1.5) -- (-1,0);
      \draw [-, color=black, thick](0,1.5) -- (1,0);
       \draw [-, color=orange, thick](-0.5,3/4) -- (0.5,3/4);
       \draw [-, color=orange, thick](-1,0) -- (0.5,3/4);
        \draw [->, color=orange, thick](0,1.5) -- (0,2.5);
           
  \node[draw,circle, inner sep=1pt,color=black, fill=black] at (-1,0){};
   \node[draw,circle, inner sep=1pt,color=black, fill=black] at (1,0){};
     \node[draw,circle, inner sep=1pt,color=black, fill=black] at (0,1.5){};
      \node[draw,circle, inner sep=1pt,color=black, fill=black] at (-0.5,3/4){};
       \node[draw,circle, inner sep=1pt,color=black, fill=black] at (0.5,3/4){};

   \node [below, color=black] at (-1,0) {$L_{1}$};
\node [below, color=black] at (1,0) {$L$};
\node [above, color=black] at (0,2.5) {$A$};

  \node [left, color=blue] at (-1,0) {$\mathbf{0}$};
\node [right, color=blue] at (1,0) {$\mathbf{0}$};
\node [right, color=blue] at (0.5,3/4) {$\mathbf{2}$};
\node [left, color=blue] at (-0.5,3/4) {$\mathbf{3}$};
\node [left, color=blue] at (0,1.5) {$\mathbf{6}$};

\node [below, color=black] at (0,0) {\small{$\orange{2}$}};  
\node [below, color=black] at (0,0.55) {\small{$\orange{1}$}};
\node [above, color=black] at (0,0.7) {\small{$\orange{1}$}};
\node [right, color=black] at (0,2) {\small{$\orange{1}$}};
   \end{tikzpicture}
\end{center}
 \caption{The orders of vanishing $\ord_E(A)$, when $E$  is a curve associated to a vertex of the lotus of Figure \ref{fig:asslotuscusp}}
\label{fig:ordvanish1}
   \end{figure}
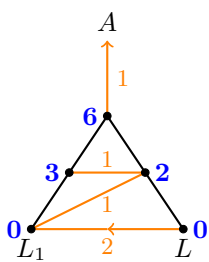


 \begin{figure}[h!]
    \begin{center}
\begin{tikzpicture}[scale=1]
   
    \draw [->, color=orange, thick](1,0) -- (0,0);
    \draw [-, color=orange, thick](0,0) -- (-1,0);
    \draw [-, color=black, thick](0,1.5) -- (-1,0);
      \draw [-, color=black, thick](0,1.5) -- (1,0);
       \draw [-, color=orange, thick](-0.5,3/4) -- (0.5,3/4);
       \draw [-, color=orange, thick](-1,0) -- (0.5,3/4);

        \draw [->, color=orange, thick](0,1.5) -- (-1.5,1.5);
   \draw [-, color=orange, thick](-1.5,1.5) -- (-3,1.5);
     \draw [-, color=black, thick](0,1.5) -- (-1.5,4);
      \draw [-, color=black, thick](-1.5,4) -- (-3,1.5);
      \draw [->, color=orange, thick](-1.5,4) -- (-1.5,5);
     \draw [-, color=orange, thick](-3/4,11/4) -- (-3,1.5);
     \draw [-, color=orange, thick](-3/4,11/4) -- (-2.5,2.34);
       \draw [-, color=orange, thick](-3/4,11/4) -- (-2,3.17);
           
  \node[draw,circle, inner sep=1pt,color=black, fill=black] at (-1,0){};
   \node[draw,circle, inner sep=1pt,color=black, fill=black] at (1,0){};
     \node[draw,circle, inner sep=1pt,color=black, fill=black] at (0,1.5){};
      \node[draw,circle, inner sep=1pt,color=black, fill=black] at (-0.5,3/4){};
       \node[draw,circle, inner sep=1pt,color=black, fill=black] at (0.5,3/4){};
        \node[draw,circle, inner sep=1pt,color=black, fill=black] at (-3,1.5){};
         \node[draw,circle, inner sep=1pt,color=black, fill=black] at (-1.5,4){};
         \node[draw,circle, inner sep=1pt,color=black, fill=black] at (-3/4,11/4){};
         \node[draw,circle, inner sep=1pt,color=black, fill=black] at (-2.5,2.34){};
         \node[draw,circle, inner sep=1pt,color=black, fill=black] at (-2,3.17){};

   \node [below, color=black] at (-1,0) {$L_{1}$};
\node [below, color=black] at (1,0) {$L$};
\node [below, color=black] at (-3,1.5) {$L_{2}$};
\node [above, color=black] at (-1.5,5) {$A_1$};

  \node [left, color=blue] at (-1,0) {$\mathbf{0}$};
\node [right, color=blue] at (1,0) {$\mathbf{0}$};
\node [right, color=blue] at (0.5,3/4) {$\mathbf{6}$};
\node [left, color=blue] at (-0.5,3/4) {$\mathbf{9}$};
\node [right, color=blue] at (0,1.5) {$\mathbf{18}$};
\node [left, color=blue] at (-3,1.5) {$\mathbf{0}$};
\node [right, color=blue] at (-3/4,11/4) {$\mathbf{21}$};
\node [right, color=blue] at (-1.5,4) {$\mathbf{66}$};
\node [left, color=blue] at (-2.5,2.34) {$\mathbf{22}$};
\node [left, color=blue] at (-2,3.17){$\mathbf{44}$};

\node [below, color=black] at (0,0) {\small{$\orange{6}$}};  
\node [below, color=black] at (0,0.55) {\small{$\orange{3}$}};
\node [above, color=black] at (0,0.7) {\small{$\orange{3}$}};
\node [below, color=black] at (-1.5,1.5) {\small{$\orange{3}$}}; 
\node [below, color=black] at (-1.7,2.3) {\small{$\orange{1}$}};  
\node [above, color=black] at (-1.5,2.5) {\small{$\orange{1}$}}; 
\node [above, color=black] at (-1.5,3) {\small{$\orange{1}$}}; 
\node [right, color=black] at (-1.5,4.5) {\small{$\orange{1}$}};

   \end{tikzpicture}
\end{center}
 \caption{The orders of vanishing $\ord_E(A_1)$, when $E$  is a curve associated to a vertex of the lotus of Figure \ref{fig:lotusbranch}}
\label{fig:ordvanish2}
   \end{figure}


 \begin{figure}[h!]
    \begin{center}
\begin{tikzpicture}[scale=1]
   
    \draw [->, color=orange, thick](1,0) -- (0,0);
    \draw [-, color=orange, thick](0,0) -- (-1,0);
    \draw [-, color=black, thick](0,1.5) -- (-1,0);
      \draw [-, color=black, thick](0,1.5) -- (1,0);
       \draw [-, color=orange, thick](-0.5,3/4) -- (0.5,3/4);
       \draw [-, color=orange, thick](-1,0) -- (0.5,3/4);
       \draw [-, color=orange, thick](-5.2,4.2) -- (-3.7,3.2);

        \draw [->, color=orange, thick](0,1.5) -- (-1.5,1.5);
   \draw [-, color=orange, thick](-1.5,1.5) -- (-3,1.5);
     \draw [-, color=black, thick](0,1.5) -- (-1.5,4);
      \draw [-, color=black, thick](-1.5,4) -- (-3,1.5);
      \draw [->, color=orange, thick](-1.5,4) -- (-1.5,5);
     \draw [-, color=orange, thick](-3/4,11/4) -- (-3,1.5);
     \draw [-, color=orange, thick](-3/4,11/4) -- (-2.5,2.34);
     \draw [-, color=orange, thick](-2,3.17) -- (-2.5,2.34);
       \draw [-, color=orange, thick](-3/4,11/4) -- (-2,3.17);
       \draw [->, color=orange, thick](-3/4,11/4) -- (1/4,15/4);
        \draw [-, color=black, thick](-2,3.17) -- (-1.5,4);
        \draw [-, color=black, thick](-3,1.5) -- (-2.5,2.34);
        \draw [-, color=black, thick](-3,1.5) -- (-2.5,2.34);
        \draw [-, color=black, thick](-3.7,3.2) -- (-2,3.17);
        \draw [-, color=black, thick](-3.7,3.2) -- (-2.5,2.34);
        \draw [->, color=orange, thick](-3.7,3.2) -- (-4.7,3.2);
        \draw [-, color=orange, thick](-5.7,3.2) -- (-4.7,3.2);
        \draw [->, color=orange, thick](-4.7,5.2) -- (-4.7,6.2);
        \draw [-, color=black, thick](-3.7,3.2) -- (-4.7,5.2);
        \draw [-, color=black, thick](-5.7,3.2) -- (-4.7,5.2);
       
  \node[draw,circle, inner sep=1pt,color=black, fill=black] at (-1,0){};
   \node[draw,circle, inner sep=1pt,color=black, fill=black] at (1,0){};
     \node[draw,circle, inner sep=1pt,color=black, fill=black] at (0,1.5){};
      \node[draw,circle, inner sep=1pt,color=black, fill=black] at (-0.5,3/4){};
       \node[draw,circle, inner sep=1pt,color=black, fill=black] at (0.5,3/4){};
        \node[draw,circle, inner sep=1pt,color=black, fill=black] at (-3,1.5){};
         \node[draw,circle, inner sep=1pt,color=black, fill=black] at (-1.5,4){};
         \node[draw,circle, inner sep=1pt,color=black, fill=black] at (-3/4,11/4){};
         \node[draw,circle, inner sep=1pt,color=black, fill=black] at (-2.5,2.34){};
         \node[draw,circle, inner sep=1pt,color=black, fill=black] at (-2,3.17){};
          \node[draw,circle, inner sep=1pt,color=black, fill=black] at (-3.7,3.2){};
          \node[draw,circle, inner sep=1pt,color=black, fill=black] at (-5.7,3.2){};
           \node[draw,circle, inner sep=1pt,color=black, fill=black] at (-4.7,5.2){};
           \node[draw,circle, inner sep=1pt,color=black, fill=black] at (-5.2,4.2){};
       
   \node [below, color=black] at (-1,0) {$L_{1}$};
\node [below, color=black] at (1,0) {$L$};
\node [below, color=black] at (-3,1.5) {$L_{2}$};
\node [right, color=black] at (1/4,15/4) {$A_3$};
\node [above, color=black] at (-1.5,5) {$A_1$};
 \node [below, color=black] at (-5.7,3.2) {$L_{3}$};
 \node [above, color=black] at (-4.7,6.2) {$A_2$};

   \node [left, color=blue] at (-1,0) {$\mathbf{0}$};
\node [right, color=blue] at (1,0) {$\mathbf{0}$};
\node [right, color=blue] at (0.5,3/4) {$\mathbf{20}$};
\node [left, color=blue] at (-0.5,3/4) {$\mathbf{30}$};
\node [right, color=blue] at (0,1.5) {$\mathbf{60}$};
\node [left, color=blue] at (-3,1.5) {$\mathbf{0}$};
\node [right, color=blue] at (-3/4,11/4) {$\mathbf{70}$};
\node [right, color=blue] at (-1.5,4) {$\mathbf{219}$};
\node [left, color=blue] at (-2.5,2.22) {$\mathbf{75}$};
\node [left, color=blue] at (-1.9,3.4){$\mathbf{148}$};
\node [left, color=blue] at  (-5.7,3.2) {$\mathbf{0}$};
\node [right, color=blue] at (-4.7,5.2) {$\mathbf{452}$};
\node [right, color=blue] at (-3.74,3.4) {$\mathbf{225}$};
\node [left, color=blue] at (-5.2,4.3){$\mathbf{226}$};

\node [below, color=black] at (0,0) {\small{$\orange{20}$}};  
\node [below, color=black] at (0,0.55) {\small{$\orange{10}$}};
\node [above, color=black] at (0,0.7) {\small{$\orange{10}$}};
\node [below, color=black] at (-1.5,1.5) {\small{$\orange{10}$}}; 
\node [below, color=black] at (-1.7,2.3) {\small{$\orange{5}$}};  
\node [above, color=black] at (-1.5,2.5) {\small{$\orange{3}$}}; 
\node [left, color=black] at (-1.85,2.8) {\small{$\orange{2}$}}; 
\node [above, color=black] at (-1.5,3) {\small{$\orange{1}$}}; 
\node [below, color=black] at (0,14/4) {\small{$\orange{1}$}}; 
\node [below, color=black] at (-1.2,4.7) {\small{$\orange{1}$}}; 
\node [below, color=black] at (-4.7,3.2) {\small{$\orange{1}$}};
\node [below, color=black] at (-4.7,4.3) {\small{$\orange{1}$}};
\node [below, color=black] at (-4.4,6) {\small{$\orange{1}$}};
   \end{tikzpicture}
\end{center}
 \caption{The orders of vanishing $\ord_E(A_1+A_2+A_3)$, when $E$  is a curve associated to a vertex of the lotus of Figure \ref{fig:lotus3branches}}
\label{fig:ordvanish3}
   \end{figure}


 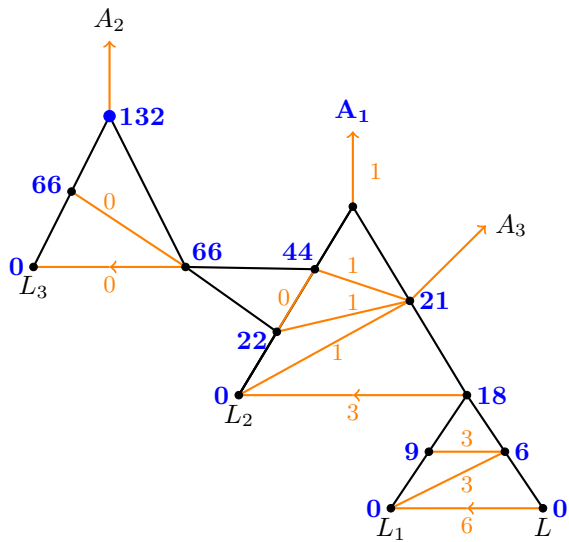
\begin{figure}[h!]
    \begin{center}
\begin{tikzpicture}[scale=1]
   
    \draw [->, color=orange, thick](1,0) -- (0,0);
    \draw [-, color=orange, thick](0,0) -- (-1,0);
    \draw [-, color=black, thick](0,1.5) -- (-1,0);
      \draw [-, color=black, thick](0,1.5) -- (1,0);
       \draw [-, color=orange, thick](-0.5,3/4) -- (0.5,3/4);
       \draw [-, color=orange, thick](-1,0) -- (0.5,3/4);
       \draw [-, color=orange, thick](-5.2,4.2) -- (-3.7,3.2);

        \draw [->, color=orange, thick](0,1.5) -- (-1.5,1.5);
   \draw [-, color=orange, thick](-1.5,1.5) -- (-3,1.5);
     \draw [-, color=black, thick](0,1.5) -- (-1.5,4);
      \draw [-, color=black, thick](-1.5,4) -- (-3,1.5);
      \draw [->, color=orange, thick](-1.5,4) -- (-1.5,5);
     \draw [-, color=orange, thick](-3/4,11/4) -- (-3,1.5);
     \draw [-, color=orange, thick](-3/4,11/4) -- (-2.5,2.34);
     \draw [-, color=orange, thick](-2,3.17) -- (-2.5,2.34);
       \draw [-, color=orange, thick](-3/4,11/4) -- (-2,3.17);
       \draw [->, color=orange, thick](-3/4,11/4) -- (1/4,15/4);
        \draw [-, color=black, thick](-2,3.17) -- (-1.5,4);
        \draw [-, color=black, thick](-3,1.5) -- (-2.5,2.34);
        \draw [-, color=black, thick](-3,1.5) -- (-2.5,2.34);
        \draw [-, color=black, thick](-3.7,3.2) -- (-2,3.17);
        \draw [-, color=black, thick](-3.7,3.2) -- (-2.5,2.34);
        \draw [->, color=orange, thick](-3.7,3.2) -- (-4.7,3.2);
        \draw [-, color=orange, thick](-5.7,3.2) -- (-4.7,3.2);
        \draw [->, color=orange, thick](-4.7,5.2) -- (-4.7,6.2);
        \draw [-, color=black, thick](-3.7,3.2) -- (-4.7,5.2);
        \draw [-, color=black, thick](-5.7,3.2) -- (-4.7,5.2);
       
  \node[draw,circle, inner sep=1pt,color=black, fill=black] at (-1,0){};
   \node[draw,circle, inner sep=1pt,color=black, fill=black] at (1,0){};
     \node[draw,circle, inner sep=1pt,color=black, fill=black] at (0,1.5){};
      \node[draw,circle, inner sep=1pt,color=black, fill=black] at (-0.5,3/4){};
       \node[draw,circle, inner sep=1pt,color=black, fill=black] at (0.5,3/4){};
        \node[draw,circle, inner sep=1pt,color=black, fill=black] at (-3,1.5){};
         \node[draw,circle, inner sep=1pt,color=black, fill=black] at (-1.5,4){};
         \node[draw,circle, inner sep=1pt,color=black, fill=black] at (-3/4,11/4){};
         \node[draw,circle, inner sep=1pt,color=black, fill=black] at (-2.5,2.34){};
         \node[draw,circle, inner sep=1pt,color=black, fill=black] at (-2,3.17){};
          \node[draw,circle, inner sep=1pt,color=black, fill=black] at (-3.7,3.2){};
          \node[draw,circle, inner sep=1pt,color=black, fill=black] at (-5.7,3.2){};
           \node[draw,circle, inner sep=1.5pt,color=blue, fill=blue] at (-4.7,5.2){};
           \node[draw,circle, inner sep=1pt,color=black, fill=black] at (-5.2,4.2){};
       
   \node [below, color=black] at (-1,0) {$L_{1}$};
\node [below, color=black] at (1,0) {$L$};
\node [below, color=black] at (-3,1.5) {$L_{2}$};
\node [right, color=black] at (1/4,15/4) {$A_3$};
\node [above, color=blue] at (-1.5,5) {$\mathbf{A_1}$};
 \node [below, color=black] at (-5.7,3.2) {$L_{3}$};
 \node [above, color=black] at (-4.7,6.2) {$A_2$};

   \node [left, color=blue] at (-1,0) {$\mathbf{0}$};
\node [right, color=blue] at (1,0) {$\mathbf{0}$};
\node [right, color=blue] at (0.5,3/4) {$\mathbf{6}$};
\node [left, color=blue] at (-0.5,3/4) {$\mathbf{9}$};
\node [right, color=blue] at (0,1.5) {$\mathbf{18}$};
\node [left, color=blue] at (-3,1.5) {$\mathbf{0}$};
\node [right, color=blue] at (-3/4,11/4) {$\mathbf{21}$};
\node [left, color=blue] at (-2.5,2.22) {$\mathbf{22}$};
\node [left, color=blue] at (-1.9,3.4){$\mathbf{44}$};
\node [left, color=blue] at  (-5.7,3.2) {$\mathbf{0}$};
\node [right, color=blue] at (-4.7,5.2) {$\mathbf{132}$};
\node [right, color=blue] at (-3.74,3.4) {$\mathbf{66}$};
\node [left, color=blue] at (-5.2,4.3){$\mathbf{66}$};

\node [below, color=black] at (0,0) {\small{$\orange{6}$}};  
\node [below, color=black] at (0,0.55) {\small{$\orange{3}$}};
\node [above, color=black] at (0,0.7) {\small{$\orange{3}$}};
\node [below, color=black] at (-1.5,1.5) {\small{$\orange{3}$}}; 
\node [below, color=black] at (-1.7,2.3) {\small{$\orange{1}$}};  
\node [above, color=black] at (-1.5,2.5) {\small{$\orange{1}$}}; 
\node [left, color=black] at (-2.2,2.8) {\small{$\orange{0}$}}; 
\node [above, color=black] at (-1.5,3) {\small{$\orange{1}$}}; 
\node [below, color=black] at (-1.2,4.7) {\small{$\orange{1}$}}; 
\node [below, color=black] at (-4.7,3.2) {\small{$\orange{0}$}};
\node [below, color=black] at (-4.7,4.3) {\small{$\orange{0}$}};
   \end{tikzpicture}
\end{center}
 \caption{The orders of vanishing $\ord_E(A_1)$, when $E$  is a curve associated to a vertex of the lotus of Figure \ref{fig:lotus3branches}  which allows to compute $(A_1 \cdot A_2)_O$ as $\ord_{E_{A_2}}(A_1)$}
\label{fig:ordvanish4}
   \end{figure}

Similarly to the case of multiplicities considered in Proposition \ref{prop:ordvanish}, local intersection numbers may be also computed as orders of vanishing on resolutions:

\begin{proposition}  
   \label{prop:intbr}
    Let $A$ be a branch and let $D$ be a curve singularity on $(S,O)$,  which does not contain $A$ in its support. Consider an embedded resolution $\pi : S_{\pi} \to S$ of $A$ such that the strict transforms $A_\pi$ and $D_\pi$ of $A$ and $D$ are disjoint. Denote by $\boxed{E_A}$ the unique irreducible component of the exceptional divisor of $\pi$ which intersects the strict transform $A_{\pi}$ of $A$ by $\pi$. Then:
      \[ (A \cdot D)_O = \ord_{E_A} (D).\]
\end{proposition}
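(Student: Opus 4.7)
The plan is to apply the projection formula (Theorem \ref{thm:projform}) to the proper morphism $\pi : S_\pi \to S$, the divisor $A_\pi$ on $S_\pi$ and the divisor $D$ on $S$. This choice is natural because the right-hand side of the conclusion, $\ord_{E_A}(D)$, is precisely the multiplicity of $E_A$ in the total transform $\pi^*(D)$, which will appear after expanding $A_\pi \cdot \pi^*(D)$ into components. The germ-theoretic setting is handled by restricting to a small enough representative of $(S,O)$ so that all pertinent intersections are concentrated above $O$.

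The first step is to identify $\pi_*(A_\pi) = A$. Since $\pi$ restricts to a biholomorphism $A_\pi \smallsetminus E_\pi \xrightarrow{\sim} A \smallsetminus \{O\}$, the restriction $A_\pi \to A$ has degree $1$, so this follows from Definition \ref{def:transfdiv}. Applying the projection formula then gives
\[
   (A \cdot D)_O = A \cdot D = \pi_*(A_\pi) \cdot D = A_\pi \cdot \pi^*(D),
\]
where the first equality holds because, on a small representative, the intersection number $A \cdot D$ is supported only at $O$.

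Next, I would decompose the pull-back according to Definitions \ref{def:transfdiv} and \ref{def:dival} as
\[
   \pi^*(D) = D_\pi + \sum_E \ord_E(D)\, E,
\]
the sum being taken over the irreducible components $E$ of $E_\pi$. Substituting into the previous equation yields
\[
   (A \cdot D)_O \,=\, A_\pi \cdot D_\pi \,+\, \sum_E \ord_E(D)\, (A_\pi \cdot E).
\]
The first term vanishes by hypothesis, since $A_\pi$ and $D_\pi$ are disjoint. For the remaining sum, the fact that $\pi$ is an \emph{embedded} resolution of $A$ guarantees that $\pi^{-1}(A) = A_\pi + E_\pi$ is a normal crossings divisor; together with the fact that $A_\pi$ is a branch not contained in $E_\pi$, this forces $A_\pi$ to intersect $E_\pi$ transversally at a single smooth point of $E_\pi$. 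Hence $A_\pi \cdot E_A = 1$ and $A_\pi \cdot E = 0$ for every other exceptional component $E$, giving $(A \cdot D)_O = \ord_{E_A}(D)$.

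The main technical point to watch is the justification of the projection formula in the germ setting of Definition \ref{def:resol}, where neither $A_\pi$ nor $D$ is compact. This is handled just as in the proof of Proposition \ref{prop:relblowup}: by properness of $\pi$, the intersection of $A_\pi$ with $\pi^*(D)$ is supported in the compact fiber $\pi^{-1}(O)$, so all relevant intersection numbers are well defined and the projection formula applies without change. The remaining ingredients (normal crossings of $\pi^{-1}(A)$, transversal intersection of $A_\pi$ with a unique exceptional component, and $\pi$ inducing an abstract resolution of $A$) are all standard consequences of Definition \ref{def:resol} and the uniqueness statement following it.
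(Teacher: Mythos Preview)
Your proof is correct and follows essentially the same approach as the paper: apply the projection formula to $\pi$, $A_\pi$, and $D$, expand $\pi^*(D)$ into the strict transform plus exceptional components, and use the disjointness of $A_\pi$ from $D_\pi$ together with the transversal intersection $A_\pi \cdot E_A = 1$. Your version is in fact more explicit than the paper's (which compresses the whole argument into a single chain of equalities), spelling out why $\pi_*(A_\pi)=A$ and why the germ-level application of the projection formula is legitimate.
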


\begin{proof}
    Using the projection formula (see Theorem \ref{thm:projform}), we get:
      \begin{eqnarray*}
          (A \cdot D)_O & = & \pi_*(A_{\pi}) \cdot D  =  A_{\pi} \cdot \pi^*(D) = 
               A_{\pi} \cdot \left( \sum_{E \subseteq E_{\pi}} \ord_E(D) \ E + D_{\pi} \right)  \\
         & = & A_{\pi} \cdot (\ord_{E_A} (D) \ E_A) = \ord_{E_A} (D). 
      \end{eqnarray*}
\end{proof}

One has the following immediate consequence of Proposition \ref{prop:ordvanish}:

\begin{proposition}  \label{prop:ordvanishspec}
      Let $L + L_1$ be a cross on $(S,O)$. Let $D = \ord_L(D) L + \ord_{L_1}(D) L_1 + D'$ be a curve singularity on $(S,O)$, where $L$ and $L_1$ are not components of $D'$.  Consider the blowup morphism $\pi_O: S_O \to S$ of $S$ at $O$, with exceptional divisor $E_O$. Then:
         \[\ord_{E_O}(D) = \ord_L(D) + \ord_{L_1}(D) + e_O(D').\]
\end{proposition}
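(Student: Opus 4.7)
The plan is to reduce the statement to the case already covered by Proposition \ref{prop:ordvanish} by using the multiplicativity of defining functions under decomposition of the divisor $D$ into its listed components, combined with the fact that $\ord_{E_O}$ is a valuation and hence additive on products.

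More precisely, I would begin by choosing local coordinates $(x,y)$ on $(S,O)$ adapted to the cross $L+L_1$, in the sense that $L = Z(x)$ and $L_1 = Z(y)$. Such coordinates exist because $L+L_1$ is, by definition of a cross, a reduced normal crossings divisor with two components. Let $g \in \calo_{S,O}$ be a defining function of $D'$; since $L$ and $L_1$ are not components of $D'$, $g$ is not divisible by $x$ or $y$. Then a defining function of $D$ is:
\[ f_D = x^{\ord_L(D)}\, y^{\ord_{L_1}(D)}\, g. \]

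Next, I would apply the defining property of the divisorial valuation $\ord_{E_O}$ introduced in Definition \ref{def:dival}. Since $\ord_{E_O}$ is a valuation (hence satisfies axiom (1) of Definition \ref{def:semival}), it is additive on products, which yields:
\[ \ord_{E_O}(D) = \ord_L(D)\cdot \ord_{E_O}(x) + \ord_{L_1}(D) \cdot \ord_{E_O}(y) + \ord_{E_O}(g). \]

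Finally, I would identify each of the three terms using Proposition \ref{prop:ordvanish}, which expresses the multiplicity of a plane curve singularity at $O$ as an order of vanishing along $E_O$. Applied to the smooth branches $L = Z(x)$ and $L_1 = Z(y)$, this gives $\ord_{E_O}(x) = e_O(L) = 1$ and $\ord_{E_O}(y) = e_O(L_1) = 1$. Applied to $D'$, it gives $\ord_{E_O}(g) = e_O(D')$. Substituting yields the desired formula. There is no real obstacle here beyond verifying that the decomposition of $f_D$ as the product above indeed holds, which follows immediately from the choice of coordinates and from the assumption that $L$ and $L_1$ do not appear in $D'$.
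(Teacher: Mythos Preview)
Your proposal is correct and is exactly the argument the paper has in mind: the paper simply states that the proposition is an ``immediate consequence of Proposition \ref{prop:ordvanish}'' without spelling out the details, and you have unpacked precisely that consequence via the product decomposition of $f_D$ and the additivity of the valuation $\ord_{E_O}$.
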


Therefore: 

\medskip
\begin{center}
\fbox{
\begin{minipage}{0.75\textwidth}
    \begin{corollary}
       \label{cor:secmethint}
    One may compute the orders of vanishing $\ord_E(A)$ of a plane curve singularity $A \hookrightarrow (S,O)$ using an associated lotus $\Lambda(\hat{\calc}_A)$ by the following recursive algorithm:
  \begin{enumerate}
      \item 
        Decorate the base and interior edges with the multiplicities of strict transforms of $A$, as explained in Corollary \ref{cor:multedges}. 
      \item 
        Then, starting from the petal corresponding to the active point $O$ of the active constellation of crosses $\hat{\calc}_A$, decorate the vertex opposite to the base $e$ of each petal with the sum of decorations of $e$ and of its two vertices. 
  \end{enumerate}
\end{corollary}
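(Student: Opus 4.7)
The plan is to prove the equality between the algorithm output and $\ord_{E_P}(A)$ by induction on the partial order on active points of $\hat{\calc}_A$ (equivalently, on the lateral vertices of $\Lambda(\hat{\calc}_A)$, via Remark~\ref{rem:oneone}), following the order in which petals are added by the recursive construction of Definition~\ref{def:lotusfacc}. The invariant I will maintain is that each vertex $E$ of the lotus processed so far carries as its decoration exactly $\ord_E(A)$, where the valuation $\ord_E$ is interpreted as that of the corresponding branch on $(S,O)$ (for base vertices) or of the corresponding exceptional component (for lateral vertices). The base case is immediate: each base vertex of $\Lambda(\hat{\calc}_A)$ corresponds to a branch $C$ on $(S,O)$, and its initial decoration $\ord_C(A)$ is simply the multiplicity of $C$ as an irreducible component of $A$, which vanishes unless $C$ is a branch of $A$ and, in the reduced case, equals $1$ precisely for the arrowhead base vertices, matching the initialization visible in Figures~\ref{fig:ordvanish1}--\ref{fig:ordvanish4}.

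For the inductive step, I would consider the petal $\Delta_P$ created when an active point $P$ is activated, with cross $X_P = F_1 + F_2$ on the germ $(S^P, P)$ and apex $E_P$. By Corollary~\ref{cor:multedges}, the base edge of $\Delta_P$ (which is dual to $X_P$) carries the weight $e_P(A)$; and by the inductive hypothesis, the two base vertices carry the decorations $\ord_{F_1}(A)$ and $\ord_{F_2}(A)$, since each $F_i$ is either itself a base vertex or corresponds to a lateral vertex that precedes $E_P$ in the partial order. The algorithm thus assigns to $E_P$ the value $e_P(A) + \ord_{F_1}(A) + \ord_{F_2}(A)$, and my task becomes to identify it with $\ord_{E_P}(A)$.

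The key tool will be Proposition~\ref{prop:ordvanishspec}, applied on the smooth germ $(S^P, P)$ with cross $F_1 + F_2$ to the germ $D$ at $P$ of the total transform of $A$ on $S^P$. Writing the cross decomposition $D = \ord_{F_1}(D) F_1 + \ord_{F_2}(D) F_2 + D'$, that proposition will yield
\[
\ord_{E_P}(D) \;=\; \ord_{F_1}(D) + \ord_{F_2}(D) + e_P(D').
\]
I then need three identifications. First, $\ord_{E_P}(D) = \ord_{E_P}(A)$, because the divisorial valuation $\ord_{E_P}$ on $\calo_{S,O}$ factors through pull-back to $S^P$ and then through the blowup of $P$, after which Proposition~\ref{prop:ordvanish} identifies the order of vanishing along $E_P$ with the multiplicity at $P$. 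Second, $\ord_{F_i}(D) = \ord_{F_i}(A)$, by the very definition of the divisorial valuation $\ord_{F_i}$ measuring the multiplicity of $F_i$ inside the total transform of $A$ on $S^P$. The main obstacle will be the third identification $e_P(D') = e_P(A)$: the admissibility of the cross $X_P$ (Definition~\ref{def:admcross}) guarantees that $F_1$ and $F_2$ exhaust the irreducible exceptional components through $P$, and I will use that for an adapted active constellation of crosses taken with its natural structure, the curvetta component (if any) of $X_P$ at an active point $P$ is chosen outside the branches of $A$ (otherwise, since such a curvetta would be smooth and transversal to the exceptional component, the total transform of $A$ would already be normal crossing along it, which is incompatible with $P$ being an active point whose activation serves to resolve $A$). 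Under these two conditions $D'$ coincides with the germ at $P$ of the strict transform of $A$ on $S^P$, whose multiplicity at $P$ equals $e_P(A)$ by Definition~\ref{def:multinpoint}, and combining the three identifications then yields the required equality.
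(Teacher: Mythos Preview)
Your proposal is correct and takes exactly the approach the paper intends: the corollary is stated immediately after Proposition~\ref{prop:ordvanishspec} with only the word ``Therefore:'' as justification, and your induction on the partial order of active points spells out precisely how that proposition, applied at each $(S^P,P)$ with cross $F_1+F_2$ and $D$ the germ of the total transform of $A$, yields the recursion.

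One small weakness: your parenthetical argument that a curvetta at an \emph{active} point $P$ cannot be a strict transform branch of $A$ is not airtight. You claim that if it were, the total transform of $A$ would be normal crossing at $P$, forcing $P$ to be inactive; but this fails if some \emph{other} branch of $A$ also passes through $P$ (then $P$ is genuinely active yet the adapted condition of Definition~\ref{def:adaptedactconst} does not prevent choosing that resolved branch as the curvetta), and it also fails for a non-minimal embedded resolution in which an already-resolved point is blown up (there the adapted condition even \emph{forces} $X_P$ to be the cross formed by the exceptional component and the branch). In either scenario your invariant would place $\ord_{B_0}(A)=1$ at that base vertex and overcount $\ord_{E_P}(A)$ by $1$, whereas the $0$-initialization seen in Figures~\ref{fig:ordvanish1}--\ref{fig:ordvanish3} gives the correct value. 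The paper silently avoids this by the standing convention that the curvettas $L_i$ are chosen distinct from the branches $A_l$; once you invoke that convention explicitly, your third identification $e_P(D')=e_P(A)$ goes through and the proof is complete.
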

\end{minipage}
}
\end{center}
\medskip

  \begin{example}
      \label{ex:ordercomp}
    Figures \ref{fig:ordvanish1}, \ref{fig:ordvanish2} and \ref{fig:ordvanish3} illustrate 
    the result of applying the algorithm of Corollary \ref{cor:secmethint} to Figures \ref{fig:multbranch1}, \ref{fig:multbranch2} and \ref{fig:multbranch3} respectively. Figure \ref{fig:ordvanish4} illustrates the result of applying the same algorithm on the lotus of Figure \ref{fig:ordvanish3} in order to compute the intersection number $(A_1 \cdot A_2)_O$ as $\ord_{E_{A_2}}(A_1)$. Note that we do not need to compute the order of vanishing $\ord_{E_{A_1}}(A_1)$ (equal, by the way, to $44 + 21+1 =66$) in order to reach by this algorithm the value of $\ord_{E_{A_2}}(A_1)$.
  \end{example}

\begin{remark}
    \label{rem:charpolzeta}
    Once we know the orders of vanishing of $A$ and the log-discrepancies as decorations of the vertices of the lotus, one may compute the {\em topological Zeta function}, the {\em characteristic polynomial of the monodromy} and the {\em multivariable Alexander polynomial} of $A$ by looking at those values at particular vertices, as explained by Wall in \cite[Sections 8.4 and 10.3]{W 04}. 
\end{remark}

\medskip
\section{The Eggers-Wall tree associated to a  lotus}  
\label{sec:EWfromlot}

In this section we explain following \cite[Section 1.6.1]{GBGPPP 20} how to associate an  \emph{Eggers-Wall} tree 
$\Theta_{L}(A)$ to a  reduced  plane curve singularity $A \hookrightarrow (S,O)$, relative to a smooth branch $L$ (see Definition \ref{def:EW}).  Then, we describe the main properties of the natural embedding built in \cite[Section 8]{GBGPPP 19} of $\Theta_{L}(A)$ in the space of semivaluations of $(S,O)$  
(see Proposition \ref{prop:ew-emb}). Finally, we show how to construct $\Theta_{L}(A)$ from an associated lotus $\hat{\calc}_A$, decorated by log-discrepancies $\lambda(E)$ and orders of vanishing $\ord_E(L)$, whose computation was presented in Section \ref{sec:ldov} (see Proposition \ref{prop:fromlotustoEW}).

\medskip

We denote by 
  \[ 
  \boxed{\Cc\{ x^{1/ \Nn} \}} := \bigcup_{n \geq 1 } \Cc\{ x^{1/ n} \} 
  \] 
  the ring of  convergent {\bf Newton-Puiseux series} and by  $\boxed{\nu_x}$ its {\bf $x$-adic valuation},  defined on a nonzero series $\xi  = \sum c_{\alpha} x^{\alpha} \in  \Cc\{x^{1/ n}\}$ by: 
  \[ \boxed{\nu_x(\xi)} := \min\{ \alpha \mid c_\alpha \ne 0 \} \in \Zz_{\geq 0}. \]

\medskip
 Consider a {\em reduced} curve singularity $A \hookrightarrow (S,O)$. Let $L$ be a smooth branch on $(S,O)$ and $(x,y)$ a local coordinate system on $(S,O)$ such that $L = Z(x)$. We assume that $L$ is not a component of $A$.  Let $f \in \Cc\{x,y\}$ be a defining function of $A$ in this coordinate system.  As a consequence of the {\bf Newton-Puiseux Theorem} (see \cite[Theorem 1.2.20]{GBGPPP 20} and the references preceding it), there exists a finite subset $\boxed{\cZ_x(A)} \subset \Cc\{ x^{1/ \Nn} \}$ of {\bf zeros of $f$ relative to $x$}
   and a unit $u(x,y)$ of the local ring $\Cc\{x,y\}$, such that:
    \begin{equation}
      \label{fmla:NewtPuiseux}
                 f(x,y) = u(x,y) \prod_{\xi \in \cZ_x(A)} (y - \xi). 
      \end{equation}

The set $\cZ_x(A)$  is equal to the disjoint union of the 
sets $\cZ_x(A_l)$, when $A_l$ varies among the branches of $A$. In turn, each set $\cZ_x(A_l)$ is an orbit of the action on $\Cc\{ x^{1/ \Nn} \}$ of the Galois group of the field extension induced by the ring extension $\Cc\{ x \} \hookrightarrow \Cc\{ x^{1/ \Nn} \}$. Namely, if $\xi \in \Cc\{ x^{1/ \Nn} \}$ belongs to $\cZ_x(A_l)$ then, after writing $\xi = \eta(x^{\frac{1}{n}})$ with $\eta \in \Cc\{t\}$ and $n \geq 1$, we get:
\begin{equation}
\label{eq:Zer}
   \cZ_x(A_l) = \{ \eta(\rho \cdot x^{\frac{1}{n}}); \ \  \rho \in \Cc, \rho^n =1  \}. 
   \end{equation}
  That is, the Galois conjugates of $\eta(x^{\frac{1}{n}})$ are simply obtained by replacing $x^{\frac{1}{n}}$ by all possible $n$-th roots of $x$. Note that the elements $\eta(\rho \cdot x^{\frac{1}{n}})$ of the set $\cZ_x(A_l)$ are pairwise distinct if and only if $n$ is the lowest common denominator of the exponents of $\xi$.

The elements of $\cZ_x(A)$ allow to define the following important {\em invariants} (see Remark \ref{rem:genericEW}) of $A$ relative to the smooth branch $L$:

\begin{definition} 
   \label{def:charexpcoinc}
    The set of {\bf characteristic exponents} of a branch $A_l$ of $A$ relative to $L$  is 
    \[\boxed{{\mathrm{Ch}}_L (A_l)}  := \{ \nu_x(\xi - \xi') , \:  \xi, \xi'  \in \cZ_x(A_l), \: \xi \neq \xi'  \}   \subseteq \Qq_{> 0}. \]
    
  The {\bf order of coincidence}  of two distinct branches $A_l$ and $A_m$ of $A$ relative to $L$  is  
  \[  \boxed{k_L (A_l, A_m)} := \max\{ \nu_x(\xi - \xi') , \:  \xi \in \cZ_x(A_l), \: \xi' \in \cZ_x(A_m) \} \in \Qq_{> 0}. \]
\end{definition}

\begin{example}
   \label{ex:charexpordcoinc}
    Assume that $A = A_1 + A_2$, where $x^{\frac{3}{2}} + x^{\frac{5}{2}} + x^{\frac{11}{4}} \in \cZ_x(A_1)$ and $ - x^{\frac{3}{2}} +  x^{\frac{5}{2}} +  x^{\frac{11}{4}} \in \cZ_x(A_2)$. As both series belong to $\Cc\{x^{\frac{1}{4}}\}$, we get their Galois conjugates replacing successively $x^{\frac{1}{4}}$ by $x^{\frac{1}{4}}$, $ i x^{\frac{1}{4}}$, $- x^{\frac{1}{4}}$, $- i x^{\frac{1}{4}}$. Therefore: 
      \begin{eqnarray*}
          \cZ_x(A_1) = \{ x^{\frac{3}{2}} + x^{\frac{5}{2}} + x^{\frac{11}{4}}, \ - x^{\frac{3}{2}} - x^{\frac{5}{2}} -i  x^{\frac{11}{4}}, \ x^{\frac{3}{2}} + x^{\frac{5}{2}} - x^{\frac{11}{4}},  \ - x^{\frac{3}{2}} - x^{\frac{5}{2}} + ix^{\frac{11}{4}}  \},  \\
            \cZ_x(A_2) = \{ - x^{\frac{3}{2}} - x^{\frac{5}{2}} + x^{\frac{11}{4}}, \ x^{\frac{3}{2}} + x^{\frac{5}{2}} -i  x^{\frac{11}{4}}, \ - x^{\frac{3}{2}} -  x^{\frac{5}{2}}- x^{\frac{11}{4}},  \  x^{\frac{3}{2}}+ x^{\frac{5}{2}} + ix^{\frac{11}{4}}  \}. 
      \end{eqnarray*}
    We get:
      \[ \begin{array}{l}
          {\mathrm{Ch}}_L (A_1)  =  {\mathrm{Ch}}_L (A_2) = \{ \frac{3}{2}, \frac{11}{4} \},\\
          k_L (A_1, A_2)  =  \frac{11}{4}.
      \end{array} \]
\end{example}

The joint information contained in the sets ${\mathrm{Ch}}_L (A_l)$ of characteristic exponents and in the orders of coincidence $k_L (A_l, A_m)$ may be encoded geometrically in the {\em Eggers-Wall tree of $A$ relative to $L$} (see \cite[Section 1.6.6]{GBGPPP 20} for historical information about the evolution of this notion):

\medskip
\begin{definition} 
   \label{def:EW}
      Let $A \hookrightarrow (S,O)$ be a reduced plane curve singularity and let $L \hookrightarrow (S,O)$ be a smooth branch. Let $(x,y)$ be a local coordinate system on $(S,O)$ such that $L = Z(x)$.          

              \noindent 
              $\bullet$
		The {\bf Eggers-Wall tree} $\boxed{\Theta_L(A_l)}$ of a branch $A_l \neq L$ of $A$ relative to $L$ is a compact segment endowed with a homeomorphism $\boxed{\ex_L}: \Theta_L(A_l) \to [0, \infty]$ called the {\bf exponent function},  and with {\bf marked points}, which are the preimages by the exponent function of the characteristic exponents of $A_l$ relative to $L$. The point $(\ex_L)^{-1}(0)$ is {\bf labeled by $L$} and $(\ex_L)^{-1}(\infty)$ is {\bf labeled by $A_l$}. The tree $\Theta_L (A_l)$ is also endowed with the  {\bf index function}   $\boxed{\de_L}: \Theta_L(A_l) \to \Zz_{>0}$, whose value $\de_L(P)$ on a point $P \in \Theta_L(A_l)$ is equal to the lowest common multiple of the denominators of the exponents of the marked points belonging to the half-open segment $[LP)$.

              \noindent 
        $\bullet$ 
          The {\bf Eggers-Wall tree} $\boxed{\Theta_L(L)}$ is reduced to a point labeled by $L$, at which $\ex_L(L) = 0$ and $\de_L(L) = 1$.

              \noindent 
        $\bullet$
          The {\bf Eggers-Wall tree} $\boxed{\Theta_L (A)}$ of $A$ relative to $L$ is obtained from the disjoint union of the Eggers-Wall trees $\Theta_L(A_l)$ of its branches $A_l$ by identifying, for each pair of distinct branches $A_l$ and $A_m$ of $A$, their points with equal exponents not greater than the order of coincidence $k_L(A_l, A_m)$. Its {\bf marked points} are its ramification points and the images of the marked points of the trees $\Theta_L(A_l)$ by the identification map. Its {\bf labeled points} are analogously the images of the labeled points of the trees $\Theta_L(A_l)$, the identification map being label-preserving. It is endowed with an {\bf exponent function} $\boxed{\ex_L}: \Theta_L(A) \to [0, \infty]$ and an {\bf index function}  $\boxed{\de_L}: \Theta_L(A) \to \Zz_{>0}$ obtained by gluing the exponent functions  and index functions on the trees $\Theta_L(A_l)$ respectively.
  
    We say that a point $P \in \Theta_L (A)$ is {\bf rational} if $\ex_L (P) \in \Qq_{\geq 0} \cup \{\infty\}$.

    The tree $\Theta_L (A)$ is {\bf rooted at the point labeled by $L$}. Its root is the unique minimum for the partial order on $\Theta_L (A)$ defined by:
       \[ \boxed{P \preceq_L Q} \  \Longleftrightarrow \  P \in [L Q] \]
    whenever  $P, Q  \in \Theta_L (A)$.
    The {\bf leaves} of $\Theta_L (A)$ are its ends points different from $L$, that is, the maximal points of the tree with respect to $\preceq_L$.
\end{definition}

Note that inside the Eggers-Wall tree $\Theta_L(A)$ of a reduced plane curve singularity with several branches, the Eggers-Wall tree $\Theta_L(A_l)$ of each of the branches $A_l$ of $A$ is equal as a topological space to the unique segment $[LA_l]$ joining the points $L$ and $A_l$. It is in order to have the same property for the branch $L$ that we define $\Theta_L(L) := [LL] = \{L\}$.


 \begin{figure}[h!]
    \begin{center}
\begin{tikzpicture}[scale=1]
\begin{scope}[shift={(0,0)}]

     \draw [->, color=blue,  line width=2.5pt](0,0) -- (0,4.5);
     \draw [->, color=blue,  line width=2.5pt](0,1) -- (0,0);

       \node [below, color=black] at (0,0) {$L$};
        \node [above, color=black] at (0,4.5) {$A_1$};
        \node [below, color=black] at (0,-1) {$\Theta_L(A_1)$};

    \node[draw,circle, inner sep=1.5pt,color=violet, fill=violet] at (0,1.5){};
    \node[draw,circle, inner sep=1.5pt,color=violet, fill=violet] at (0,3){};

        \node [left, color=blue] at (0,0.75) {\small{$1$}};
        \node [left, color=blue] at (0,2.25) {\small{$2$}};
        \node [left, color=blue] at (0,3.75) {\small{$4$}};
        \node [right, color=violet] at (0,1.5) {\small{$\frac{3}{2}$}}; 
        \node [right, color=violet] at (0,3) {\small{$\frac{11}{4}=k_L (A_1, A_2)=$}}; 
                            
      \end{scope}
      
      \begin{scope}[shift={(3.3,0)}]

     \draw [->, color=blue,  line width=2.5pt](0,0) -- (0,4.5);
     \draw [->, color=blue,  line width=2.5pt](0,1) -- (0,0);

       \node [below, color=black] at (0,0) {$L$};
       \node [above, color=black] at (0,4.5) {$A_2$};
       \node [below, color=black] at (0,-1) {$\Theta_L(A_2)$};

    \node[draw,circle, inner sep=1.5pt,color=violet, fill=violet] at (0,1.5){};
    \node[draw,circle, inner sep=1.5pt,color=violet, fill=violet] at (0,3){};

                \node [right, color=blue] at (0,0.75) {\small{$1$}};
                \node [right, color=blue] at (0,2.25) {\small{$2$}};
                \node [right, color=blue] at (0,3.75) {\small{$4$}};
           \node [left, color=violet] at (0,1.5) {\small{$\frac{3}{2}$}}; 
            \node [left, color=violet] at (0,3) {\small{$\frac{11}{4}$}}; 
                            
      \end{scope}

  \draw[->][thick, color=black](4,3) .. controls (5,2.5) ..(6,3);

      \begin{scope}[shift={(8,0)}]

     \draw [-, color=blue,  line width=2.5pt](0,0) -- (0,3);
     \draw [->, color=blue,  line width=2.5pt](0,3) -- (1,4.3);
      \draw [->, color=blue,  line width=2.5pt](0,3) -- (-1,4.3);
     \draw [->, color=blue,  line width=2.5pt](0,1) -- (0,0);

       \node [below, color=black] at (0,0) {$L$};
       \node [above, color=black] at (-1,4.3) {$A_1$};
           \node [above, color=black] at (1,4.3) {$A_2$};
           \node [below, color=black] at (0,-1) {$\Theta_L(A_1+A_2)$};

    \node[draw,circle, inner sep=1.5pt,color=violet, fill=violet] at (0,1.5){};
    \node[draw,circle, inner sep=1.5pt,color=violet, fill=violet] at (0,3){};

                \node [right, color=blue] at (0,0.75) {\small{$1$}};
                 \node [right, color=blue] at (0,2.25) {\small{$2$}};
                  \node [left, color=blue] at (-0.5,3.55) {\small{$4$}};
                    \node [right, color=blue] at (0.5,3.55) {\small{$4$}};
           \node [left, color=violet] at (0,1.5) {\small{$\frac{3}{2}$}}; 
            \node [left, color=violet] at (0,2.9) {\small{$\frac{11}{4}$}}; 
                            
      \end{scope}
     
   \end{tikzpicture}
\end{center}
 \caption{Construction of the Eggers-Wall tree of the plane curve singularity of Example \ref{ex:charexpordcoinc}}
\label{fig:constrEW}
   \end{figure}
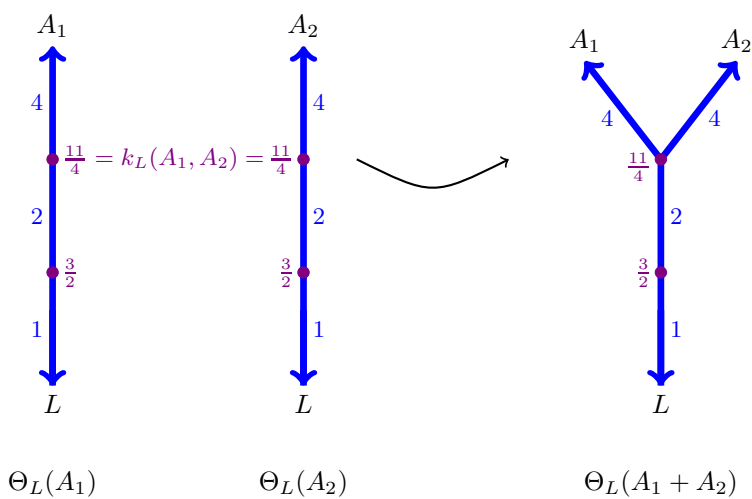


\begin{example}
  \label{ex:constrEW}
     In Figure \ref{fig:constrEW} we illustrate the construction of the Eggers-Wall tree $\Theta_L(A_1 + A_2)$ of the plane curve singularity of Example \ref{ex:charexpordcoinc} by gluing the Eggers-Wall trees $\Theta_L(A_1)$ and $\Theta_L(A_2)$.
\end{example}

\begin{figure}[h!]
    \begin{center}
\begin{tikzpicture}[scale=1.2]
\begin{scope}[shift={(-20,0)}]

 \draw [->, color=black, thick](1,0) -- (0,0);
    \draw [-, color=black, thick](0,0) -- (-1,0);
    \draw [-, color=black, thick](0,1.5) -- (-1,0);
      \draw [-, color=black, thick](0,1.5) -- (1,0);
       \draw [-, color=black, thick](-0.5,3/4) -- (0.5,3/4);
       \draw [-, color=black, thick](-1,0) -- (0.5,3/4);
       \draw [->, color=black, thick](0,1.5) -- (0,2.5);
       
  \node[draw,circle, inner sep=1pt,color=black, fill=black] at (-1,0){};
   \node[draw,circle, inner sep=1pt,color=black, fill=black] at (1,0){};
     \node[draw,circle, inner sep=1pt,color=black, fill=black] at (0,1.5){};
      \node[draw,circle, inner sep=1pt,color=black, fill=black] at (-0.5,3/4){};
       \node[draw,circle, inner sep=1pt,color=black, fill=black] at (0.5,3/4){};

\node [right, color=black] at (0,2.5) {$A$};

\node [left, color=black] at (-1,0) {\small{$(\red{1},\blue{0})$}};  
\node [right, color=black] at (1,0) {\small{$(\red{1},\blue{1})$}};  
 \node [right, color=black] at (0,1.5) {\small{$(\red{5},\blue{2}) $}};  
 \node [left, color=black] at (-0.5,3/4) {\small{$(\red{3},\blue{1})$}}; 
  \node [right, color=black] at (0.5,3/4) {\small{$(\red{2},\blue{1})$}}; 
  
    \draw[->][thick, color=black](-1,-0.75) .. controls (-1.5,-2) ..(-1,-2.75); 
   
   \end{scope}

   \begin{scope}[shift={(-20,-4)}]
  
 \draw [->, color=black, thick](1,0) -- (0,0);
    \draw [-, color=black, thick](0,0) -- (-1,0);
    \draw [-, color=blue, line width=2.5pt](0,1.5) -- (-1,0);
      \draw [-, color=blue, line width=2.5pt](0,1.5) -- (1,0);
       \draw [-, color=black, thick](-0.5,3/4) -- (0.5,3/4);
       \draw [-, color=black, thick](-1,0) -- (0.5,3/4);
        \draw [->, color=blue, line width=2.5pt](0,1.5) -- (0,2.5);

  \node[draw,circle, inner sep=1.5pt,color=violet, fill=violet] at (-1,0){};
   \node[draw,circle, inner sep=1.5pt,color=violet, fill=violet] at (1,0){};
     \node[draw,circle, inner sep=1.5pt,color=violet, fill=violet] at (0,1.5){};
      \node[draw,circle, inner sep=1.5pt,color=black, fill=black] at (-0.5,3/4){};
       \node[draw,circle, inner sep=1.5pt,color=black, fill=black] at (0.5,3/4){};

\node [right, color=black] at (0.1,2.5) {$A$};
 \node [right, color=black] at (0,1.5) {\small{$(\red{5},\blue{2}) \to \violet{\frac{3}{2}}$}};  
 \node [right, color=black] at (1,0) {\small{$(\red{1},\blue{1}) \to \violet{0}$}};  
 \node [left, color=black] at (-1,0) {\small{$(\red{1},\blue{0})\to \violet{\infty}$}};  

\end{scope}

\begin{scope}[shift={(-14,-3)}]
   
  \draw[->][thick, color=black](-4,0) .. controls (-3,-0.5) ..(-2,0); 
   
     \draw [->, color=blue,  line width=2.5pt](0,0) -- (0,3);
     \draw [->, color=blue,  line width=2.5pt](0,1) -- (0,0);
      \node[draw,circle, inner sep=1.5pt,color=violet, fill=violet] at (0,1.5){};
      \draw [->, color=blue,  line width=2.5pt](0,1.5) -- (-1.4,1.5);

       \node [below, color=black] at (0,0) {$L$};
         \node [left, color=black] at (-1.4,1.5) {$L_{1}$};
           \node [above, color=black] at (0,3) {$A$};

           \node [right, color=violet] at (0,0) {\small{$0$}};  
           \node [right, color=violet] at (0,1.5) {\small{$\frac{3}{2}$}}; 
                \node [below, color=violet] at (-1.3,1.4) {\small{$\infty$}};

                 \node [right, color=blue] at (0,0.75) {\small{$1$}};
                 \node [right, color=blue] at (0,2.25) {\small{$2$}};
                   \node [above, color=blue] at (-0.7,1.5) {\small{$1$}};

      \end{scope}
         
   \end{tikzpicture}
\end{center}
 \caption{Going from $\lambda(E)$ and $\ord_E(L)$ to  $\Theta_L(\hat{A})$ when $\hat{A} = L + L_1 + A$}
\label{fig:logdistoEW1}
   \end{figure}
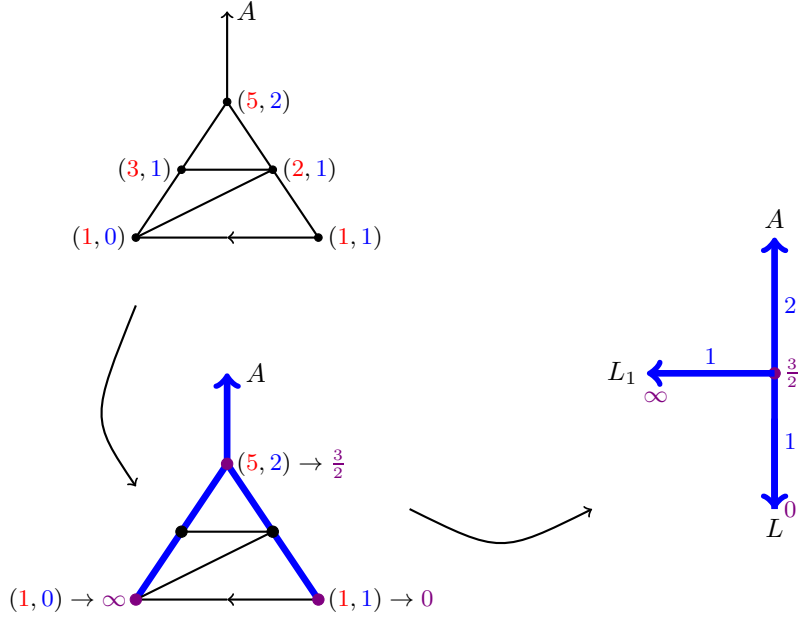

    \begin{figure}[h!]
    \begin{center}
\begin{tikzpicture}[scale=1.2]
\begin{scope}[shift={(-20,0)}]
   
    \draw [->, color=black, thick](1,0) -- (0,0);
    \draw [-, color=black, thick](0,0) -- (-1,0);
    \draw [-, color=black, thick](0,1.5) -- (-1,0);
      \draw [-, color=black, thick](0,1.5) -- (1,0);
       \draw [-, color=black, thick](-0.5,3/4) -- (0.5,3/4);
       \draw [-, color=black, thick](-1,0) -- (0.5,3/4);

        \draw [->, color=black, thick](0,1.5) -- (-1.5,1.5);
   \draw [-, color=black, thick](-1.5,1.5) -- (-3,1.5);
     \draw [-, color=black, thick](0,1.5) -- (-1.5,4);
      \draw [-, color=black, thick](-1.5,4) -- (-3,1.5);
      \draw [->, color=black, thick](-1.5,4) -- (-1.5,5);
     \draw [-, color=black, thick](-3/4,11/4) -- (-3,1.5);
     \draw [-, color=black, thick](-3/4,11/4) -- (-2.5,2.34);
       \draw [-, color=black, thick](-3/4,11/4) -- (-2,3.17);
        \draw [-, color=black, thick](-2,3.17) -- (-1.5,4);
        \draw [-, color=black, thick](-3,1.5) -- (-2.5,2.34);
        \draw [-, color=black, thick](-3,1.5) -- (-2.5,2.34);

  \node[draw,circle, inner sep=1pt,color=black, fill=black] at (-1,0){};
   \node[draw,circle, inner sep=1pt,color=black, fill=black] at (1,0){};
     \node[draw,circle, inner sep=1pt,color=black, fill=black] at (0,1.5){};
      \node[draw,circle, inner sep=1pt,color=black, fill=black] at (-0.5,3/4){};
       \node[draw,circle, inner sep=1pt,color=black, fill=black] at (0.5,3/4){};
        \node[draw,circle, inner sep=1pt,color=black, fill=black] at (-3,1.5){};
         \node[draw,circle, inner sep=1pt,color=black, fill=black] at (-1.5,4){};
         \node[draw,circle, inner sep=1pt,color=black, fill=black] at (-3/4,11/4){};
         \node[draw,circle, inner sep=1pt,color=black, fill=black] at (-2.5,2.34){};
         \node[draw,circle, inner sep=1pt,color=black, fill=black] at (-2,3.17){};
                 
   \node [below, color=black] at (-1,0) {$L_{1}$};
\node [below, color=black] at (1,0) {$L$};
\node [below, color=black] at (-3,1.5) {$L_{2}$};
\node [right, color=black] at (-1.5,5) {$A_1$};

 \node [left, color=black] at (-1,0) {\small{$(\red{1},\blue{0})$}};  
\node [right, color=black] at (1,0) {\small{$(\red{1},\blue{1})$}};  
 \node [right, color=black] at (0,1.5) {\small{$(\red{5},\blue{2}) $}};  
 \node [left, color=black] at (-0.5,3/4) {\small{$(\red{3},\blue{1})$}}; 
  \node [right, color=black] at (0.5,3/4) {\small{$(\red{2},\blue{1})$}}; 
     \node [left, color=black] at (-3,1.5) {\small{$(\red{1},\blue{0})$}}; 
     \node [left, color=black] at (-1.5,4.2) {\small{$(\red{19},\blue{6})$}}; 
    \node [right, color=black] at (-3/4,11/4) {\small{$(\red{6},\blue{2})$}};  
   \node [left, color=black] at (-2.5,2.34) {\small{$(\red{7},\blue{2})$}}; 
   \node [left, color=black] at (-1.8,3.4) {\small{$(\red{13},\blue{4})$}}; 
 
    \draw[->][thick, color=black](-1,-0.75) .. controls (-1.5,-2) ..(-1,-2.75); 
   
   \end{scope}

   \begin{scope}[shift={(-20,-8)}]
   
    \draw [->, color=black, thick](1,0) -- (0,0);
    \draw [-, color=black, thick](0,0) -- (-1,0);
    \draw [-, color=blue, line width=2.5pt](0,1.5) -- (-1,0);
      \draw [-, color=blue, line width=2.5pt](0,1.5) -- (1,0);
       \draw [-, color=black, thick](-0.5,3/4) -- (0.5,3/4);
       \draw [-, color=black, thick](-1,0) -- (0.5,3/4);

        \draw [->, color=black, thick](0,1.5) -- (-1.5,1.5);
   \draw [-, color=black, thick](-1.5,1.5) -- (-3,1.5);
     \draw [-, color=blue, line width=2.5pt](0,1.5) -- (-1.5,4);
      \draw [-, color=black, thick](-1.5,4) -- (-3,1.5);
      \draw [->, color=blue, line width=2.5pt](-1.5,4) -- (-1.5,5);
     \draw [-, color=black, thick](-3/4,11/4) -- (-3,1.5);
     \draw [-, color=black, thick](-3/4,11/4) -- (-2.5,2.34);
       \draw [-, color=black, thick](-3/4,11/4) -- (-2,3.17);
   
        \draw [-, color=blue, line width=2.5pt](-2,3.17) -- (-1.5,4);
        \draw [-, color=blue, line width=2.5pt](-3,1.5) -- (-2.5,2.34);
        \draw [-, color=blue, line width=2.5pt](-3,1.5) -- (-2.5,2.34);
           \draw [-, color=blue, line width=2.5pt](-2,3.17) -- (-2.5,2.34);

  \node[draw,circle, inner sep=1.5pt,color=violet, fill=violet] at (-1,0){};
   \node[draw,circle, inner sep=1.5pt,color=violet, fill=violet] at (1,0){};
     \node[draw,circle, inner sep=1.5pt,color=violet, fill=violet] at (0,1.5){};
      \node[draw,circle, inner sep=1.5pt,color=black, fill=black] at (-0.5,3/4){};
       \node[draw,circle, inner sep=1.5pt,color=black, fill=black] at (0.5,3/4){};
        \node[draw,circle, inner sep=1.5pt,color=violet, fill=violet] at (-3,1.5){};
         \node[draw,circle, inner sep=1.5pt,color=violet, fill=violet] at (-1.5,4){};
         \node[draw,circle, inner sep=1.5pt,color=black, fill=black] at (-3/4,11/4){};
         \node[draw,circle, inner sep=1.5pt,color=black, fill=black] at (-2.5,2.34){};
         \node[draw,circle, inner sep=1.5pt,color=black, fill=black] at (-2,3.17){};

   \node [below, color=black] at (-1,0) {$L_{1}$};
\node [below, color=black] at (1,0) {$L$};
\node [below, color=black] at (-3,1.5) {$L_{2}$};
\node [right, color=black] at (-1.5,5) {$A_1$};

\node [left, color=black] at (-1,0) {\small{$(\red{1},\blue{0})\to \violet{\infty}$}};  
\node [right, color=black] at (1,0) {\small{$(\red{1},\blue{1}) \to \violet{0}$}};  
 \node [right, color=black] at (0,1.5) {\small{$(\red{5},\blue{2}) \to \violet{\frac{3}{2}}$}};   
     \node [left, color=black] at (-3,1.5) {\small{$(\red{1},\blue{0}) \to \violet{\infty}$}}; 
     \node [left, color=black] at (-1.6,4.2) {\small{$(\red{19},\blue{6})\to \violet{\frac{13}{6}}$}}; 
      
   \end{scope}

   \begin{scope}[shift={(-15,-6)}]
   
  \draw[->][thick, color=black](-4,1) .. controls (-3,0.5) ..(-2,1); 
   
     \draw [->, color=blue,  line width=2.5pt](0,0) -- (0,4.5);
     \draw [->, color=blue,  line width=2.5pt](0,1) -- (0,0);

      \draw [->, color=blue,  line width=2.5pt](0,1.5) -- (-1.4,1.5);
       \draw [->, color=blue,  line width=2.5pt](0,3) -- (-1.4,3);

       \node [below, color=black] at (0,0) {$L$};
         \node [left, color=black] at (-1.4,1.5) {$L_{1}$};
           \node [above, color=black] at (0,4.5) {$A_1$};
           \node [left, color=black] at (-1.4,3) {$L_{2}$};
       
            \node[draw,circle, inner sep=1.5pt,color=violet, fill=violet] at (0,1.5){};
      \node[draw,circle, inner sep=1.5pt,color=violet, fill=violet] at (0,3){};

                           \node [right, color=blue] at (0,0.75) {\small{$1$}};
                 \node [right, color=blue] at (0,2.25) {\small{$2$}};
                  \node [right, color=blue] at (0,3.75) {\small{$6$}};
                   \node [above, color=blue] at (-0.7,1.5) {\small{$1$}};
                   \node [above, color=blue] at (-0.7,3) {\small{$2$}};
                    \node [right, color=violet] at (0,0) {\small{$0$}};  
           \node [right, color=violet] at (0,1.5) {\small{$\frac{3}{2}$}}; 
            \node [right, color=violet] at (0,3) {\small{$\frac{13}{6}$}}; 
             \node [below, color=violet] at (-1.3,2.9) {\small{$\infty$}};
                \node [below, color=violet] at (-1.3,1.4) {\small{$\infty$}};

      \end{scope}
         
   \end{tikzpicture}
\end{center}
 \caption{Going from $\lambda(E)$ and $\ord_E(L)$ to  $\Theta_L(\hat{A})$ when $\hat{A} =L + L_1 + L_2 + A_1$}
\label{fig:logdistoEW2}
   \end{figure}

     \begin{figure}[h!]
    \begin{center}
\begin{tikzpicture}[scale=1.2]
\begin{scope}[shift={(-20,0)}]
   
    \draw [->, color=black, thick](1,0) -- (0,0);
    \draw [-, color=black, thick](0,0) -- (-1,0);
    \draw [-, color=black, thick](0,1.5) -- (-1,0);
      \draw [-, color=black, thick](0,1.5) -- (1,0);
       \draw [-, color=black, thick](-0.5,3/4) -- (0.5,3/4);
       \draw [-, color=black, thick](-1,0) -- (0.5,3/4);
       \draw [-, color=black, thick](-5.2,4.2) -- (-3.7,3.2);

        \draw [->, color=black, thick](0,1.5) -- (-1.5,1.5);
   \draw [-, color=black, thick](-1.5,1.5) -- (-3,1.5);
     \draw [-, color=black, thick](0,1.5) -- (-1.5,4);
      \draw [-, color=black, thick](-1.5,4) -- (-3,1.5);
      \draw [->, color=black, thick](-1.5,4) -- (-1.5,5);
     \draw [-, color=black, thick](-3/4,11/4) -- (-3,1.5);
     \draw [-, color=black, thick](-3/4,11/4) -- (-2.5,2.34);
       \draw [-, color=black, thick](-3/4,11/4) -- (-2,3.17);
       \draw [->, color=black, thick](-3/4,11/4) -- (1/4,15/4);
        \draw [-, color=black, thick](-2,3.17) -- (-1.5,4);
        \draw [-, color=black, thick](-3,1.5) -- (-2.5,2.34);
        \draw [-, color=black, thick](-3,1.5) -- (-2.5,2.34);
        \draw [-, color=black, thick](-3.7,3.2) -- (-2,3.17);
        \draw [-, color=black, thick](-3.7,3.2) -- (-2.5,2.34);
        \draw [->, color=black, thick](-3.7,3.2) -- (-4.7,3.2);
        \draw [-, color=black, thick](-5.7,3.2) -- (-4.7,3.2);
           \draw [->, color=black, thick](-4.7,5.2) -- (-4.7,6.2);
            \draw [-, color=black, thick](-3.7,3.2) -- (-4.7,5.2);
            \draw [-, color=black, thick](-5.7,3.2) -- (-4.7,5.2);
       
  \node[draw,circle, inner sep=1pt,color=black, fill=black] at (-1,0){};
   \node[draw,circle, inner sep=1pt,color=black, fill=black] at (1,0){};
     \node[draw,circle, inner sep=1pt,color=black, fill=black] at (0,1.5){};
      \node[draw,circle, inner sep=1pt,color=black, fill=black] at (-0.5,3/4){};
       \node[draw,circle, inner sep=1pt,color=black, fill=black] at (0.5,3/4){};
        \node[draw,circle, inner sep=1pt,color=black, fill=black] at (-3,1.5){};
         \node[draw,circle, inner sep=1pt,color=black, fill=black] at (-1.5,4){};
         \node[draw,circle, inner sep=1pt,color=black, fill=black] at (-3/4,11/4){};
         \node[draw,circle, inner sep=1pt,color=black, fill=black] at (-2.5,2.34){};
         \node[draw,circle, inner sep=1pt,color=black, fill=black] at (-2,3.17){};
          \node[draw,circle, inner sep=1pt,color=black, fill=black] at (-3.7,3.2){};
          \node[draw,circle, inner sep=1pt,color=black, fill=black] at (-5.7,3.2){};
           \node[draw,circle, inner sep=1pt,color=black, fill=black] at (-4.7,5.2){};
           \node[draw,circle, inner sep=1pt,color=black, fill=black] at (-5.2,4.2){};
       
   \node [below, color=black] at (-1,0) {$L_{1}$};
\node [below, color=black] at (1,0) {$L$};
\node [below, color=black] at (-3,1.5) {$L_{2}$};
\node [right, color=black] at (1/4,15/4) {$A_3$};
\node [right, color=black] at (-1.5,5) {$A_1$};
 \node [below, color=black] at (-5.7,3.2) {$L_{3}$};
 \node [above, color=black] at (-4.7,6.2) {$A_2$};

\node [left, color=black] at (-1,0) {\small{$(\red{1},\blue{0})$}};  
\node [right, color=black] at (1,0) {\small{$(\red{1},\blue{1})$}};  
 \node [right, color=black] at (0,1.5) {\small{$(\red{5},\blue{2}) $}};  
 \node [left, color=black] at (-0.5,3/4) {\small{$(\red{3},\blue{1})$}}; 
  \node [right, color=black] at (0.5,3/4) {\small{$(\red{2},\blue{1})$}}; 
     \node [left, color=black] at (-3,1.5) {\small{$(\red{1},\blue{0})$}}; 
     \node [left, color=black] at (-1.5,4.2) {\small{$(\red{19},\blue{6})$}}; 
    \node [right, color=black] at (-3/4,11/4) {\small{$(\red{6},\blue{2})$}}; 
    \node [below, color=black] at (-4,3.2) {\small{$(\red{20},\blue{6})$}}; 
   \node [left, color=black] at (-2.5,2.34) {\small{$(\red{7},\blue{2})$}}; 
   \node [left, color=black] at (-1.8,3.4) {\small{$(\red{13},\blue{4})$}}; 
   \node [left, color=black] at (-5.7,3.2) {\small{$(\red{1},\blue{0})$}};  
   \node [left, color=black] at (-4.7,5.2) {\small{$(\red{41},\blue{12})$}}; 
   \node [left, color=black] at (-5.2,4.2) {\small{$(\red{21},\blue{6})$}};  
   
    \draw[->][thick, color=black](-1,-0.75) .. controls (-1.5,-2) ..(-1,-2.75); 
   
   \end{scope}

   \begin{scope}[shift={(-20,-8)}]
   
    \draw [->, color=black, thick](1,0) -- (0,0);
    \draw [-, color=black, thick](0,0) -- (-1,0);
    \draw [-, color=blue, line width=2.5pt](0,1.5) -- (-1,0);
      \draw [-, color=blue, line width=2.5pt](0,1.5) -- (1,0);
       \draw [-, color=black, thick](-0.5,3/4) -- (0.5,3/4);
       \draw [-, color=black, thick](-1,0) -- (0.5,3/4);
       \draw [-, color=black, thick](-5.2,4.2) -- (-3.7,3.2);

        \draw [->, color=black, thick](0,1.5) -- (-1.5,1.5);
   \draw [-, color=black, thick](-1.5,1.5) -- (-3,1.5);
     \draw [-, color=blue, line width=2.5pt](0,1.5) -- (-1.5,4);
      \draw [-, color=black, thick](-1.5,4) -- (-3,1.5);
      \draw [->, color=blue, line width=2.5pt](-1.5,4) -- (-1.5,5);
     \draw [-, color=black, thick](-3/4,11/4) -- (-3,1.5);
     \draw [-, color=black, thick](-3/4,11/4) -- (-2.5,2.34);
       \draw [-, color=black, thick](-3/4,11/4) -- (-2,3.17);
       \draw [->, color=blue, line width=2.5pt](-3/4,11/4) -- (1/4,15/4);
        \draw [-, color=blue, line width=2.5pt](-2,3.17) -- (-1.5,4);
        \draw [-, color=blue, line width=2.5pt](-3,1.5) -- (-2.5,2.34);
        \draw [-, color=blue, line width=2.5pt](-3,1.5) -- (-2.5,2.34);
        \draw [-, color=blue, line width=2.5pt](-3.7,3.2) -- (-2,3.17);
        \draw [-, color=blue, line width=2.5pt](-3.7,3.2) -- (-2.5,2.34);
        \draw [->, color=black, thick](-3.7,3.2) -- (-4.7,3.2);
        \draw [-, color=black, thick](-5.7,3.2) -- (-4.7,3.2);
           \draw [->, color=blue, line width=2.5pt](-4.7,5.2) -- (-4.7,6.2);
            \draw [-, color=blue, line width=2.5pt](-3.7,3.2) -- (-4.7,5.2);
            \draw [-, color=blue, line width=2.5pt](-5.7,3.2) -- (-4.7,5.2);
       
  \node[draw,circle, inner sep=1.5pt,color=violet, fill=violet] at (-1,0){};
   \node[draw,circle, inner sep=1.5pt,color=violet, fill=violet] at (1,0){};
     \node[draw,circle, inner sep=1.5pt,color=violet, fill=violet] at (0,1.5){};
      \node[draw,circle, inner sep=1pt,color=black, fill=black] at (-0.5,3/4){};
       \node[draw,circle, inner sep=1pt,color=black, fill=black] at (0.5,3/4){};
        \node[draw,circle, inner sep=1.5pt,color=violet, fill=violet] at (-3,1.5){};
         \node[draw,circle, inner sep=1.5pt,color=violet, fill=violet] at (-1.5,4){};
         \node[draw,circle, inner sep=1.5pt,color=violet, fill=violet] at (-3/4,11/4){};
         \node[draw,circle, inner sep=1pt,color=black, fill=black] at (-2.5,2.34){};
         \node[draw,circle, inner sep=1pt,color=black, fill=black] at (-2,3.17){};
          \node[draw,circle, inner sep=1.5pt,color=violet, fill=violet] at (-3.7,3.2){};
          \node[draw,circle, inner sep=1.5pt,color=violet, fill=violet] at (-5.7,3.2){};
           \node[draw,circle, inner sep=1.5pt,color=violet, fill=violet] at (-4.7,5.2){};
           \node[draw,circle, inner sep=1pt,color=black, fill=black] at (-5.2,4.2){};
       
   \node [below, color=black] at (-1,0) {$L_{1}$};
\node [below, color=black] at (1,0) {$L$};
\node [below, color=black] at (-3,1.5) {$L_{2}$};
\node [right, color=black] at (1/4,15/4) {$A_3$};
\node [right, color=black] at (-1.5,5) {$A_1$};
 \node [below, color=black] at (-5.7,3.2) {$L_{3}$};
 \node [above, color=black] at (-4.7,6.2) {$A_2$};

\node [left, color=black] at (-1,0) {\small{$(\red{1},\blue{0})\to \violet{\infty}$}};  
\node [right, color=black] at (1,0) {\small{$(\red{1},\blue{1}) \to \violet{0}$}};  
 \node [right, color=black] at (0,1.5) {\small{$(\red{5},\blue{2}) \to \violet{\frac{3}{2}}$}};  
 
 \node [above, color=violet] at (0.2,1.6) {$E_1$};

     \node [left, color=black] at (-3,1.5) {\small{$(\red{1},\blue{0}) \to \violet{\infty}$}}; 
     \node [left, color=black] at (-1.6,4.2) {\small{$(\red{19},\blue{6})\to \violet{\frac{13}{6}}$}}; 
     
      \node [above, color=violet] at (-1.2,4)  {$E_3$};
     
    \node [right, color=black] at (-3/4,11/4) {\small{$(\red{6},\blue{2})\to\violet{2}$}}; 
    
 \node [above, color=violet] at (-0.73,2.95)  {$E_2$};

    \node [below, color=black] at (-4.4,3.2) {\small{$(\red{20},\blue{6})\to \violet{\frac{7}{3}}$}}; 
    
     \node [above, color=violet] at (-3.6,3.3)  {$E_4$};

   \node [left, color=black] at (-5.7,3.2) {\small{$(\red{1},\blue{0}) \to \violet{\infty}$}};  
   \node [left, color=black] at (-4.7,5.2) {\small{$(\red{41},\blue{12})\to \violet{\frac{29}{12}}$}}; 
   
   \node [above, color=violet] at (-4.4,5.1)  {$E_5$};

   \end{scope}

   \begin{scope}[shift={(-15,-6)}]
   
  \draw[->][thick, color=black](-4,1) .. controls (-3,0.5) ..(-2,1); 
   
     \draw [->, color=blue,  line width=2.5pt](0,0) -- (0,8);
     \draw [->, color=blue,  line width=2.5pt](0,1) -- (0,0);
      \node[draw,circle, inner sep=1.5pt,color=violet, fill=violet] at (0,1.5){};
      \node[draw,circle, inner sep=1.5pt,color=violet, fill=violet] at (0,3){};
      \node[draw,circle, inner sep=1.5pt,color=violet, fill=violet] at (0,4.5){};
      \node[draw,circle, inner sep=1.5pt,color=violet, fill=violet] at (0,6){};
      \node[draw,circle, inner sep=1.5pt,color=violet, fill=violet] at (0,7){};
      
      \draw [->, color=blue,  line width=2.5pt](0,1.5) -- (-1.4,1.5);
       \draw [->, color=blue,  line width=2.5pt](0,3) -- (1.4,3);
       \draw [->, color=blue,  line width=2.5pt](0,4.5) -- (1.4,4.5);
       \draw [->, color=blue,  line width=2.5pt](0,6) -- (-1.4,6);
       \draw [->, color=blue,  line width=2.5pt](0,7) -- (-1.4,7);
      
       \node [below, color=black] at (0,0) {$L$};
         \node [left, color=black] at (-1.4,1.5) {$L_{1}$};
          \node [right, color=black] at (1.4,3) {$A_3$};
           \node [right, color=black] at (1.4,4.5) {$A_1$};
           \node [left, color=black] at (-1.4,6) {$L_{2}$};
           \node [left, color=black] at (-1.4,7) {$L_{3}$};
           \node [above, color=black] at (0,8) {$A_2$};
             \node [left, color=violet] at (0.9,1.5) {$E_{1}$};
             \node [left, color=violet] at (-0.3,3) {$E_{2}$};
              \node [left, color=violet] at (-0.3,4.5) {$E_{3}$};
             \node [left, color=violet] at (0.9,6) {$E_{4}$};
               \node [left, color=violet] at (0.9,7) {$E_{5}$};

           \node [right, color=violet] at (0,0) {\small{$0$}};  
           \node [right, color=violet] at (0,1.5) {\small{$\frac{3}{2}$}}; 
            \node [left, color=violet] at (0,3) {\small{$2$}};     
             \node [left, color=violet] at (0,4.5) {\small{$\frac{13}{6}$}};        
              \node [right, color=violet] at (0,6) {\small{$\frac{7}{3}$}};
               \node [right, color=violet] at (0,7) {\small{$\frac{29}{12}$}};

                \node [below, color=violet] at (-1.3,1.5) {\small{$\infty$}};
                \node [below, color=violet] at (-1.3,6) {\small{$\infty$}};
                \node [below, color=violet] at (-1.3,7) {\small{$\infty$}};
                
                 \node [right, color=blue] at (0,0.75) {\small{$1$}};
                 \node [right, color=blue] at (0,2.25) {\small{$2$}};
                  \node [right, color=blue] at (0,3.75) {\small{$2$}};
                  \node [right, color=blue] at (0,5.25) {\small{$2$}};
                  \node [right, color=blue] at (0,6.5) {\small{$6$}};
                   \node [right, color=blue] at (0,7.5) {\small{$12$}};
                   \node [above, color=blue] at (-0.7,1.5) {\small{$1$}};
                \node [above, color=blue] at (-0.7,6) {\small{$2$}};
                \node [above, color=blue] at (-0.7,7) {\small{$6$}};
                \node [above, color=blue] at (0.7,3) {\small{$2$}};
                \node [above, color=blue] at (0.7,4.5) {\small{$6$}};    
                
      \end{scope}
         
   \end{tikzpicture}
\end{center}
 \caption{Going from $\lambda(E)$ and $\ord_E(L)$ to  $\Theta_L(\hat{A})$ when $\hat{A} =L + L_1 + L_2 + L_3 + A_1 + A_2 + A_3$}
\label{fig:logdistoEW3}
   \end{figure}

The Eggers-Wall tree of the plane curve singularity whose branches are defined by the Newton-Puiseux series of Example \ref{ex:EWtoseries} below is as shown on the right of Figure \ref{fig:logdistoEW3}. One may find in \cite[Section 1.6.1]{GBGPPP 20} other examples of Eggers-Wall trees constructed from Newton-Puiseux series associated to the branches of plane curve singularities.

\begin{remark} 
   \label{rem:genericEW}
  The set  of characteristic exponents ${\mathrm{Ch}}_L (A_l)$, the order of coincidence  $k_L (A_l, A_m)$, the Eggers-Wall tree $\Theta_{L}(A)$ and the functions $\ex_L$ and $\de_L$, do not depend on the choices of local coordinate $y$ and of the defining function $x$ of the branch $L$  (see  \cite[Proposition 26]{GBGPPP 18}).  When $L$ is transversal to $A$, $\Theta_{L}(A)$ becomes independent of $L$. We call it the {\bf generic Eggers-Wall tree of $A$}. We explained in \cite[Section 4]{GBGPPP 18} how to get from it all other possible Eggers-Wall trees, relative to branches $L$ which are tangent to $A$. 
\end{remark}

\begin{remark} \label{rem-conjugates} 
   If $A$ is a branch different from $L$, then the intersection number $(L\cdot A)_O$ (see Definition \ref{def:intnumber}) is equal to the maximum achieved by the index function $\de_{L}$ on the segment $[L A] \subset \Theta_L(A)$. In particular, $\de_{L} (A) = (L\cdot A)_O$ (see \cite[Remark 3.25]{GBGPPP 19}). More generally, the value of the index function $\de_L (P)$, for $P \in (LA]$, is characterized in the following way. Take a Newton-Puiseux series $ \eta = \sum c_\alpha x^\alpha \in \cZ_x(A)$. If $\ex_L (P) = \gamma$, denote by $A_{\gamma}$ the unique branch which has the truncated Newton-Puiseux series $\xi_{\gamma}  :=   \sum_{\alpha < \gamma} c_\alpha x^\alpha$ as a root relative to the local coordinates $(x,y)$, that is, such that $\xi_{\gamma}  \in \cZ_x(A_{\gamma})$. Then, $\de_L (P)$ is equal both to the number of Galois conjugates of $\xi_{\gamma}$ and to the intersection number $(L\cdot A_\gamma)_O$. 
\end{remark}

We explain now the relations between Eggers-Wall trees and lotuses by using the notion of {\em semivaluation}, in the sense of Definition \ref{def:semival}. 

The  {\bf semivaluation space} $\boxed{\calv}$ of $\calo_{S, O}$ is the set of semivaluations of the local ring $\calo_{S, O}$, endowed with the topology of pointwise convergence. We denote by  $\boxed{\calv_L}$ the subset of $\calv$ consisting of those semivaluations which take the value $1$ on $L$. Favre and Jonsson have studied the properties of these semivaluation  spaces in their book \cite{FJ 04}. They proved that the semivaluation space $\calv_L$ is a compact $\Rr$-tree rooted at $\ord_L$ (see \cite[Prop. 3.61]{FJ 04}). 
In \cite{GBGPPP 19} we defined a natural embedding 
    \[ \boxed{V_L} : \Theta_{L} (A) \hookrightarrow \calv \]
of the Eggers-Wall tree in the semivaluation space $\calv_L$ and we proved the following result (see \cite[Theorem 8.11 and the proof of Proposition 8.16]{GBGPPP 19}, recalling  that the intersection semivaluations $I_{A_l}$ were introduced in Definition \ref{def:intsv}):

\begin{proposition} 
   \label{prop:ew-emb} 
  The embedding $V_L : \Theta_{L} (A) \to \calv$ is such that $V_L (L) = \ord_L$ and $V_L(A_l) = \frac{1}{(L\cdot A_l)_O} I_{A_l}$, for every branch $A_l$ of $A$. If $P \in \Theta_L (A)$ is a rational point, then there exists a unique prime exceptional divisor $E(P)$ on a model of $(S,O)$ such that $V_L (P) = \frac{1}{\ord_{E(P)} (L)} \ord_{E(P)}$ and 
    \begin{equation}
       \label{eq: rat-point}
       \frac{\lambda( E(P) )}{\ord_{E(P)}(L)} = 1 + \ex_L (P).
      \end{equation}
   In addition, assume that  $A_0$ is a branch of $A$ and $P \in \Theta_{L} (A_0)$ is a point such that $\de_L (P) = \de_L (A_0)$ and that $A_l$ is a branch of $A$ whose strict transform is a curvetta at a point of $E(P)$ on a model of $(S,O)$, in the sense of Definition \ref{def:curvetta}. Then, the restriction of $\de_L$ to the segment $(P A_l]$ of $\Theta_{L} (A)$ has constant value $(L\cdot A_l)_O = \ord_{E(P)} (L)$. 
\end{proposition}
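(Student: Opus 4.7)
The plan is to prove the three claims in sequence, leaning on the Newton--Puiseux description of points of $\Theta_L(A)$, on the inductive rule for log-discrepancies and orders of vanishing from Proposition \ref{prop:addlotus}, and on the identification of rational points of the Eggers-Wall tree with divisorial semivaluations via toroidal modifications. For the two boundary identifications in the first claim, the embedding $V_L$ is by construction the one that sends the root and the leaves of $\Theta_L(A)$ to the semivaluations naturally attached to the corresponding branches, renormalized so as to land in $\calv_L$. Thus $V_L(L)$ must be proportional to $\ord_L$ (which is already an element of $\calv_L$ since $\ord_L(L)=1$), giving $V_L(L)=\ord_L$; and $V_L(A_l)$ must be proportional to $I_{A_l}$, with proportionality constant $1/I_{A_l}(L) = 1/(L\cdot A_l)_O$ by Definition \ref{def:intsv}.

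For the second claim, fix a rational point $P\in\Theta_L(A)$ with $\ex_L(P)=\gamma\in\Qq_{\geq 0}\cup\{\infty\}$, and a Newton--Puiseux series $\eta\in\cZ_x(A_0)$ of some branch $A_0$ through $P$. The truncation of $\eta$ at exponent $\gamma$ determines, via the continued fraction expansion of $\gamma$ and the toroidal blowup process recalled in \cite{GBGPPP 20}, a unique prime exceptional divisor $E(P)$ on some model of $(S,O)$. The normalized divisorial valuation $\ord_{E(P)}/\ord_{E(P)}(L)$ and $V_L(P)$ then agree on the defining functions of all branches, since both compute the corresponding intersection numbers in terms of Newton--Puiseux truncations at $\gamma$; uniqueness is automatic because distinct divisorial valuations produce distinct normalized elements of $\calv_L$. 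To establish equation \eqref{eq: rat-point}, I would induct on the length of the blowup sequence realizing $E(P)$. The base cases are $(\lambda(L),\ord_L(L))=(1,1)$ with ratio $2=1+0$, and the limiting case on $L_1$ with ratio $+\infty=1+\infty$. At each inductive step, Proposition \ref{prop:addlotus} makes both $\lambda$ and $\ord(\,\cdot\,)(L)$ additive over the cross at the blown-up point, so the ratio at the new divisor is the mediant of the two preceding ratios; a direct calculation shows this mediant equals $1+\gamma$, where $\gamma$ is precisely the exponent of the new rational point produced on the Eggers-Wall tree.

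For the third claim, the assumption $\de_L(P)=\de_L(A_0)$ means that $\ex_L(P)$ dominates all characteristic exponents of $A_0$, so the Galois orbit of a Newton--Puiseux root has already stabilized at $P$ and no further jumps of the index function occur along $(PA_l]$; hence $\de_L$ is constant there, equal to $\de_L(A_0)=(L\cdot A_0)_O$ by Remark \ref{rem-conjugates}. The second equality, $(L\cdot A_l)_O = \ord_{E(P)}(L)$, is obtained by applying Proposition \ref{prop:intbr} with $A:=A_l$ and $D:=L$: since the strict transform of $A_l$ is a curvetta at a point of $E(P)$, the component of the exceptional divisor meeting this strict transform is exactly $E(P)$, whence $(A_l\cdot L)_O=\ord_{E(P)}(L)$.

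The main obstacle is the inductive step of the second claim. One must check that the mediant operation $(\lambda,\ord(L))\mapsto (\lambda(F_1)+\lambda(F_2),\ord_{F_1}(L)+\ord_{F_2}(L))$ dictated by Proposition \ref{prop:addlotus} is compatible, blowup by blowup, with the way rational exponents are generated on the Eggers-Wall tree by the continued fraction algorithm, in order for the clean identity $\lambda(E(P))/\ord_{E(P)}(L)=1+\ex_L(P)$ to propagate. This is essentially a translation of the classical correspondence between points on edges of Newton polygons and exponents of the associated Newton--Puiseux truncations into the bookkeeping of log-discrepancies on the lotus; the delicate part is keeping track of both invariants simultaneously and ensuring that their ratio evolves exactly as $1+\gamma$ at each new vertex.
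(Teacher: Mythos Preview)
The paper does not actually prove this proposition: it is stated as a quotation of results established elsewhere, with the attribution ``see \cite[Theorem 8.11 and the proof of Proposition 8.16]{GBGPPP 19}'' appearing immediately before the statement. So there is no in-paper argument to compare your sketch against.

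That said, your outline has a structural gap. The embedding $V_L$ is not defined in this paper; it is imported from \cite{GBGPPP 19}. Your arguments for all three claims presuppose properties of $V_L$ (``by construction'', ``both compute the corresponding intersection numbers in terms of Newton--Puiseux truncations'') that amount to assuming what is to be shown. A genuine proof would need to start from the actual definition of $V_L(P)$ for an interior point $P$ --- which in \cite{GBGPPP 19} is given as a normalized quasi-monomial valuation attached to a Newton--Puiseux truncation --- and then verify the asserted identifications. Your inductive strategy for \eqref{eq: rat-point} via the mediant rule from Proposition~\ref{prop:addlotus} is the right mechanism, but note two slips: the base case ``$(\lambda(L),\ord_L(L))=(1,1)$ with ratio $2=1+0$'' is incoherent (the ratio is $1$, and $L$ is not an exceptional divisor anyway; the genuine base is $E_O$ with $(\lambda,\ord_L)=(2,1)$ and exponent $1$); and in the third claim you prove constancy of $\de_L$ along $(PA_0]$ rather than $(PA_l]$. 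The latter requires the observation that $A_l$, being a curvetta at $E(P)$, is already resolved on that model, so its characteristic exponents all lie below $\ex_L(P)$ and $\de_L$ cannot jump on $(PA_l]$; your appeal to $\de_L(P)=\de_L(A_0)$ addresses a different segment.
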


Next, we consider a reduced plane curve singularity $A \hookrightarrow (S,O)$ together with a finite active constellation of crosses $\hat{\calc}_A$ adapted to $A$ in the sense of Definition \ref{def:adaptedactconst}. The following result explains how the lotus $\Lambda(\hat{\calc}_A)$ determines the Eggers-Wall tree of the associated completion $\hat{A}$ of $A$, in the sense of Definition \ref{def:adaptedactconst}:

\begin{proposition}  
    \label{prop:fromlotustoEW}
Let $\hat{\calc}_A$ be a finite active constellation of crosses adapted to a reduced plane curve singularity $A$. Let $X_O = L + L_1$ be the cross of $\hat{\calc}_A$ at $(S,O)$. Then: 

\begin{enumerate}[(a)]
\item \label{a-ewlotus}
   The lateral boundary  $\partial_+ \Lambda(\hat{\calc}_A)$ of the lotus $\Lambda(\hat{\calc}_A)$ is isomorphic to the Eggers-Wall tree $\Theta_{L} (\hat{A})$ by an isomorphism which preserves the labels of vertices by the branches of $\hat{A}$. 

\item \label{b-ewlotus}
   This isomorphism induces a bijection between the  marked points in the interior of the tree
   $\Theta_{L} (\hat{A})$ and the rupture vertices of the lotus $\Lambda (\hat{\calc}_A)$, which sends a point $P$ to the vertex labeled by $E(P)$ in the lotus $\Lambda (\hat{\calc}_A)$ in such a way that:
    \[
      \ex_{L} (P) =  \frac{\lambda (E(P))}{\ord_{E(P)} (L)} -1.
     \]
\item  \label{c-ewlotus} 
   If $A_l$ is a branch of $\hat{A}$ different from $L$ and $L_1$, then there is a unique ramification point $P$ of $\Theta_{L} (\hat{A})$ such that $E(P)+ A_l$ is a cross of $\hat{\calc}_A$ and 
   the index function $\de_{L}$ is constantly equal to $(L \cdot A_l)_O = \ord_{E(P)} (L)$ on the segment $(P A_l]$. 
\end{enumerate}
\end{proposition}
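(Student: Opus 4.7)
My plan is to proceed by induction on the number of extension steps used to construct $\hat{\calc}_A$ following Proposition \ref{prop:fincc}. The base case is the trivial constellation consisting of the single point $O$ carrying the cross $L+L_1$ and declared inactive: then $\hat{A} = L + L_1$, the lateral boundary $\partial_+ \Lambda$ reduces to the base edge $[LL_1]$, and $\Theta_L(L+L_1)$ is the segment joining $L$ (with $\ex_L = 0$) to $L_1$ (with $\ex_L = \infty$), while (b) and (c) are vacuous. The inductive step will show that each elementary extension updates both sides compatibly.

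For the inductive step, I would treat the three extension types separately. Step (B1) changes neither the lotus nor the completion. Step (A), which activates an inactive point $P$ with cross $F_1 + F_2$, glues a petal with new apex $E_P$ to $\partial_+ \Lambda$; by Proposition \ref{prop:addlotus} both $\lambda$ and $\ord(L)$ add across this petal, giving the mediant formula
\[
\frac{\lambda(E_P)}{\ord_{E_P}(L)} \;=\; \frac{\lambda(F_1)+\lambda(F_2)}{\ord_{F_1}(L)+\ord_{F_2}(L)}.
\]
Combined with formula \eqref{eq: rat-point} of Proposition \ref{prop:ew-emb}, this identifies $E_P$ with the unique new rational point that appears in $\Theta_L(\hat{A})$ strictly between the points associated with $F_1$ and $F_2$, reflecting the usual mediant behaviour of divisorial valuations under blowup in the valuative tree. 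Step (B2) attaches a curvetta $L_Q$ at a lateral vertex $E_{\cald}$; on the Eggers-Wall side, I would argue using the last assertion of Proposition \ref{prop:ew-emb} that this corresponds to attaching a new leaf at the rational point associated with $E_{\cald}$, with $\de_L$ constant and equal to $\ord_{E_{\cald}}(L)$ on the new segment.

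With this inductive description in hand, part (a) follows by combining Proposition \ref{prop:dualgraph} (which identifies $\partial_+ \Lambda$ with the weighted dual graph) with the correspondence between lateral vertices and divisorial valuations provided by Proposition \ref{prop:ew-emb}; the labeling by branches of $\hat{A}$ matches because $V_L(A_l) = \frac{1}{(L\cdot A_l)_O}\, I_{A_l}$. Part (b) is then formula \eqref{eq: rat-point} restricted to the rupture vertices, and part (c) follows from the final assertion of Proposition \ref{prop:ew-emb} applied to a curvetta $A_l$ attached to the divisor $E(P)$ by a step of type (B2). The hard part will be the verification, at each inductive step, that the marked interior points of $\Theta_L(\hat{A})$ correspond exactly to the rupture vertices of $\Lambda(\hat{\calc}_A)$: this requires showing that a lateral vertex $E$ becomes a rupture vertex precisely when later extensions make $E$ incident to at least three edges of $\partial_+ \Lambda$, and that this combinatorial condition is equivalent, via Proposition \ref{prop:graphprox} and the proximity structure on the active points, to the corresponding point of $\Theta_L(\hat{A})$ being either a characteristic exponent or a ramification point rather than an ordinary regular point on a single segment.
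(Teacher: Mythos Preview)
Your approach is genuinely different from the paper's. The paper's proof of part~\ref{a-ewlotus} is a one-line citation: it invokes Theorems~1.5.29 and~1.6.27 of \cite{GBGPPP 20}, where the identification of $\partial_+\Lambda$ with the Eggers-Wall tree is carried out via the fan-tree machinery of toroidal pseudo-resolutions rather than by induction on elementary extension steps. Parts~\ref{b-ewlotus} and~\ref{c-ewlotus} are then deduced exactly as you do, as direct consequences of Proposition~\ref{prop:ew-emb}. So for (b) and (c) you and the paper coincide; the novelty is in your inductive treatment of (a).

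Your induction is a reasonable and more self-contained strategy, but as written it has a genuine gap that you yourself flag. The mediant computation for step~(A) correctly locates $E_P$ at the point of $\Theta_L(\hat A)$ with exponent $\frac{\lambda(E_P)}{\ord_{E_P}(L)}-1$, but you do not verify that this point actually lies on the segment of $\partial_+\Lambda$ being subdivided, i.e.\ that the ratio $\lambda/\ord_L$ is strictly monotone along each maximal arc of $\partial_+\Lambda$ joining two base vertices. This monotonicity is what makes the homeomorphism work, and it is not a formal consequence of the mediant identity alone: it needs the observation that within a single membrane the pairs $(\lambda,\ord_L)$ reproduce the Stern--Brocot enumeration of primitive lattice vectors (this is the content behind the Newton-lotus picture of Section~\ref{sec:Nlot}). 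Similarly, your final paragraph states the equivalence ``rupture vertex $\Leftrightarrow$ marked interior point'' as something to be checked via Proposition~\ref{prop:graphprox} and the proximity relation, but gives no mechanism; the clean way is to observe that a lateral vertex is rupture precisely when a later step~(B2) attaches a new base edge there, and that on the Eggers-Wall side this is exactly when a new leaf is grafted, forcing a ramification or characteristic-exponent point. Filling in these two points would make the induction complete and would yield an argument independent of the toroidal machinery cited by the paper.
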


\begin{proof}
    The statement \ref{a-ewlotus} follows by combining Theorems 1.5.29 and Theorems 1.6.27 of \cite{GBGPPP 18}. Then, \ref{b-ewlotus} and \ref{c-ewlotus} are direct consequences of Proposition \ref{prop:ew-emb}. We illustrate this fact in Example \ref{ex:lotusEW3branches} below.
\end{proof}

\begin{remark}
    The Eggers-Wall tree $\Theta_{L} (\hat{A})$ has the special property that all its marked points are either ramification points or end points. Moreover, combining Propositions \ref{prop:dualgraph} and \ref{prop:fromlotustoEW}, we see that the dual graph  $G(X_{\hat{\calc}_A})$ is naturally homeomorphic to the Eggers-Wall tree $\Theta_{L} (\hat{A})$. Indeed, they are both identified with the lateral boundary $\partial_+ \Lambda(\hat{\calc}_A)$ of the lotus $\Lambda(\hat{\calc}_A)$.
\end{remark}

In the following Corollary \ref{cor:lotustoEW} of 
Proposition \ref{prop:fromlotustoEW}, we explain how to build an Eggers-Wall tree from a lotus:

\medskip
\begin{center}
\fbox{
\begin{minipage}{0.75\textwidth}
    \begin{corollary}
       \label{cor:lotustoEW}
    Let $\Lambda$ be the lotus of a finite active constellation of crosses $\hat{\calc}_A$, with initial vertex $L$. The Eggers-Wall tree $\Theta_L(\hat{A})$ of the associated completion $\hat{A}$ of $A$ may be computed as follows:
  \begin{enumerate}
      \item 
         Compute the pairs $(\lambda(E), \ord_E(L))$ associated to its vertices $E$ as explained in Corollary \ref{cor:reclord}. We will be only interested in those values for rupture vertices $E$.       
      \item 
         Define $\Theta_L(\Lambda) := \partial_+(\Lambda)$. Let $\preceq_L$ be the partial order on it defined by the root $L$.  
       \item 
          For each membrane $\Lambda_i$ of $\Lambda$ in the sense of Definition \ref{def:vocablotus}, with initial vertex $E_i$, denote by $[E_i L_i]$ the intersection $\Theta_L(\Lambda) \cap \Lambda_i$. It verifies $E_i \prec_L L_i$.
      \item 
          To each rupture vertex $E$ of $\Lambda$, associate the exponent:
            \[
               \ex_{L} (E) :=  \frac{\lambda (E)}{\ord_{E} (L)} -1.
             \]      
      \item 
          Define $\de_{L}$ on the interval $(E_i L_i]$ to be constantly equal to $\ord_{E_i} (L)$.       
  \end{enumerate}
\end{corollary}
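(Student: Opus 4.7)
The plan is to deduce this corollary as a direct consequence of Proposition \ref{prop:fromlotustoEW}, whose three parts cover essentially everything needed. First I would invoke part \ref{a-ewlotus} of that proposition, which provides a label-preserving homeomorphism between the lateral boundary $\partial_{+} \Lambda(\hat{\calc}_A)$ and the Eggers-Wall tree $\Theta_L(\hat{A})$. This justifies setting $\Theta_L(\Lambda) := \partial_{+}(\Lambda)$ and transferring the partial order $\preceq_L$ of $\Theta_L(\hat{A})$ rooted at $L$ to the lateral boundary, with the same root vertex labelled $L$. Under this identification, each membrane $\Lambda_i$ of $\Lambda$ meets $\partial_{+}\Lambda$ along a segment $[E_i L_i]$ joining its initial vertex $E_i$ (a rupture vertex of $\Lambda$, except in the trivial case of the membrane containing $L$) to the endpoint $L_i$ of its base edge opposite to $E_i$; since $L_i$ is a leaf of $\partial_{+}\Lambda$ lying above $E_i$ in the lotus, the inequality $E_i \prec_L L_i$ holds automatically.

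Next, I would use Corollary \ref{cor:reclord} to compute the pairs $(\lambda(E), \ord_E(L))$ on the vertices of $\Lambda$, and then apply part \ref{b-ewlotus} of Proposition \ref{prop:fromlotustoEW}, which identifies the marked interior points of $\Theta_L(\hat{A})$ with the rupture vertices of $\Lambda$ through $P \mapsto E(P)$ and yields the formula
\[
\ex_L(E) \;=\; \frac{\lambda(E)}{\ord_E(L)} - 1.
\]
This gives the exponent function at every rupture vertex. Since the exponent function on $\Theta_L(\hat{A})$ is piecewise affine with $\ex_L(L)=0$ and $\ex_L$ equal to $\infty$ at each leaf distinct from $L$, the rupture values together with the leaf values and the root value determine $\ex_L$ on the whole tree, so the algorithm faithfully reconstructs the exponent function.

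For the index function $\de_L$, I would apply part \ref{c-ewlotus} of Proposition \ref{prop:fromlotustoEW} branch by branch: for every leaf $A_l$ of $\hat{A}$ (including the components of crosses at maximal inactive points of $\hat{\calc}_A$), the unique ramification or root vertex $P$ of $\Theta_L(\hat{A})$ such that $E(P)+A_l$ is a cross of $\hat{\calc}_A$ corresponds, under the identification of part \ref{a-ewlotus}, to the initial vertex $E_i$ of the membrane $\Lambda_i$ containing the base edge $[E_i L_i] = [E_i A_l]$. Part \ref{c-ewlotus} then asserts that $\de_L$ is constantly equal to $(L \cdot A_l)_O = \ord_{E_i}(L)$ on the half-open segment $(E_i A_l] = (E_i L_i]$, which is exactly the value prescribed by the algorithm.

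The only real obstacle is therefore the bookkeeping in the membrane/leaf correspondence: one must check that the membranes of $\Lambda$ are in bijection with the leaves of $\Theta_L(\hat{A})$ distinct from $L$ (equivalently, with the branches of $\hat{A}$ other than $L$), so that prescribing $\de_L$ on each interval $(E_i L_i]$ together with $\de_L(L)=1$ covers the entire tree. This bijection follows from Definition \ref{def:vocablotus} (each membrane contains a unique base edge, and base edges are dual segments either of $X_O$ or of crosses introduced by step (B2) of Definition \ref{def:lotusfacc}, hence correspond bijectively to the curvetta branches of $X_{\hat{\calc}_A}$, together with the branch $L$), combined with Definition \ref{def:adaptedactconst} which identifies these curvettas with the branches of $\hat{A}$. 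Once this identification is in place, the three pieces of Proposition \ref{prop:fromlotustoEW} assemble into the claimed algorithm.
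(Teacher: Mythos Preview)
Your proposal is correct and follows essentially the same approach as the paper, which presents this corollary as a direct consequence of Proposition~\ref{prop:fromlotustoEW} without further argument; you have simply spelled out explicitly how parts \ref{a-ewlotus}, \ref{b-ewlotus}, and \ref{c-ewlotus} of that proposition correspond to the steps of the algorithm. One minor imprecision: the exponent function on each segment $[LA_l]$ is defined as a homeomorphism onto $[0,\infty]$ rather than as a piecewise affine map, but your conclusion that its values at rupture vertices and leaves determine it still holds for the same reason.
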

\end{minipage}
}
\end{center}
\medskip

We see that $\Theta_L(\hat{A})$ depends only on the lotus $\Lambda$. This allows us to define:

\begin{definition}
   \label{def:assEWtolotus}
    Let $\Lambda$ be the lotus of a finite active constellation of crosses $\hat{\calc}_A$, with initial vertex $L$. Its {\bf associated Eggers-Wall tree} $(\boxed{\Theta_L(\Lambda)}, \ex_L, \de_L)$ is the subtree of the $1$-skeleton of $\Lambda$ constructed and decorated as explained in Corollary \ref{cor:lotustoEW}.
\end{definition}

\begin{example} 
    \label{ex:lotusEW3branches}
  In Figures \ref{fig:logdistoEW1}, \ref{fig:logdistoEW2} and \ref{fig:logdistoEW3} we illustrate how to recover the Eggers-Wall trees of the completions $\hat{A}$ from the knowledge of the pairs of invariants $(\lambda(E), \ord_E (L))$, shown in Figures \ref{fig:logdisord1}, \ref{fig:logdisord2} and \ref{fig:logdisord3}. 

  Let us explain in detail the computations illustrated in Figure \ref{fig:logdistoEW3}, which allow to recover the Eggers-Wall tree $\Theta_L (\hat{A})$ 
in the example of the lotus $\Lambda(\hat{\calc}_A)$, its vertices $E$ being decorated with the pairs $(\lambda(E), \ord_{E} (L))$, as shown in Figure \ref{fig:logdisord3}. This lotus is associated to a finite active constellation $\hat{\calc}_A$ of crosses compatible 
with a curve $A = A_1 + A_2 + A_3$ 
with three branches. 
Notice that in this case the completion $\hat{A}$ is equal to $L + L_1+ L_2 + L_3 + A_1+ A_2 + A_3$. 
    
By Proposition \ref{prop:fromlotustoEW} \ref{a-ewlotus}, the 
Eggers-Wall tree $\Theta_L (\hat{A})$ is homeomorphic  as a topological space with the lateral boundary $\partial_+ \Lambda (\hat{\calc}_A)$, represented in blue bold line in Figure  \ref{fig:dualgr3branches}, the end points being labeled by the branches of $\hat{A}$. 
For simplicity,   we label the ramification points of $\Theta_L( \hat{A})$
with the names of the exceptional prime divisors of the corresponding rupture vertices in the lotus. 
We  reconstruct the values of the index function and the exponent function 
in a recursive way. 

Since  $L+L_1$ is a cross  at $(S,O)$,  
the index function $\de_{L}$ has constant value $1$ on the segment $[L  L_1]$ of $\Theta_{L} (\hat{A})$. 
There is only one rupture vertex $E_1$ on the blue bold path of the lotus going from $L$ to $L_1$, and  $E_1+L_2$ is a cross of $\hat{\calc}_A$. By Proposition \ref{prop:fromlotustoEW} \ref{c-ewlotus}, $E_1$ is the ramification  point of the tree $\Theta_L( \hat{A})$ which lies in the segment $[L L_1]$. 
By Proposition \ref{prop:fromlotustoEW} \ref{b-ewlotus}, the value of the exponent function at this point  is determined by the log-discrepancy of $E_1$ and the vanishing order $\ord_{E_1}(L)$, namely we get $\ex_L (E_1) = \frac{\lambda(E_1)}{\ord_{E_1} (L)} -1= \frac{5}{2} -1 =\frac{3}{2}$, and  the index function $\de_L$ 
has constant value $\de_L (L_2) = \ord_{E_1} (L)=2$ on $(E_1 L_2]$. 

There are three rupture vertices $E_2$, $E_3$, and $E_4$ 
of the lotus,  which lie on the blue bold path of the lotus going from $E_1$ to $L_2$,  where $(\lambda(E_2), \ord_{E_2} (L)) = (6,2)$, $(\lambda(E_3), \ord_{E_3} (L)) = (19,6)$, and $(\lambda(E_4), \ord_{E_4} (L)) = (20,6)$. These three vertices are the ramification points of $\Theta_L ( \hat{A})$ on the segment $[E_1 L_2]$. By Proposition \ref{prop:fromlotustoEW} \ref{b-ewlotus}, we get 
$\ex_L (E_2) =  \frac{6}{2} -1 =2$, $\ex_L (E_3) = \frac{19}{6} -1  = \frac{13}{6}$ and $\ex_L (E_4) = \frac{20}{6} -1 = \frac{7}{3}$.
These exceptional divisors are components of the crosses  
$E_2 + A_3$, $E_3 + A_1$ and $E_4+ L_3$ of $\hat{\calc}_A$. 
The  index function $\de_{L}$ has constant value  $\ord_{E_2}(L) = 2$ in restriction to $(E_2 A_3]$, while in restriction to $(E_3 A_1)$ or to $(E_4 L_3]$ it has constant value $\ord_{E_3} (L)  = \ord_{E_4} (L) = 6$. 
There is only one rupture point $E_5$ on the blue bold path of the lotus going from $E_4$ to $L_3$, where $(\lambda(E_5), \ord_{E_5} (L)) = (41,12)$, and $E_5 + A_2$ is a cross of $\hat{\calc}_A$. Hence  $E_5$ is  the ramification point  of $\Theta_L (\hat{A})$ in the segment $[E_4  L_3]$,   $\ex_L (E_5) = \frac{41}{12} -1 = \frac{29}{12}$ and the index function is equal to $\ord_{E_5} (L) = 12$ on $(E_5 A_2]$.

The Eggers-Wall tree $\Theta_L (A)$ is the union of the segments $[L A_l]$, for $l =1, 2, 3$, endowed with the restrictions of the exponent and index functions of $\Theta_L (\hat{A})$.

We recover the characteristic exponents of a branch $A_l$ of $\hat{A}$ from the Eggers-Wall tree, by using the fact that they are the values of the exponent function at the points of discontinuity of the restriction of the index function to the segment $[L A_l]$. In the case of the branch $A_1$, the points of discontinuity of the index function on $[L A_1]$ are $E_1 $  with $\ex_L (E_1) = \frac{3}{2}$ and $E_3$ with $\ex_L (E_3) = \frac{13}{6}$.  Hence 
${\mathrm{Ch}}_L (A_1) = \{ \frac{3}{2} , \frac{13}{6} \}$. Similarly, we get that ${\mathrm{Ch}}_L (A_2) = \{ \frac{3}{2} , \frac{7}{3}, \frac{29}{12} \}$ and ${\mathrm{Ch}}_L (A_3) = \{ \frac{3}{2} \}$. 
\end{example}

\medskip
  Once one has an Eggers-Wall tree, it is an easy task to write down Newton-Puiseux series defining branches whose sum has the given tree as Eggers-Wall tree relative to $L := Z(x)$:

\begin{example}
   \label{ex:EWtoseries}
    Let us consider the Eggers-Wall tree of Figure \ref{fig:logdistoEW3}. One obtains it by choosing, for instance, branches with the following Newton-Puiseux roots: 
    \[ \begin{array}{lll}
         L_1 : 0, &  
         L_2 :  x^{\frac{3}{2}},  &  
         L_3  : x^{\frac{3}{2}} + x^{\frac{7}{3}},  \\
         A_1 : x^{\frac{3}{2}} + x^{\frac{13}{6}},  &
         A_2 : x^{\frac{3}{2}} + x^{\frac{7}{3}}  + x^{\frac{29}{12}},  & 
         A_3 : x^{\frac{3}{2}} +x^2.
      \end{array}  \]
    This allows to obtain a concrete series which admits an active constellation of crosses whose lotus is shown in Figure \ref{fig:lotus3branches} (and again in the upper part of Figure \ref{fig:logdistoEW3}). Indeed, it is enough to multiply the minimal polynomials of the above Newton-Puiseux series associated to $A_1, A_2$ and $A_3$. In particular, keeping only the minimal polynomial of the Newton-Puiseux series $x^{\frac{3}{2}} + x^{\frac{13}{6}}$, which is equal to 
      \[ (y^2 - x^3)^3 - 6 x^8 (y^2 - x^3) - 8 x^{11} - x^{13},  \] 
    one gets a branch $A_1$ which admits an active constellation of crosses whose lotus is shown in Figure \ref{fig:lotusbranch} (and again in the upper part of Figure \ref{fig:logdistoEW2}). 
\end{example}

\begin{remark}
    We can apply Proposition \ref{prop:fromlotustoEW} interchanging the roles of $L$ and $L_1$ in order to obtain the Eggers-Wall tree $\Theta_{L_1}(\hat{A})$ from $\hat{\calc}_A$ (see Remark \ref{rem:genericEW}).
\end{remark}

\begin{remark}
  \label{rem:splicediagfromEW}
     As explained in \cite[Section 5]{GBGPPP 18}, the knowledge of an Eggers-Wall tree of a plane curve singularity allows also to compute an associated {\em splice diagram}.
\end{remark}

\medskip
\section{Newton lotuses and continued fractions}  \label{sec:Nlot}

In this section we explain following \cite[Section 1.5.2]{GBGPPP 20}  how to associate a {\em Newton lotus} to every finite set of positive rational numbers (see Definition \ref{def:deflot}). It is a sublotus of a {\em universal lotus}, canonically associated to any basis of a two-dimensional lattice (see Figure \ref{fig:Unilotus}). Newton lotuses are used as building blocks of the lotuses associated to Eggers-Wall trees (see Section \ref{sec:lotfromEW}).

\medskip

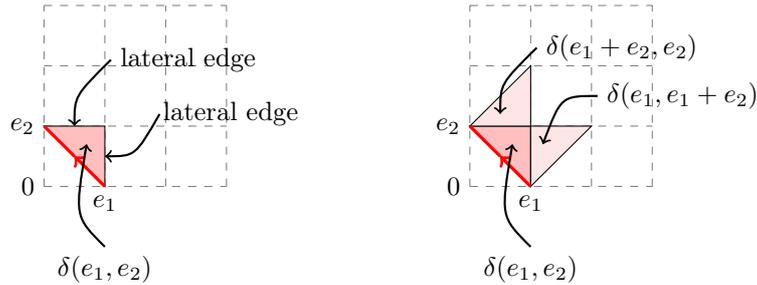
\begin{figure}[h!]
    \begin{center}
\begin{tikzpicture}[scale=0.8]
   \draw [dashed, gray] (0,0) grid (3,3);
\node [below] at (1,0) {$e_{1}$};
\node [left] at (0,1) {$e_{2}$};
\node [left] at (0,0) {$0$};

\draw [fill=pink](1,0) -- (0,1)--(1,1)--cycle;
\draw [->, very thick, red] (1,0)--(0.5, 0.5);
\draw [-, very thick, red] (0.5, 0.5)--(0,1);

\draw[->][thick, color=black](1,-1) .. controls (0.5,-0.5) ..(0.7,0.7);
\node [below] at (1,-1) {$\delta(e_{1},e_{2})$};

\draw[->][thick, color=black](1.9,1.2) .. controls (1.5,0.5) ..(1,0.5);
\node [right] at (1.8,1.2) {$\hbox{\rm lateral edge}$};
\draw[->][thick, color=black](1.1,2.1) .. controls (0.5,1.5) ..(0.5,1);
\node [right] at (1.1,2.1) {$\hbox{\rm lateral edge}$};


\begin{scope}[shift={(7,0)},scale=1]
   \draw [dashed, gray] (0,0) grid (3,3);
\node [below] at (1,0) {$e_{1}$};
\node [left] at (0,1) {$e_{2}$};
\node [left] at (0,0) {$0$};

\draw [fill=pink](1,0) -- (0,1)--(1,1)--cycle;
\draw [fill=pink!40](1,0) -- (1,1)--(2,1)--cycle;
\draw [fill=pink!40](0,1) -- (1,1)--(1,2)--cycle;
\draw [->, very thick, red] (1,0)--(0.5, 0.5);
\draw [-, very thick, red] (0.5, 0.5)--(0,1);

\draw[->][thick, color=black](1,-1) .. controls (0.5,-0.5) ..(0.7,0.7);
\node [below] at (1,-1) {$\delta(e_{1},e_{2})$};

\draw[->][thick, color=black](2.1,1.5) .. controls (1.5,1.5) ..(1.2,0.7);
\node [right] at (2.1,1.5) {$\delta(e_{1},e_{1}+e_{2})$};

\draw[->][thick, color=black](1.1,2.3) .. controls (0.5,1.7) ..(0.5,1.2);
\node [right] at (1.1,2.3){$\delta(e_{1}+e_{2}, e_2)$};
\end{scope}
  \end{tikzpicture}
\end{center}
 \caption{Vocabulary and notations about petals. The petal $\delta(e_{1},e_{2})$ is the parent of both petals $\delta(e_{1} + e_{2},e_{2})$ and $\delta(e_{1}, e_{1} + e_{2})$}
 \label{fig:Vocapetals}
   \end{figure}

 By a {\bf lattice} we mean a free abelian group of finite rank. Let $\boxed{N}$ be a lattice of rank $2$ endowed with a fixed basis  $\boxed{\mathcal{B} := (e_1, e_2)}$. We denote $\boxed{N_\Rr} := N \otimes_{\Zz} \Rr$. It is a real vector space with basis $(e_1, e_2)$.  
 
 \begin{definition} \label{def:petalsunivlotus}
    The {\bf petal} $\boxed{\delta (e_1, e_2)}$ associated with the basis $(e_1, e_2)$ of the lattice $N$ is the triangle of $N_\Rr$ with vertices $e_1, e_2$ and $e_1 + e_2$. The edge $[e_1, e_2]$ is called the {\bf base} of the petal. It has two {\bf lateral edges} $[e_1, e_1+ e_2]$ and $[e_1 + e_2, e_2]$. 
    As $(e_1, e_1 +e_2)$ and $(e_1 + e_2, e_2)$ are also bases of the lattice $N$, they define new petals $\delta(e_1, e_1 +e_2)$ and $\delta (e_1 + e_2, e_2)$ inside the plane $N_\Rr$ (see Figure \ref{fig:Vocapetals}). By iterating this process, at the $n$-th step of construction of petals to $\delta (e_1, e_2)$, one adds $2^n$ petals to those already constructed. The base of a new petal is a lateral edge of a petal constructed at the previous step, called its {\bf parent}.  The result is an infinite simplicial complex  $\boxed{\Lambda{(\mathcal{B})}} = \boxed{\Lambda(e_1, e_2)}$ embedded in the cone $\Rr_{\geq 0} e_1 + \Rr_{\geq 0} e_2$, called the {\bf universal lotus} associated to $(N, \mathcal{B})$ (see Figure \ref{fig:Unilotus}, which is identical to \cite[Figure 1.26]{GBGPPP 20} and almost identical to \cite[Figure 10]{PP 11}). 
\end{definition}

 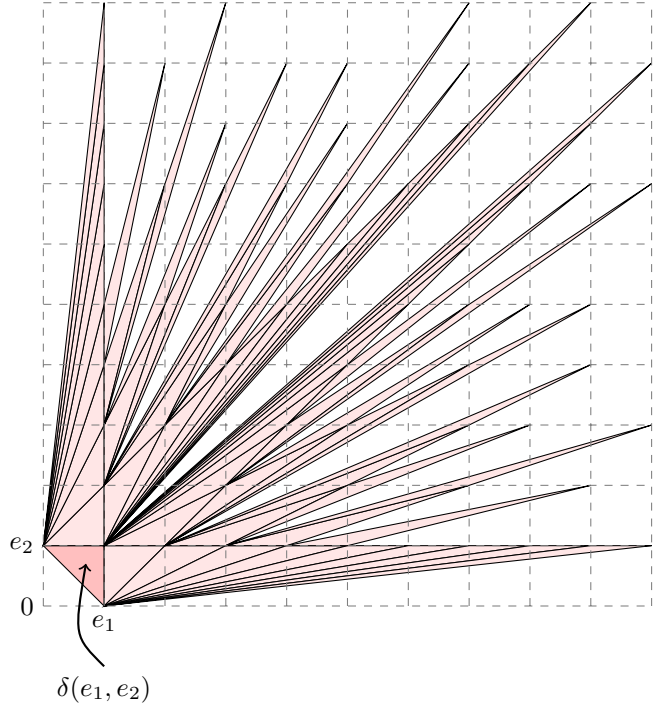
\begin{figure}[h!]
     \begin{center}
\begin{tikzpicture}[scale=0.8]

\draw [fill=pink](1,0) -- (0,1)--(1,1)--cycle;
\draw [fill=pink!40](1,0) -- (1,1)--(2,1)--cycle;
\draw [fill=pink!40](1,0) -- (2,1)--(3,1)--cycle;
\draw [fill=pink!40](1,0) -- (3,1)--(4,1)--cycle;
\draw [fill=pink!40](1,0) -- (4,1)--(5,1)--cycle;
\draw [fill=pink!40](1,0) -- (5,1)--(6,1)--cycle;
\draw [fill=pink!40](1,0) -- (6,1)--(7,1)--cycle;
\draw [fill=pink!40](1,0) -- (7,1)--(8,1)--cycle;
\draw [fill=pink!40](1,0) -- (8,1)--(9,1)--cycle;
\draw [fill=pink!40](1,0) -- (9,1)--(10,1)--cycle;

\draw [fill=pink!40](1,1) -- (2,1)--(3,2)--cycle;
\draw [fill=pink!40](1,1) -- (3,2)--(4,3)--cycle;
\draw [fill=pink!40](1,1) -- (4,3)--(5,4)--cycle;
\draw [fill=pink!40](1,1) -- (5,4)--(6,5)--cycle;
\draw [fill=pink!40](1,1) -- (6,5)--(7,6)--cycle;
\draw [fill=pink!40](1,1) -- (7,6)--(8,7)--cycle;
\draw [fill=pink!40](1,1) -- (8,7)--(9,8)--cycle;
\draw [fill=pink!40](1,1) -- (9,8)--(10,9)--cycle;

\draw [fill=pink!40](2,1) -- (3,2)--(5,3)--cycle;
\draw [fill=pink!40](2,1) -- (5,3)--(7,4)--cycle;
\draw [fill=pink!40](2,1) -- (7,4)--(9,5)--cycle;

\draw [fill=pink!40](2,1) -- (3,1)--(5,2)--cycle;
\draw [fill=pink!40](2,1) -- (5,2)--(7,3)--cycle;
\draw [fill=pink!40](2,1) -- (7,3)--(9,4)--cycle;

\draw [fill=pink!40](3,2) -- (5,3)--(8,5)--cycle;

\draw [fill=pink!40](3,2) -- (4,3)--(7,5)--cycle;
\draw [fill=pink!40](3,2) -- (7,5)--(10,7)--cycle;

\draw [fill=pink!40](4,3) -- (5,4)--(9,7)--cycle;

\draw [fill=pink!40](3,1) -- (5,2)--(8,3)--cycle;
\draw [fill=pink!40](3,1) -- (4,1)--(7,2)--cycle;
\draw [fill=pink!40](3,1) -- (7,2)--(10,3)--cycle;

\draw [fill=pink!40](4,1) -- (5,1)--(9,2)--cycle;

\draw [fill=pink!40](0,1) -- (1,1)--(1,2)--cycle;
\draw [fill=pink!40](0,1) -- (1,2)--(1,3)--cycle;
\draw [fill=pink!40](0,1) -- (1,3)--(1,4)--cycle;
\draw [fill=pink!40](0,1) -- (1,4)--(1,5)--cycle;
\draw [fill=pink!40](0,1) -- (1,5)--(1,6)--cycle;
\draw [fill=pink!40](0,1) -- (1,6)--(1,7)--cycle;
\draw [fill=pink!40](0,1) -- (1,7)--(1,8)--cycle;
\draw [fill=pink!40](0,1) -- (1,8)--(1,9)--cycle;
\draw [fill=pink!40](0,1) -- (1,9)--(1,10)--cycle;

\draw [fill=pink!40](1,1) -- (1,2)--(2,3)--cycle;
\draw [fill=pink!40](1,1) -- (2,3)--(3,4)--cycle;
\draw [fill=pink!40](1,1) -- (3,4)--(4,5)--cycle;
\draw [fill=pink!40](1,1) -- (4,5)--(5,6)--cycle;
\draw [fill=pink!40](1,1) -- (5,6)--(6,7)--cycle;
\draw [fill=pink!40](1,1) -- (6,7)--(7,8)--cycle;
\draw [fill=pink!40](1,1) -- (7,8)--(8,9)--cycle;
\draw [fill=pink!40](1,1) -- (8,9)--(9,10)--cycle;

\draw [fill=pink!40](1,2) -- (2,3)--(3,5)--cycle;
\draw [fill=pink!40](1,2) -- (3,5)--(4,7)--cycle;
\draw [fill=pink!40](1,2) -- (4,7)--(5,9)--cycle;

\draw [fill=pink!40](1,2) -- (1,3)--(2,5)--cycle;
\draw [fill=pink!40](1,2) -- (2,5)--(3,7)--cycle;
\draw [fill=pink!40](1,2) -- (3,7)--(4,9)--cycle;

\draw [fill=pink!40](2,3) -- (3,5)--(5,8)--cycle;

\draw [fill=pink!40](2,3) -- (3,4)--(5,7)--cycle;
\draw [fill=pink!40](2,3) -- (5,7)--(7,10)--cycle;

\draw [fill=pink!40](3,4) -- (4,5)--(7,9)--cycle;

\draw [fill=pink!40](1,3) -- (2,5)--(3,8)--cycle;
\draw [fill=pink!40](1,3) -- (1,4)--(2,7)--cycle;
\draw [fill=pink!40](1,3) -- (2,7)--(3,10)--cycle;

\draw [fill=pink!40](1,4) -- (1,5)--(2,9)--cycle;

\draw [dashed, gray] (0,0) grid (10,10);
\node [below] at (1,0) {$e_{1}$}; 
\node [left] at (0,1) {$e_{2}$}; 
\node [left] at (0,0) {$0$}; 

\draw[->][thick, color=black](1,-1) .. controls (0.5,-0.5) ..(0.7,0.7);  
\node [below] at (1,-1) {$\delta(e_{1},e_{2})$}; 
   \end{tikzpicture}
\end{center}
  \caption{Partial view of the universal lotus $\Lambda(e_{1},e_{2})$ of Definition \ref{def:petalsunivlotus}}  
  \label{fig:Unilotus} 
    \end{figure}

\medskip 

We define now  special kinds of sublotuses of the universal lotus $\Lambda(e_{1},e_{2})$.

\begin{definition}  
      \label{def:deflot} $\,$ 
   \begin{enumerate}
\item 
       A {\bf Newton lotus relative to} $\mathcal{B}$ is either the segment $[e_1, e_2]$ or a union of a finite set of petals of the universal lotus  $\Lambda(\mathcal{B})$,  which contains the parent of each of its petals different from $\delta (e_1, e_2)$. 

\item 
       If $\gamma \in \Qq_{\geq 0} \cup \{ \infty \}$, its  {\bf lotus relative to} $\mathcal{B}$, denoted $\boxed{\Lambda(\gamma)_\mathcal{B}} \subset N_\Rr$, is the simplicial complex obtained as the union of petals of  the universal lotus  $\Lambda(\mathcal{B})$ whose interiors intersect the ray $\Rr_{\geq 0}(e_1 + \gamma e_2)$ of slope $\gamma$ relative to $\mathcal{B}$.  If $\gamma \in \{ 0, \infty \}$, then its lotus $\Lambda(\gamma)_\mathcal{B}$ is just the segment $[e_1, e_2]$.      
\item
       If $\cale \subseteq  \Qq_{\geq 0} \cup \{ \infty \}$, then its {\bf lotus relative to} $\mathcal{B}$ is  the union $\bigcup_{\gamma \in \cale} \Lambda( \gamma )_\mathcal{B}$ of the lotuses of its elements. We denote it by $ \boxed{\Lambda (\cale)_\mathcal{B}}$. In addition, the lotus $\Lambda (\cale)_\mathcal{B}$ is equipped with a finite set of {\bf marked points} $\boxed{p(\gamma)}$ for $\gamma \in \cale$. If $\gamma \in \cale$, the marked point $p(\gamma) \in \Lambda (\cale)_\mathcal{B}$ is the unique primitive element of the lattice $N$ which lies in the cone $\Rr_{\geq 0} e_1 + \Rr_{\geq 0} e_2$ and which has slope $\gamma$ relative to $\mathcal{B}$.              
\end{enumerate}
\end{definition}

  Note that every Newton lotus relative to $(e_1, e_2)$ is of the form $\Lambda (\cale)_\mathcal{B}$ for some {\em finite} subset $\cale$ of $\Qq_{\geq 0} \cup \{ \infty \}$. One may choose for instance the set of slopes of the rays determined by all the vertices of the given Newton lotus.

\medskip 

Let us explain how to construct an {\em abstract lotus} which is combinatorially equivalent to any given Newton lotus by using {\em continued fraction expansions} of positive rational numbers.

\begin{definition}  
     \label{def:contfrac}
  Let $ k \in \Zz_{>0}$ and let $a_1, \dots , a_k$ be natural numbers such that $a_1 \geq 0$ and $a_j >0$ whenever $j \in  \{ 2, \dots, k \}$. The {\bf continued fraction} with {\bf terms} $a_1, \dots , a_k$ is the non-negative rational number: 
   \[ \boxed{[a_1, a_2, \dots, a_k]} : = 
          a_1 + \cfrac{1}{a_2 + \cfrac{1}{ \cdots + \cfrac{1}{a_k}}}. \]
\end{definition}

Any $\gamma \in \Qq_+^*$ may be written uniquely as a continued fraction 
$[a_1, a_2, \dots, a_k]$, if one imposes the constraint that $a_k>1$ whenever $\gamma \neq 1$. One speaks then of the {\bf continued fraction expansion} of $\gamma$. Note that its  first term  $a_1$ is the integer part of $\gamma$, therefore it vanishes if and only if $\gamma \in (0, 1)$.

\begin{definition}  
    \label{def:decomponenumber}
   Let $\gamma \in \Qq_+^*$. Consider its continued fraction expansion 
   $\gamma =  [a_1, a_2, \dots, a_k]$. Its {\bf abstract lotus}  \index{lotus!abstract}
   $\boxed{\Lambda(\gamma)}$ is the simplicial complex constructed from the continued fraction expansion
   $\gamma =  [a_1, a_2, \dots, a_k]$ by the following procedure:
   \medskip
   
   \noindent
   $\bullet$
   Start from an affine triangle $[A_1 A_2 V]$, with vertices $A_1, A_2, V$. 
       
       \noindent
   $\bullet$
        Draw a polygonal line $P_0 P_1 P_2 \dots P_{k-1}$ whose vertices belong alternatively to the sides $[A_1 V]$, $[A_2 V]$, and such that  $P_0 := A_2$ and    
          \[ \left\{ \begin{array}{l} P_1 \in [A_1 V), \mbox{ with } P_1 =A_1 \mbox{ if and only if }  a_1 =0,  \\
            P_i \in (P_{i-2} V) \mbox{ for any }  i \in \{2, \dots, k-1\}.
                \end{array}  \right. \]
        By convention, we set also $P_{-1} := A_1, P_k := V$. The resulting subdivision  
        of the triangle $[A_1 A_2 V]$ into $k$ triangles 
        is the {\bf zigzag decomposition associated with $\lambda$}.

       \noindent
   $\bullet$
        Decompose then each segment $[P_{i -1} P_{i + 1}]$ (for $i \in \{0, \dots, k-1 \}$) 
        into $a_{i+1}$ segments, and join the interior points of $[P_{i -1} P_{i + 1}]$ 
        created in this way to $P_i$. One obtains then a new triangulation of the 
        initial triangle $[A_1 A_2 V]$, which is by definition the abstract lotus $\Lambda(\gamma)$. 
               
    \medskip 
    The {\bf base} of the abstract lotus $\Lambda(\gamma)$ is the segment $[A_1 A_2]$, oriented from $A_1$ to $A_2$. 
\end{definition}


    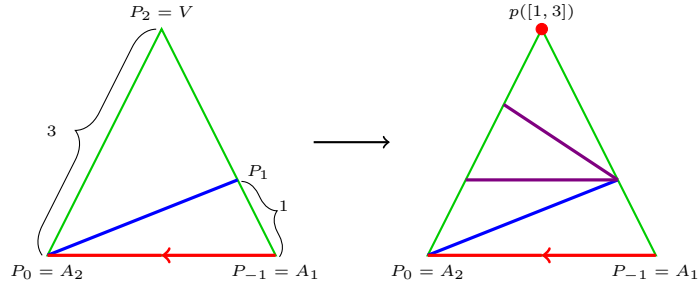
\begin{figure}[h!]
    \begin{center}
\begin{tikzpicture}[scale=0.5]

\draw [decorate,decoration={brace,amplitude=10pt},xshift=-4pt,yshift=0pt]
(0,0) --(3,6) node [black,midway,xshift=-0.6cm]
{$\;$};
\node [right] at (-0.5,3.3) {{ \tiny{$3$}}};

\draw [decorate,decoration={brace,amplitude=7pt},yshift=-4pt,xshift=0pt]
(5.1,2.1) -- (6.1,0.1) node  [black,midway,yshift=0.6cm]
{$\;$};
\node [right] at (5.6,1.3) {{ \tiny{$1$}}};

\draw [color=black!20!green, thick] (0,0) -- (6,0)--(3,6)--cycle;
\draw [-,color=blue, very thick] (0,0) -- (5,2);
\draw [->, color=red, very thick] (6,0)--(3,0);
\draw [-, color=red, very thick] (3,0)--(0,0);
\node [below] at (0,0) {{\tiny $P_{0}=A_{2}$}};
\node [below] at (6,0) {{\tiny $P_{-1}=A_1$}};
\node [right] at (5,2.2) {{\tiny $P_{1}$}};
\node [above] at (3,6) {{\tiny $P_{2}=V$}};

 \draw [->, thick](7,3) -- (9, 3) ;

\begin{scope}[shift={(10,0)}]
\draw [color=black!20!green, thick] (0,0) -- (6,0)--(3,6)--cycle;
\draw [-,color=blue, very thick] (0,0) -- (5,2);
\draw [->, color=red, very thick] (6,0)--(3,0);
\draw [-, color=red, very thick] (3,0)--(0,0);
\draw [-, color=violet, very thick] (5,2)--(1,2);
\draw [-, color=violet, very thick] (5,2)--(2,4);
\node[draw,circle, inner sep=1.5pt,color=red, fill=red] at (3,6){};
\node [above] at (3,6) {{\tiny $p([1,3])$}};
\node [below] at (0,0) {{\tiny $P_0=A_{2}$}};
\node [below] at (6,0) {{\tiny $P_{-1}=A_1$}};
\end{scope}
\end{tikzpicture}
\end{center}
 \caption{Construction of the Newton lotus $\Lambda(4/3=[1,3])$}
\label{fig:4/3}
   \end{figure}


    \begin{figure}[h!]
    \begin{center}
\begin{tikzpicture}[scale=0.5]

\draw [decorate,decoration={brace,amplitude=10pt},xshift=-4pt,yshift=0pt]
(0,0) --(1.5,2.9) node [black,midway,xshift=-0.6cm]
{$\;$};
\node [right] at (-0.9,2) {{ \tiny{$1$}}};

\draw [decorate,decoration={brace,amplitude=7pt},yshift=-4pt,xshift=0pt]
(5.2,2) -- (6.1,0.2) node  [black,midway,yshift=0.6cm]
{$\;$};
\node [right] at (5.7,1.3) {{ \tiny{$1$}}};

\draw [decorate,decoration={brace,amplitude=10pt},yshift=-4pt,xshift=0pt]
(3.2,6) -- (5.2,2.2) node  [black,midway,yshift=0.6cm]
{$\;$};
\node [right] at (4.5,4.3) {{ \tiny{$2$}}};

\draw [color=black!20!green, thick] (0,0) -- (6,0)--(3,6)--cycle;
\draw [-,color=blue, very thick] (0,0) -- (5,2)--(1.5,3);
\draw [->, color=red, very thick] (6,0)--(3,0);
\draw [-, color=red, very thick] (3,0)--(0,0);
\node [below] at (0,0) {{\tiny $P_{0}$}};
\node [below] at (6,0) {{\tiny $P_{-1}$}};
\node [right] at (5,2.2) {{\tiny $P_{1}$}};
\node [left] at (1.4,3.1) {{\tiny $P_{2}$}};
\node [above] at (3,6) {{\tiny $P_{3}=V$}};

 \draw [->, thick](7,3) -- (9, 3) ;

\begin{scope}[shift={(10,0)}]
\draw [color=black!20!green, thick] (0,0) -- (6,0)--(3,6)--cycle;
\draw [-,color=blue, very thick] (0,0) -- (5,2)--(1.5,3);
\draw [->, color=red, very thick] (6,0)--(3,0);
\draw [-, color=red, very thick] (3,0)--(0,0);
\draw [-, color=violet, very thick] (1.5,3)--(3.9,4.2);
\node[draw,circle, inner sep=1.5pt,color=red, fill=red] at (3,6){};
\node [above] at (3,6) {{\tiny $p([1,1,2])$}};
\node [below] at (0,0) {{\tiny $P_{0}$}};
\node [below] at (6,0) {{\tiny $P_{-1}$}};
\end{scope}
\end{tikzpicture}
\end{center}
 \caption{Construction of the Newton lotus  $\Lambda(5/3=[1,1,2])$}
\label{fig:5/3}
   \end{figure}
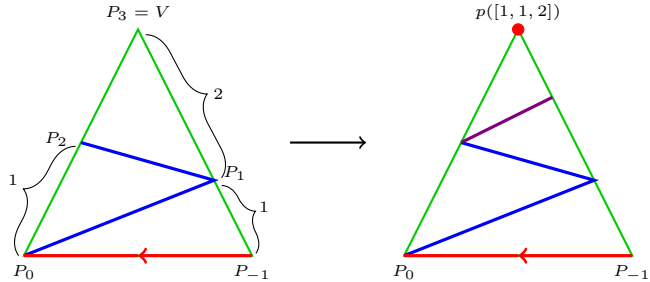

\begin{example}
   \label{ex:constrNlotus}
     In Figures \ref{fig:4/3} and \ref{fig:5/3} are represented the abstract lotuses associated to $4/3 = [1, 3]$ and $5/3 = [1, 1, 2]$, constructed following Definition \ref{def:decomponenumber}. 
\end{example}

\medskip 
   Klein's geometric interpretation of continued fraction expansions (see \cite[Section 3]{PP 07}) allows to prove by induction on the sum $a_1 + \cdots + a_k$ the following relation between both kinds of lotuses (see \cite[Proposition 1.5.20]{GBGPPP 20}):

\begin{proposition}
   Let $\gamma \in \Qq_+^*$. The Newton lotus  $\Lambda(\gamma)_\mathcal{B}$ relative to the basis $\mathcal{B}$ and the abstract lotus $\Lambda(\gamma)$ are combinatorially isomorphic, seen as triangulated polygons, where the isomorphism sends $e_1$ to $A_1$, $e_2$ to $A_2$ and $p(\gamma)$ to $V$. 
\end{proposition}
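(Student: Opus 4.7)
My proof plan is to induct on the sum $N(\gamma) := a_1 + a_2 + \cdots + a_k$ of the terms of the continued fraction expansion of $\gamma$, exploiting Klein's geometric interpretation of continued fractions as a ``zigzag'' of lattice petals traversed by the ray $\Rr_{\geq 0}(e_1 + \gamma e_2)$. The base case $N(\gamma) = 1$ forces $\gamma = 1 = [1]$: then $\Lambda(1)_\mathcal{B}$ is the single petal $\delta(e_1, e_2)$, while the abstract lotus $\Lambda(1)$ is the single triangle $[A_1 A_2 V]$ (no zigzag subdivision, and the single segment $[P_{-1} P_1] = [A_1 V]$ is not further subdivided since $a_1 = 1$), so the assignment $e_1 \mapsto A_1$, $e_2 \mapsto A_2$, $e_1 + e_2 \mapsto V$ is the required isomorphism.

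For the inductive step, first use the obvious symmetry $\gamma \leftrightarrow 1/\gamma$ (which swaps $(e_1, e_2)$ and $(A_1, A_2)$, and corresponds to swapping $[0, a_2, \ldots, a_k]$ with $[a_2, \ldots, a_k]$) to reduce to the case $\gamma > 1$, i.e.\ $a_1 \geq 1$. Introduce the basis $\mathcal{B}' = (e_1', e_2') := (e_1 + e_2,\, e_2)$ of $N$. A direct computation gives $e_1 + \gamma e_2 = e_1' + (\gamma - 1)\, e_2'$, so the same ray has slope $\gamma' := \gamma - 1$ with respect to $\mathcal{B}'$, and $p(\gamma)_{\mathcal{B}} = p(\gamma')_{\mathcal{B}'}$. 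From the definition of the universal lotus and its petals, one checks the geometric decomposition
\[
\Lambda(\gamma)_\mathcal{B} \;=\; \delta(e_1, e_2) \,\cup\, \Lambda(\gamma')_{\mathcal{B}'},
\]
where the two pieces meet along the lateral edge $[e_1 + e_2, e_2]$. Writing $\gamma'$ as a continued fraction gives $\gamma' = [a_1 - 1, a_2, \ldots, a_k]$ if $a_1 \geq 2$, and $\gamma' = [0, a_2, \ldots, a_k]$ if $a_1 = 1$; in both cases $N(\gamma') < N(\gamma)$, so the inductive hypothesis applies to $\Lambda(\gamma')_{\mathcal{B}'}$.

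It remains to verify the corresponding decomposition on the abstract side: the abstract lotus $\Lambda(\gamma)$ is obtained from $\Lambda(\gamma')$ (built on an auxiliary triangle $[A_1' A_2 V]$) by gluing one extra triangle $[A_1 A_2 A_1']$ along the edge $[A_1' A_2]$, where $A_1'$ is the first subdivision point on the segment $[P_{-1} P_1] = [A_1 V]$ when $a_1 \geq 2$, and is $V$ itself with a reinterpreted zigzag when $a_1 = 1$. This follows by inspecting Definition~\ref{def:decomponenumber}: the polygonal line $P_0 P_1 \cdots P_{k-1}$ for $\gamma$ differs from the one for $\gamma'$ only by prepending one extra segment of the zigzag (respectively by switching the role of the two sides when $a_1 = 1$), and the subsequent fan subdivisions agree under the matching of indices. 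Gluing the isomorphism provided by the inductive hypothesis with the evident identification of the extra petal $\delta(e_1, e_2) \leftrightarrow [A_1 A_2 A_1']$ (respecting the assignments $e_1 \mapsto A_1$ and $e_2 \mapsto A_2$) yields the desired combinatorial isomorphism, and in particular sends $p(\gamma)_{\mathcal{B}} = p(\gamma')_{\mathcal{B}'}$ to $V = V'$.

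The main obstacle is the bookkeeping in the transitional case $a_1 = 1$, where shortening the continued fraction forces the zigzag's first segment to switch sides (the term ``$0$'' at the head of $[0, a_2, \ldots, a_k]$ means $P_1 = A_1'$ in the new construction); one must check by hand that the two triangulations match along the shared edge and that $V$ is correctly preserved. Once this step is handled, the rest of the induction is a straightforward bookkeeping based on Klein's lattice-polygon description, in which the terms $a_i$ count consecutive petals of the universal lotus traversed by the ray on the same side before switching sides.
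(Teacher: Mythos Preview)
Your proof is correct and follows essentially the same approach as the paper: the paper states only that ``Klein's geometric interpretation of continued fraction expansions allows to prove by induction on the sum $a_1 + \cdots + a_k$,'' and your argument spells out precisely this induction, peeling off one petal at a time via the change of basis $\mathcal{B}' = (e_1 + e_2, e_2)$ and the corresponding shortening of the continued fraction.
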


\medskip

  Suppose now that one has two numbers $\gamma, \mu \in \Qq_+^*$. If $\gamma =  [a_1,\dots, a_k]$ and $\mu = [b_1,\dots, b_l]$, let $j\in \{0, \dots,  \min\{k,l\}\}$ be maximal such that $a_i = b_i$ for all $i \in \{1,\dots, j\}$.  We may assume, up to permutation of $\gamma$ and $\mu$, that $k =j$ or $a_{j+1} < b_{j+1}$. Define then:
 \begin{equation} \label{eq:defwedge}  
     \boxed{\gamma \wedge \mu}  := \left\{ 
       \begin{array}{l}
             [a_1, \dots , a_j],  \mbox{ if }    k = j,    \\
                \left[a_1, \dots, a_j, a_{j+1} \right],    \mbox{ if }  k =  j + 1,  \\
               \left[a_1, \dots, a_j, a_{j+1}  + 1 \right],   \mbox{ if }  k > j + 1.
       \end{array}  \right.
  \end{equation}
    
    The next proposition allow us to describe the intersection of two lotuses 
    of the form $\Lambda(\gamma)$
    in terms of the operation  $\wedge$ on $\Qq_+^*$ (see \cite[Proposition 1.5.23]{GBGPPP 20}). 
           
\begin{proposition} \label{prop:lotus-wedge} 
   For any $\gamma, \mu \in \Qq_+^*$, one has:
       \[ \Lambda(\gamma)_\mathcal{B} \cap \Lambda(\mu)_\mathcal{B} = 
            \Lambda( \gamma \wedge \mu)_\mathcal{B}.  \]
   Therefore, the lotus $\Lambda(\{\gamma, \mu \})_\mathcal{B}$ is isomorphic, 
   as a simplicial complex with an oriented base, to the triangulated polygon 
   $\boxed{\Lambda(\gamma) \uplus \Lambda(\mu)}$ obtained by gluing the abstract lotuses $\Lambda(\gamma)$ and $\Lambda(\mu)$ along $\Lambda(\gamma \wedge \mu)$.
\end{proposition}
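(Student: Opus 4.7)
My plan is to prove the equality $\Lambda(\gamma)_\mathcal{B} \cap \Lambda(\mu)_\mathcal{B} = \Lambda(\gamma \wedge \mu)_\mathcal{B}$ by induction on the combined length of the canonical continued fraction expansions of $\gamma$ and $\mu$, exploiting the recursive ``peeling'' structure of the universal lotus $\Lambda(\mathcal{B})$. The geometric input underlying the whole argument is Klein's interpretation of continued fractions: if $\gamma = [a_1, a_2, \dots, a_k]$, then the ray $\Rr_{\geq 0}(e_1 + \gamma e_2)$ crosses successively $a_1$ petals sharing the vertex $e_1$ (when $a_1 \geq 1$) or the vertex $e_2$ (when $a_1 = 0$), then $a_2$ petals sharing a new vertex, and so on. This splits $\Lambda(\gamma)_\mathcal{B}$ into blocks in bijection with the partial quotients $a_1, \dots, a_k$; moreover, after the first block, the remaining petals form a Newton lotus $\Lambda([a_2, \dots, a_k])_{\mathcal{B}'}$ relative to a new basis $\mathcal{B}'$ of $N$ obtained by the unimodular change adapted to $a_1$.

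For the base case (one expansion of length zero, corresponding to $\gamma$ or $\mu$ in $\{0, \infty\}$), both sides reduce to the segment $[e_1, e_2]$. For the inductive step, write $\gamma = [a_1, \dots, a_k]$ and $\mu = [b_1, \dots, b_l]$. If $a_1 = b_1$, the first block of petals is common to both lotuses; the unimodular change to $\mathcal{B}'$ identifies the complements of this block with $\Lambda([a_2,\dots,a_k])_{\mathcal{B}'}$ and $\Lambda([b_2,\dots,b_l])_{\mathcal{B}'}$, and the inductive hypothesis applied to this smaller pair, combined with the elementary identity expressing $\gamma \wedge \mu$ in terms of $[a_2,\dots,a_k] \wedge [b_2,\dots,b_l]$, closes the loop. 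If $a_1 \neq b_1$, say $a_1 < b_1$ after a harmless swap, then $j = 0$, the intersection is confined to the fan of petals at a single vertex ($e_1$ if $a_1 \geq 1$, or $e_2$ if $a_1 = 0$), and a direct count of which petals of this fan lie in each lotus recovers the three subcases in the definition \eqref{eq:defwedge} of $\gamma \wedge \mu$.

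The delicate point, which I expect to be the main obstacle, is the third subcase $\gamma \wedge \mu = [a_1,\dots,a_j, a_{j+1}+1]$, where the extra ``$+1$'' reflects the fact that when both $\gamma$ and $\mu$ still have continued-fraction terms beyond the first point of disagreement, the two rays share one additional petal at the end of the common block before turning in opposite directions. Verifying this ``overshoot'' requires a careful look inside a single block of petals: one must check that a ray which continues past the block's apex crosses exactly one more petal of the block than a ray that stops within it, taking into account the alternation of sides on which successive blocks sit. I would isolate this as a sub-lemma phrased purely in the universal lotus $\Lambda(\mathcal{B})$, comparing the sets of petals traversed by two rays whose slopes share a prescribed continued-fraction prefix; this sub-lemma would then be invoked at the disagreement step of the induction, reducing the three-way case analysis on $\gamma \wedge \mu$ to a single uniform geometric statement.
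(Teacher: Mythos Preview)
The paper does not actually prove this proposition; it merely cites \cite[Proposition 1.5.23]{GBGPPP 20} for the result and moves on. So there is no in-paper argument to compare yours against.

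That said, your plan is sound and is essentially the natural proof. The induction on the combined length of the continued fraction expansions, with the ``peeling'' step that passes from $\mathcal{B}=(e_1,e_2)$ to a new basis $\mathcal{B}'$ after stripping the first block of petals, is exactly what Klein's geometric interpretation of continued fractions delivers. One small slip: for $\gamma=[a_1,\dots]$ with $a_1\geq 1$ the first $a_1$ petals crossed by the ray $\Rr_{\geq 0}(e_1+\gamma e_2)$ share the vertex $e_2$, not $e_1$ (and dually when $a_1=0$); this does not affect the logic. Your diagnosis of the ``$+1$'' in the case $k>j+1$ is correct: the first petal of the $(j+2)$-th block of $\Lambda(\gamma)_\mathcal{B}$ coincides with the $(a_{j+1}+1)$-th petal of the fan at the relevant vertex, and since $b_{j+1}>a_{j+1}$ this petal also lies in $\Lambda(\mu)_\mathcal{B}$. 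Isolating this as a sub-lemma about the overlap of two rays sharing a continued-fraction prefix is a clean way to package it. Just be careful at the boundary of the induction when one expansion terminates exactly at the peeled block (i.e.\ $k=j+1$), where the ``complement'' you pass to is the bare base segment rather than a genuine lotus; this is precisely what forces the second clause $\gamma\wedge\mu=[a_1,\dots,a_{j+1}]$ in \eqref{eq:defwedge}.
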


Iterating the gluing operation, one defines the {\bf abstract lotus} 
associated with $\gamma_1, \dots, \gamma_k \in \Qq^*_+$, as
   \[
     \boxed{\Lambda(\gamma_1,\dots, \gamma_k)} := 
      \Lambda (\gamma_1, \dots, \gamma_{k-1}) \uplus \Lambda(\gamma_k).\] 
It is combinatorially equivalent to the Newton lotus 
$\Lambda(\gamma_1,\dots, \gamma_k)_\mathcal{B}$, seen as a triangulated polygon with marked points $p(\gamma_1), \dots, p(\gamma_k)$ and an oriented base.

\begin{example} 
  In Figure \ref{fig:4/3+5/3} we represent the abstract lotus $\Lambda(\{\frac{4}{3}, \frac{5}{3}\})$, obtained by gluing the lotuses $\Lambda(\frac{4}{3})$ and $\Lambda(\frac{5}{3})$ of Figures \ref{fig:4/3} and \ref{fig:5/3} as explained  in Proposition \ref{prop:lotus-wedge}.
\end{example}

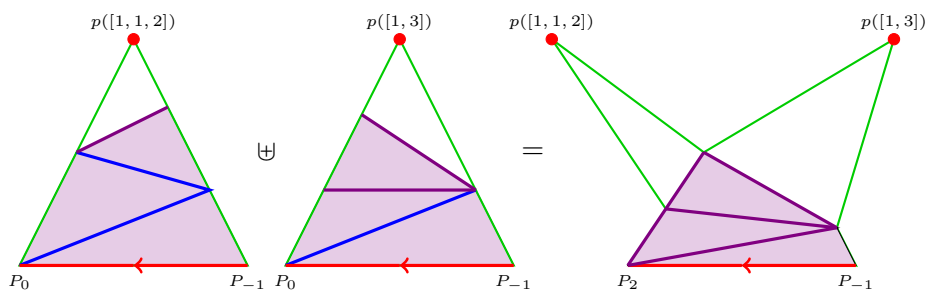
\begin{figure}[h!]
    \begin{center}
\begin{tikzpicture}[scale=0.5]

\begin{scope}[shift={(0,0)}]
  
  \draw[fill=violet!20](0,0) -- (6,0) -- (5,2)  --(1.5,3)--cycle;
   \draw[fill=violet!20](1.5,3)--(5,2)  --(3.9,4.2)--cycle;
   \draw [color=black!20!green, thick] (0,0) -- (6,0)--(3,6)--cycle;
   \draw [-,color=blue, very thick] (0,0) -- (5,2)--(1.5,3);
   \draw [->, color=red, very thick] (6,0)--(3,0);
   \draw [-, color=red, very thick] (3,0)--(0,0);
   \draw [-, color=violet, very thick] (1.5,3)--(3.9,4.2);
     \node[draw,circle, inner sep=1.5pt,color=red, fill=red] at (3,6){};
     \node [above] at (3,6) {{\tiny $p([1,1,2])$}};
     \node [below] at (0,0) {{\tiny $P_{0}$}};
     \node [below] at (6,0) {{\tiny $P_{-1}$}};
\end{scope}
\node at (6.5,3) {{\large $\uplus$}};

\begin{scope}[shift={(7,0)}]

    \draw[fill=violet!20](0,0) -- (6,0) -- (5,2)  --(1,2)--cycle;
    \draw[fill=violet!20](5,2)--(1,2)  --(2,4)--cycle;
    \draw [color=black!20!green, thick] (0,0) -- (6,0)--(3,6)--cycle;       
    \draw [-,color=blue, very thick] (0,0) -- (5,2);
    \draw [->, color=red, very thick] (6,0)--(3,0);
    \draw [-, color=red, very thick] (3,0)--(0,0);
    \draw [-, color=violet, very thick] (5,2)--(1,2);
    \draw [-, color=violet, very thick] (5,2)--(2,4);
      \node[draw,circle, inner sep=1.5pt,color=red, fill=red] at (3,6){};
      \node [above] at (3,6) {{\tiny $p([1,3])$}};
      \node [below] at (0,0) {{\tiny $P_{0}$}};
      \node [below] at (6,0) {{\tiny $P_{-1}$}};

\end{scope}

\node at (13.5,3) {{\large $=$}};

\begin{scope}[shift={(16,0)}]
\draw [color=black!20!green, thick] (1,1.5) -- (-2,6) --(2,3)--(7,6)--(5.5,1)--(6,0)--cycle;
\draw[fill=violet!20](0,0) -- (6,0)--(5.5,1) --(0,0)--cycle;
\draw[fill=violet!20](0,0) -- (1,1.5)--(5.5,1) --cycle;
\draw[fill=violet!20](1,1.5)--(5.5,1) --(2,3)--cycle;
\draw [->, color=red, very thick] (6,0)--(3,0);
\draw [-, color=red, very thick] (3,0)--(0,0);
\draw [-, color=violet, very thick] (0,0)--(2,3);
\draw [-, color=violet, very thick] (0,0)--(5.5,1);
\draw [-, color=violet, very thick] (5.5,1)--(1,1.5);
\draw [-, color=violet, very thick] (5.5,1)--(2,3);
\node[draw,circle, inner sep=1.5pt,color=red, fill=red] at (-2,6){};
\node [above] at (-2,6) {{\tiny $p([1,1,2])$}};
\node[draw,circle, inner sep=1.5pt,color=red, fill=red] at (7,6){};
\node [above] at (7,6) {{\tiny $p([1,3])$}};
\node [below] at (0,0) {{\tiny $P_{2}$}};
\node [below] at (6,0) {{\tiny $P_{-1}$}};
\end{scope}
\end{tikzpicture}
\end{center}
  \caption{The abstract lotus $\Lambda(\{[1,3], [1,1,2]\})$ is obtained by gluing the abstract lotuses $\Lambda([1,3])$ and $\Lambda([1,1,2])$} 
 \label{fig:4/3+5/3}
 \end{figure}

We can see a Newton lotus as the lotus associated to a finite active constellation of crosses by taking a cross at $O$ and by fixing an orientation of its dual segment:

\begin{proposition} 
   \label{prop: lotus-const}
    Let $\cale$ be a nonempty finite subset of $\Qq_+$. Let $X_O := L + L_1$ be a cross at $O$. Then, there exists a unique minimal finite active constellation of crosses $\boxed{\hat{\calc} (\cale)}$ above $O$ all of whose points are active, such that $X_O$ is its cross at $O$, and the lotuses $\Lambda(\hat{\calc} (\cale)) $ and  $\Lambda(\cale)$ are isomorphic as triangulated polygons with oriented bases.
\end{proposition}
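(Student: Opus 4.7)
The plan is to construct $\hat{\calc}(\cale)$ by induction on the number of petals of $\Lambda(\cale)$, using Proposition \ref{prop:fincc} to build up the constellation one active point at a time while matching each such step with step (A) of Definition \ref{def:lotusfacc}. The key observation is that $\Lambda(\cale)$ has a single base edge (namely $[e_1, e_2]$) and all other boundary edges are lateral, so the construction never needs step \ref{(B2)} of Definition \ref{def:lotusfacc}; in particular, under the requirement that every point is active, the number of active points of $\hat{\calc}(\cale)$ must equal the number of petals of $\Lambda(\cale)$ by Remark \ref{rem:oneone}.

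For the base case I would take $\Lambda(\cale)$ equal to the single petal $\delta(e_1,e_2)$, set $\hat{\calc}(\cale) := \{O\}$ with $O$ active, and keep the cross $X_O = L + L_1$. Applying step \ref{(A)} of Definition \ref{def:lotusfacc} once on the dual segment $[L\, L_1]$ yields a single petal $[L \, L_1 \, E_O]$, combinatorially isomorphic to $\delta(e_1,e_2)$ via $e_1 \mapsto L$, $e_2 \mapsto L_1$, as required.

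For the inductive step I would choose a \emph{terminal} petal $\pi$ of $\Lambda(\cale)$, that is, a petal whose apex (the vertex opposite its base) is incident to no other petal of $\Lambda(\cale)$. Such a $\pi$ exists because the parent relation organises the petals into a finite rooted forest. Removing $\pi$ yields a Newton lotus $\Lambda(\cale')$ for an explicitly describable finite $\cale' \subset \Qq_+$; one obtains $\cale'$ from $\cale$ by adjusting the continued fraction expansions of those slopes whose representing ray passed through $\pi$, an operation governed by Proposition \ref{prop:lotus-wedge} and the recursive definition of the operation $\wedge$. By the inductive hypothesis there is a unique minimal $\hat{\calc}(\cale')$ whose lotus matches $\Lambda(\cale')$. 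The base edge of $\pi$ is then an edge of $\Lambda(\cale')$, and by Proposition \ref{prop:interprlotus} it corresponds to a pair of irreducible components of the total transform $X_{\hat{\calc}(\cale')}$ meeting at a unique point $P$ on the model $S_{\cale'}$. I would extend $\hat{\calc}(\cale')$ by adjoining $(P, X_P)$, with $X_P$ the germ at $P$ of those two components, via step \ref{donttouch} of Proposition \ref{prop:fincc}, and then activate $P$ via step \ref{stepactivate}; this activation triggers step \ref{(A)} of Definition \ref{def:lotusfacc} and adds exactly the petal $\pi$ to the lotus built so far.

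Uniqueness proceeds by reversing this induction. Given any $\hat{\calc}$ satisfying the conclusion, Remark \ref{rem:oneone} puts its active points in bijection with the petals of $\Lambda(\cale)$, and any admissible construction order must end with the activation of a terminal petal. Once the initial cross $X_O$ and the lotus $\Lambda(\cale)$ are fixed, Proposition \ref{prop:interprlotus} forces the identity of the last active point and its cross, so peeling off terminal petals one by one uniquely determines $\hat{\calc}$, which is also minimal since no inactive point is ever needed. The main obstacle will be the combinatorial step of showing that removing a terminal petal of $\Lambda(\cale)$ yields $\Lambda(\cale')$ for an explicit $\cale'$: this requires a careful tracking of how the continued fraction expansions of the elements of $\cale$ interact under $\wedge$. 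A secondary point, essentially automatic, is to verify the compatibility condition \eqref{eq:inclcrosses} at each newly added point $P$; it holds because the two components of $X_P$ already appear in the total transform at the previous stage of the construction.
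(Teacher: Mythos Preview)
Your argument is correct, but it follows a different route from the paper's. The paper's proof is very short and conceptual: it chooses local coordinates $(x,y)$ with $L=Z(x)$, $L_1=Z(y)$, views $(\Cc^2,O)$ as a toric surface with weight lattice $N$, and reads the Newton lotus $\Lambda(\cale)_{\mathcal{B}}$ directly as an instruction sheet for a sequence of toric blowups of the closed orbit. The constellation is then simply the set of points blown up, each cross being the germ of the total transform of the coordinate axes, and independence from the choice of coordinates is dispatched in one line by induction on the number of petals.

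Your approach, by contrast, is a purely combinatorial induction that peels off terminal petals and matches each removal with the deletion of one active point via Proposition~\ref{prop:fincc} and Definition~\ref{def:lotusfacc}. This is more elementary in that it avoids any appeal to toric geometry and stays entirely within the recursive framework the paper has already set up for constellations of crosses. The trade-off is length and some bookkeeping. One simplification you should make: the step you flag as ``the main obstacle'' --- showing that removing a terminal petal yields $\Lambda(\cale')$ for an explicit $\cale'$ --- is not actually needed. All you require is that the result is still a Newton lotus (immediate, since the parent-closure condition of Definition~\ref{def:deflot} is preserved), and then the remark following that definition guarantees it has the form $\Lambda(\cale')$ for \emph{some} finite $\cale'$; you never need to compute $\cale'$ explicitly, so the continued-fraction tracking via $\wedge$ is unnecessary.
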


\begin{proof}
      Choose local coordinates $(x,y)$ on $(S, O)$ such that $L = Z(x)$ and $L_1 = Z(y)$. Consider the lotus $\Lambda(\cale)_{\mathcal{B}}$ as an instruction to perform a sequence of blowups of the closed orbit $O$ of $\Cc^2$ seen as a toric surface with weight lattice $N$. The underlying constellation of $\hat{\calc} (\cale)$ consists of all the points which are blown up. Its crosses are the germs of the total transforms at those points of the sum of coordinate axes of $\Cc^2$, and all the points are declared active. It is simple to show by induction on the number of petals of $\Lambda(\cale)$ that this active constellation of crosses depends only on the cross $X_O$ and not on the local coordinate system $(x,y)$ defining it. 
\end{proof}

The following remark points out the connection of lotuses with toroidal geometry which was essential in the approach to lotuses developed in \cite{GBGPPP 20}.

\begin{remark}
   The rays spanned by the integral points in the lateral boundary of the lotus $\Lambda(\hat{\calc} (\cale))_{\mathcal{B}}$ define a {\em regular fan} $\mathcal{F}(\cale)$ subdividing the cone $\Rr_{\geq 0} e_1+ \Rr_{\geq 0} e_2$. The model of the active constellation $\hat{\calc} (\cale)$ can be seen as the {\em toroidal map} $\pi: (\Sigma, \partial \Sigma) \to (S, X_O)$ defined by  the fan $\mathcal{F}(\cale)$,  with respect to the {\em boundaries} $\partial \Sigma = \pi^{-1} (X_O)$ and $\partial S = X_O$ (see \cite[Section 1.3.3]{GBGPPP 20} for more details). 
\end{remark}

To conclude this section, let us mention that Faber and Schober started in \cite{FS 24} the study of the relation of Newton lotuses with Conway and Coxeter's {\em frieze patterns}.

\medskip
\section{Construction of a lotus from an Eggers-Wall tree}  \label{sec:lotfromEW}

In this section we explain how to associate lotuses  to a given {\em complete} Eggers-Wall tree, in the sense of Definition \ref{def:EW-complete}. This association depends on the choice of a {\em trunk decomposition} of the Eggers-Wall tree (see Definition \ref{def:trunkdec}). This may lead to different lotuses associated to the same Eggers-Wall tree. The Eggers-Wall tree associated to any of those lotuses as explained in Corollary \ref{cor:lotustoEW} is isomorphic to the starting Eggers-Wall tree (see Proposition \ref{prop:reconstrEW}). The basic building block of our construction is the association of a {\em Newton lotus} to every finite set of positive rational numbers (see Definition \ref{def:deflot}). 

\medskip 

In order to pass from an Eggers-Wall tree to a lotus whose associated tree in the sense of Definition \ref{def:assEWtolotus} is isomorphic to the given one, we need to work with the notion of {\em complete} Eggers-Wall tree:

\begin{definition} 
    \label{def:EW-complete}
    Let $A$ be a reduced plane curve singularity and $L$ be a smooth branch on $(S,O)$.  The Eggers-Wall tree $\Theta_{L} (A)$ is {\bf complete} if the ends of the level sets of the index function are leaves of $\Theta_{L} (A)$. 
\end{definition}

\begin{remark}
    Recall that given a reduced plane curve singularity $A$ together with a finite active constellation of crosses $\hat{\calc}_A$ compatible with $A$, the lotus $\Lambda (\hat{\calc}_A)$ determines the  Eggers-Wall tree  $\Theta_{L} (\hat{A})$ of the  completion $\hat{A}$  of $A$ relative to $\hat{\calc}_A$ (see Proposition  \ref{prop:fromlotustoEW}). The Eggers-Wall tree $\Theta_{L} (\hat{A})$ is complete according to Definition \ref{def:EW-complete}, which is the reason why we choose this terminology. For instance, the Eggers Wall tree of Figure \ref{fig:logdistoEW3} is complete, but the Eggers-Wall subtree $\Theta_L (A_1+ A_2+ A_3)$ is not, because the end $E_1$ of the level set ${\de}_L^{-1}(1)$ is not one of its leaves. 
\end{remark}

Notice that if $\Theta_{L} (A)$ is not complete, we may add some additional branches to $A$ in order to obtain a reduced plane curve singularity $A'$ such that $\Theta_L (A')$ is complete. It is enough to define $A' := \hat{A}$ to be the completion of $A$ relative to an active constellation of crosses adapted to it in the sense of Definition \ref{def:adaptedactconst}.

\medskip


 \begin{figure}[h!]
    \begin{center}
\begin{tikzpicture}[scale=1]
  
  \begin{scope}[shift={(-10,0)}]
  
     \draw [->, color=black,  line width=1pt](0,0) -- (0,3);
     \draw [->, color=black,  line width=1pt](0,1) -- (0,0);
      \node[draw,circle, inner sep=1pt,color=black, fill=black] at (0,1.5){};
      \draw [->, color=black,  line width=1pt](0,1.5) -- (-1.4,1.5);

       \node [below, color=black] at (0,0) {$L$};
         \node [left, color=black] at (-1.4,1.5) {$L_{1}$};
           \node [above, color=black] at (0,3) {$A$};

           \node [right, color=black] at (0,0) {\small{$0$}};  
           \node [right, color=black] at (0,1.5) {\small{$\mathbf{\frac{3}{2}}$}}; 
                \node [below, color=black] at (-1.3,1.4) {\small{$\infty$}};

                 \node [right, color=black] at (0,0.75) {\small{$1$}};
                 \node [right, color=black] at (0,2.25) {\small{$2$}};
                   \node [above, color=black] at (-0.7,1.5) {\small{$1$}};

   \draw[->][thick, color=black](0.5,1.7) .. controls (1,1.9) ..(1.5,1.7);
  \end{scope}

\begin{scope}[shift={(-7,0)}]

     \draw [->, color=black,  line width=1pt](0,2) -- (0,3.5);
     \draw [->, color=black,  line width=1pt](0,1.5) -- (0,0);
      \node[draw,circle, inner sep=1pt,color=black, fill=black] at (0,1.5){};
       \node[draw,circle, inner sep=1pt,color=black, fill=black] at (0,2){};
      \draw [->, color=black,  line width=1pt](0,1.5) -- (-1.4,1.5);

       \node [below, color=black] at (0,0) {$L$};
         \node [left, color=black] at (-1.4,1.5) {$L_{1}$};
           \node [above, color=black] at (0,3.5) {$A$};

           \node [right, color=blue] at (0,1.5) {\small{$\mathbf{\frac{3}{2}}$}}; 
              \draw[->][thick, color=blue](-0.2,2) .. controls (-0.4,2) ..(-0.2,1.6); 
              
                  \draw[->][thick, color=black](0.5,1.7) .. controls (1,1.9) ..(1.5,1.7); 
    \end{scope}
    
    \begin{scope}[shift={(-4.5,0)}]
  
 \draw [->, color=black,  line width=1pt](0,2) -- (0,3.5);
  \node[draw,circle, inner sep=1pt,color=black, fill=black] at (0,2){};
  \node [above, color=black] at (0,3.5) {$A$};

    \draw [->, color=black, thick](1,0) -- (0,0);
    \draw [-, color=black, thick](0,0) -- (-1,0);
    \draw [-, color=black, thick](0,1.5) -- (-1,0);
      \draw [-, color=black, thick](0,1.5) -- (1,0);
       \draw [-, color=black, thick](-0.5,3/4) -- (0.5,3/4);
       \draw [-, color=black, thick](-1,0) -- (0.5,3/4);

  \node[draw,circle, inner sep=1pt,color=black, fill=black] at (-1,0){};
   \node[draw,circle, inner sep=1pt,color=black, fill=black] at (1,0){};
     \node[draw,circle, inner sep=1pt,color=black, fill=black] at (0,1.5){};
      \node[draw,circle, inner sep=1pt,color=black, fill=black] at (-0.5,3/4){};
       \node[draw,circle, inner sep=1pt,color=black, fill=black] at (0.5,3/4){};

   \node [below, color=black] at (-1,0) {$L_{1}$};
\node [below, color=black] at (1,0) {$L$};

  \draw[->][thick, color=blue](-0.2,2) .. controls (-0.4,2) ..(-0.2,1.6);
    \draw[->][thick, color=black](1,1.7) .. controls (1.5,1.9) ..(2,1.7); 
                
    \end{scope}
    
     \begin{scope}[shift={(-1.5,0)}]
      \draw [->, color=black, thick](1,0) -- (0,0);
    \draw [-, color=black, thick](0,0) -- (-1,0);
    \draw [-, color=black, thick](0,1.5) -- (-1,0);
      \draw [-, color=black, thick](0,1.5) -- (1,0);
       \draw [-, color=black, thick](-0.5,3/4) -- (0.5,3/4);
       \draw [-, color=black, thick](-1,0) -- (0.5,3/4);
        \draw [->, color=black, thick](0,1.5) -- (0,2.5);
           
  \node[draw,circle, inner sep=1pt,color=black, fill=black] at (-1,0){};
   \node[draw,circle, inner sep=1pt,color=black, fill=black] at (1,0){};
     \node[draw,circle, inner sep=1pt,color=black, fill=black] at (0,1.5){};
      \node[draw,circle, inner sep=1pt,color=black, fill=black] at (-0.5,3/4){};
       \node[draw,circle, inner sep=1pt,color=black, fill=black] at (0.5,3/4){};

   \node [below, color=black] at (-1,0) {$L_{1}$};
\node [below, color=black] at (1,0) {$L$};
\node [above, color=black] at (0,2.5) {$A$};
     
       \end{scope}

  \end{tikzpicture}
\end{center}
 \caption{Constructing a lotus from the Eggers-Wall tree on the right of Figure \ref{fig:logdistoEW1}}
\label{fig:EWtolotus1}
   \end{figure}
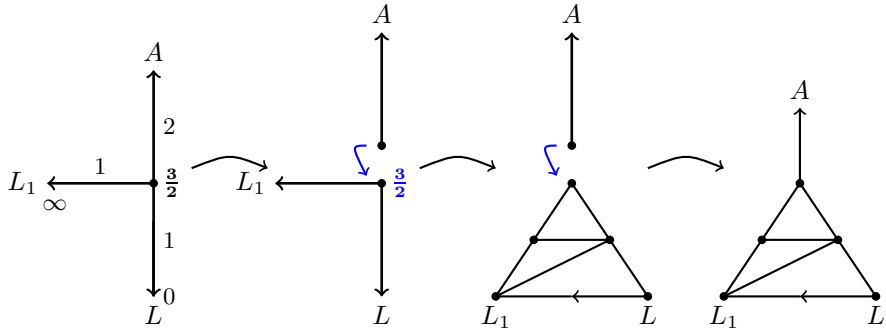


 \begin{figure}[h!]
    \begin{center}
\begin{tikzpicture}[scale=1]
  
  \begin{scope}[shift={(-10,0)}]

     \draw [->, color=black,  line width=1pt](0,0) -- (0,4.5);
     \draw [->, color=black,  line width=1pt](0,1) -- (0,0);

      \draw [->, color=black,  line width=1pt](0,1.5) -- (-1.4,1.5);
       \draw [->, color=black,  line width=1pt](0,3) -- (-1.4,3);

       \node [below, color=black] at (0,0) {$L$};
         \node [left, color=black] at (-1.4,1.5) {$L_{1}$};
           \node [above, color=black] at (0,4.5) {$A_1$};
           \node [left, color=black] at (-1.4,3) {$L_{2}$};
       
            \node[draw,circle, inner sep=1pt,color=black, fill=black] at (0,1.5){};
      \node[draw,circle, inner sep=1pt,color=black, fill=black] at (0,3){};

                           \node [right, color=black] at (0,0.75) {\small{$1$}};
                 \node [right, color=black] at (0,2.25) {\small{$2$}};
                  \node [right, color=black] at (0,3.75) {\small{$6$}};
                   \node [above, color=black] at (-0.7,1.5) {\small{$1$}};
                   \node [above, color=black] at (-0.7,3) {\small{$2$}};
                    \node [right, color=black] at (0,0) {\small{$0$}};  
           \node [right, color=black] at (0,1.5) {\small{$\frac{3}{2}$}}; 
            \node [right, color=black] at (0,3) {\small{$\frac{13}{6}$}}; 
             \node [below, color=black] at (-1.3,2.9) {\small{$\infty$}};
                \node [below, color=black] at (-1.3,1.4) {\small{$\infty$}};

  \draw[->][thick, color=black](0.5,2) .. controls (1,2.2) ..(1.5,2);
 \end{scope}

\begin{scope}[shift={(-7,0)}]

     \draw [-, color=black,  line width=1pt](0,0) -- (0,1.5);
     \draw [-, color=black,  line width=1pt](0,2) -- (0,3.5);
     \draw [->, color=black,  line width=1pt](0,0.5) -- (0,0);
      \draw [->, color=black,  line width=1pt](0,4) -- (0,5.5);
      
      \draw [->, color=black,  line width=1pt](0,1.5) -- (-1.4,1.5);
       \draw [->, color=black,  line width=1pt](0,3.5) -- (-1.4,3.5);

       \node [below, color=black] at (0,0) {$L$};
         \node [left, color=black] at (-1.4,1.5) {$L_{1}$};
           \node [above, color=black] at (0,5.5) {$A_1$};
           \node [left, color=black] at (-1.4,3.5) {$L_{2}$};
       
            \node[draw,circle, inner sep=1pt,color=black, fill=black] at (0,1.5){};
      \node[draw,circle, inner sep=1pt,color=black, fill=black] at (0,2){};
          \node[draw,circle, inner sep=1pt,color=black, fill=black] at (0,3.5){};
            \node[draw,circle, inner sep=1pt,color=black, fill=black] at (0,4){};
                   
           \node [right, color=blue] at (0,1.5) {\small{$\mathbf{\frac{3}{2}}$}}; 
            \node [right, color=blue] at (0,3.5) {\small{$\mathbf{\frac{4}{3}}$}}; 
          
           \draw[->][thick, color=blue](-0.2,2) .. controls (-0.4,2) ..(-0.2,1.6); 
            \draw[->][thick, color=blue](-0.2,4) .. controls (-0.4,4) ..(-0.2,3.6); 

 \draw[->][thick, color=black](0.5,2) .. controls (1,2.2) ..(1.5,2);
    \end{scope}
    
    \begin{scope}[shift={(-2,0)}]
  
    \draw [->, color=black, thick](1,0) -- (0,0);
    \draw [-, color=black, thick](0,0) -- (-1,0);
    \draw [-, color=black, thick](0,1.5) -- (-1,0);
      \draw [-, color=black, thick](0,1.5) -- (1,0);
       \draw [-, color=black, thick](-0.5,3/4) -- (0.5,3/4);
       \draw [-, color=black, thick](-1,0) -- (0.5,3/4);

  \node[draw,circle, inner sep=1pt,color=black, fill=black] at (-1,0){};
   \node[draw,circle, inner sep=1pt,color=black, fill=black] at (1,0){};
     \node[draw,circle, inner sep=1pt,color=black, fill=black] at (0,1.5){};
      \node[draw,circle, inner sep=1pt,color=black, fill=black] at (-0.5,3/4){};
       \node[draw,circle, inner sep=1pt,color=black, fill=black] at (0.5,3/4){};

   \node [below, color=black] at (-1,0) {$L_{1}$};
\node [below, color=black] at (1,0) {$L$};

    \draw [->, color=black, thick](0,2) -- (-1.5,2);
   \draw [-, color=black, thick](-1.5,2) -- (-3,2);
     \draw [-, color=black, thick](0,2) -- (-1.5,4);
      \draw [-, color=black, thick](-1.5,4) -- (-3,2);
  
     \draw [-, color=black, thick](-3/4,3) -- (-3,2);
     \draw [-, color=black, thick](-3/4,3) -- (-2,10/3);
       \draw [-, color=black, thick](-3/4,3) -- (-2.5, 8/3);
       
          \node[draw,circle, inner sep=1pt,color=black, fill=black] at (0,2){};
          \node[draw,circle, inner sep=1pt,color=black, fill=black] at (-3,2){};
          
         \node[draw,circle, inner sep=1pt,color=black, fill=black] at (-1.5,4){};
         \node[draw,circle, inner sep=1pt,color=black, fill=black] at (-3/4,3){};
         \node[draw,circle, inner sep=1pt,color=black, fill=black] at (-2,10/3){};
        \node[draw,circle, inner sep=1pt,color=black, fill=black] at (-2.5,8/3){};

 \node [above, color=black] at (-1.5,5.5) {$A_{1}$};
  \node[draw,circle, inner sep=1pt,color=black, fill=black] at (-1.5,4.5){};
  \draw [->, color=black, thick](-1.5,4.5) -- (-1.5,5.5);

 \draw[->][thick, color=blue](-1.5,4.4) .. controls (-1.7,4.4) ..(-1.5,4.1);
  \draw[->][thick, color=blue](-0.2,1.9) .. controls (-0.4,1.9) ..(-0.2,1.5);
    \draw[->][thick, color=black](0.5,2) .. controls (1,2.2) ..(1.5,2); 
                
\end{scope}

 \begin{scope}[shift={(2.5,0)}]

 \draw [->, color=black, thick](1,0) -- (0,0);
    \draw [-, color=black, thick](0,0) -- (-1,0);
    \draw [-, color=black, thick](0,1.5) -- (-1,0);
      \draw [-, color=black, thick](0,1.5) -- (1,0);
       \draw [-, color=black, thick](-0.5,3/4) -- (0.5,3/4);
       \draw [-, color=black, thick](-1,0) -- (0.5,3/4);

        \draw [->, color=black, thick](0,1.5) -- (-1.5,1.5);
   \draw [-, color=black, thick](-1.5,1.5) -- (-3,1.5);
     \draw [-, color=black, thick](0,1.5) -- (-1.5,4);
      \draw [-, color=black, thick](-1.5,4) -- (-3,1.5);
      \draw [->, color=black, thick](-1.5,4) -- (-1.5,5);
     \draw [-, color=black, thick](-3/4,11/4) -- (-3,1.5);
     \draw [-, color=black, thick](-3/4,11/4) -- (-2.5,2.34);
       \draw [-, color=black, thick](-3/4,11/4) -- (-2,3.17);
           
  \node[draw,circle, inner sep=1pt,color=black, fill=black] at (-1,0){};
   \node[draw,circle, inner sep=1pt,color=black, fill=black] at (1,0){};
     \node[draw,circle, inner sep=1pt,color=black, fill=black] at (0,1.5){};
      \node[draw,circle, inner sep=1pt,color=black, fill=black] at (-0.5,3/4){};
       \node[draw,circle, inner sep=1pt,color=black, fill=black] at (0.5,3/4){};
        \node[draw,circle, inner sep=1pt,color=black, fill=black] at (-3,1.5){};
         \node[draw,circle, inner sep=1pt,color=black, fill=black] at (-1.5,4){};
         \node[draw,circle, inner sep=1pt,color=black, fill=black] at (-3/4,11/4){};
         \node[draw,circle, inner sep=1pt,color=black, fill=black] at (-2.5,2.34){};
         \node[draw,circle, inner sep=1pt,color=black, fill=black] at (-2,3.17){};

   \node [below, color=black] at (-1,0) {$L_{1}$};
\node [below, color=black] at (1,0) {$L$};
\node [below, color=black] at (-3,1.5) {$L_{2}$};
\node [above, color=black] at (-1.5,5) {$A_1$};

        \end{scope}
  \end{tikzpicture}
\end{center}
 \caption{Constructing a lotus from the Eggers-Wall tree on the right of Figure \ref{fig:logdistoEW2}}
\label{fig:EWtolotus2}
   \end{figure}


 \begin{figure}[h!]
    \begin{center}
\begin{tikzpicture}[scale=1]
  
  \begin{scope}[shift={(-10,0)}]
 \draw [->, color=black,  line width=1.5pt](0,0) -- (0,8);
     \draw [->, color=black,  line width=1.5pt](0,1) -- (0,0);
      \node[draw,circle, inner sep=1.5pt,color=black, fill=black] at (0,1.5){};
      \node[draw,circle, inner sep=1.5pt,color=black, fill=black] at (0,3){};
      \node[draw,circle, inner sep=1.5pt,color=black, fill=black] at (0,4.5){};
      \node[draw,circle, inner sep=1.5pt,color=black, fill=black] at (0,6){};
      \node[draw,circle, inner sep=1.5pt,color=black, fill=black] at (0,7){};
      
      \draw [->, color=black,  line width=1.5pt](0,1.5) -- (-1.4,1.5);
       \draw [->, color=black,  line width=1.5pt](0,3) -- (1.4,3);
       \draw [->, color=black,  line width=1.5pt](0,4.5) -- (1.4,4.5);
       \draw [->, color=black,  line width=1.5pt](0,6) -- (-1.4,6);
       \draw [->, color=black,  line width=1.5pt](0,7) -- (-1.4,7);
      
       \node [below, color=black] at (0,0) {$L$};
         \node [left, color=black] at (-1.4,1.5) {$L_{1}$};
          \node [right, color=black] at (1.4,3) {$A_3$};
           \node [right, color=black] at (1.4,4.5) {$A_1$};
           \node [left, color=black] at (-1.4,6) {$L_{2}$};
           \node [left, color=black] at (-1.4,7) {$L_{3}$};
           \node [above, color=black] at (0,8) {$A_2$};

           \node [right, color=black] at (0,0) {\small{$0$}};  
           \node [right, color=black] at (0,1.5) {\small{$\frac{3}{2}$}}; 
            \node [left, color=black] at (0,3) {$2$}; 
             \node [left, color=black] at (0,4.5) {\small{$\frac{13}{6}$}}; 
              \node [right, color=black] at (0,6) {\small{$\frac{7}{3}$}};
               \node [right, color=black] at (0,7) {\small{$\frac{29}{12}$}};
                \node [below, color=black] at (-1.3,1.4) {\small{$\infty$}};
                \node [below, color=black] at (-1.3,5.9) {\small{$\infty$}};
                \node [below, color=black] at (-1.3,6.9) {\small{$\infty$}};
                
                 \node [right, color=black] at (0,0.75) {\small{$1$}};
                 \node [right, color=black] at (0,2.25) {\small{$2$}};
                  \node [right, color=black] at (0,3.75) {\small{$2$}};
                  \node [right, color=black] at (0,5.25) {\small{$2$}};
                  \node [right, color=black] at (0,6.5) {\small{$6$}};
                   \node [right, color=black] at (0,7.5) {\small{$12$}};
                   \node [above, color=black] at (-0.7,1.5) {\small{$1$}};
                \node [above, color=black] at (-0.7,6) {\small{$2$}};
                \node [above, color=black] at (-0.7,7) {\small{$6$}};
                \node [above, color=black] at (0.7,3) {\small{$2$}};
                \node [above, color=black] at (0.7,4.5) {\small{$6$}}; 

 \draw[->][thick, color=black](1.5,6) .. controls (2.7,6.2) ..(3.7,6);
 
\end{scope}
  \begin{scope}[shift={(-4,0)}]

     \draw [->, color=black,  line width=1.5pt](0,1.5) -- (0,0);
     \draw [->, color=black,  line width=1.5pt](0,1.5) -- (-1.5,1.5);
      \node[draw,circle, inner sep=1.5pt,color=black, fill=black] at (0,1.5){};
      
       \draw [-, color=black,  line width=1.5pt](0,2) -- (0,6.5);
        \draw [->, color=black,  line width=1.5pt](0.5,3.5) -- (2,3.5);
         \node[draw,circle, inner sep=1.5pt,color=black, fill=black] at (0,6.5){};
               \node[draw,circle, inner sep=1.5pt,color=black, fill=black] at (0,5){};
         \node[draw,circle, inner sep=1.5pt,color=black, fill=black] at (0,3.5){};
         \node[draw,circle, inner sep=1.5pt,color=black, fill=black] at (0.5,3.5){};
         \node[draw,circle, inner sep=1.5pt,color=black, fill=black] at (0,2){};
         \draw [->, color=black,  line width=1.5pt](0.5,5) -- (2,5);
           \node[draw,circle, inner sep=1.5pt,color=black, fill=black] at (0.5,5){};
             \node[draw,circle, inner sep=1.5pt,color=black, fill=black] at (0,6.5){};
               \node[draw,circle, inner sep=1.5pt,color=black, fill=black] at (0,8.5){};
               \node[draw,circle, inner sep=1.5pt,color=black, fill=black] at (0,7){};
                  \draw [->, color=black,  line width=1.5pt](0,6.5) -- (-1.5,6.5);
                   \draw [-, color=black,  line width=1.5pt](0,7) -- (0,8.5);
                   \draw [->, color=black,  line width=1.5pt](0,8.5) -- (-1.5,8.5);     
                   \draw [->, color=black,  line width=1.5pt](0,6.5) -- (-1.5,6.5);
                    \draw [->, color=black,  line width=1.5pt](0,9) -- (0,10);
                    \node[draw,circle, inner sep=1.5pt,color=black, fill=black] at (0,9){};

       \node [below, color=black] at (0,0) {$L$};
         \node [left, color=black] at (-1.4,1.5) {$L_{1}$};
          \node [right, color=black] at (2,3.5) {$A_3$};
           \node [right, color=black] at (2,5) {$A_1$};
           \node [left, color=black] at (-1.4,6.5) {$L_{2}$};
           \node [left, color=black] at (-1.4,8.5) {$L_{3}$};
           \node [above, color=black] at (0,10) {$A_2$};

           \node [right, color=blue] at (0,1.5) {\small{$\mathbf{\frac{3}{2}}$}}; 
            \node [left, color=blue] at (0,3.5) {$\mathbf{1}$}; 
             \node [left, color=blue] at (0,5) {\small{$\mathbf{\frac{4}{3}}$}}; 
              \node [right, color=blue] at (0,6.5) {\small{$\mathbf{\frac{5}{3}}$}};
               \node [right, color=blue] at (0,8.5) {\small{$\mathbf{\frac{1}{2}}$}};

 \draw[->][thick, color=blue](-0.2,2) .. controls (-0.4,2) ..(-0.2,1.6); 
 \draw[->][thick, color=blue](-0.2,7) .. controls (-0.4,7) ..(-0.2,6.6); 
 \draw[->][thick, color=blue](-0.2,9) .. controls (-0.4,9) ..(-0.2,8.6);
 
  \draw[->][thick, color=blue](0.4,3.5) .. controls (0.3,3.7) ..(0.1,3.5);
    \draw[->][thick, color=blue](0.4,5) .. controls (0.3,5.2) ..(0.1,5);

\draw[->][thick, color=black](-1,1) .. controls (-2,-1) ..(-7.5,-4);

  \end{scope}
  
 \begin{scope}[shift={(-10,-10)}] 
    \draw [->, color=black, thick](1,0) -- (0,0);
    \draw [-, color=black, thick](0,0) -- (-1,0);
    \draw [-, color=black, thick](0,1.5) -- (-1,0);
      \draw [-, color=black, thick](0,1.5) -- (1,0);
       \draw [-, color=black, thick](-0.5,3/4) -- (0.5,3/4);
       \draw [-, color=black, thick](-1,0) -- (0.5,3/4);

  \node[draw,circle, inner sep=1pt,color=black, fill=black] at (-1,0){};
   \node[draw,circle, inner sep=1pt,color=black, fill=black] at (1,0){};
     \node[draw,circle, inner sep=1pt,color=black, fill=black] at (0,1.5){};
      \node[draw,circle, inner sep=1pt,color=black, fill=black] at (-0.5,3/4){};
       \node[draw,circle, inner sep=1pt,color=black, fill=black] at (0.5,3/4){};

   \node [below, color=black] at (-1,0) {$L_{1}$};
\node [below, color=black] at (1,0) {$L$};

    \draw [->, color=black, thick](0,2) -- (-1.5,2);
   \draw [-, color=black, thick](-1.5,2) -- (-3,2);
     \draw [-, color=black, thick](0,2) -- (-1.5,4);
      \draw [-, color=black, thick](-1.5,4) -- (-3,2);
  
     \draw [-, color=black, thick](-3/4,3) -- (-3,2);
     \draw [-, color=black, thick](-3/4,3) -- (-2.25,3);
     \draw [-, color=black, thick](-2.25,5) -- (-2.25, 3);
      \draw [-, color=black, thick](-3/4,5) -- (-3/4, 3);
      \draw [-, color=black, thick](-3/4,5) -- (-1.5, 4);
      \draw [-, color=black, thick](-2.25,5) -- (-1.5, 4);
       
          \node[draw,circle, inner sep=1pt,color=black, fill=black] at (0,2){};
          \node[draw,circle, inner sep=1pt,color=black, fill=black] at (-3,2){};
          
         \node[draw,circle, inner sep=1pt,color=black, fill=black] at (-1.5,4){};
         \node[draw,circle, inner sep=1pt,color=black, fill=black] at (-3/4,3){};
         \node[draw,circle, inner sep=1pt,color=black, fill=black] at (-3/4,5){};
        \node[draw,circle, inner sep=1pt,color=black, fill=black] at (-2.25,3){};
        \node[draw,circle, inner sep=1pt,color=black, fill=black] at (-2.25,5){};
        
       \draw [->, color=black, thick](-0.15,3.3) -- (0.5,3.8);
       \node [right, color=black] at (0.5,3.8) {$A_3$};
       \node[draw,circle, inner sep=1pt,color=black, fill=black] at (-0.15,3.3){};
       
       \draw [->, color=black, thick](-0.15,5.3) -- (0.5,5.8);
       \node [right, color=black] at (0.5,5.8) {$A_1$};
       \node[draw,circle, inner sep=1pt,color=black, fill=black] at (-0.15,5.3){};

 \node [above, color=black] at (-3.75,8.5) {$A_{2}$};
  \node[draw,circle, inner sep=1pt,color=black, fill=black] at (-3.75,7.5){};
  \draw [->, color=black, thick](-3.75,7.5) -- (-3.75,8.5);
  
  \node [below, color=black] at (-3,2) {$L_2$};
   \node [below, color=black] at (-4.75,5.5) {$L_3$};

\draw [->, color=black, thick](-2.75,5.5)--(-3.75,5.5);
 \draw [-, color=black, thick](-3.75,5.5) -- (-4.75,5.5);
 \draw [-, color=black, thick](-3.75,7) -- (-4.75,5.5);
 \draw [-, color=black, thick](-3.75,7) -- (-2.75,5.5);
  \draw [-, color=black, thick](-2.75,5.5)-- (-4.25,6.25);
  \node[draw,circle, inner sep=1pt,color=black, fill=black] at (-4.75,5.5){};
   \node[draw,circle, inner sep=1pt,color=black, fill=black] at (-2.75,5.5){};
    \node[draw,circle, inner sep=1pt,color=black, fill=black] at (-3.75,7){};
   \node[draw,circle, inner sep=1pt,color=black, fill=black] at (-4.25,6.25){};

 \draw[->][thick, color=blue](-3.75,7.4) .. controls (-3.95,7.4) ..(-3.75,7.1);
\draw[->][thick, color=blue](-0.2,5.35) .. controls (-0.4,5.5) ..(-0.7,5.1);
\draw[->][thick, color=blue](-0.2,3.35) .. controls (-0.4,3.5) ..(-0.7,3.1);
\draw[->][thick, color=blue](-2.75,5.4) .. controls (-2.95,5.4) ..(-2.3,5.1);
 
  \draw[->][thick, color=blue](-0.2,1.9) .. controls (-0.4,1.9) ..(-0.2,1.5);
   \draw[->][thick, color=black](1.5,4) .. controls (3,3.4) ..(4.5,4);

   \end{scope}

\begin{scope}[shift={(0,-8)}]
  \draw [->, color=black, thick](1,0) -- (0,0);
    \draw [-, color=black, thick](0,0) -- (-1,0);
    \draw [-, color=black, thick](0,1.5) -- (-1,0);
      \draw [-, color=black, thick](0,1.5) -- (1,0);
       \draw [-, color=black, thick](-0.5,3/4) -- (0.5,3/4);
       \draw [-, color=black, thick](-1,0) -- (0.5,3/4);
       \draw [-, color=black, thick](-5.2,4.2) -- (-3.7,3.2);

        \draw [->, color=black, thick](0,1.5) -- (-1.5,1.5);
   \draw [-, color=black, thick](-1.5,1.5) -- (-3,1.5);
     \draw [-, color=black, thick](0,1.5) -- (-1.5,4);
      \draw [-, color=black, thick](-1.5,4) -- (-3,1.5);
      \draw [->, color=black, thick](-1.5,4) -- (-1.5,5);
     \draw [-, color=black, thick](-3/4,11/4) -- (-3,1.5);
     \draw [-, color=black, thick](-3/4,11/4) -- (-2.5,2.34);
     \draw [-, color=black, thick](-2,3.17) -- (-2.5,2.34);
       \draw [-, color=black, thick](-3/4,11/4) -- (-2,3.17);
       \draw [->, color=black, thick](-3/4,11/4) -- (1/4,15/4);
        \draw [-, color=black, thick](-2,3.17) -- (-1.5,4);
        \draw [-, color=black, thick](-3,1.5) -- (-2.5,2.34);
        \draw [-, color=black, thick](-3,1.5) -- (-2.5,2.34);
        \draw [-, color=black, thick](-3.7,3.2) -- (-2,3.17);
        \draw [-, color=black, thick](-3.7,3.2) -- (-2.5,2.34);
        \draw [->, color=black, thick](-3.7,3.2) -- (-4.7,3.2);
        \draw [-, color=black, thick](-5.7,3.2) -- (-4.7,3.2);
        \draw [->, color=black, thick](-4.7,5.2) -- (-4.7,6.2);
        \draw [-, color=black, thick](-3.7,3.2) -- (-4.7,5.2);
        \draw [-, color=black, thick](-5.7,3.2) -- (-4.7,5.2);
       
  \node[draw,circle, inner sep=1pt,color=black, fill=black] at (-1,0){};
   \node[draw,circle, inner sep=1pt,color=black, fill=black] at (1,0){};
     \node[draw,circle, inner sep=1pt,color=black, fill=black] at (0,1.5){};
      \node[draw,circle, inner sep=1pt,color=black, fill=black] at (-0.5,3/4){};
       \node[draw,circle, inner sep=1pt,color=black, fill=black] at (0.5,3/4){};
        \node[draw,circle, inner sep=1pt,color=black, fill=black] at (-3,1.5){};
         \node[draw,circle, inner sep=1pt,color=black, fill=black] at (-1.5,4){};
         \node[draw,circle, inner sep=1pt,color=black, fill=black] at (-3/4,11/4){};
         \node[draw,circle, inner sep=1pt,color=black, fill=black] at (-2.5,2.34){};
         \node[draw,circle, inner sep=1pt,color=black, fill=black] at (-2,3.17){};
          \node[draw,circle, inner sep=1pt,color=black, fill=black] at (-3.7,3.2){};
          \node[draw,circle, inner sep=1pt,color=black, fill=black] at (-5.7,3.2){};
           \node[draw,circle, inner sep=1pt,color=black, fill=black] at (-4.7,5.2){};
           \node[draw,circle, inner sep=1pt,color=black, fill=black] at (-5.2,4.2){};
       
   \node [below, color=black] at (-1,0) {$L_{1}$};
\node [below, color=black] at (1,0) {$L$};
\node [below, color=black] at (-3,1.5) {$L_{2}$};
\node [right, color=black] at (0.5,3.8) {$A_3$};
\node [above, color=black] at (-1.5,5) {$A_1$};
 \node [below, color=black] at (-5.7,3.2) {$L_{3}$};
 \node [above, color=black] at (-4.7,6.2) {$A_2$};

 \end{scope}

  \end{tikzpicture}
\end{center}
 \caption{Constructing a lotus from the Eggers-Wall tree on the right of Figure \ref{fig:logdistoEW3}}
\label{fig:EWtolotus3}
   \end{figure}


 \begin{figure}[h!]
    \begin{center}
\begin{tikzpicture}[scale=1]
  
  \begin{scope}[shift={(-10,0)}]
 \draw [->, color=black,  line width=1.5pt](0,0) -- (0,8);
     \draw [->, color=black,  line width=1.5pt](0,1) -- (0,0);
      \node[draw,circle, inner sep=1.5pt,color=black, fill=black] at (0,1.5){};
      \node[draw,circle, inner sep=1.5pt,color=black, fill=black] at (0,3){};
      \node[draw,circle, inner sep=1.5pt,color=black, fill=black] at (0,4.5){};
      \node[draw,circle, inner sep=1.5pt,color=black, fill=black] at (0,6){};
      \node[draw,circle, inner sep=1.5pt,color=black, fill=black] at (0,7){};
      
      \draw [->, color=black,  line width=1.5pt](0,1.5) -- (-1.4,1.5);
       \draw [->, color=black,  line width=1.5pt](0,3) -- (1.4,3);
       \draw [->, color=black,  line width=1.5pt](0,4.5) -- (1.4,4.5);
       \draw [->, color=black,  line width=1.5pt](0,6) -- (-1.4,6);
       \draw [->, color=black,  line width=1.5pt](0,7) -- (-1.4,7);
      
       \node [below, color=black] at (0,0) {$L$};
         \node [left, color=black] at (-1.4,1.5) {$L_{1}$};
          \node [right, color=black] at (1.4,3) {$A_3$};
           \node [right, color=black] at (1.4,4.5) {$A_1$};
           \node [left, color=black] at (-1.4,6) {$L_{2}$};
           \node [left, color=black] at (-1.4,7) {$L_{3}$};
           \node [above, color=black] at (0,8) {$A_2$};

           \node [right, color=black] at (0,0) {\small{$0$}};  
           \node [right, color=black] at (0,1.5) {\small{$\frac{3}{2}$}}; 
            \node [left, color=black] at (0,3) {$2$}; 
             \node [left, color=black] at (0,4.5) {\small{$\frac{13}{6}$}}; 
              \node [right, color=black] at (0,6) {\small{$\frac{7}{3}$}};
               \node [right, color=black] at (0,7) {\small{$\frac{29}{12}$}};
                \node [below, color=black] at (-1.3,1.4) {\small{$\infty$}};
                \node [below, color=black] at (-1.3,5.9) {\small{$\infty$}};
                \node [below, color=black] at (-1.3,6.9) {\small{$\infty$}};
                
                 \node [right, color=black] at (0,0.75) {\small{$1$}};
                 \node [right, color=black] at (0,2.25) {\small{$2$}};
                  \node [right, color=black] at (0,3.75) {\small{$2$}};
                  \node [right, color=black] at (0,5.25) {\small{$2$}};
                  \node [right, color=black] at (0,6.5) {\small{$6$}};
                   \node [right, color=black] at (0,7.5) {\small{$12$}};
                   \node [above, color=black] at (-0.7,1.5) {\small{$1$}};
                \node [above, color=black] at (-0.7,6) {\small{$2$}};
                \node [above, color=black] at (-0.7,7) {\small{$6$}};
                \node [above, color=black] at (0.7,3) {\small{$2$}};
                \node [above, color=black] at (0.7,4.5) {\small{$6$}}; 

 \draw[->][thick, color=black](1.5,6) .. controls (2.7,6.2) ..(3.7,6);
 
\end{scope}
  \begin{scope}[shift={(-4,0)}]

     \draw [->, color=black,  line width=1.5pt](0,1.5) -- (0,0);
     \draw [->, color=black,  line width=1.5pt](0,1.5) -- (-1.5,1.5);
      \node[draw,circle, inner sep=1.5pt,color=black, fill=black] at (0,1.5){};
      
       \draw [-, color=black,  line width=1.5pt](0,2) -- (0,3.5);
       \draw [-, color=black,  line width=1.5pt](0,4) -- (0,7);
        \draw [->, color=black,  line width=1.5pt](0,3.5) -- (1.5,3.5);
        \node[draw,circle, inner sep=1.5pt,color=black, fill=black] at (0,4){};

               \node[draw,circle, inner sep=1.5pt,color=black, fill=black] at (0,5.5){};
         \node[draw,circle, inner sep=1.5pt,color=black, fill=black] at (0,3.5){};
         \node[draw,circle, inner sep=1.5pt,color=black, fill=black] at (0,2){};
         \draw [->, color=black,  line width=1.5pt](0.5,5.5) -- (2,5.5);
           \node[draw,circle, inner sep=1.5pt,color=black, fill=black] at (0.5,5.5){};
               \node[draw,circle, inner sep=1.5pt,color=black, fill=black] at (0,8.5){};
               \node[draw,circle, inner sep=1.5pt,color=black, fill=black] at (0,7){};
                \node[draw,circle, inner sep=1.5pt,color=black, fill=black] at (0,7.5){};
                  \draw [->, color=black,  line width=1.5pt](0,7) -- (-1.5,7);
                   \draw [-, color=black,  line width=1.5pt](0,7.5) -- (0,8.5);
                   \draw [->, color=black,  line width=1.5pt](0,8.5) -- (-1.5,8.5);     
                    \draw [->, color=black,  line width=1.5pt](0,9) -- (0,10);
                    \node[draw,circle, inner sep=1.5pt,color=black, fill=black] at (0,9){};

       \node [below, color=black] at (0,0) {$L$};
         \node [left, color=black] at (-1.4,1.5) {$L_{1}$};
          \node [right, color=black] at (2,3.5) {$A_3$};
           \node [right, color=black] at (2,5.5) {$A_1$};
           \node [left, color=black] at (-1.4,7) {$L_{2}$};
           \node [left, color=black] at (-1.4,8.5) {$L_{3}$};
           \node [above, color=black] at (0,10) {$A_2$};

           \node [right, color=blue] at (0,1.5) {\small{$\mathbf{\frac{3}{2}}$}}; 
            \node [right, color=blue] at (0,3.3) {$\mathbf{1}$}; 
             \node [left, color=blue] at (0,5.5) {\small{$\mathbf{\frac{1}{3}}$}}; 
              \node [right, color=blue] at (0,7) {\small{$\mathbf{\frac{2}{3}}$}};
               \node [right, color=blue] at (0,8.5) {\small{$\mathbf{\frac{1}{2}}$}};

 \draw[->][thick, color=blue](-0.2,2) .. controls (-0.4,2) ..(-0.2,1.6); 
 \draw[->][thick, color=blue](-0.2,4) .. controls (-0.4,4) ..(-0.2,3.6); 
 \draw[->][thick, color=blue](-0.2,7.5) .. controls (-0.4,7.5) ..(-0.2,7.1); 
 \draw[->][thick, color=blue](-0.2,9) .. controls (-0.4,9) ..(-0.2,8.6);
 
    \draw[->][thick, color=blue](0.4,5.5) .. controls (0.3,5.7) ..(0.1,5.5);

\draw[->][thick, color=black](-1,1) .. controls (-2,-1) ..(-7.5,-3);

  \end{scope}
  
 \begin{scope}[shift={(-10,-10)}] 
    \draw [->, color=black, thick](1,0) -- (0,0);
    \draw [-, color=black, thick](0,0) -- (-1,0);
    \draw [-, color=black, thick](0,1.5) -- (-1,0);
      \draw [-, color=black, thick](0,1.5) -- (1,0);
       \draw [-, color=black, thick](-0.5,3/4) -- (0.5,3/4);
       \draw [-, color=black, thick](-1,0) -- (0.5,3/4);

  \node[draw,circle, inner sep=1pt,color=black, fill=black] at (-1,0){};
   \node[draw,circle, inner sep=1pt,color=black, fill=black] at (1,0){};
     \node[draw,circle, inner sep=1pt,color=black, fill=black] at (0,1.5){};
      \node[draw,circle, inner sep=1pt,color=black, fill=black] at (-0.5,3/4){};
       \node[draw,circle, inner sep=1pt,color=black, fill=black] at (0.5,3/4){};

   \node [below, color=black] at (-1,0) {$L_{1}$};
\node [below, color=black] at (1,0) {$L$};

    \draw [->, color=black, thick](0,2) -- (-1,2);
   \draw [-, color=black, thick](-1,2) -- (-2,2);
 
     \draw [->, color=black, thick](-3/4,3.5) --(-1.5,3.5); 
      \draw [-, color=black, thick](-2.25,3.5)--(-1.5,3.5);
     \draw [-, color=black, thick](-2.25,5.5) -- (-2.25, 3.5);
      \draw [-, color=black, thick](-3/4,5.5) -- (-3/4, 3.5);
      \draw [-, color=black, thick](-3/4,3.5) -- (-2.25, 5.5);
      \draw [-, color=black, thick](-2.25,4.5) -- (-3/4, 3.5);
      \draw [-, color=black, thick](-2.25,4.5) -- (-3/4, 5.5);
      \node[draw,circle, inner sep=1pt,color=black, fill=black] at (-7/4,29/6){};

\draw [-, color=black, thick](-1,3)--(0,2);
          \draw [-, color=black, thick](-1,3)--(-2,2);
          \node[draw,circle, inner sep=1pt,color=black, fill=black] at (0,2){};
         \node[draw,circle, inner sep=1pt,color=black, fill=black] at (-2,2){};
         \node[draw,circle, inner sep=1pt,color=black, fill=black] at (-1,3){};
           \node [below, color=black] at (-2,2) {$A_3$};

         \node[draw,circle, inner sep=1pt,color=black, fill=black] at (-2.25,4.5){};
         \node[draw,circle, inner sep=1pt,color=black, fill=black] at (-3/4,3.5){};
         \node[draw,circle, inner sep=1pt,color=black, fill=black] at (-3/4,5.5){};
        \node[draw,circle, inner sep=1pt,color=black, fill=black] at (-2.25,3.5){};
        \node[draw,circle, inner sep=1pt,color=black, fill=black] at (-2.25,5.5){};

       \draw [->, color=black, thick](-0.15,5.8) -- (0.5,6.3);
       \node [right, color=black] at (0.5,6.3) {$A_1$};
       \node[draw,circle, inner sep=1pt,color=black, fill=black] at (-0.15,5.8){};

 \node [above, color=black] at (-3.75,9) {$A_{2}$};
  \node[draw,circle, inner sep=1pt,color=black, fill=black] at (-3.75,8){};
   \node[draw,circle, inner sep=1pt,color=black, fill=black] at (-3.75,7.5){};
  \draw [->, color=black, thick](-3.75,8) -- (-3.75,9);
  
  \node [below, color=black] at (-2.5,3.5) {$L_2$};
   \node [below, color=black] at (-4.75,6) {$L_3$};

\draw [->, color=black, thick](-2.75,6)--(-3.75,6);
 \draw [-, color=black, thick](-3.75,6) -- (-4.75,6);
 \draw [-, color=black, thick](-3.75,7.5) -- (-4.75,6);
 \draw [-, color=black, thick](-3.75,7.5) -- (-2.75,6);
  \draw [-, color=black, thick](-4.75,6)-- (-3.25,6.75);
  \node[draw,circle, inner sep=1pt,color=black, fill=black] at (-4.75,6){};
   \node[draw,circle, inner sep=1pt,color=black, fill=black] at (-2.75,6){};
   \node[draw,circle, inner sep=1pt,color=black, fill=black] at (-3.25,6.75){};

 \draw[->][thick, color=blue](-3.75,7.9) .. controls (-3.95,7.9) ..(-3.75,7.6);
\draw[->][thick, color=blue](-0.2,5.85) .. controls (-0.4,6) ..(-0.7,5.6);
\draw[->][thick, color=blue](-0.8,3.38) .. controls (-0.4,3.5) ..(-0.9,3.1);
\draw[->][thick, color=blue](-2.75,5.9) .. controls (-2.95,5.9) ..(-2.3,5.6);
 
\draw[->][thick, color=blue](-0.2,1.9) .. controls (-0.4,1.9) ..(-0.2,1.5);
 \draw[->][thick, color=black](1.5,4) .. controls (3,3.4) ..(4.5,4);

   \end{scope}

\begin{scope}[shift={(0,-8)}]
  \draw [->, color=black, thick](1,0) -- (0,0);
    \draw [-, color=black, thick](0,0) -- (-1,0);
    \draw [-, color=black, thick](0,1.5) -- (-1,0);
      \draw [-, color=black, thick](0,1.5) -- (1,0);
       \draw [-, color=black, thick](-0.5,3/4) -- (0.5,3/4);
       \draw [-, color=black, thick](-1,0) -- (0.5,3/4);
       \draw [-, color=black, thick](-4.2,4.2) -- (-5.7,3.2);

        \draw [->, color=black, thick](0,1.5) -- (-1,1.5);
   \draw [-, color=black, thick](-1,1.5) -- (-2,1.5);
   
     \draw [-, color=black, thick](0,1.5) -- (-1.5,4);
      \draw [-, color=black, thick](-1.5,4) -- (-3,1.5);
      \draw [->, color=black, thick](-1.5,4) -- (-1.5,5);
     \draw [-, color=black, thick](-3/4,11/4) -- (-3,1.5);
     \draw [-, color=black, thick](-3/4,11/4) -- (-2.5,2.34);
     \draw [-, color=black, thick](-2,3.17) -- (-2.5,2.34);
       \draw [-, color=black, thick](-3/4,11/4) -- (-2,3.17);
       \draw [-, color=black, thick](-3/4,11/4) -- (-2,1.5);
        \draw [-, color=black, thick](-2,3.17) -- (-1.5,4);
        \draw [-, color=black, thick](-3,1.5) -- (-2.5,2.34);
        \draw [-, color=black, thick](-3,1.5) -- (-2.5,2.34);
        \draw [-, color=black, thick](-3.7,3.2) -- (-2,3.17);
        \draw [-, color=black, thick](-3.7,3.2) -- (-2.5,2.34);
        \draw [->, color=black, thick](-3.7,3.2) -- (-4.7,3.2);
        \draw [-, color=black, thick](-5.7,3.2) -- (-4.7,3.2);
        \draw [->, color=black, thick](-4.7,5.2) -- (-4.7,6.2);
        \draw [-, color=black, thick](-3.7,3.2) -- (-4.7,5.2);
        \draw [-, color=black, thick](-5.7,3.2) -- (-4.7,5.2);
         \draw [->, color=black, thick](-3/4,11/4) -- (-15/8,17/8);
       
  \node[draw,circle, inner sep=1pt,color=black, fill=black] at (-1,0){};
   \node[draw,circle, inner sep=1pt,color=black, fill=black] at (1,0){};
     \node[draw,circle, inner sep=1pt,color=black, fill=black] at (0,1.5){};
      \node[draw,circle, inner sep=1pt,color=black, fill=black] at (-0.5,3/4){};
       \node[draw,circle, inner sep=1pt,color=black, fill=black] at (0.5,3/4){};
        \node[draw,circle, inner sep=1pt,color=black, fill=black] at (-3,1.5){};
         \node[draw,circle, inner sep=1pt,color=black, fill=black] at (-1.5,4){};
         \node[draw,circle, inner sep=1pt,color=black, fill=black] at (-3/4,11/4){};
         \node[draw,circle, inner sep=1pt,color=black, fill=black] at (-2.5,2.34){};
         \node[draw,circle, inner sep=1pt,color=black, fill=black] at (-2,3.17){};
          \node[draw,circle, inner sep=1pt,color=black, fill=black] at (-3.7,3.2){};
          \node[draw,circle, inner sep=1pt,color=black, fill=black] at (-5.7,3.2){};
           \node[draw,circle, inner sep=1pt,color=black, fill=black] at (-4.7,5.2){};
           \node[draw,circle, inner sep=1pt,color=black, fill=black] at (-4.2,4.2){};
             \node[draw,circle, inner sep=1pt,color=black, fill=black] at (-2,1.5){};
       
   \node [below, color=black] at (-1,0) {$L_{1}$};
\node [below, color=black] at (1,0) {$L$};
\node [below, color=black] at (-3,1.5) {$L_{2}$};
\node [below, color=black] at (-2,1.5) {$A_3$};
\node [above, color=black] at (-1.5,5) {$A_1$};
 \node [below, color=black] at (-5.7,3.2) {$L_{3}$};
 \node [above, color=black] at (-4.7,6.2) {$A_2$};

 \end{scope}

  \end{tikzpicture}
\end{center}
 \caption{Constructing a second lotus from the Eggers-Wall tree on the right of Figure \ref{fig:logdistoEW3}}
\label{fig:EWtolotus4}
   \end{figure}

We need also the notion of {\em trunk decomposition} of a complete Eggers-Wall tree:

\begin{definition}
   \label{def:trunkdec}
  Let $A$ be a reduced plane curve singularity and $L$ be a smooth branch on $(S,O)$ such that $\Theta_{L} (A)$ is complete. A {\bf  trunk decomposition} of $\Theta_{L} (A)$ is a finite set $\boxed{\mathfrak{D}}$ of compact subsegments of $\Theta_L (A)$ called {\bf trunks}, such that:
     \begin{enumerate}
      \item 
         The union of the trunks is equal to $\Theta_L (A)$. 
      \item 
          For every  trunk $\tau \in \mathfrak{D}$, one may denote its ends by $R_\tau,   F_\tau$, such that  
          $R_\tau \prec_L F_\tau$. 
       \item
           For every trunk $\tau \in \mathfrak{D}$, the index function $\de_L$ is constant on the half open segment $(R_\tau F_\tau]$ and $F_\tau$ is a leaf of $\Theta_L (A)$.
  \end{enumerate}
 
 \medskip 
 
 Let $\tau = [R_\tau F_\tau]$ be a trunk of the trunk decomposition $\mathcal{D}$ of a complete Eggers-Wall tree $\Theta_L (A)$. Its {\bf marked points} are $R_\tau, F_\tau$ and the marked points of $\Theta_L (A)$ which belong to $\tau$. We consider on $\tau$ the {\bf renormalized exponent function} $\ex_\tau : \tau \to [0, \infty]$ defined by 
\begin{equation}  \label{ex:renormexp}
    \boxed{{\ex}_{\tau} (P)}  :=  {\de}_L(F_\tau) \cdot ({\ex}_L (P)  -{\ex}_L(R_\tau)). 
\end{equation}
We denote by  $\boxed{\cale _{\tau}} \subset \Qq_{\geq 0} \cup \{\infty\}$ the set of values of  the marked points of $\tau$ by the renormalized exponent function $\ex_{\tau}$.

\end{definition}

Notice that $0 = {\ex}_\tau (R_{\tau})$, $\infty = {\ex}_\tau (F_{\tau})$, and $\{0, \infty \} \subset \cale _{\tau}$.

\medskip 
We are ready to introduce the notion of {\em lotus associated to a trunk decomposition} of a complete Eggers-Wall tree: 

\begin{definition}
   \label{def:lotasstotrunkdec}
   Let $A$ be a reduced plane curve singularity and $L$ a smooth branch on $(S,O)$. Assume that the Eggers-Wall tree $\Theta_{L}(A)$ is complete. Let $\mathfrak{D}$ be a trunk decomposition of $\Theta_L(A)$. 
   Then, the {\bf lotus $\boxed{\Lambda_{ \mathfrak{D}}}$ 
   associated with the trunk decomposition
   $\mathfrak{D}$} is constructed as follows: 
  \begin{enumerate}
    \item
      By definition, there is a canonical bijection between the marked points  of a trunk $\tau$  in the decomposition and the  marked vertices of the abstract lotus  $\Lambda(\cale _{\tau})$. We label corresponding marked points and vertices with the same labels. 
    \item
       Identify all the vertices of $\bigsqcup_{\tau \in \mathfrak{D}} \Lambda(\cale _{\tau})$ which have the same label. 
        The result of this identification is a two dimensional simplicial complex $\Lambda_{\mathfrak{D}}$.
        For each trunk $\tau \in \mathfrak{D}$, we keep the same labels when taking the 
        images of the marked vertices of  $\Lambda(\cale _{\tau})$ in the lotus
        $\Lambda_{\mathfrak{D}}$. 
  \end{enumerate}
\end{definition}

Therefore, to each trunk $\tau$ of $\mathfrak{D}$ corresponds a membrane $\Lambda(\cale_{\tau})$ of $\Lambda_{ \mathfrak{D}}$.

By applying the algorithm of Corollary \ref{cor:lotustoEW} to the lotus $\Lambda_{ \mathfrak{D}}$, we get:

  \begin{proposition}
     \label{prop:reconstrEW}
      Let $\mathfrak{D}$ be a trunk decomposition of a complete Eggers-Wall tree $\Theta_L$. Consider its associated lotus $\Lambda_{\mathfrak{D}}$ in the sense of Definition \ref{def:lotasstotrunkdec}. Then the Eggers-Wall tree associated to $\Lambda_{\mathfrak{D}}$ in the sense of Definition \ref{def:assEWtolotus} is isomorphic to the starting Eggers-Wall tree $\Theta_L$. 
  \end{proposition}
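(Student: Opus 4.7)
The plan is to argue by induction on the number of trunks in $\mathfrak{D}$, showing simultaneously that $\partial_+\Lambda_{\mathfrak{D}}$ is isomorphic to $\Theta_L$ as a labeled rooted tree with marked interior points, and that the exponent and index functions prescribed by the algorithm of Corollary \ref{cor:lotustoEW} agree with the original $\ex_L$ and $\de_L$. The topological part is essentially formal: each membrane $\Lambda(\cale_\tau)$ is a Newton lotus with oriented base $[R_\tau F_\tau]$, its segment joining $R_\tau$ to $F_\tau$ inside $\partial_+\Lambda(\cale_\tau)$ passes through exactly one marked interior vertex for each element of $\cale_\tau \smallsetminus \{0,\infty\}$, and the gluing prescribed in Definition \ref{def:lotasstotrunkdec} identifies marked vertices sharing a label. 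The resulting union is, by construction, a tree canonically isomorphic to $\Theta_L$ with its marked points and leaf labels, so the underlying rooted labeled trees match.

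Next I would compute the pairs $(\lambda(E),\ord_E(L))$ recursively via Corollary \ref{cor:reclord}, ordering the trunks so that the root $R_\tau$ of each trunk is processed before the rest of its membrane. Within a single Newton lotus $\Lambda(\cale_\tau)$ whose corner values are $(\lambda(R_\tau),\ord_{R_\tau}(L))$ at $R_\tau$ and $(1,0)$ at the leaf $F_\tau$, the bilinearity of Proposition \ref{prop:addlotus} gives, for the marked vertex $p(\gamma)=\alpha e_1+\beta e_2$ of slope $\gamma=\beta/\alpha$,
\[
\ord_{p(\gamma)}(L) \;=\; \alpha\cdot\ord_{R_\tau}(L), \qquad \lambda(p(\gamma)) \;=\; \alpha\cdot\lambda(R_\tau)+\beta.
\]
The crucial inductive identity is $\ord_{R_\tau}(L)=\de_L(F_\tau)$, which follows from Proposition \ref{prop:ew-emb} together with the condition that $\de_L$ is constant on $(R_\tau,F_\tau]$. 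Combining this with the inductive hypothesis $\lambda(R_\tau)/\ord_{R_\tau}(L)-1=\ex_L(R_\tau)$, a short calculation yields
\[
\frac{\lambda(p(\gamma))}{\ord_{p(\gamma)}(L)}-1 \;=\; \ex_L(R_\tau)+\frac{\gamma}{\de_L(F_\tau)} \;=\; \ex_L(R_\tau)+\frac{\ex_\tau(p(\gamma))}{\de_L(F_\tau)} \;=\; \ex_L(p(\gamma)),
\]
where the last equality is exactly the definition \eqref{ex:renormexp} of the renormalized exponent $\ex_\tau$. Hence the exponent function reconstructed by Corollary \ref{cor:lotustoEW} coincides with $\ex_L$; for the index function, the same corollary assigns the constant value $\ord_{R_\tau}(L)=\de_L(F_\tau)$ to $(R_\tau,F_\tau]$, which matches the original $\de_L$ there.

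The main obstacle I anticipate is the bookkeeping at the ramification points of $\Theta_L$ where several trunks share a common root $R_\tau$: one must check that the pair $(\lambda(R_\tau),\ord_{R_\tau}(L))$ produced by the recursive procedure is well-defined independently of the trunk through which it is computed, and that the identity $\ord_{R_\tau}(L)=\de_L(F_\tau)$ holds simultaneously for every trunk rooted at $R_\tau$. This rests on the divisorial interpretation of Proposition \ref{prop:ew-emb}: if $R_\tau$ corresponds to the valuation $\ord_{E(R_\tau)}$, then $\de_L$ takes the single value $\ord_{E(R_\tau)}(L)$ on every half-open outgoing segment $(R_\tau,F_\tau]$, so the required consistency is built into the construction of the trunk decomposition itself.
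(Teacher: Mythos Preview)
Your proposal is correct and follows exactly the approach the paper gestures at: the paper's entire ``proof'' is the one-line remark preceding the proposition (``By applying the algorithm of Corollary \ref{cor:lotustoEW} to the lotus $\Lambda_{\mathfrak{D}}$, we get:''), with no details supplied. Your induction on trunks, the linearity computation of $(\lambda,\ord_L)$ inside each Newton lotus, and the verification that the reconstructed exponent equals $\ex_L$ via the renormalized-exponent formula \eqref{ex:renormexp} are precisely the missing details.

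One small clarification on your ``obstacle'' paragraph: the pair $(\lambda(R_\tau),\ord_{R_\tau}(L))$ is never computed through the trunks rooted at $R_\tau$; it is computed exactly once, in the unique membrane lying below $R_\tau$ (the one in which $R_\tau$ appears as a lateral vertex, not as the initial vertex). So there is no well-definedness issue for that pair. What genuinely needs checking---and what you correctly isolate---is the identity $\ord_{R_\tau}(L)=\de_L(F_\tau)$ for every trunk $\tau$ rooted at $R_\tau$. Note that different trunks sharing the root $R_\tau$ may have different values $\de_L(F_\tau)$ (e.g.\ at the vertex $E_3$ in Figure~\ref{fig:logdistoEW3} one outgoing direction has index $2$ and another has index $6$), but this causes no contradiction: only one of them is actually a trunk rooted at $R_\tau$, the other being the continuation of the trunk coming from below. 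When several trunks genuinely share the same root, your appeal to Proposition~\ref{prop:ew-emb} settles the matter, and it can also be seen directly from your inductive computation, since if $R_\tau=p(\gamma)$ in the preceding membrane with $\gamma=\beta/\alpha$ in lowest terms, then $\ord_{R_\tau}(L)=\alpha\cdot\de_L(F_{\tau'})$, and this is exactly the lcm of denominators of the characteristic exponents on $[L,R_\tau]$, i.e.\ $\de_L(F_\tau)$.
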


  The following proposition explains how to associate a finite active constellation of crosses to a trunk decomposition of a complete Eggers-Wall tree. It is based on Proposition \ref{prop: lotus-const}, which explains how to associate a finite constellation of crosses to a lotus of the form $\Lambda(\cale)$, for a non-empty finite subset $\cale$ of $\Qq_+$.

\begin{proposition} 
   \label{prop: trunk-const}
    Let $A$ be a reduced plane curve singularity and $L$ a smooth branch on $(S,O)$. Assume that the Eggers-Wall tree $\Theta_{L}(A)$ is complete. 
    Let $\mathfrak{D}$ be a trunk decomposition of $\Theta_L(A)$. 
    Then, there exists a unique minimal finite active constellation of crosses $\boxed{\hat{\calc}_A (\mathfrak{D})}$ adapted to $A$, such that the associated  completion $\hat{A}$ of $A$ is equal to $A$ and such that there is an isomorphism of simplicial complexes between the lotuses $\Lambda ( \hat{\calc}_A ( \mathfrak{D}))$ and $\Lambda_{ \mathfrak{D}}$, which preserves the labeling of the vertices by the branches of $A$. 
\end{proposition}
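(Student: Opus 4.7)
My plan is to construct $\hat{\calc}_A(\mathfrak{D})$ recursively, using Proposition \ref{prop: lotus-const} as the building block, one trunk at a time. Enumerate the trunks as $\tau_1, \dots, \tau_k$ so that $R_{\tau_1} = L$ and so that for every $i \geq 2$, the root $R_{\tau_i}$ lies in the interior of some previously processed trunk $\tau_{j(i)}$ with $j(i) < i$ (such an enumeration exists because $\mathfrak{D}$ covers a finite tree rooted at $L$). The idea is that each trunk $\tau_i$ will contribute a ``membrane's worth'' of infinitely near points to $\hat{\calc}_A(\mathfrak{D})$, obtained from the finite set $\cale_{\tau_i} \subset \Qq_{\geq 0} \cup \{\infty\}$ of renormalized exponents via Proposition \ref{prop: lotus-const}, applied at a suitable infinitely near origin.

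For the base case $i = 1$, choose a local coordinate system $(x,y)$ on $(S,O)$ with $L = Z(x)$ and let $L_1 := Z(y)$; this defines the initial cross $X_O$. Apply Proposition \ref{prop: lotus-const} to $\cale_{\tau_1}$: it produces a minimal active constellation of crosses over $O$ whose lotus is combinatorially isomorphic to $\Lambda(\cale_{\tau_1})$, with $L_1$ playing the role of the leaf corresponding to $F_{\tau_1} \in \cale_{\tau_1}$ if $\ex_{\tau_1}(F_{\tau_1}) \neq \infty$, or else the exceptional divisor at the last point blown up does. For the inductive step, suppose we have built the constellation $\hat{\calc}^{(i-1)}$ realising $\bigcup_{j < i} \tau_j$. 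The point $R_{\tau_i}$ corresponds, by Proposition \ref{prop:fromlotustoEW}\ref{b-ewlotus}, to a rupture vertex $E(R_{\tau_i})$ of the already-constructed lotus, hence to a specific irreducible exceptional divisor appearing in the model $S_{\hat{\calc}^{(i-1)}}$. Choose a curvetta $L^{(i)}$ at a smooth point $Q_i \in E(R_{\tau_i})$ lying outside the already existing crosses (or, when $F_{\tau_i}$ is the label of a branch of $A$ already transverse to $E(R_{\tau_i})$, the strict transform of that branch itself). The pair $(E(R_{\tau_i}), L^{(i)})$ defines an ordered cross on $(S_{\hat{\calc}^{(i-1)}}, Q_i)$, which we take as the new initial cross, and we apply Proposition \ref{prop: lotus-const} again, this time to $\cale_{\tau_i}$, to get the additional active points of $\hat{\calc}^{(i)}$.

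The proof that this gluing reproduces $\Lambda_{\mathfrak{D}}$ relies on the fact that, under blowup of $Q_i$, the slope function along the new petal structure is governed precisely by the renormalisation formula \eqref{ex:renormexp}; this is where Proposition \ref{prop:ew-emb} enters, ensuring that the log-discrepancy and order-of-vanishing invariants computed on $\hat{\calc}^{(i)}$ via Corollary \ref{cor:reclord} match the exponents $\ex_L$ through the factor $\de_L(F_{\tau_i})$. A direct check at each gluing vertex, using that the local coordinate $x$ at $O$ pulls back along the chain of blowups to a local defining function of the exceptional divisor $E(R_{\tau_i})$ that vanishes to order $\ord_{E(R_{\tau_i})}(L)$, shows the combinatorial equivalence with $\Lambda_{\mathfrak{D}}$ membrane by membrane, the trunk decomposition exactly corresponding to the decomposition into membranes of Definition \ref{def:vocablotus}.

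Adaptation to $A$ is then automatic: the leaves of $\Theta_L(A)$ labeled by branches of $A$ become curvettas of crosses at inactive maximal points, so the total transform of each branch of $A$ is contained in $X_{\hat{\calc}_A(\mathfrak{D})}$, and completeness of $\Theta_L(A)$ guarantees no extra branches are created (that is, $\hat{A} = A$). Uniqueness and minimality follow from the bijections of Remark \ref{rem:oneone} and Proposition \ref{prop:fromlotustoEW}\ref{b-ewlotus}--\ref{c-ewlotus}: each rupture vertex of $\Lambda_{\mathfrak{D}}$ must correspond to a divisorial valuation $\ord_{E(P)}$ already prescribed by $\Theta_L(A)$, and each base edge is forced by the cross $X_O$ or by a curvetta labeled by a branch of $A$, so the active constellation is rigidly determined. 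The main obstacle I foresee is checking the compatibility of the local coordinate choice at each infinitely near base point $Q_i$: one must verify that the new cross $(E(R_{\tau_i}), L^{(i)})$ is indeed ordered compatibly with the renormalised exponent function, so that the recursive application of Proposition \ref{prop: lotus-const} reproduces the correct branch labels at leaves of $\Lambda_{\mathfrak{D}}$; this amounts to the translation between the ``horizontal'' (slope) coordinate of the Newton lotus and the direction transverse to $E(R_{\tau_i})$ in the model, a translation made precise by equation \eqref{eq: rat-point}.
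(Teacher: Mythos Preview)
The paper states Proposition~\ref{prop: trunk-const} without proof; it is meant to follow from the explicit construction of $\Lambda_{\mathfrak{D}}$ in Definition~\ref{def:lotasstotrunkdec} together with the membrane-by-membrane realization provided by Proposition~\ref{prop: lotus-const}. Your recursive strategy---process the trunks in an order compatible with $\preceq_L$, apply Proposition~\ref{prop: lotus-const} at each stage to the renormalized set $\cale_{\tau_i}$, and glue at the rupture vertex $E(R_{\tau_i})$---is exactly that implicit argument made explicit, so the overall approach is correct and matches what the paper intends.

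Two small points deserve tightening. First, in the base case you set $L_1 := Z(y)$ for an arbitrary coordinate $y$; but to obtain $\hat{A} = A$ you must take $L_1$ to be the specific branch of $A$ labeling the leaf $F_{\tau_1}$ (completeness of $\Theta_L(A)$ guarantees such a branch exists). The same remark applies to each curvetta $L^{(i)}$: it is not merely ``a curvetta at a smooth point of $E(R_{\tau_i})$'' but the strict transform of the branch of $A$ labeling $F_{\tau_i}$. You acknowledge this parenthetically in the inductive step, but it is not optional---it is what forces $\hat{A}=A$ and what gives uniqueness. Second, the clause ``if $\ex_{\tau_1}(F_{\tau_1}) \neq \infty$'' is vacuous: by Definition~\ref{def:trunkdec} every $F_\tau$ is a leaf, so $\ex_\tau(F_\tau)=\infty$ always, and the vertex $p(\infty)$ of $\Lambda(\cale_{\tau_1})$ is precisely the base vertex labeled $L_1$ (that is, $F_{\tau_1}$). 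With these adjustments your argument goes through.
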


\begin{example}
    \label{ex:fromEWtolotus}
   In Figures \ref{fig:EWtolotus1}, \ref{fig:EWtolotus2} and \ref{fig:EWtolotus3} are represented the constructions of the lotuses associated to the trunk decompositions of the Eggers-Wall trees on the right sides of Figures \ref{fig:logdistoEW1}, \ref{fig:logdistoEW2} and \ref{fig:logdistoEW3} respectively,  performed according to Definition \ref{def:lotasstotrunkdec}. In the first two cases there is only one possible trunk decomposition. 
   
   But in the third case there is a second possible trunk decomposition $\mathfrak{D}'$, leading to a second lotus, shown in Figure \ref{fig:EWtolotus4}. In order to compare them, we look at  the minimal finite active constellations of crosses $\hat{\calc}$ and $\hat{\calc}'$ corresponding to the lotuses of Figures \ref{fig:logdistoEW3} and \ref{fig:EWtolotus4} respectively, as explained in Proposition \ref{prop: trunk-const}. Both constellations of crosses  have the same underlying constellation. Look at the infinitely near point $O_3$ which belongs to the exceptional divisor of the third blowup in the resolution process. In $\hat{\calc}$ we consider the cross $X_{O_3} = E_3 + L_2$ dual to the base segment $[E_3 L_2]$, while in $\hat{C}'$ we take a different cross,  $X_{O_3}' = E_3 +A_3$, which is dual to the base segment $[E_3 A_3]$. 

   The blue numbers of Figures \ref{fig:EWtolotus1}--\ref{fig:EWtolotus4} are the renormalized exponents $\ex_{\tau}(P)$, computed using their defining formula \eqref{ex:renormexp}. For instance, the renormalized exponents $\frac{4}{3}$,  $\frac{1}{2}$ and $\frac{2}{3}$ in Figures \ref{fig:EWtolotus2}, \ref{fig:EWtolotus3} and \ref{fig:EWtolotus4} respectively are computed as:
     \begin{eqnarray*}
        2\cdot \left(\frac{13}{6} - \frac{3}{2} \right) &= &   \dfrac{4}{3}, \\
        6\cdot \left(\frac{29}{12} - \frac{7}{3} \right) &=  &  \dfrac{1}{2}, \\
        2\cdot \left(\frac{7}{3} - 2 \right) &= &   \dfrac{2}{3}. 
      \end{eqnarray*}
\end{example}

\begin{remark}
  \label{rem:alternEWtolot}
    Figures \ref{fig:EWtolotus1}, \ref{fig:EWtolotus2} concern completions of branches $A$, obtained by adding one branch $L_i$ for each discontinuity point of the index function  ${\de}_L$ on the Eggers-Wall segment $\Theta_L(A)$. In this case each trunk of the corresponding trunk decomposition is decorated by one rational number. An alternative way to compute this sequence of rational numbers was explained in \cite[Examples 1.6.32--33]{GBGPPP 20}. In the example of Eggers-Wall tree shown on the left of Figure \ref{fig:EWtolotus2}, it  consists in rewriting as follows the finite Newton-Puiseux series whose exponents are the characteristic exponents of $A$: 

\begin{eqnarray*}
   x^{\tfrac{3}{2}} + x^{\tfrac{13}{6}} = x^{\tfrac{3}{2}}\left( 1 + x^{\tfrac{13}{6}- \tfrac{3}{2}}\right) =x^{\tfrac{3}{2}}\left( 1 + x^{\tfrac{4}{6}}\right) =
    x^{\boxed{\tfrac{3}{2}}}\left( 1 + x^{\tfrac{1}{2}\boxed{\tfrac{4}{3}}}\right).
\end{eqnarray*}    
    Note that the surrounded exponents are exactly the rational numbers decorating the trunks of Figure \ref{fig:EWtolotus2}. As a second example of this method, let us perform the analogous computation starting from the Newton-Puiseux series defining the branch $A_2$ of Example \ref{ex:EWtoseries}:
    \[  \begin{array}{ll}
   x^{\tfrac{3}{2}} + x^{\tfrac{7}{3}} + x^{\tfrac{29}{12}} & = x^{\tfrac{3}{2}}\left( 1 + x^{\tfrac{7}{3}- \tfrac{3}{2}} + x^{\tfrac{29}{12}- \tfrac{3}{2}}\right) =  x^{\tfrac{3}{2}}\left( 1 + x^{\tfrac{5}{6}} + x^{\tfrac{11}{12}}\right)   = 
   x^{\tfrac{3}{2}}\left( 1 + x^{\tfrac{5}{6}}\left(1 + x^{\tfrac{11}{12} - \tfrac{5}{6}}\right)\right) \\
    & = x^{\tfrac{3}{2}}\left( 1 + x^{\tfrac{5}{6}}\left(1 + x^{\tfrac{1}{12}} \right) \right)
    = x^{\boxed{\tfrac{3}{2}}}\left( 1 + x^{\tfrac{1}{2}\boxed{\tfrac{5}{3}}} \left(1 + x^{\tfrac{1}{6}\boxed{\tfrac{1}{2}}}\right) \right).
  \end{array} \] 
\end{remark}

\medskip
\section{Computation of intersection numbers from Eggers-Wall trees}
\label{sec:intfromEW}

In Section \ref{sec:ovint} we explained one method of computation of the intersection numbers of pairs of branches of a plane curve singularity $A$, using a corresponding lotus. In this section, we explain how to compute them using the Eggers-Wall tree associated to the lotus  in Section \ref{sec:EWfromlot}. Besides the exponent function $\ex_L$ and the index function $\de_L$, the Eggers-Wall tree $\Theta_{L} (A)$ of a reduced plane curve singularity $A$ is endowed with a third function, the {\em contact complexity function} $\ic_L$, which is determined by the first two functions. It is $\ic_L$ which allows to determine easily the intersection numbers of the branches of $A$ (see Theorem \ref{thm:intfromEW}). 
\medskip

\begin{definition} 
   \label{def:contact complexity} 
    Let $A \hookrightarrow (S,O)$ be a reduced plane curve singularity and $L$ be a smooth branch  on $S$. 
    The {\bf contact complexity}  function  
          \[ {\ic}_L : \Theta_L (A) \to [0, \infty]  \] 
     is defined at a point $P \in \Theta_L (A)$ by the formula 
        \begin{equation}  
          \label{eq:intcoefint}
       \boxed{{\ic}_L(P)} := \int_L^{P} \frac{d \: {\ex}_L}{{\de}_L}. 
   \end{equation}
\end{definition}

  One may express  the value of the integral \eqref{eq:intcoefint} in terms of the characteristic exponents of the branches of $A$ and of the exponent ${\ex}_L(P)$. Take a branch $A_l$ of $A$ such that $P \in [L  A_l]$. Denote by $\boxed{\alpha_1} <  \dots < \boxed{\alpha_g}$ the characteristic exponents of  $A_l$. We set $\boxed{\alpha_0} := 0$ and $\boxed{\alpha_{g+1}} := \infty$. Define  $P_j :=  \ex_L^{-1}(\alpha_j)$, for each $j \in \{ 0, \dots, g+1 \}$. The index function ${\de}_L$ has value  $\boxed{{\de}_{j}} $ on the half-open segment  $ ( P_j  P_{j+1}]$. Denote by $\boxed{r} \in \{0, \dots,  g\}$ the unique integer such that  $P\in [P_r P_{r+1} ) $. Then the contact complexity ${\ic}_L(P)$ becomes, by formula \eqref{eq:intcoefint}:  
     \begin{equation} 
        \label{eq:intcoefint2}
     {\ic}_L(P) =      
      \left(
    \sum_{j =1}^{r} 
    \frac{\alpha_j - \alpha_{j-1}}{{\de}_{ j-1}}
    \right)  + 
    \frac{{\ex}_L(P)- \alpha_r}{{\de}_{r}}.
     \end{equation}

\begin{example}
   We give the value of the contact complexity function $\ic_L$ on the ramification point $E_5$ of the tree $\Theta_L (L_3 + A_2)$, viewed in the Eggers-Wall tree on the right of Figure \ref{fig:logdistoEW3}. The index function has constant value $1$ on the interval $[L E_1]$, value $2$ on the interval $(E_1 E_4]$ and value $6$ on the interval $(E_4 E_5]$. Therefore, by formula \eqref{eq:intcoefint2} we have: 
 \begin{eqnarray*}
   \ic_L (E_5) & = & \frac{1}{1} (\ex_L (E_1) - \ex_{L} (L)) +  \frac{1}{2} (\ex_L (E_4) - \ex_{L} (E_1)) + \frac{1}{6} (\ex_L (E_5) - \ex_{L} (E_4))  \\ 
    & = & 1\left( \frac{3}{2}-0 \right) + \frac{1}{2} \left( \frac{7}{3} -\frac{3}{2} \right) +\frac{1}{6} \left( \frac{29}{12} - \frac{7}{3}\right)=   \frac{139}{72}.
 \end{eqnarray*}  
\end{example}

In order to compute the intersection number $(A_l \cdot A_m)_O$ of two branches of $A$, we need the notion of {\em center of a tripod}:

\begin{definition}
   \label{def:centertripod}
  Let $A_l$ and $A_m$ be two branches of $A$ different from $L$. The {\bf center $\boxed{\langle L, A_l, A_m \rangle}$ of the tripod}   determined by $A_l, A_m$ and $L$ is the ramification point  of the tree $\Theta_L ( A_l + A_m)$ if $A_l$ and $A_m$ are different. If instead $A_l =A_m$, we set $\langle L, A_l, A_m \rangle:=A_l$. 
\end{definition}

\medskip 
  The center $\langle L, A_l, A_m \rangle$ of the tripod determined by the points $A_l, A_m, L$ of $\Theta_L(A)$ may be also seen as the infimum of the points $A_l, A_m$ with respect to the partial order relation $\preceq_L$ determined by the choice of the root $L$ of $\Theta_L(A)$ (see Definition \ref{def:EW}).  The following {\bf tripod formula}, which in different language goes back at least to works of Smith, Stolz and Max Noether, allows to determine the intersection numbers using Eggers-Wall trees (see \cite[Theorem 3.23 and  Corollary 3.26]{GBGPPP 19}):  
  
\begin{theorem}
    \label{thm:intfromEW} 
      If $A_l$ and $A_m$ are two branches of $A$  different from $L$, then: 
     \begin{equation} 
        \label{eq:tripod}
     (A_l \cdot A_m)_O = \de_L(A_l) \cdot  \de_L(A_m) \cdot  {\ic}_L(\langle L, A_l, A_m\rangle).
     \end{equation}
\end{theorem}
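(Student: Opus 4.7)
I will assume $A_l \neq A_m$ (otherwise both sides are $+\infty$). The plan is to reduce the intersection number to a pairwise sum of valuations of Newton--Puiseux roots and then recognise this sum as the ``arc-length'' with respect to $d\ex_L/\de_L$ of the common segment $[L, \langle L, A_l, A_m\rangle]$ inside $\Theta_L(A)$.

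First I would fix local coordinates $(x,y)$ with $L = Z(x)$, factor $f_{A_l}(x,y) = u(x,y) \prod_{\xi \in \cZ_x(A_l)} (y - \xi)$ as in \eqref{fmla:NewtPuiseux}, and parametrize $A_m$ via $t \mapsto (t^{n_m}, \mu(t))$ with $n_m = \de_L(A_m) = (L \cdot A_m)_O$ (see Remark \ref{rem-conjugates}). The classical identity $(A_l \cdot A_m)_O = \nu_t\bigl(f_{A_l}(t^{n_m}, \mu(t))\bigr)$, combined with the pullback of the factorization and the relation $\nu_t = n_m\,\nu_x$ under $x = t^{n_m}$, then yields, after averaging over the Galois orbit $\cZ_x(A_m)$,
\[
 (A_l \cdot A_m)_O \;=\; \sum_{\xi \in \cZ_x(A_l)} \sum_{\eta \in \cZ_x(A_m)} \nu_x(\xi - \eta).
\]

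Next comes the key counting step. Set $\gamma_0 := k_L(A_l, A_m) = \ex_L(\langle L, A_l, A_m\rangle)$, and for each $\gamma \in [0, \gamma_0]$ let $P_\gamma \in [L, \langle L, A_l, A_m\rangle]$ be the point with $\ex_L(P_\gamma) = \gamma$. For any fixed $\xi \in \cZ_x(A_l)$, I would establish
\[
 \bigl|\{\eta \in \cZ_x(A_m) : \nu_x(\xi - \eta) \geq \gamma\}\bigr| \;=\; \frac{\de_L(A_m)}{\de_L(P_\gamma)}, \qquad 0 \leq \gamma \leq \gamma_0,
\]
the set being empty for $\gamma > \gamma_0$. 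The condition $\nu_x(\xi - \eta) \geq \gamma$ is equivalent to the equality of the truncations $\sum_{\alpha < \gamma} c_\alpha x^\alpha$ of $\xi$ and $\eta$. Invoking Remark \ref{rem-conjugates}, an orbit-stabilizer computation for the action of the Galois group of $\Cc\{x^{1/n_m}\}/\Cc\{x\}$ on these truncations shows that the image of the truncation map $\cZ_x(A_m) \to \Cc\{x^{1/\Nn}\}$ at order $\gamma$ has size $\de_L(P_\gamma)$, so that each fiber has size $\de_L(A_m)/\de_L(P_\gamma)$. That the truncation of $\xi$ belongs to this image precisely when $\gamma \leq \gamma_0$ is the very definition of the order of coincidence.

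A layer-cake integration then gives, for each $\xi$,
\[
 \sum_{\eta \in \cZ_x(A_m)} \nu_x(\xi - \eta) \;=\; \int_0^\infty \bigl|\{\eta : \nu_x(\xi - \eta) \geq \gamma\}\bigr|\, d\gamma \;=\; \de_L(A_m) \int_0^{\gamma_0} \frac{d\gamma}{\de_L(P_\gamma)},
\]
and the right-hand integral equals $\ic_L(\langle L, A_l, A_m\rangle)$ by Definition \ref{def:contact complexity}, via the parametrization $P \mapsto \gamma = \ex_L(P)$ of the common segment. Summing over $\xi \in \cZ_x(A_l)$, each contributing the same amount by Galois-equivariance of the setup, yields the tripod formula \eqref{eq:tripod}. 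The main obstacle is the counting lemma above: the identification of the fiber size with $\de_L(A_m)/\de_L(P_\gamma)$ depends on the convention that $\de_L(P_\gamma)$ is the lcm of the denominators of the characteristic exponents \emph{strictly less} than $\gamma$, and the orbit-stabilizer computation must be carried out respecting this left-continuity so that the piecewise-constant integrand in the layer-cake integral matches exactly the integrand $1/\de_L$ appearing in the definition of $\ic_L$. The Newton--Puiseux identity of the first step is classical but also deserves care, as it rests on the Weierstrass preparation theorem and on expressing the intersection multiplicity as the valuation of a pullback along a Puiseux parametrization.
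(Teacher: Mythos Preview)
Your proof is correct. The paper does not actually prove Theorem~\ref{thm:intfromEW}: it simply cites \cite[Theorem 3.23 and Corollary 3.26]{GBGPPP 19}. However, the paper does prove the positive-characteristic analogue (Theorem~\ref{th:tripod-p}), and your argument is essentially the characteristic-zero version of that proof. Both start from the resultant/double-sum identity $(A_l\cdot A_m)_O = \sum_{\xi\in\cZ_x(A_l)}\sum_{\eta\in\cZ_x(A_m)}\nu_x(\xi-\eta)$, both reduce by Galois-equivariance to counting how many conjugates of one root agree with a fixed root of the other up to a given order, and both identify the resulting weighted sum with the contact complexity via its defining integral. Your counting lemma is the characteristic-zero counterpart of the paper's Lemma~\ref{lem: comparison}; the only cosmetic difference is that you package the final step as a layer-cake integral over the continuous parameter $\gamma$, whereas the paper sums discretely over the finitely many jump values $b_j/n$ and the order of coincidence $\alpha$, obtaining formula~\eqref{eq:intcoefint3-p}. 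The two computations are equivalent because $\de_L$ is piecewise constant.
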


Notice that if $A_i$ is a branch of $A$ different from $L$ then $\de_L(A_i) = (L\cdot A_i)_O$. Using this observation, one can reformulate \eqref{eq:tripod} as: 
\begin{equation} \label{eq: tripod2}
   {\ic}_L(\langle L, A_l, A_m\rangle) = 
   \frac{(A_l \cdot A_m)_O}{(A_l\cdot  L)_O \cdot (A_m\cdot  L)_O}. 
\end{equation}

\begin{remark} \label{rem:ultrametric}
    P\l oski proved in \cite{Pl 85} that for any three pairwise distinct  branches $A_l$, $A_m$ and $A_k$, in the sequence $\frac{(A_l \cdot A_m)_O}{(A_l\cdot  L)_O \cdot (A_m\cdot  L)_O}$, $\frac{(A_l \cdot A_k)_O}{(A_l\cdot  L)_O \cdot (A_k\cdot  L)_O}$ and $\frac{(A_m \cdot A_k)_O}{(A_m\cdot  L)_O \cdot (A_k\cdot  L)_O}$ there are two terms equal and the third one is not less than the two equal terms (see also \cite{GB-P 15}, in which these numbers were called {\em logarithmic distances}). P\l oski's result may be also expressed as the statement that the function
    \begin{equation}\label{eq:UL}
        \boxed{U_{L}} (A_l, A_m) :=\frac{(A_l\cdot  L)_O \cdot (A_m\cdot  L)_O}{(A_l \cdot A_m)_O}
    \end{equation} 
    defines an ultrametric on the set of branches on $(S,O)$ different from $L$. In \cite{GBGPPP 19} and \cite{GBGPPPR 19} analogous maps were studied in the broader context of branches traced on arbitrary normal surface singularities and those surface singularities for which such maps define ultrametrics were identified.
\end{remark}

   \begin{example}  
       \label{ex:fromexptocont}
       In Figures \ref{fig:exptocontact1}, \ref{fig:exptocontact2}, \ref{fig:exptocontact3}    are indicated the passages from the Eggers-Wall trees obtained in Figures \ref{fig:logdistoEW1}, \ref{fig:logdistoEW2} and \ref{fig:logdistoEW3}, which are decorated with indices ${\de}_L$ and exponents $\ex_L$, to the same trees decorated with indices ${\de}_L$  and contact complexities ${\ic}_L$.    
       Let us apply the tripod formula of Theorem \ref{thm:intfromEW} to determine the intersection multiplicities of the pairs of branches $A_i$ shown on Figure \ref{fig:logdistoEW3}. We get $(A_1 \cdot A_2)_O = 6\cdot 12 \cdot \frac{11}{6}=132$, 
       $(A_1 \cdot A_3)_O= 6 \cdot 2 \cdot \frac{7}{4} = 21$ and 
       $(A_2 \cdot A_3)_O = 12 \cdot 2 \cdot \frac{7}{4}= 42$.
   \end{example}


 \begin{figure}[h!]
    \begin{center}
\begin{tikzpicture}[scale=1]
  
  \begin{scope}[shift={(0,0)}]
  
     \draw [->, color=black,  line width=1.5pt](0,0) -- (0,3);
     \draw [->, color=black,  line width=1.5pt](0,1) -- (0,0);
      \node[draw,circle, inner sep=1.5pt,color=black, fill=black] at (0,1.5){};
      \draw [->, color=black,  line width=1.5pt](0,1.5) -- (-1.4,1.5);

       \node [below, color=black] at (0,0) {$L$};
         \node [left, color=black] at (-1.4,1.5) {$L_{1}$};
           \node [above, color=black] at (0,3) {$A$};

           \node [right, color=black] at (0,0) {\small{$0$}};  
           \node [right, color=black] at (0,1.5) {\small{$\frac{3}{2}$}}; 
                \node [below, color=black] at (-1.3,1.4) {\small{$\infty$}};

                 \node [right, color=black] at (0,0.75) {\small{$1$}};
                 \node [right, color=black] at (0,2.25) {\small{$2$}};
                   \node [above, color=black] at (-0.7,1.5) {\small{$1$}};
  
  \end{scope}

\begin{scope}[shift={(7,0)}]
  
  \draw[->][thick, color=black](-5,1.7) .. controls (-4,1.3) ..(-3,1.7); 
   
     \draw [->, color=black,  line width=1.5pt](0,0) -- (0,3);
     \draw [->, color=black,  line width=1.5pt](0,1) -- (0,0);
      \node[draw,circle, inner sep=1.5pt,color=black, fill=black] at (0,1.5){};
      \draw [->, color=black,  line width=1.5pt](0,1.5) -- (-1.4,1.5);

       \node [below, color=black] at (0,0) {$L$};
         \node [left, color=black] at (-1.4,1.5) {$L_{1}$};
           \node [above, color=black] at (0,3) {$A$};

           \node [right, color=black] at (0,0) {\small{$0$}};  
           \node [right, color=blue] at (0,1.5) {\small{$\mathbf{\frac{3}{2}}$}}; 
                \node [below, color=black] at (-1.3,1.4) {\small{$\infty$}};

                 \node [right, color=black] at (0,0.75) {\small{$1$}};
                 \node [right, color=black] at (0,2.25) {\small{$2$}};
                   \node [above, color=black] at (-0.7,1.5) {\small{$1$}};
    \end{scope}

  \end{tikzpicture}
\end{center}
 \caption{
 Passage from the Eggers-Wall tree decorated with the indices ${\de}_L$ and exponents $\ex_L$ of its marked points to the indices ${\de}_L$ and contact complexities ${\ic}_L$, in the case of the tree of Figure \ref{fig:logdistoEW1}; here the contact complexity is equal to the exponent at the unique ramification point}
\label{fig:exptocontact1}
   \end{figure}
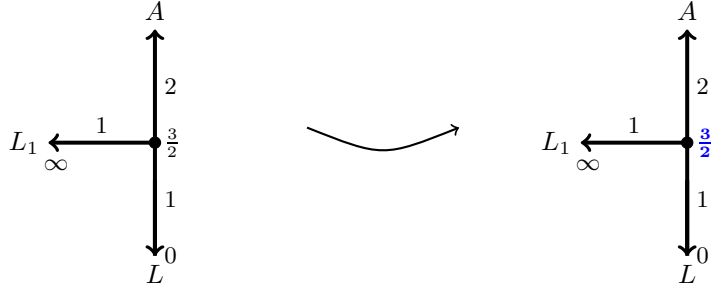


 \begin{figure}[h!]
    \begin{center}
\begin{tikzpicture}[scale=1]
  
  \begin{scope}[shift={(0,0)}] 
   
     \draw [->, color=black,  line width=1.5pt](0,0) -- (0,4.5);
     \draw [->, color=black,  line width=1.5pt](0,1) -- (0,0);

      \draw [->, color=black,  line width=1.5pt](0,1.5) -- (-1.4,1.5);
       \draw [->, color=black,  line width=1.5pt](0,3) -- (-1.4,3);

       \node [below, color=black] at (0,0) {$L$};
       \node [left, color=black] at (-1.4,1.5) {$L_{1}$};
        \node [above, color=black] at (0,4.5) {$A_1$};
        \node [left, color=black] at (-1.4,3) {$L_{2}$};
        \node[draw,circle, inner sep=1.5pt,color=black, fill=black] at (0,1.5){};
      \node[draw,circle, inner sep=1.5pt,color=black, fill=black] at (0,3){};

                \node [right, color=black] at (0,0.75) {\small{$1$}};
                 \node [right, color=black] at (0,2.25) {\small{$2$}};
                  \node [right, color=black] at (0,3.75) {\small{$6$}};
                   \node [above, color=black] at (-0.7,1.5) {\small{$1$}};
                   \node [above, color=black] at (-0.7,3) {\small{$2$}};
                    \node [right, color=black] at (0,0) {\small{$0$}};  
           \node [right, color=black] at (0,1.5) {\small{$\frac{3}{2}$}}; 
            \node [right, color=black] at (0,3) {\small{$\frac{13}{6}$}}; 
             \node [below, color=black] at (-1.3,2.9) {\small{$\infty$}};
                \node [below, color=black] at (-1.3,1.4) {\small{$\infty$}};

 \end{scope}

\begin{scope}[shift={(7,0)}]

 \draw[->][thick, color=black](-5,3) .. controls (-4,2.5) ..(-3,3); 
   
     \draw [->, color=black,  line width=1.5pt](0,0) -- (0,4.5);
     \draw [->, color=black,  line width=1.5pt](0,1) -- (0,0);

      \draw [->, color=black,  line width=1.5pt](0,1.5) -- (-1.4,1.5);
       \draw [->, color=black,  line width=1.5pt](0,3) -- (-1.4,3);

       \node [below, color=black] at (0,0) {$L$};
         \node [left, color=black] at (-1.4,1.5) {$L_{1}$};
           \node [above, color=black] at (0,4.5) {$A_1$};
           \node [left, color=black] at (-1.4,3) {$L_{2}$};
       
            \node[draw,circle, inner sep=1.5pt,color=violet, fill=violet] at (0,1.5){};
      \node[draw,circle, inner sep=1.5pt,color=violet, fill=violet] at (0,3){};

                \node [right, color=black] at (0,0.75) {\small{$1$}};
                 \node [right, color=black] at (0,2.25) {\small{$2$}};
                  \node [right, color=black] at (0,3.75) {\small{$6$}};
                   \node [above, color=black] at (-0.7,1.5) {\small{$1$}};
                   \node [above, color=black] at (-0.7,3) {\small{$2$}};
                    \node [right, color=black] at (0,0) {\small{$0$}};  
           \node [right, color=blue] at (0,1.5) {\small{$\mathbf{\frac{3}{2}}$}}; 
            \node [right, color=blue] at (0,3) {\small{$\mathbf{\frac{11}{6}}$}}; 
             \node [below, color=black] at (-1.3,2.9) {\small{$\infty$}};
                \node [below, color=black] at (-1.3,1.4) {\small{$\infty$}};

    \end{scope}
  \end{tikzpicture}
\end{center}
 \caption{
 Passage from the Eggers-Wall tree decorated with the indices ${\de}_L$  and the exponents $\ex_L$ of its marked points to the indices ${\de}_L$ and the contact complexities ${\ic}_L$, in the case of the tree of Figure
 \ref{fig:logdistoEW2}}
\label{fig:exptocontact2}
   \end{figure}


 \begin{figure}[h!]
    \begin{center}
\begin{tikzpicture}[scale=1]
  
  \begin{scope}[shift={(0,0)}]
 \draw [->, color=black,  line width=1.5pt](0,0) -- (0,8);
     \draw [->, color=black,  line width=1.5pt](0,1) -- (0,0);
      \node[draw,circle, inner sep=1.5pt,color=black, fill=black] at (0,1.5){};
      \node[draw,circle, inner sep=1.5pt,color=black, fill=black] at (0,3){};
      \node[draw,circle, inner sep=1.5pt,color=black, fill=black] at (0,4.5){};
      \node[draw,circle, inner sep=1.5pt,color=black, fill=black] at (0,6){};
      \node[draw,circle, inner sep=1.5pt,color=black, fill=black] at (0,7){};
      
      \draw [->, color=black,  line width=1.5pt](0,1.5) -- (-1.4,1.5);
       \draw [->, color=black,  line width=1.5pt](0,3) -- (1.4,3);
       \draw [->, color=black,  line width=1.5pt](0,4.5) -- (1.4,4.5);
       \draw [->, color=black,  line width=1.5pt](0,6) -- (-1.4,6);
       \draw [->, color=black,  line width=1.5pt](0,7) -- (-1.4,7);
      
       \node [below, color=black] at (0,0) {$L$};
         \node [left, color=black] at (-1.4,1.5) {$L_{1}$};
          \node [right, color=black] at (1.4,3) {$A_3$};
           \node [right, color=black] at (1.4,4.5) {$A_1$};
           \node [left, color=black] at (-1.4,6) {$L_{2}$};
           \node [left, color=black] at (-1.4,7) {$L_{3}$};
           \node [above, color=black] at (0,8) {$A_2$};

           \node [right, color=black] at (0,0) {\small{$0$}};  
           \node [right, color=black] at (0,1.5) {\small{$\frac{3}{2}$}}; 
            \node [left, color=black] at (0,3) {$2$}; 
             \node [left, color=black] at (0,4.5) {\small{$\frac{13}{6}$}}; 
              \node [right, color=black] at (0,6) {\small{$\frac{7}{3}$}};
               \node [right, color=black] at (0,7) {\small{$\frac{29}{12}$}};
                \node [below, color=black] at (-1.3,1.4) {\small{$\infty$}};
                \node [below, color=black] at (-1.3,5.9) {\small{$\infty$}};
                \node [below, color=black] at (-1.3,6.9) {\small{$\infty$}};
                
                 \node [right, color=black] at (0,0.75) {\small{$1$}};
                 \node [right, color=black] at (0,2.25) {\small{$2$}};
                  \node [right, color=black] at (0,3.75) {\small{$2$}};
                  \node [right, color=black] at (0,5.25) {\small{$2$}};
                  \node [right, color=black] at (0,6.5) {\small{$6$}};
                   \node [right, color=black] at (0,7.5) {\small{$12$}};
                   \node [above, color=black] at (-0.7,1.5) {\small{$1$}};
                \node [above, color=black] at (-0.7,6) {\small{$2$}};
                \node [above, color=black] at (-0.7,7) {\small{$6$}};
                \node [above, color=black] at (0.7,3) {\small{$2$}};
                \node [above, color=black] at (0.7,4.5) {\small{$6$}};

\end{scope}
  \begin{scope}[shift={(7,0)}]
  \draw[->][thick, color=black](-4,4) .. controls (-3,3.5) ..(-2,4); 
   
     \draw [->, color=black,  line width=1.5pt](0,0) -- (0,8);
     \draw [->, color=black,  line width=1.5pt](0,1) -- (0,0);
      \node[draw,circle, inner sep=1.5pt,color=black, fill=black] at (0,1.5){};
      \node[draw,circle, inner sep=1.5pt,color=black, fill=black] at (0,3){};
      \node[draw,circle, inner sep=1.5pt,color=black, fill=black] at (0,4.5){};
      \node[draw,circle, inner sep=1.5pt,color=black, fill=black] at (0,6){};
      \node[draw,circle, inner sep=1.5pt,color=black, fill=black] at (0,7){};
      
      \draw [->, color=black,  line width=1.5pt](0,1.5) -- (-1.4,1.5);
       \draw [->, color=black,  line width=1.5pt](0,3) -- (1.4,3);
       \draw [->, color=black,  line width=1.5pt](0,4.5) -- (1.4,4.5);
       \draw [->, color=black,  line width=1.5pt](0,6) -- (-1.4,6);
       \draw [->, color=black,  line width=1.5pt](0,7) -- (-1.4,7);
      
       \node [below, color=black] at (0,0) {$L$};
         \node [left, color=black] at (-1.4,1.5) {$L_{1}$};
          \node [right, color=black] at (1.4,3) {$A_3$};
           \node [right, color=black] at (1.4,4.5) {$A_1$};
           \node [left, color=black] at (-1.4,6) {$L_{2}$};
           \node [left, color=black] at (-1.4,7) {$L_{3}$};
           \node [above, color=black] at (0,8) {$A_2$};

    \node [right, color=black] at (0,0) {\small{$0$}};  
    \node [right, color=blue] at (0,1.5) {\small{$\mathbf{\frac{3}{2}}$}}; 
    \node [left, color=blue] at (0,3) {$\mathbf{\frac{7}{4}}$}; 
    \node [left, color=blue] at (0,4.5) {\small{$\mathbf{\frac{11}{6}}$}}; 
    \node [right, color=blue] at (0,6) {\small{$\mathbf{\frac{23}{12}}$}};
    \node [right, color=blue] at (0,7) {\small{$\mathbf{\frac{139}{72}}$}};
    \node [below, color=black] at (-1.3,1.4) {\small{$\infty$}};
    \node [below, color=black] at (-1.3,5.9) {\small{$\infty$}};
    \node [below, color=black] at (-1.3,6.9) {\small{$\infty$}};
                
                 \node [right, color=black] at (0,0.75) {\small{$1$}};
                 \node [right, color=black] at (0,2.25) {\small{$2$}};
                  \node [right, color=black] at (0,3.75) {\small{$2$}};
                  \node [right, color=black] at (0,5.25) {\small{$2$}};
                  \node [right, color=black] at (0,6.5) {\small{$6$}};
                   \node [right, color=black] at (0,7.5) {\small{$12$}};
                   \node [above, color=black] at (-0.7,1.5) {\small{$1$}};
                \node [above, color=black] at (-0.7,6) {\small{$2$}};
                \node [above, color=black] at (-0.7,7) {\small{$6$}};
                \node [above, color=black] at (0.7,3) {\small{$2$}};
                \node [above, color=black] at (0.7,4.5) {\small{$6$}}; 

  \end{scope}

\begin{scope}[shift={(7,0)}]

 \end{scope}

  \end{tikzpicture}
\end{center}
 \caption{
 Passage from the Eggers-Wall tree decorated with the indices ${\de}_L$ and the exponents $\ex_L$ of its marked points to the indices ${\de}_L$ and the contact complexities ${\ic}_L$, in the case of the tree of Figure
 \ref{fig:logdistoEW3}}
\label{fig:exptocontact3}
   \end{figure}
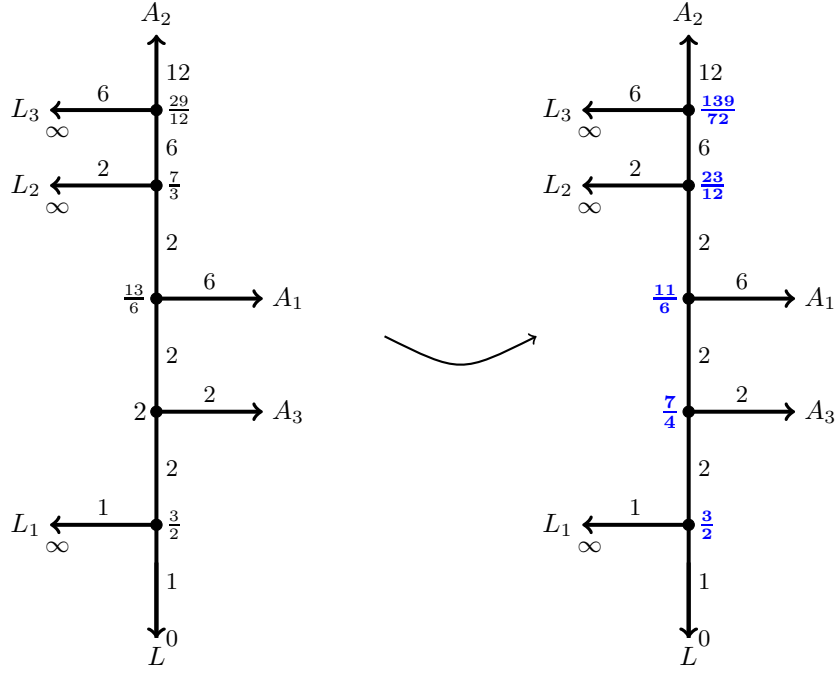

\medskip
\section{The minimal generating set of the semigroup of a branch}
\label{sec:computgensg}

In this section we recall the definition of the {\em semigroup} of a branch (see Definition \ref{def:sgbranch}) and we explain how to compute its minimal generating sequence using a lotus associated to the branch (see Corollary \ref{cor:mingenseq}). 
\medskip

The following notion goes back at least to Arf's paper \cite{A 48} (see also \cite{DV 48} and \cite{I 98}):

\begin{definition}
    \label{def:sgbranch}
    Let $A$ be an abstract branch. Denote by $\tilde{A} \to A$ a normalization morphism and by $\boxed{\nu_A}$ the restriction to the local ring $\calo_{A}$ of the canonical valuation of $\calo_{\tilde{A}}$ (which computes the order of vanishing at the marked point of $\tilde{A}$). The {\bf semigroup $\Gamma(A)$} of $A$ is defined by:
      \[\boxed{\Gamma(A)} := \{ \nu_A(f), \  f \in \calo_A \smallsetminus \{0\} \} \subset \Zz_{\geq 0}. \]
\end{definition}

The set $\Gamma(A)$ is a subsemigroup of $(\Zz_{\geq 0}, +)$, because for every $f, g \in \calo_A \smallsetminus \{0\}$:
   \[\nu_A(fg)  = \nu_A(f) + \nu_A(g),\]
by the definition of valuations (see Definition \ref{def:semival} \eqref{itemprod}). In fact $\Gamma(A)$ is a submonoid of $(\Zz_{\geq 0}, +)$, because the neutral element $0 \in \Zz_{\geq 0}$ is achieved as $\nu_A(u)$, for every unit $u$ of $\calo_A$. It is nevertheless traditional in singularity theory to speak of the {\em semigroup} of a branch instead of its {\em monoid}.

\begin{remark}
    \label{rem:alterninterpr}
    When $A \hookrightarrow (S, O)$ is a {\em plane} branch, there is an alternative interpretation of the semigroup $\Gamma(A)$ as the set of intersection numbers of $A$ with the plane curve singularities which do not contain $A$:
      \[\Gamma(A)= \{ (A \cdot Z(f))_O, \  f \in \calo_{S,O}, \ f|_A \neq 0 \}. \]
\end{remark}

Every subsemigroup $\Gamma$ of $(\Zz_{\geq 0}, +)$ is finitely generated and has a unique minimal generating set with respect to inclusion. It consists of the indecomposable elements of $\Gamma$. By ordering them increasingly, one gets the {\bf minimal generating sequence} of $\Gamma$. 

\medskip

When $\Gamma$ is the semigroup of a plane branch $A$, this sequence may be expressed in terms of the characteristic exponents of $A$. In order to explain this, we need more notations. Denote by $\boxed{\alpha_1}<  \dots < \boxed{\alpha_g}$ the {\em generic} characteristic exponents of $A$, that is, the elements of the set ${\mathrm{Ch}}_L (A_l)$ of Definition \ref{def:charexpcoinc}, where $L$ is a smooth branch {\em transversal} to $A$. This means that $1 < \alpha_1$. Using Zariski's notations in \cite[Chapitre II.3]{Z 86}, define successively:
 
 \begin{eqnarray}
    \label{eq:beta0}
 \boxed{\beta_0} &:=  & \mbox{the minimal common denominator of } \ \alpha_1,  \dots,  \alpha_g \\ 
     \boxed{\beta_i}&  := & \beta_0 \alpha_i, \ \mbox{ for all } i \in  \{1, \dots, g\}, \\
     \boxed{e_i}& : = &\gcd(\beta_0,\ldots, \beta_i), \  \mbox{ for all }  i \in \{0,\ldots,g\}, \\
    \boxed{n_i} & := &\frac{e_{i-1}}{e_i}, \  \mbox{ for all }  i \in \{1,\ldots,g\}, \\
   \boxed{\overline{\beta}_0} 
       & :=&  \beta_0, \\  \boxed{\overline{\beta}_1} & := &\beta_1, \\ 
      \label{eq:recbetabar}
     \boxed{\overline{\beta}_i}&  :=  &n_{i-1} \overline{\beta}_{i-1} + \beta_i - \beta_{i-1}, \  \mbox{ for all }  i \in \{2,\ldots,g\}. 
 \end{eqnarray}

One has the following result (see Zariski's \cite[Theorem 3.9]{Z 86}):

\begin{proposition} 
    \label{prop:mingenfrombeta}
    Assume that $A \hookrightarrow (S, O)$ is a plane branch. In terms of the previous notations, 
    the minimal generating sequence of its semigroup $\Gamma(A)$ is:
      \[(\overline{\beta}_0, \dots, \overline{\beta}_g). \]
\end{proposition}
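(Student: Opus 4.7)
The plan is to use the interpretation of $\Gamma(A)$ as the set of intersection numbers $(A \cdot Z(f))_O$ with plane curves not containing $A$ (Remark~\ref{rem:alterninterpr}), together with the tripod formula of Theorem~\ref{thm:intfromEW}, to realize each $\overline{\beta}_i$ as such an intersection number, and then to argue generation and minimality arithmetically from the recursion~\eqref{eq:recbetabar}.

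For the realization step, choose a coordinate system $(x,y)$ on $(S,O)$ such that $L := Z(x)$ is transversal to $A$, and fix a Newton-Puiseux series $\eta = \sum c_\alpha x^\alpha \in \cZ_x(A)$. Set $C_0 := L$ and, for each $i \in \{1, \dots, g\}$, let $C_i$ be the branch whose minimal polynomial over $\Cc\{x\}$ has $\eta_i := \sum_{\alpha < \alpha_i} c_\alpha x^\alpha$ as a Newton-Puiseux root (the \emph{$i$-th semi-root} of $A$). By construction, the characteristic exponents of $C_i$ are exactly $\alpha_1, \dots, \alpha_{i-1}$, so $\de_L(C_i) = (L \cdot C_i)_O = \beta_0/e_{i-1} = n_1 \cdots n_{i-1}$; and since $\eta$ and $\eta_i$ coincide strictly below exponent $\alpha_i$, the order of coincidence $k_L(A, C_i)$ equals $\alpha_i$, making the tripod center $\langle L, A, C_i\rangle$ in $\Theta_L(A + C_i)$ the marked point $P_i$ of exponent $\alpha_i$. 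Combining the tripod formula~\eqref{eq:tripod} with the explicit expression~\eqref{eq:intcoefint2} for $\ic_L(P_i)$ gives
\[
    (A \cdot C_i)_O \;=\; \de_L(A) \cdot \de_L(C_i) \cdot \ic_L(P_i) \;=\; \frac{\beta_0}{e_{i-1}} \sum_{j=1}^{i} e_{j-1} \,(\alpha_j - \alpha_{j-1}),
\]
with the convention $\alpha_0 := 0$. An elementary induction on $i$ using~\eqref{eq:recbetabar} identifies this with $\overline{\beta}_i$, so $\overline{\beta}_i \in \Gamma(A)$. The case $i = 0$ is simply $\overline{\beta}_0 = \beta_0 = e_O(A) = (A \cdot L)_O$.

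To show that $(\overline{\beta}_0, \dots, \overline{\beta}_g)$ generates $\Gamma(A)$, I would invoke the classical fact that the defining functions $f_L, f_{C_1}, \dots, f_{C_g}$ of $L$ and the semi-roots form a system of \emph{approximate roots} of $A$ in the sense of Abhyankar--Moh: any $f \in \calo_{S,O}$ not vanishing on $A$ admits a unique finite expansion $f = \sum_{\mathbf{k}} u_{\mathbf{k}}(x) \, f_{C_1}^{k_1} \cdots f_{C_g}^{k_g}$ with $0 \leq k_i < n_i$ for $i \geq 1$ and $u_{\mathbf{k}}(x) \in \Cc\{x\}$. The non-cancellation property $\nu_A\bigl(u_{\mathbf{k}}(x) f_{C_1}^{k_1} \cdots f_{C_g}^{k_g}\bigr) = \nu_x(u_{\mathbf{k}}) \overline{\beta}_0 + \sum_{i \geq 1} k_i \overline{\beta}_i$, combined with the observation that distinct admissible vectors $\mathbf{k}$ yield distinct residues of $\sum k_i \overline{\beta}_i$ modulo $\overline{\beta}_0$, then forces $\nu_A(f)$ to coincide with one of the summand valuations, which is a $\Zz_{\geq 0}$-linear combination of $\overline{\beta}_0, \dots, \overline{\beta}_g$. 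This non-cancellation rests on the arithmetic identity $n_i \overline{\beta}_i \in \langle \overline{\beta}_0, \dots, \overline{\beta}_{i-1}\rangle$, which is an immediate consequence of~\eqref{eq:recbetabar}.

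Finally, minimality reduces to showing $\overline{\beta}_i \notin \langle \overline{\beta}_0, \dots, \overline{\beta}_{i-1}\rangle$ for each $i \geq 1$: the subsemigroup on the right is contained in $e_{i-1} \Zz_{\geq 0}$, whereas $\gcd(\overline{\beta}_i, e_{i-1}) = e_i < e_{i-1}$ by~\eqref{eq:recbetabar}. The main obstacle in the plan is the generation step, whose technical core is the approximate-root expansion; the realization and minimality steps reduce cleanly, via the tripod formula and the arithmetic of the sequences $(\alpha_i)$, $(\beta_i)$, $(e_i)$, to finite computations.
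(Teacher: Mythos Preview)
Your sketch is correct and follows the classical route via semi-roots and approximate-root expansions. Note, however, that the paper does not actually prove this proposition: it simply records the statement and attributes it to Zariski \cite[Theorem 3.9]{Z 86}. So there is no argument in the paper to compare against; what you have written is essentially the standard proof that the citation points to, with the pleasant twist that you use the tripod formula of Theorem~\ref{thm:intfromEW} to realise the $\overline{\beta}_i$ as intersection numbers, which ties the computation neatly to the Eggers-Wall machinery developed earlier in the paper.

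Two small points of precision. First, the statement that the distinct-residue property ``rests on the arithmetic identity $n_i \overline{\beta}_i \in \langle \overline{\beta}_0, \dots, \overline{\beta}_{i-1}\rangle$'' conflates two things: the distinctness of residues follows directly from the chain of gcd relations $\gcd(\overline{\beta}_i, e_{i-1}) = e_i$ (exactly the relation you use for minimality), whereas the inclusion $n_i \overline{\beta}_i \in \langle \overline{\beta}_0, \dots, \overline{\beta}_{i-1}\rangle$ is what underlies the existence of the bounded expansion; moreover that inclusion is true and classical but is not an \emph{immediate} consequence of the recursion~\eqref{eq:recbetabar} alone. Second, your minimality step shows only $\overline{\beta}_i \notin \langle \overline{\beta}_0, \dots, \overline{\beta}_{i-1}\rangle$; to exclude the later generators you need the strict monotonicity $\overline{\beta}_{i-1} < \overline{\beta}_i$, which does follow at once from~\eqref{eq:recbetabar} since $n_{i-1} \geq 2$ and $\beta_i > \beta_{i-1}$.
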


This proposition indicates the following procedure to compute the minimal generating sequence of $\Gamma(A)$:

\medskip
\begin{center}
\fbox{
\begin{minipage}{0.75\textwidth}
    \begin{corollary}
       \label{cor:mingenseq}
          Let $A \hookrightarrow (S, O)$ be a plane branch.
          Start from the lotus $\Lambda(\hat{\calc}_A)$ of a finite active constellation of crosses such that the branch $L$ corresponding to the initial vertex is transversal to $A$. 
          \begin{itemize}
      \item 
         Construct the Eggers-Wall tree $\Theta_L(A)$ from $\Lambda(\hat{\calc}_A)$, as explained in Corollary \ref{cor:lotustoEW}.
      \item   
         If $P_1, \dots, P_g$ are the points of discontinuity of the index function $\de_L$, define $\alpha_i := \ex_L (P_i)$, for $i = 1, \dots, g$.
      \item 
         Transform the sequence $(\alpha_1, \dots, \alpha_g)$ into the sequence $(\overline{\beta}_0, \dots, \overline{\beta}_g)$ using formulae \eqref{eq:beta0}--\eqref{eq:recbetabar}.   
 \end{itemize}
    \end{corollary}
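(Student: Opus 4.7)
The proof is essentially a concatenation of results already established in the paper, together with one observation about the role of the transversality hypothesis on $L$. The plan is the following.

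First, I would invoke Corollary \ref{cor:lotustoEW} to justify that the procedure of the first bullet does produce the Eggers-Wall tree $\Theta_L(\hat{A})$, decorated with the exponent function $\ex_L$ and the index function $\de_L$. Since $A$ is a single branch, $\Theta_L(A)$ is the segment $[LA]$ inside $\Theta_L(\hat{A})$, and the discontinuities of $\de_L$ on this segment coincide with the marked points of $\Theta_L(A)$ distinct from $L$ and $A$, namely the points whose exponents are the characteristic exponents of $A$ relative to $L$, as recorded in Definition \ref{def:EW} and Remark \ref{rem-conjugates}. Hence the second bullet outputs precisely the set ${\mathrm{Ch}}_L(A)$.

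Second, I would use the transversality hypothesis on $L$: by Remark \ref{rem:genericEW}, when $L$ is transversal to $A$, the tree $\Theta_L(A)$ is the \emph{generic} Eggers-Wall tree, so ${\mathrm{Ch}}_L(A)$ coincides with the set of generic characteristic exponents $\alpha_1 < \cdots < \alpha_g$ used in Proposition \ref{prop:mingenfrombeta}. Note that transversality also guarantees $1 < \alpha_1$, which is the convention under which Zariski's formulas \eqref{eq:beta0}--\eqref{eq:recbetabar} compute the invariants $\overline{\beta}_i$.

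Finally, the third bullet applies verbatim the transformation \eqref{eq:beta0}--\eqref{eq:recbetabar}, so by Proposition \ref{prop:mingenfrombeta} the resulting sequence $(\overline{\beta}_0, \dots, \overline{\beta}_g)$ is the minimal generating sequence of $\Gamma(A)$. The only step that is not already a direct citation is the identification of the discontinuities of $\de_L$ with the characteristic exponents, but this is built into the very definition of the Eggers-Wall tree (Definition \ref{def:EW}), so no genuine obstacle arises. If anything, the subtlest point to state carefully is the role of transversality: without it, the exponents $\alpha_i$ would be those relative to $L$, not the generic ones, and Zariski's formulas would compute a different sequence of integers (compare \cite[Section 4]{GBGPPP 18}). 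Making this explicit is the one place where the proof requires more than a reference.
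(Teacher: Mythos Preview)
Your proposal is correct and matches the paper's approach: the corollary is presented there as an immediate procedural consequence of Proposition~\ref{prop:mingenfrombeta}, and you have correctly unpacked the chain of citations (Corollary~\ref{cor:lotustoEW}, Definition~\ref{def:EW}, Remark~\ref{rem:genericEW}) needed to connect the lotus output to the generic characteristic exponents that feed into Zariski's formulae. Your emphasis on the transversality hypothesis is apt and makes explicit what the paper leaves implicit.
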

\end{minipage}
}
\end{center}
\medskip

\medskip

On the other hand, by combining all the formulae \eqref{eq:recbetabar}, we get for all $i \in \{1, \dots,  g\}$ (see also \cite[Chapitre II.3, Th\'eor\`eme 3.9 (a)]{Z 86}): 
\begin{equation}
  \label{eq:gen-exp}
     \overline{\beta}_i=(n_1-1)n_2\cdots n_{i-1}\beta_1+(n_2-1)n_3\cdots n_{i-1}\beta_2+\cdots+(n_{i-1}-1)\beta_{i-1}+\beta_i.
\end{equation}

If $P_0=L, P_1,\ldots, P_g, P_{g+1}=A$ are the marked points of $\Theta_L(A)$, with $P_i \preceq_L P_{i+1}$ for $0\leq i \leq g$, then:
\begin{equation}
\label{eq:betai}
\beta_0={\de}_L(A),\;\;\;\beta_i={\de}_L(A)\ex_L(P_i)
,\;\; \hbox{\rm for all $i \in \{1, \dots, g\}$}
\end{equation}
\noindent and the relation \eqref{eq:gen-exp} becomes
\begin{equation}
   \label{eq:gen-Egg}
     \overline{\beta}_{i}={\de}_L(A)\left(\ex_L(P_i)+\sum_{j=1}^{i-1} \left(\frac{{\de}_L(P_{i})}{{\de}_L(P_{j})}-\frac{{\de}_L(P_{i})}{{\de}_L(P_{j+1})}\right)\ex_L(P_j)\right),\;\; \hbox{\rm for all $i \in \{1, \dots, g\}$}.
\end{equation}

This formula allows to compute the minimal generating sequence of $\Gamma(A)$ directly from $(\Theta_L(A), \de_L, \ex_L)$.

 \begin{example}  
    \label{ex:mingencusp1}
     Consider the Eggers-Wall tree obtained in Figure \ref{fig:logdistoEW1}. Then $g =1$ and $\alpha_1 = \frac{3}{2}$. As $\alpha_1 > 1$, it is a generic Eggers-Wall tree.  Therefore, $\overline{\beta}_0 = \beta_0 = 2$ and:
\[\overline{\beta}_1 = \beta_1 = \beta_0 \alpha_1 = 3.\]
  Using  \eqref{eq:betai}, we get $\overline{\beta}_0=\beta_0={\de}_L(A)=2$ and $\overline{\beta}_{1}=\beta_1={\de}_L(A)\ex_L(P_1)=2\cdot \frac{3}{2}=3$.
The minimal generating sequence of the semigroup $\Gamma(A)$ is therefore $(2,3)$. 
 \end{example}

\begin{example}  
    \label{ex:mingenirr1}
     Consider the Eggers-Wall tree obtained in Figure \ref{fig:logdistoEW2}. Then $g =2$, $\alpha_1 = \frac{3}{2}$ and $\alpha_2 = \frac{13}{6}$. Therefore: 
       \[\overline{\beta}_0 = e_0 = \beta_0 = 6,
       \quad
        \] 
        \[
       \overline{\beta}_1 = \beta_1 = \beta_0 \alpha_1 = 9,
       \quad
        \] 
        \[
    \beta_2 = \beta_0 \alpha_2 = 13, \]
       \[e_1 = \gcd(\beta_0, \beta_1) = 3,  \quad e_2 = \gcd(\beta_0, \beta_1, \beta_2) = 1, \]
       \[ n_1 = \frac{e_0}{e_1} = 2, \quad
        n_1 = \frac{e_1}{e_2} = 3, \]
       \[\overline{\beta}_2 = n_1 \beta_1 + \beta_2 - \beta_1 = 2 \cdot 9 + 13 - 9 = 22. \]

 Alternatively, using  \eqref{eq:betai} and \eqref{eq:gen-Egg} we get: 
 \begin{itemize}
     \item $\overline{\beta}_0={\de}_L(A_1)=6$,
     \item $\overline{\beta}_{1}={\de}_L(A_1)\ex_L(P_1)=6\cdot \frac{3}{2}=9$ and 
     \item $\displaystyle{\overline{\beta}_{2}={\de}_L(A_1)\left(\ex_L(P_2)+\left(\frac{{\de}_L(P_{2})}{{\de}_L(P_{1})}-\frac{{\de}_L(P_{2})}{{\de}_L(P_{2})}\right){\ex}_L(P_1)\right)= 6\left(\frac{13}{6}+(2-1)\cdot \frac{3}{2} \right)=22}$.
     \end{itemize}    
     The minimal generating sequence of the semigroup $\Gamma(A_1)$ is, therefore, $(6,9, 22)$. 
 \end{example}

Let us explain now a second method of computation of a generating sequence 
of the semigroup $\Gamma(A)$ from an associated lotus, without passing through the corresponding Eggers-Wall tree. 

Assume that $A \hookrightarrow (S, O)$ is a plane branch. Fix a smooth branch $L$ on $(S,O)$, which is not necessarily transversal to $A$. Complete it into a cross $X_O := L + L_1$, then consider a finite active constellation of crosses $\hat{\calc}_{A}$ adapted to $A$ in the sense of Definition \ref{def:adaptedactconst}. Denote:
   \[\hat{A} = A + L + L_1 + \cdots + L_g.\]

The following  proposition is a consequence of 
\cite[Theorem 8.6]{S 91} (see also \cite[Corollaries 5.6 and 5.9]{PP 03}): 

\begin{proposition} 
    \label{prop:gensg}
    The sequence 
      \[((A\cdot L)_O, (A\cdot L_1)_O, \dots, (A\cdot L_g)_O)\] 
    is a  generating sequence of the semigroup $\Gamma(A)$. 
\end{proposition}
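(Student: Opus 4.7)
The plan is to reduce the claim to Zariski's theorem (Proposition~\ref{prop:mingenfrombeta}) by matching the intersection numbers $(A\cdot L_i)_O$ with the classical generators $\overline{\beta}_i$ of $\Gamma(A)$, possibly with some redundancies.

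First, I would observe that once a finite sequence in $\Gamma(A)$ generates it, any supersequence of elements of $\Gamma(A)$ still generates it. Since enlarging the active constellation of crosses adapted to $A$ only adds curvettes to $\hat{A}$ (never removing any), it suffices to prove the statement for a single minimal choice of $\hat{\calc}_A$. I would pick $\hat{\calc}_A^{\min}$ minimal among active constellations adapted to $A$ with initial cross $X_O = L + L_1$. By Proposition~\ref{prop:fromlotustoEW}(c), the curvettes of the resulting completion $\hat{A}$ different from $L, L_1, A$ are in bijection with the ramification points of the Eggers-Wall tree $\Theta_L(A)$ that are strictly interior to it: that is, the points $P_1 \prec_L \cdots \prec_L P_g$ where the index function $\de_L$ is discontinuous, which in the case when $L$ is transversal to $A$ correspond to the generic characteristic exponents $\alpha_1 < \dots < \alpha_g$ of $A$.

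Next, I would compute $(A\cdot L_i)_O$ using the tripod formula. Since $L_i$ is a curvetta at the exceptional component $E(P_i)$ produced during the resolution, Proposition~\ref{prop:ew-emb} shows that the center of the tripod $\langle L, A, L_i\rangle$ in $\Theta_L(\hat{A})$ is precisely $P_i$, while Proposition~\ref{prop:fromlotustoEW}(c) gives $\de_L(L_i) = \ord_{E(P_i)}(L)$. Theorem~\ref{thm:intfromEW} then yields
\[
(A\cdot L_i)_O \; = \; \de_L(A)\,\de_L(L_i)\,\ic_L(P_i).
\]
Substituting the integral expression~\eqref{eq:intcoefint2} for $\ic_L(P_i)$ produces a telescoping sum whose terms, after reindexing, should match exactly the right-hand side of the classical formula~\eqref{eq:gen-Egg}. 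A term-by-term comparison is then expected to give $(A\cdot L_i)_O = \overline{\beta}_i$ for $i = 1, \dots, g$, while $(A\cdot L)_O = \de_L(A) = \beta_0 = \overline{\beta}_0$ in the transversal case. Proposition~\ref{prop:mingenfrombeta} closes this case.

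The main obstacle is the case where $L$ is tangent to $A$: then the characteristic exponents of $A$ relative to $L$ no longer coincide with the generic ones, and $(A\cdot L)_O$ may strictly exceed $\beta_0 = e_O(A)$, so the identification with the $\overline{\beta}_i$ breaks down. To deal with this, I would invoke the comparison of $\Theta_L(A)$ with the generic Eggers-Wall tree from \cite[Section~4]{GBGPPP 18} (recalled in Remark~\ref{rem:genericEW}): the passage from one tree to the other translates into an explicit change of denominators for the index function on the initial trunk, and one must verify that the resulting adjusted values $(A\cdot L)_O, (A\cdot L_1)_O, \dots, (A\cdot L_g)_O$ generate the same subsemigroup of $\Zz_{\geq 0}$ as $\overline{\beta}_0, \dots, \overline{\beta}_g$. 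The essential input for controlling this transition is provided by Spivakovsky's analysis in \cite{S 91} and its toric reformulation in \cite[Corollaries~5.6 and~5.9]{PP 03}, both of which are cited in the statement. Combining this with Proposition~\ref{prop:mingenfrombeta} completes the proof.
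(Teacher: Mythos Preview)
The paper does not actually prove this proposition: it simply records it as a consequence of \cite[Theorem~8.6]{S 91} and \cite[Corollaries~5.6, 5.9]{PP 03}. So there is no ``paper's own proof'' to compare against; your proposal is an attempt to supply an argument where the authors chose a citation.

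Your computation in the transversal, minimal-bases case is sound. Applying Theorem~\ref{thm:intfromEW} at the centre $\langle L, A, L_i\rangle = P_i$ with $\de_L(L_i) = \de_{i-1}$ and expanding $\ic_L(P_i)$ via \eqref{eq:intcoefint2} does telescope to formula~\eqref{eq:gen-Egg}, hence to $\overline\beta_i$; Proposition~\ref{prop:mingenfrombeta} then finishes this case. This is a genuine argument going beyond the paper.

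However, your reduction to that case has a gap. The claim that ``enlarging the active constellation of crosses adapted to $A$ only adds curvettes to $\hat A$, never removing any'' is not justified, and is not how the family of adapted constellations of crosses behaves. For a fixed underlying constellation $\calc_A$, different admissible choices of crosses produce \emph{different} curvettes $L_i$, not nested ones; the example in Figure~\ref{fig:nonminimal bases} already illustrates a lotus whose curvettes are not a superset of those of the minimal-bases lotus for the same branch. So proving the proposition for one distinguished $\hat\calc_A^{\min}$ does not automatically cover an arbitrary $\hat\calc_A$. What you actually need is that for \emph{every} curvetta $L_i$ arising in any adapted constellation, $(A\cdot L_i)_O$ is a $\Zz_{\ge0}$-combination of $\overline\beta_0,\dots,\overline\beta_g$ \emph{and} conversely each $\overline\beta_j$ is a $\Zz_{\ge0}$-combination of the $(A\cdot L_i)_O$'s for that particular constellation --- this is precisely the content of the semiroot results in \cite{PP 03} that both you and the paper end up invoking. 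A similar remark applies to the tangent case: rather than reducing it to the transversal one, you defer to the same references the paper cites, so that part of the proposal does not add anything.
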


Let $\boxed{L_0} := L$. For each $j \in \{0, \dots, g\}$, denote by $\boxed{E_j}$ the component of the exceptional divisor of the model of $\hat{\calc}_{A}$ on which $L_j$ is a curvetta, in the sense of Definition \ref{def:curvetta}. That is, the vertex of the lotus $\Lambda(\hat{\calc}_A)$ denoted by $E_j$ is the unique lateral vertex in the sense of Definition \ref{def:vocablotus} such that $[L_j E_j]$ is a lateral edge. Using Proposition \ref{prop:intbr}, we get the following immediate consequence of Proposition \ref{prop:gensg}:

\begin{corollary}
    \label{cor:altcompmingen}
    With the notations above, 
    the sequence 
      \begin{equation} \label{eq:genseq} (\ord_{E_0}(A), \dots, \ord_{E_g}(A))
      \end{equation}
    is a generating sequence of the semigroup $\Gamma(A)$. 
\end{corollary}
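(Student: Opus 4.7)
The plan is to reduce the statement directly to Proposition \ref{prop:gensg} by identifying each intersection number $(A\cdot L_j)_O$ with the corresponding order of vanishing $\ord_{E_j}(A)$. The key tool is Proposition \ref{prop:intbr} (a consequence of the projection formula), which computes $(C\cdot D)_O$ as $\ord_{E_C}(D)$ on any embedded resolution of $C$ that separates the strict transforms of $C$ and $D$, where $E_C$ is the unique component of the exceptional divisor meeting the strict transform of $C$.

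First I would observe that the model $\pi_{\hat{\calc}_A}\colon S_{\hat{\calc}_A}\to S$ of the active constellation of crosses $\hat{\calc}_A$ is an embedded resolution of the completion $\hat{A} = A + L + L_1 + \cdots + L_g$, and in particular it is an embedded resolution of each branch $L_j$ for $j\in\{0,\dots,g\}$ (with the convention $L_0 := L$). Moreover, since the total transform of $\hat{A}$ is a normal crossings divisor on $S_{\hat{\calc}_A}$ whose components are pairwise distinct, the strict transforms of $L_j$ and of $A$ are disjoint. Hence Proposition \ref{prop:intbr}, applied with the branch $L_j$ in place of $A$ and the curve singularity $A$ in place of $D$, yields
\[
(A\cdot L_j)_O = (L_j \cdot A)_O = \ord_{E_{L_j}}(A),
\]
where $E_{L_j}$ denotes the unique component of the exceptional divisor of $\pi_{\hat{\calc}_A}$ meeting the strict transform of $L_j$.

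Next I would identify $E_{L_j}$ with the vertex $E_j$ of the lotus defined in the corollary. By Proposition \ref{prop:interprlotus}, two vertices of $\Lambda(\hat{\calc}_A)$ are joined by an edge precisely when the strict transforms of the corresponding components of $X_{\hat{\calc}_A}$ meet on some intermediate surface of the blowup process. Since $L_j$ is a curvetta at the last infinitely near point involving it, the unique exceptional component whose strict transform meets the strict transform of $L_j$ is exactly the lateral vertex $E_j$ of $\Lambda(\hat{\calc}_A)$ such that $[L_j E_j]$ is a lateral edge, which is the defining property of $E_j$ in the statement. Therefore $E_{L_j}=E_j$ and $(A\cdot L_j)_O = \ord_{E_j}(A)$.

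Finally, combining this equality with Proposition \ref{prop:gensg}, which asserts that the sequence $\bigl((A\cdot L_j)_O\bigr)_{j=0,\dots,g}$ generates $\Gamma(A)$, immediately gives that \eqref{eq:genseq} is a generating sequence of $\Gamma(A)$. The only step requiring attention is the matching between $E_{L_j}$ and the combinatorial vertex $E_j$ of the lotus; once the recursive description of $\Lambda(\hat{\calc}_A)$ from Definition \ref{def:lotusfacc} and the geometric interpretation from Proposition \ref{prop:interprlotus} are invoked, this identification is immediate, so no serious obstacle remains.
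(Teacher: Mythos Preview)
Your proof is correct and follows exactly the paper's approach: the paper simply states that the corollary is an ``immediate consequence of Proposition~\ref{prop:gensg}'' obtained ``using Proposition~\ref{prop:intbr}'', which is precisely your reduction $(A\cdot L_j)_O=\ord_{E_j}(A)$. Your detour through Proposition~\ref{prop:interprlotus} to match $E_{L_j}$ with $E_j$ is unnecessary, since the paper defines $E_j$ directly as the exceptional component on which $L_j$ is a curvetta, making the identification automatic.
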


Therefore, a generating sequence of $\Gamma(A)$ is immediate to read once one has computed the orders of vanishing of $A$ along all the components of the exceptional divisor of $\hat{\calc}_A$, as explained in Section \ref{sec:ovint}.

\begin{remark}
   If the  lotus of the finite active constellation of crosses $\hat{\calc}_{A}$  has the minimal 
   number of bases then the sequence \eqref{eq:genseq} is the minimal one. 
   In this case the smooth branches $L, L_1, \ldots, L_g$ correspond  to the strict transforms of the branches defined by a sequence of {\em approximate semiroots} in the sense of Abhyankar \cite{A 89} (see also \cite{PP 03}). Those semiroots are generalizations both of Mac Lane's {\em key polynomials} introduced in \cite{MacL 36} (see also \cite{FJ 04}) and of Abhyankar and Moh's {\em approximate roots} of \cite{AM 73}. 
\end{remark}

\begin{example}  
    \label{ex:mingencusp2}
  Consider Figure \ref{fig:ordvanish1}. One sees on it that $\ord_{E_0}(A)=2$ and $\ord_{E_1}(A)=3$. We deduce that the corresponding minimal generating sequence is $(2,3)$, as already found in Example \ref{ex:mingencusp1}. 
 \end{example}

\begin{example}  
    \label{ex:mingenirr2}
  Consider Figure \ref{fig:ordvanish2}. One sees on it that $\ord_{E_0}(A)=6$, $\ord_{E_1}(A)=9$ and $\ord_{E_2}(A)=22$. The minimal generating sequence is therefore $(6,9, 22)$, as already found in Example \ref{ex:mingenirr1}. 
\end{example}

\begin{example}  
    Consider the lotus of a finite constellation of crosses $\hat{\calc}_{A}$ of the form indicated in Figure \ref{fig:nonminimal bases}. Denote by $E_i$, $i = 1, 2,3$ the exceptional divisors labeled in the order of their creation. It follows that $\ord_{E_1} (A)  = 2$, hence $(A\cdot L)_O = (A \cdot L_1)_O = 2$ and $\ord_{E_3} (A)  = 3$, hence $(A \cdot L_3)_O = 3$. This implies that the sequence $((A\cdot L), (A\cdot L_1)_O, (A\cdot L_2))_O = (2,2, 3) $ is not the minimal generating sequence of $\Gamma(A)$, as $2$ appears twice.
\end{example}

 \begin{figure}[h!] 
    \begin{center}
\begin{tikzpicture}[scale=1]

        \draw [->, color=black, thick](0,1.5) -- (-1.5,1.5);
    \draw [-, color=black, thick](-1.5,1.5) -- (-3,1.5);
     \draw [-, color=black, thick](0,1.5) -- (-1.5,4);

      \draw [-, color=black, thick](-1.5,4) -- (-2.5,2.34);

      \draw [->, color=black, thick](-1.5,4) -- (-1.5,5);
     \draw [-, color=black, thick](-3/4,11/4) -- (-3,1.5);

        \draw [->, color=black, thick](-3/4,11/4) -- (-13/8, 509/200);
         \draw [-, color=black, thick](-2.5,2.34) -- (-13/8, 509/200);

       \draw [-, color=black, thick](-3/4,11/4) -- (-2,3.17);
        \draw [-, color=black, thick](-2,3.17) -- (-1.5,4);

     \node[draw,circle, inner sep=1pt,color=black, fill=black] at (0,1.5){};
        \node[draw,circle, inner sep=1pt,color=black, fill=black] at (-3,1.5){};
         \node[draw,circle, inner sep=1pt,color=black, fill=black] at (-1.5,4){} ;
         \node[draw,circle, inner sep=1pt,color=black, fill=black] at (-3/4,11/4){};
         \node[draw,circle, inner sep=1pt,color=black, fill=black] at (-2.5,2.34){};
         \node[draw,circle, inner sep=1pt,color=black, fill=black] at (-2,3.17){};
 
 \node [below, color=black] at (-3,1.5) {$L_{1}$};
\node [below, color=black] at (0,1.5) {$L$};
\node [right, color=black] at (-1.5,5) {$A$};
\node [left, color=black] at (-2.5,2.34) {$L_2$};
\node [right, color=black] at (-3/4,11/4) {\blue{$E_1$}};
\node [left, color=black] at (-2,3.17) {\blue{$E_2$}};
\node [right, color=black] at (-1.5,4) {\blue{$E_3$}};

 \end{tikzpicture}
\end{center}
 \caption{A lotus of a finite constellation of crosses $\hat{\calc}_{A}$ which does not have the minimal number of bases among lotuses of constellations of crosses adapted to $A$ in the sense of Definition \ref{def:adaptedactconst}}
\label{fig:nonminimal bases}
   \end{figure}
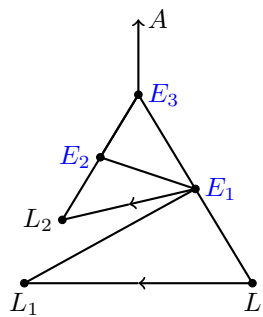

\begin{remark}
   \label{rem:variousbranches}
    One may define a semigroup $\Gamma(A)$ also for  reduced curve singularities $A$ with an arbitrary number of branches $(A_i)_{1 \leq i \leq r}$. It is the subsemigroup of $(\Zz_{\geq 0}^r, +)$ formed by the vectors 
       \[ (\nu_{A_1}(f) , \dots, \nu_{A_r}(f)), \]  
    for $f  \in \calo_{A}$ which are not divisors of zero. Here $\nu_{A_i}$ denotes the valuation of $\calo_{A}$ which gives the order of vanishing on the normalization of $A_i$.  In \cite{CDG 99}, Campillo, Delgado and Gusein-Zade described a minimal system of generators of the semigroup of a plane curve singularity in terms of an embedded resolution of $A$. The coordinates of those generators are vanishing orders along particular components of the exceptional divisor, therefore they may be computed from the lotus, as explained in Section \ref{sec:ovint}. Note that in \cite{HC 20} Hernandes and de Carvalho studied a natural semiring which contains $\Gamma(A)$.
\end{remark}

\medskip
\section{The  delta invariant and the Milnor number}  \label{sec:deltaMiln}

In this section, we explain how to compute the {\em delta invariant} (see Definition \ref{def:deltainv}) and the {\em Milnor number} (see Definition \ref{def:Milnumber}) of a plane curve singularity using an associated lotus (see Corollary \ref{cor:deltamincomp}) and we recall a formula expressing the Milnor number of a plane branch in terms of its characteristic exponents (see Proposition \ref{prop:mufromcharexp}).

\medskip
The following definition applies to any {\em abstract} reduced curve singularity (that is, a germ of a reduced complex analytic space of pure dimension one), not necessarily embeddable in a smooth germ of surface: 

\begin{definition}
   \label{def:deltainv}
    Let $A$ be a reduced curve singularity. Consider a normalization morphism $\tilde{A} \to A$, where $\tilde{A}$ is a finite disjoint union of smooth branches. It corresponds to an embedding $\calo_{A} \hookrightarrow \calo_{\tilde{A}}$ of the local ring of $A$ into the multi-local ring of $\tilde{A}$.  The {\bf delta invariant} $\boxed{\delta(A)}$ of $A$ is the dimension of the quotient complex vector space $\calo_{\tilde{A}}/\calo_{A}$. 
\end{definition}

When the reduced curve singularity is a plane one, there is a formula expressing its delta invariant in terms of its multiplicities at its infinitely near points in the sense of Definition \ref{def:multinpoint} (see \cite[Theorem 3.11.12]{C 00} or \cite[Exercise 5.4.16]{DP 00}): 

\begin{theorem}  
    \label{thm:deltaformula}
    Let $(S,O)$ be a smooth germ of surface and let $A \hookrightarrow (S,O)$ be a reduced plane curve singularity. Then:
       \[\delta(A) = \sum_P \frac{e_P(A)\cdot (e_P(A)-1)}{2},\]
    the sum being taken over the infinitely near points $P$ of $O$ belonging to the strict transforms of $A$. 
\end{theorem}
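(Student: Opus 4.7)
The plan is to proceed by induction on the number of blowups needed to achieve an embedded resolution of $A$, whose existence is ensured by Theorem \ref{thm:blowupprocess}. The inductive engine is the local recursion
\[
\delta_O(A) \;=\; \binom{e_O(A)}{2} \;+\; \sum_{Q \in E_O \cap A_O} \delta_Q(A_O),
\]
where $A_O$ denotes the strict transform of $A$ under the blowup $\pi_O : S_O \to S$ at $O$ and $Q$ ranges over the points of the exceptional divisor lying on strict transforms of branches of $A$. Granting this, iterating the recursion yields one term $\binom{e_P(A)}{2}$ for each infinitely near point $P$ that gets blown up during the resolution. The iteration terminates by Theorem \ref{thm:blowupprocess}: once the strict transform of $A$ is smooth and transversal to the exceptional divisor at every point, the local delta invariants vanish, and any remaining infinitely near points on the strict transform have multiplicity $1$, so contribute $\binom{1}{2} = 0$ to the sum.

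To prove the local recursion I would exploit the fact that the blowup $\pi_O$ is a partial normalization of $A$, in the sense that the (multi)normalization of $A$ coincides with that of the multigerm $(A_O, A_O \cap E_O)$. The short exact sequence
\[
0 \;\longrightarrow\; \mathcal{O}_A \;\longrightarrow\; (\pi_O)_*\mathcal{O}_{A_O} \;\longrightarrow\; (\pi_O)_*\mathcal{O}_{A_O}/\mathcal{O}_A \;\longrightarrow\; 0
\]
of $\mathcal{O}_{S,O}$-modules, together with the definition of $\delta$ as the $\mathbb{C}$-codimension of the local ring inside its normalization, gives
\[
\delta_O(A) \;=\; \dim_{\mathbb{C}} \bigl((\pi_O)_*\mathcal{O}_{A_O}/\mathcal{O}_A\bigr) \;+\; \sum_{Q} \delta_Q(A_O).
\]
The recursion therefore reduces to the length identity
\[
\dim_{\mathbb{C}} \bigl((\pi_O)_*\mathcal{O}_{A_O}/\mathcal{O}_A\bigr) \;=\; \binom{e_O(A)}{2}.
\]

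This length computation is the genuine technical heart of the argument and is where I expect the main obstacle. I would carry it out in local coordinates $(x,y)$ centered at $O$ chosen so that a defining function $f$ of $A$ has initial form of degree $m = e_O(A)$ and so that $L = Z(x)$ is transversal to $A$. In the standard chart of $S_O$ with $y = xt$, the strict transform $A_O$ is cut out by $g(x,t) := f(x,xt)/x^m$. The $\mathcal{O}_A$-module $(\pi_O)_*\mathcal{O}_{A_O}$ is then generated by the classes of $1, t, \ldots, t^{m-1}$ (equivalently $y^i/x^i$ for $0 \leq i \leq m-1$), and a direct verification exhibits a $\mathbb{C}$-basis of the quotient indexed by pairs $(i,j)$ with $i \geq 0$, $j \geq 1$ and $i+j \leq m-1$, whose cardinality is $1 + 2 + \cdots + (m-1) = \binom{m}{2}$. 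An alternative route would be to identify the length with $\dim_{\mathbb{C}} \mathcal{O}_{S,O} / (f, \maxid_{S,O}^{m})$ by relating both to the conductor of the inclusion $\mathcal{O}_A \hookrightarrow (\pi_O)_*\mathcal{O}_{A_O}$, and to conclude via $\dim_{\mathbb{C}} \mathcal{O}_{S,O}/\maxid_{S,O}^m = \binom{m+1}{2}$. Once this local identity is in hand, the reindexing of infinitely near points across the successive blowups is routine, since multiplicities at infinitely near points are intrinsic to those points by Definition \ref{def:multinpoint}, and the sum over all such points on all strict transforms reassembles to the right-hand side of the theorem.
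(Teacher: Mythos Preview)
The paper does not supply its own proof of this theorem; it simply cites \cite[Theorem 3.11.12]{C 00} and \cite[Exercise 5.4.16]{DP 00}. Your proposed strategy---induction on the length of the blowup tower via the one-step recursion $\delta_O(A) = \binom{e_O(A)}{2} + \sum_Q \delta_Q(A_O)$, itself reduced to the length identity $\dim_{\mathbb C}\bigl((\pi_O)_*\mathcal O_{A_O}/\mathcal O_A\bigr) = \binom{e_O(A)}{2}$---is exactly the classical argument carried out in those references (Casas, \S 3.11, in particular). So there is nothing to compare: your route \emph{is} the standard one the paper defers to.

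One minor caution on the length computation: the natural $\mathbb C$-basis of the quotient that drops out of the relation $x^j t^j = y^j \in \mathcal O_A$ is usually indexed by $x^i t^j$ with $0 \le i < j \le m-1$, rather than by $i+j \le m-1$. Both index sets have cardinality $\binom{m}{2}$, so the count is unaffected, but if you write up the details you will want the former description, since it is the condition $i < j$ that detects when $x^i t^j = y^j/x^{j-i}$ fails to lie in $\mathcal O_A$.
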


\begin{remark}
    In \cite{S 91}, Stevens generalized this formula to arbitrary (not necessarily plane) curve singularities. In particular, in the Corollary on page 98 of his paper, he characterized plane curve singularities through the previous formula: if $A$ is not planar, then $\delta(A)$ is strictly less than  the second member of the equality. 
\end{remark}

As may be deduced from Theorem \ref{thm:deltaformula}, the delta invariant of a reduced plane curve singularity with several branches is determined by the delta invariants of its branches and their mutual intersection numbers (see \cite[Corollary 3.11.13]{C 00}):

\begin{theorem} 
    \label{thm:deltabranches}
    Let $A\hookrightarrow (S,O)$ be a reduced plane curve singularity, whose branches are denoted $(A_l)_{1 \leq l \leq r}$. Then:
      \[ \delta(A) = \sum_{l = 1}^r \delta(A_l) +  \sum_{l < m} (A_l \cdot A_m)_O.     \]
\end{theorem}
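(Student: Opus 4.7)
The plan is to derive the formula directly from Theorem \ref{thm:deltaformula} together with the formula for intersection numbers in terms of multiplicities at infinitely near points (Theorem \ref{thm:intmult}). The computation hinges on a single algebraic identity applied to the sum of multiplicities of the branches at each infinitely near point.

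First I would record the basic additivity of multiplicity at an infinitely near point. If $f_{A_l}$ is a defining function of the branch $A_l$, then $f_A := \prod_l f_{A_l}$ defines $A$, and since the order of vanishing is additive under products, Definition \ref{def:mult} yields $e_O(A) = \sum_l e_O(A_l)$. Applying this at each infinitely near point $P$ to the strict transforms (whose union is the strict transform of $A$), we obtain
\[
   e_P(A) \ = \ \sum_{l=1}^r e_P(A_l),
\]
with the convention that $e_P(A_l) = 0$ if $P$ does not lie on the strict transform of $A_l$.

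Next I would plug this into Theorem \ref{thm:deltaformula} and expand the binomial. The key identity is
\[
   \binom{\sum_l e_P(A_l)}{2}
    \ = \ \sum_l \binom{e_P(A_l)}{2} \ + \ \sum_{l<m} e_P(A_l)\, e_P(A_m),
\]
which follows from $\big(\sum_l a_l\big)\big(\sum_l a_l - 1\big) = \sum_l a_l(a_l-1) + 2\sum_{l<m} a_l a_m$. Summing this identity over all infinitely near points $P$ on the strict transforms of $A$ and using Theorem \ref{thm:deltaformula} on the left-hand side and on each individual-branch term on the right gives
\[
   \delta(A) \ = \ \sum_{l=1}^r \delta(A_l) \ + \ \sum_{l<m} \sum_P e_P(A_l)\, e_P(A_m).
\]
Finally, Theorem \ref{thm:intmult} identifies the inner sum as $(A_l \cdot A_m)_O$ (indices $P$ not shared by the two strict transforms contribute $0$), yielding the desired equality.

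The only subtle step is the first one, the additivity $e_P(A) = \sum_l e_P(A_l)$: one must observe that taking strict transforms commutes with decomposing into branches, so that the defining function of the strict transform of $A$ at $P$ factors as a product of defining functions of the strict transforms of the $A_l$ at $P$ (the exceptional factors having already been removed). Everything else is a routine bookkeeping computation combining Theorems \ref{thm:intmult} and \ref{thm:deltaformula}.
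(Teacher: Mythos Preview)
Your proof is correct and is precisely the argument the paper has in mind: the text states only that the result ``may be deduced from Theorem \ref{thm:deltaformula}'' and cites \cite[Corollary 3.11.13]{C 00}, and you have supplied exactly this deduction by combining the additivity $e_P(A)=\sum_l e_P(A_l)$, the binomial identity, and Theorem \ref{thm:intmult}.
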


\begin{remark}
    In \cite[Proposition 4]{H 57}, Hironaka generalized Theorem \ref{thm:deltabranches} to reduced curve singularities {\em of arbitrary embedding dimension}. Namely, if $A = A_1 \cup \dots \cup A_r$ is a decomposition of such a singularity into not necessarily irreducible singularities, pairwise without common branches, he proved that:
      \[\delta(A) = \sum_{l = 1}^r \delta(A_l) +  \sum_{l=1}^{r-1} A_l \cdot (A_{l+1} \cup \dots \cup A_r).  \]
    In this formula appears the following notion of {\em intersection number} of two germs $B$ and $C$ of reduced curves without common branches, contained in an ambient smooth germ $(M,O)$, in terms of their defining ideals $I_B$ and $I_C$ inside the local ring $\calo_{M,O}$ of $(M,O)$ (see \cite[Definition 1]{H 57}):
      \[ B \cdot C := \dim (\calo_{M,O}/(I_B + I_C)). \]
\end{remark}

The {\em Milnor number} of a plane curve singularity is a particular case of the general notion defined by Milnor \cite{M 68} for germs of complex algebraic hypersurfaces with isolated singularities. As for the multiplicity (see Definition \ref{def:mult}) one defines it first for elements of the local ring $\calo_{S,O}$: 

\begin{definition} 
\label{def:Milnumber}
    Let $(S,O)$ be a smooth germ of surface and $f \in \calo_{S,O}$ be a reduced germ. Consider a local coordinate system $(x,y)$ of $(S,O)$. Let  $(\partial_x f, \partial_y f)$ be the ideal generated by the partial derivatives $\partial_x f, \partial_y f$ in the local ring $\calo_{S, O}$. The {\bf Milnor number} $\boxed{\mu(f)}$ of $f$ is the dimension of the quotient
          \[ \calo_{S,O}/(\partial_x f, \partial_y f) \]
    seen as a vector space over the base field.   
\end{definition}

It is a direct consequence of the change of variables formulae that the {\bf Jacobian ideal} $\boxed{J(f)} := (\partial_x f, \partial_y f)$ of the ring $\calo_{S,O}$ is independent of the choice of local coordinates $(x,y)$. Therefore $\mu(f)$ depends indeed only on $f$ and not on those local coordinates. But one has a stronger invariance property:

\begin{proposition}  \label{prop:invMiln}
     Let $f \in \calo_{S,O}$ be reduced. The Milnor number $\mu(f)$ depends only on the plane curve singularity defined by $f$. 
\end{proposition}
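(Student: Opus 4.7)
The plan is to reduce the invariance of $\mu(f)$ to the classical \emph{Milnor formula}, which expresses $\mu(f)$ in terms of intrinsic invariants of the plane curve singularity $A := Z(f)$.

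First I would observe that any two reduced defining functions $f, g$ of the same plane curve singularity $A$ differ by a unit, $g = u f$ with $u \in \calo_{S,O}^{*}$ (see Definition \ref{def:plcurvesing}), so it is enough to show that $\mu(uf) = \mu(f)$ for every unit $u$. To achieve this I would invoke the Milnor formula
\[
\mu(f) \;=\; 2\,\delta(A) \;-\; r(A) \;+\; 1,
\]
where $r(A)$ denotes the number of branches of $A$. Both $\delta(A)$ (Definition \ref{def:deltainv}) and $r(A)$ are defined intrinsically from the abstract reduced curve singularity $A$, without reference to any particular defining function; hence the right hand side depends only on $A$, yielding the desired invariance of $\mu(f)$.

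The main obstacle is the Milnor formula itself. In the spirit of the present paper, a natural route proceeds by induction on the length of an embedded resolution of $A$: Theorem \ref{thm:deltaformula} expresses $\delta(A)$ as $\sum_{P} \binom{e_P(A)}{2}$ summed over the infinitely near points of $O$ lying on the strict transforms of $A$, and a parallel induction shows that
\[
\mu(f) \;=\; \sum_{P} e_P(A)\bigl(e_P(A)-1\bigr) \;-\; r(A) \;+\; 1.
\]
Subtracting twice the first identity from the second yields the Milnor formula. The inductive step rests on the behaviour of the Milnor number under a single blowup of a point lying on the strict transform, which reduces the computation to the Milnor numbers of the strict transforms together with a correction term controlled by the proximity equalities (Theorem \ref{thm:proxeq}). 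Alternatively, one may bypass the inductive argument by appealing to the topological description of $\mu(f)$ as the first Betti number of the Milnor fibre, combined with the identification of this fibre with a smoothing of $A$, whose Euler characteristic is encoded by $\delta(A)$ and $r(A)$.
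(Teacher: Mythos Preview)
Your strategy is genuinely different from the paper's. The paper follows P\l oski's short algebraic argument: it first proves Teissier's identity
\[
Z(f)\cdot Z(\partial_y f) \;=\; \mu(f) + Z(f)\cdot Z(x) - 1
\]
by parametrizing the branches of $Z(\partial_y f)$, rewrites $\mu(f)$ as $\dim \calo_{S,O}/(f,\partial_y f) - \dim \calo_{S,O}/(f,x) + 1$, and then checks by hand that the ideals $(f,\partial_y f)$ and $(f,x)$ are unchanged when $f$ is replaced by $uf$. This is elementary and self-contained, with no resolution or topology. Your route through $\mu(f)=2\delta(A)-r(A)+1$ is in principle legitimate, since that identity can be established for a fixed $f$ and then the invariance follows because the right-hand side is intrinsic; but note that in the paper this formula is Theorem~\ref{thm:deltamur}, stated for $\mu(A)$ only \emph{after} the present proposition has been used to define $\mu(A)$, so you cannot cite it --- you must reprove it.

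There is moreover a genuine gap in your inductive sketch. You claim that ``a parallel induction'' to the one for $\delta$ gives $\mu(f)=\sum_P e_P(A)(e_P(A)-1)-r(A)+1$, with the step ``controlled by the proximity equalities''. But the proximity equalities concern multiplicities, not the Jacobian ideal, and the behaviour of $\mu(f)$ under one blowup is more delicate than that of $\delta$: the correct recursion reads $\mu(f)=m(m-1)+\sum_i \mu(f'_i)-k+1$, where $k$ is the number of distinct points of the exceptional divisor hit by the strict transform, and deriving this directly from $\mu(f)=\dim\calo_{S,O}/(\partial_x f,\partial_y f)$ is not at all immediate. Your topological alternative (Milnor's original proof via the Betti number of the Milnor fibre) does work, but it is exactly the heavy machinery that the paper's two-line ideal computation is meant to bypass.
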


\begin{proof}    
    We explain an ingenious algebraic proof given by P\l oski \cite[Property 2.2]{P 95}. His main idea was to use the identity 
       \begin{equation} 
           \label{eq:Teissierlemma}
            Z(f) \cdot Z(\partial_y f) = \mu(f) + Z(f) \cdot Z(x) - 1
      \end{equation}
    which is a special case of Teissier's \cite[Proposition II.1.2]{T 73}. Here and in the rest of this proof, we write simply $A \cdot B$ instead of $(A \cdot B)_O$.

    Before proving equation (\ref{eq:Teissierlemma}), let us see how it implies the assertion of the proposition. Equation (\ref{eq:Teissierlemma}) shows that:
    \begin{equation} 
       \label{eq:conseqTL}       
      \mu(f) = Z(f) \cdot Z(\partial_y f) -  Z(f) \cdot Z(x) +  1 = 
      \dim \frac{\calo_{S,O}}{(f, \partial_y f)}- \dim \frac{\calo_{S,O}}{(f, x)}+ 1.
    \end{equation}

    Consider a second defining function of $Z(f)$. It is of the form $uf$, where $u$ is a unit of the local ring $\calo_{S,O}$, that is, a germ which does not vanish at $O$. We have the following equalities of ideals of $\calo_{S,O}$: 
     \[\begin{array}{c}
         (uf, \partial_y (uf)) = (uf, (\partial_y u) f + u (\partial_y f) ) =  (f, (\partial_y u) f + u (\partial_y f) ) = (f, u (\partial_y f) ) =(f,  (\partial_y f) ), \\
     (uf, x) = ( f, x).
     \end{array}\]
    Combining the resulting equalities of ideals with equation (\ref{eq:conseqTL}), we get the desired equality $\mu(uf) = \mu(f)$. 

    Let us prove now equation (\ref{eq:Teissierlemma}). The main idea is to write the divisor $Z(\partial_y f)$ as a sum $\sum_{i \in I} A_i$ of branches $A_i$ and to parametrize each branch as $t \mapsto (x_i(t), y_i(t))$ in order to get a normalization of it. If $h$ is a series in the variable $t$, denote by $\nu_t(h) \in \Zz$ its order.  Then:

       \begin{eqnarray*}
       Z(f) \cdot Z(\partial_y f) &  = &\sum_{i \in I} Z(f) \cdot A_i = \sum_{i \in I} \nu_t (f(x_i(t), y_i(t)) )=\sum_{i \in I} \left(1 + \nu_t(\frac{d}{dt}f(x_i(t), y_i(t))) \right)  \\
       &  = &\sum_{i \in I} \left(1 + \nu_t((\partial_x f) (x_i(t), y_i(t)) \frac{d}{dt} x_i(t)) \right) = 
       \sum_{i \in I} \nu_t((\partial_x f) (x_i(t), y_i(t)) x_i(t))   \\
       & = &\sum_{i \in I}\nu_t((\partial_x f) (x_i(t), y_i(t)) + \sum_{i \in I}\nu_t(x_i(t)) = \sum_{i \in I} Z(\partial_x f) \cdot A_i + 
       \sum_{i \in I} Z(x) \cdot A_i  \\
       & =  &Z(\partial_x f) \cdot Z(\partial_y f) + 
       Z(x) \cdot Z(\partial_y f) = 
       \mu(f) + \nu_y ((\partial_y f)(0, y))  = \mu(f) + \nu_y (f(0, y)) -1  \\
       & = &\mu(f) + Z(f) \cdot Z(x) - 1. 
       \end{eqnarray*}
\end{proof}

\begin{remark}
    The proof of Proposition \ref{prop:invMiln} cannot be as easy as that of the independence of $\mu(f)$ from the choice of local coordinate system for fixed $f$, because the Jacobian ideal $J(f) := (\partial_x f, \partial_y f)$ may depend on the choice of defining function $f$. Assume for instance that $g := (1 + x) \cdot f$ is a second such defining function. Therefore:
        \[\left\{  \begin{array}{l}
                      \partial_x g = f + (1+x) \cdot \partial_x f \\
                      \partial_y g =  (1+x) \partial_y f
                   \end{array}  \right. .\]
    This shows that one has the equality $J(f) = J(g)$ if and only if $f \in J(f)$. Zariski showed in \cite[Theorem 4]{Z 66} that whenever $f$ is irreducible, this happens if and only if the branch $Z(f)$ is analytically equivalent to a branch of the form $Z(y^n - x^m)$, for coprime positive integers $m$ and $n$ (a result which was generalized to arbitrary dimensions by Saito \cite{S 71}). In particular, this never happens when $Z(f)$ has more than one characteristic exponent. 
\end{remark}

The higher dimensional analog of Proposition 
\ref{prop:invMiln} is also true and may be proved similarly. An alternative proof may be found in \cite[Corollary 6.4.10]{DP 00}.

Proposition \ref{prop:invMiln} allows to speak about the {\bf Milnor number} $\boxed{\mu(A)}$ of a reduced plane curve singularity $A \hookrightarrow (S,O)$. 
 The following theorem relates the delta-invariant and the Milnor number of a plane curve singularity. It was proved topologically by Milnor \cite[Theorem 10.5]{M 68} and extended in \cite[Proposition 1.2.1]{BG 80} by Buchweitz and Greuel to reduced curve singularities of arbitrary embedding dimension: 

\begin{theorem}  
   \label{thm:deltamur}
    Let  $A\hookrightarrow (S,O)$ be a reduced plane curve singularity. Then:
      \[ \mu(A) = 2 \delta(A) - r(A) + 1, \]
    where $\boxed{r(A)}$ denotes the number of branches of $A$. 
\end{theorem}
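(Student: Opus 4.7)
The plan is to combine Teissier's identity \eqref{eq:Teissierlemma} (established inside the proof of Proposition \ref{prop:invMiln}) with the additivity of the delta invariant recorded in Theorem \ref{thm:deltabranches}, in order to reduce the statement to the irreducible case, and then to treat a single branch by means of its semigroup.

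The first step would be to prove the additivity formula
\[ \mu(A_1 + A_2) = \mu(A_1) + \mu(A_2) + 2 (A_1 \cdot A_2)_O - 1, \]
whenever $A_1$ and $A_2$ are reduced plane curve singularities sharing no branch. To derive it, I would apply Teissier's identity to $f = f_1 f_2$ and expand $\partial_y(f_1 f_2) = (\partial_y f_1)\, f_2 + f_1 (\partial_y f_2)$; each local intersection number $(A_i \cdot Z(\partial_y (f_1 f_2)))_O$ then splits, via a parametrization of the branches of $A_i$, into a contribution intrinsic to $A_i$ and a cross-contribution that assembles into $(A_1 \cdot A_2)_O$. Combined with the additivity of $\delta$ supplied by Theorem \ref{thm:deltabranches} and the trivial identity $r(A_1 + A_2) = r(A_1) + r(A_2)$, a direct algebraic check shows that the formula $\mu = 2\delta - r + 1$ for $A_1$ and $A_2$ separately implies the same formula for $A_1 + A_2$. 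Induction on the number of branches reduces the theorem to the case where $A$ is a single branch.

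For an irreducible $A$, the desired identity becomes $\mu(A) = 2\delta(A)$. I would apply Teissier's identity once more, with $x$ chosen transversal to $A$ so that $(A \cdot Z(x))_O = e_O(A)$ by Theorem \ref{thm:intmult}. Taking a parametrization $t \mapsto (x(t), y(t))$ of the normalization of $A$ and using the chain rule $0 = \partial_x f(x(t), y(t))\, x'(t) + \partial_y f(x(t), y(t))\, y'(t)$, one expresses $\nu_t(\partial_y f(x(t), y(t)))$, and hence $(A \cdot Z(\partial_y f))_O$, in terms of invariants of the parametrization of $A$. A semigroup-theoretic computation, combined with the expression $2\delta(A) = \sum_P e_P(A)(e_P(A) - 1)$ from Theorem \ref{thm:deltaformula} and the classical identification of $2\delta(A)$ with the conductor of the (symmetric) numerical semigroup $\Gamma(A)$ of a plane branch, yields $\mu(A) = 2\delta(A)$.

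The main obstacle is the irreducible case, which ultimately encodes the Gorenstein property of plane branches through the symmetry of $\Gamma(A)$; this is where the plane embedding is essential, as for reduced curve singularities of higher embedding dimension only the inequality $\mu \geq 2\delta - r + 1$ persists in general, equality characterizing precisely the Gorenstein case. Once the branch case is secured, the inductive additivity argument of the second paragraph assembles the formula for an arbitrary reduced plane curve singularity $A$ from the irreducible building blocks.
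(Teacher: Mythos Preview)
The paper does not give its own proof of this theorem: it simply records that Milnor proved it topologically in \cite[Theorem 10.5]{M 68} and that Buchweitz and Greuel extended it to reduced curve singularities of arbitrary embedding dimension in \cite[Proposition 1.2.1]{BG 80}. Your proposal is therefore not a reconstruction of the paper's argument but an independent algebraic route, and as such it is essentially sound. The additivity formula $\mu(A_1+A_2)=\mu(A_1)+\mu(A_2)+2(A_1\cdot A_2)_O-1$ does follow from Teissier's identity exactly as you indicate (on a branch of $A_1$ one has $f_1=0$, hence $\partial_y(f_1f_2)|_{A_1}=(\partial_y f_1)\,f_2|_{A_1}$, and symmetrically), and together with Theorem~\ref{thm:deltabranches} it cleanly reduces the statement to the case of a single branch. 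For a branch, your outline is correct but compresses two genuinely nontrivial ingredients: first, that $\mu(A)$ equals the conductor $c$ of $\Gamma(A)$ (which one can extract from Teissier's identity and a parametrization, or equivalently from Proposition~\ref{prop:mufromcharexp} together with the standard conductor formula in terms of the $\overline{\beta}_i$), and second, that $c=2\delta(A)$, which is the symmetry of the semigroup of a plane branch (the Gorenstein property). Compared with Milnor's topological argument, your approach stays entirely within commutative algebra and intersection theory, at the price of invoking the Gorenstein symmetry of plane branches as a black box.

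One correction to your closing remark: it is not true that in higher embedding dimension only the inequality $\mu\geq 2\delta-r+1$ survives, with equality characterising the Gorenstein case. Over $\mathbb{C}$, Buchweitz and Greuel prove the \emph{equality} $\mu=2\delta-r+1$ for every reduced curve singularity, irrespective of the embedding dimension; this is precisely the extension the paper cites. The strict inequality $\mu>2\delta-r+1$ is a positive-characteristic phenomenon (wild vanishing cycles), as the paper itself notes in Section~\ref{sec:charp}, and is unrelated to Gorensteinness. The Gorenstein property enters your argument only in the irreducible plane case, through $c=2\delta$, not as a characterisation of when Milnor's formula holds.
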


\medskip
\begin{center}
\fbox{
\begin{minipage}{0.75\textwidth}
    \begin{corollary}
       \label{cor:deltamincomp}
           Let  $A\hookrightarrow (S,O)$ be a reduced plane curve singularity and let $\Lambda$ be the lotus of an active constellation of crosses $\hat{\calc}_A$ adapted to it. Decorate its edges by the multiplicities of $A$ at the infinitely near points of $\hat{\calc}_A$, as explained in Corollary \ref{cor:multedges}. Then $\delta(A)$ and $\mu(A)$ may be computed from those multiplicities using the formulae of Theorems \ref{thm:deltaformula} and \ref{thm:deltamur}.      
    \end{corollary}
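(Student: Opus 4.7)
The plan is to observe that this statement is essentially an immediate concatenation of three results already at our disposal, so the proof will amount to verifying that no auxiliary datum beyond the multiplicities on the edges of $\Lambda$ is needed in order to apply the formulae of Theorems \ref{thm:deltaformula} and \ref{thm:deltamur}.

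First, I would recall that by Corollary \ref{cor:multedges}, the recursive weighting algorithm on the set $\calp$ of petal bases of $\Lambda$ produces a decoration that coincides, under the bijection of Remark \ref{rem:oneone}, with the multiplicities $e_P(A)$ of $A$ at the infinitely near points $P$ of $\hat{\calc}_A$. Since $\hat{\calc}_A$ is adapted to $A$ in the sense of Definition \ref{def:adaptedactconst}, by the time we reach the leaves of the tree of infinitely near points, the total transform of $A$ is a normal crossings divisor; therefore every point $P$ lying on a strict transform of $A$ is already contained in the underlying constellation $\calc_A$, so every term $e_P(A)(e_P(A)-1)/2$ of the sum appearing in Theorem \ref{thm:deltaformula} corresponds to one of the decorated edges of $\Lambda$.

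Second, I would apply Theorem \ref{thm:deltaformula} directly: summing $e_P(A)(e_P(A)-1)/2$ over the edges in $\calp$ (each carrying its multiplicity weight) yields $\delta(A)$. To then obtain $\mu(A)$ via Theorem \ref{thm:deltamur}, I only need the number of branches $r(A)$; but this is readable from $\Lambda$ as the number of arrowhead base vertices of $\Lambda(\hat{\calc}_A)$ that label branches of $A$ itself (as opposed to the auxiliary curvettas $L, L_1, \ldots$ belonging to $\hat{A}\smallsetminus A$), which is part of the data of $\hat{\calc}_A$ being adapted to $A$.

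There is no real obstacle here: the work has been done in Corollary \ref{cor:multedges} and in Theorems \ref{thm:deltaformula} and \ref{thm:deltamur}. The only point requiring a line of care is that a multiplicity $e_P(A)$ with $e_P(A)\leq 1$ contributes $0$ to the sum, so one may equivalently restrict the sum to those edges of $\Lambda$ whose weight is at least $2$; this is exactly what we illustrate in the worked examples. I would close the proof by pointing to Examples \ref{excompmult} to show that in practice the computation requires only inspecting the weighted lotus and reading off $r(A)$.
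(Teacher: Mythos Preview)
Your proposal is correct and matches the paper's approach: the paper gives no explicit proof of this corollary, treating it as an immediate consequence of Corollary \ref{cor:multedges} together with Theorems \ref{thm:deltaformula} and \ref{thm:deltamur}, and proceeds directly to the worked Examples \ref{ex:branchdeltamiln} and \ref{ex:3brdeltamiln}. Your added remarks (that every infinitely near point on a strict transform of $A$ already lies in $\calc_A$, and that $r(A)$ is read off from the arrowhead vertices) are sound elaborations of points the paper leaves implicit.
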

\end{minipage}
}
\end{center}
\medskip

\begin{example}
   \label{ex:branchdeltamiln}
  Consider the branch $A_1$ which has a lotus decorated with the multiplicities of $A_1$ at its infinitely near points as shown in Figure \ref{fig:multbranch2}. Using Theorem \ref{thm:deltaformula}, we get:
  \[\delta(A_1) = \frac{1}{2} \cdot \left(  6 \cdot 5 + 3 \cdot 2 +  3 \cdot 2 + 3 \cdot 2 \right) = 24.\]
  Therefore, by Theorem \ref{thm:deltamur} we get:
    \[  \mu(A_1) = 2\delta(A_1) - r(A_1) + 1 = 2 \cdot 24 - 1 + 1 = 48. \]
\end{example}

\begin{example}
   \label{ex:3brdeltamiln}
  Consider the plane curve singularity $A := A_1+ A_2 + A_3$ which has a lotus decorated with the multiplicities of $A$ at its infinitely near points as shown in Figure \ref{fig:multbranch3}. Using Theorem \ref{thm:deltaformula}, we get:
  \[\delta(A) = \frac{1}{2} \cdot \left(  20 \cdot 19 + 10 \cdot 9 +  10 \cdot 9 + 10 \cdot 9 + 5 \cdot 4 + 3 \cdot 2 + 2 \cdot 1  \right) = 339.\]
  Therefore, by Theorem \ref{thm:deltamur} we get:
    \[  \mu(A) = 2\delta(A) - r(A) + 1 = 2 \cdot 339 - 3 + 1 = 676. \]
\end{example}

\medskip

One may compute as follows the Milnor number of a {\em branch} from its characteristic exponents (see \cite[Chapitre II.3]{Z 86}):  

\begin{proposition}
    \label{prop:mufromcharexp}
    Let $A  \hookrightarrow (S,O)$ be a plane branch. Then, with the notations \eqref{eq:beta0}--\eqref{eq:recbetabar},  we have:
      \[  \mu(A)=\sum_{i=1}^g(n_i-1)\overline{\beta}_i-\beta_0 +1 = 
                  n_g \overline{\beta}_g - \beta_g - \beta_0 +1.
       \]
\end{proposition}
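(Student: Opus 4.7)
The plan is to reduce everything to the computation of $\delta(A)$ via Theorem \ref{thm:deltamur}. Since a branch has $r(A) = 1$, that theorem gives $\mu(A) = 2\delta(A)$, and the problem becomes purely semigroup-theoretic: express $2\delta(A)$ in terms of the sequence $(\overline{\beta}_0, \dots, \overline{\beta}_g)$ of minimal generators of $\Gamma(A)$ provided by Proposition \ref{prop:mingenfrombeta}.

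The key classical input, which I would invoke rather than reprove, is the symmetry of $\Gamma(A)$ for a plane branch (equivalent to the Gorenstein property of $\calo_A$): if $c$ denotes the conductor of $\Gamma(A)$, i.e. the smallest nonnegative integer with $c + \Zz_{\geq 0} \subseteq \Gamma(A)$, then an integer $n$ lies in $\Gamma(A)$ if and only if $c - 1 - n \notin \Gamma(A)$. Pairing each element of $\Gamma(A) \cap [0,c-1]$ with its gap-partner in $[0,c-1]$ gives $c = 2 \cdot |\Zz_{\geq 0} \smallsetminus \Gamma(A)|$; and the definition of $\delta(A)$ via $\calo_{\tilde A}/\calo_A$ combined with the valuation $\nu_A$ of Definition~\ref{def:sgbranch} identifies $\delta(A)$ with the number of gaps. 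Hence $\mu(A) = 2 \delta(A) = c$, and everything reduces to computing the conductor $c$.

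To compute $c$, I would use the two defining properties of the minimal generating sequence produced from characteristic exponents: each $\overline{\beta}_i$ is the smallest element of $\Gamma(A)$ not contained in the subsemigroup $\langle \overline{\beta}_0, \dots, \overline{\beta}_{i-1} \rangle$, and $n_i \overline{\beta}_i$ belongs to that subsemigroup. A straightforward induction on $i$ shows that every element of $\Gamma(A)$ has a unique expression $\sum_{j=0}^g a_j \overline{\beta}_j$ with $a_0 \geq 0$ and $0 \leq a_j < n_j$ for $j \geq 1$. The Frobenius number $c-1$ is then the largest integer not admitting such a representation with $a_0 \geq 0$; pushing the $a_j$'s to their maximum allowed values and subtracting one step of $\overline{\beta}_0$, one obtains
\[ c - 1 \;=\; \sum_{i=1}^g (n_i - 1)\overline{\beta}_i - \overline{\beta}_0, \]
which is the announced first expression since $\overline{\beta}_0 = \beta_0$.

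For the second expression, a direct telescoping suffices: writing $(n_i-1)\overline{\beta}_i = n_i\overline{\beta}_i - \overline{\beta}_i$, substituting $n_{i-1}\overline{\beta}_{i-1} = \overline{\beta}_i - \beta_i + \beta_{i-1}$ from the recursion \eqref{eq:recbetabar} for $i \geq 2$, and using $\overline{\beta}_1 = \beta_1$, one computes $\sum_{i=1}^g (n_i-1)\overline{\beta}_i = n_g \overline{\beta}_g - \beta_g$. The main obstacle is the conductor formula in the middle step: although it is classical, a fully self-contained proof requires the symmetry of $\Gamma(A)$, which is not immediate from the material developed in the previous sections. One self-contained alternative I would keep in reserve is an induction on $g$: compute $\delta(A)$ directly from Theorem \ref{thm:deltaformula} together with the proximity equalities of Theorem \ref{thm:proxeq} (which determine the multiplicity sequence of $A$ from $(\alpha_1, \dots, \alpha_g)$), and then verify by induction that the resulting sum equals $\frac{1}{2}(n_g\overline{\beta}_g - \beta_g - \beta_0 + 1)$; this bypasses symmetry altogether and keeps the argument within the lotus framework used throughout the paper.
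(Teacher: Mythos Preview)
The paper does not prove this proposition at all; it simply attributes the formula to Zariski \cite[Chapitre II.3]{Z 86} in the sentence preceding the statement. Your sketch is correct and is in fact the classical argument one finds in that reference: reduce to $\mu(A)=2\delta(A)$ via Theorem~\ref{thm:deltamur}, identify $2\delta(A)$ with the conductor $c$ of $\Gamma(A)$ using the symmetry (Gorenstein) property of plane branch semigroups, and compute $c$ from the standard-form representation of elements of $\Gamma(A)$ in terms of $(\overline{\beta}_0,\dots,\overline{\beta}_g)$. Your telescoping verification of the second equality is also correct.

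Your self-diagnosis is accurate: the one ingredient not supplied by the paper's earlier sections is the symmetry of $\Gamma(A)$, so strictly speaking the argument is not self-contained relative to this text. The fallback you propose---computing $\delta(A)$ directly from the multiplicity sequence via Theorems~\ref{thm:deltaformula} and~\ref{thm:proxeq} and checking the closed form by induction on $g$---would indeed keep everything inside the paper's framework, at the price of a longer computation. Either way, you are supplying a proof where the paper supplies only a citation.
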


Using \eqref{eq:betai} and \eqref{eq:gen-Egg}, the formula of Proposition \ref{prop:mufromcharexp} may be rewritten as follows in terms of the Eggers-Wall tree $(\Theta_L(A), \ex_L, \de_L)$:
\begin{equation}
   \label{eq:MilnbranchEW}
     \mu(A)={\de}_L(A)^2\left(\frac{\ex_L(P_g)}{{\de}_L(P_g)}+\sum_{j=1}^{g-1}\left(\frac{1}{{\de}_L(P_j)}-\frac{1}{{\de}_L(P_{j+1})}\right)\ex_L(P_j)\right)-{\de}_L(A)(\ex_L(P_g)+1) +1. 
\end{equation}

\begin{example}
     Consider the Eggers-Wall tree obtained in Figure \ref{fig:logdistoEW2}.  Using Proposition \ref{prop:mufromcharexp},   we get $\mu(A_1)=3\cdot 22-13-6+1=48$. 

   We may proceed alternatively using formula \eqref{eq:MilnbranchEW}: 
   \[\mu(A_1)=6^2\left(\frac{13}{6} \cdot\frac{1}{2}+\left(\frac{1}{1}-\frac{1}{2}\right)\cdot \frac{3}{2}\right)-6\left(\frac{13}{6}+1\right)+1  = 48. \]
   Happily, we get the same result as in Example \ref{ex:branchdeltamiln}.
\end{example}

\medskip
\section{Extensions to algebraically closed fields of positive characteristic}  \label{sec:charp}

In this section, we discuss extensions of the previous uses of lotuses as computational architectures to the case of a formal germ $A$ defined by reduced non-zero element 
of $\Kk[[x, y ]]$, where $\Kk$ is an algebraically closed field of characteristic $p >0$. The essential point is that in this new setting we can still consider the lotus $\Lambda(\hat{\calc})$ associated with an active constellation of crosses $\hat{\calc}$. This fact allows us to introduce in Definition \ref{Ew-p-lotus} a notion of Eggers-Wall tree of $A$ even if some of the branches of $A$ do not have Newton-Puiseux roots.

\medskip

\medskip 

Let $A$ be the formal plane curve singularity defined by a reduced and non-zero element of the maximal ideal $(x,y)\Kk[[x,y]]$ of the local ring $\Kk[[x, y ]]$. The notion of active constellation of crosses $\hat{\calc}_A$ adapted to $A$ from Definition \ref{def:adaptedactconst} may be defined in positive characteristic without any change. The determination of the weighted dual graph from the lateral boundary of the lotus explained in Proposition \ref{prop:dualgraph} remains unchanged. 
We can still compute from the lotus $\Lambda( \hat{\calc}_A )$ the log-discrepancies and the orders of vanishing of the first axis as explained in Corollary \ref{cor:reclord} and the multiplicities of the strict transforms of the branches of $A$ as explained in Corollary \ref{cor:multedges}.  The intersection multiplicities of pairs of branches of $A$ and the orders of vanishing of the branches of $A$ along the exceptional divisors corresponding to active points of the constellation may still be determined from the lotus $\Lambda( \hat{\calc}_A )$ as described in Corollaries \ref{cor:firstmethint} and \ref{cor:secmethint}. 
\medskip

The notion of Eggers-Wall tree is subtler in positive characteristic, due to the absence or the different behaviour of associated Newton-Puiseux series when they exist. 

\begin{example}  \label{ex:noNProot}
   Let $\Kk$ be an algebraically closed  field of characteristic $\boxed{p} > 0$. As explained by Kedlaya in \cite[Section 1]{Kedlaya 01}, Chevalley \cite[Page 64]{C 51} showed that the Artin-Schreier polynomial 
     \[\boxed{g_p (y)} := y^p - y - x^{-1} \in \Kk((x))[y]\] 
   has no root in the fraction field $\Kk((x^{1/\Nn}))$ of the ring $\Kk[[x^{1/\Nn}]]$ of formal Newton-Puiseux series in $x$. Later, Abhyankar \cite[Section 2]{A 56} noticed that the roots of $g_p(y)$ are of the form 
      \[\xi_j := j \ +\sum_{r =1}^\infty x^{-\frac{1}{p^r}},\] 
   for $j$ varying in the prime field $\mathbb{F}_p \hookrightarrow \Kk$. Notice that the series $\xi_j$ are not Newton-Puiseux series, since their exponents have not bounded denominators. For an example in $\Kk[[x]][y]$, set 
   \[\boxed{f_p(y)} := x^p g_p \left(\frac{y}{x}\right) = y^p - x^{p-1} y - x^{p-1}.\] 
   Using the relation between $f_p(y)$ and $g_p(y)$ we see that the roots of $f_p (y)$ are 
   \[\eta_j := x \cdot \xi_j = jx \ + \sum_{r =1}^\infty x^{1 -\frac{1}{p^r}}.\] 
   Therefore, $f_p(y)$ has no Newton-Puiseux root, similarly to $g_p(y)$. Another change of variable was performed by Abhyankar \cite[Section 2]{A 56}, in order to get the example \[\boxed{h_p(y)} := y^p - x^{p-1}y -1\] without Newton-Puiseux roots. Let us mention that Kedlaya described in \cite{Kedlaya 01} and  \cite{Kedlaya 17} the algebraic closure of the fraction field of the ring $\Kk[[x]]$ in terms of generalized power series, when $\Kk$ is a field of positive characteristic. 
\end{example}

\medskip

We will characterize in Corollary 
\ref{cor:NP-p} below those monic irreducible polynomials in the ring $\Kk [[x]][y]$ which admit Newton-Puiseux roots in $x$.

\medskip 

Because, as shown in Example \ref{ex:noNProot}, Newton-Puiseux series do not necessarily exist expressing the roots of the elements of $\Kk[[x]][y]$, it is more convenient to define the Eggers-Wall tree directly from the lotus, without passing through the consideration of Newton-Puiseux series: 

\begin{definition} \label{Ew-p-lotus}
    Consider a reduced formal germ $A$ defined by a power series $f \in (x,y)\Kk[[x,y]] \smallsetminus \{0\}$. Take any finite constellation of crosses $\hat{\calc}_A$ adapted to $A$. Denote as before $\boxed{X_O} := L + L_1$ and by $\boxed{\hat{A}}$ the {\bf completion of $A$}  determined by $\hat{\calc}_A$, in the sense of Definition \ref{def:adaptedactconst}. Consider the associated lotus $\Lambda(\hat{\calc}_A)$, in the sense of Definition \ref{def:lotusfacc}. The {\bf Eggers-Wall tree} $\boxed{\Theta_{L} (\hat{A})}$, endowed with the {\bf index function} $\boxed{\de_L}$ and the {\bf exponent function} $\boxed{\ex_L}$, is the tree rooted at $L$ obtained by the construction performed on the lotus $\Lambda(\hat{\calc}_A)$ using Definition \ref{def:assEWtolotus}. Therefore, $\Theta_{L}(A)$ is the union of segments $[L A_l]$ joining $L$ with the branches $A_l$ of $A$.  The {\bf characteristic exponents} of a branch $A_l$ of $A$ are the values of the exponent function $\ex_L$ on the points of discontinuity of the index function $\de_L$ in restriction to the segment $[L A_l]$. The {\bf contact complexity function} $\boxed{\ic_L}$ is defined from $\de_L$ and $\ex_L$ using Definition \ref{def:contact complexity}.
\end{definition}

Since the intersection numbers of pairs of branches of $A$, and also of any branch of $A$ and $L$ are  determined by the lotus $\Lambda$ (see Corollaries \ref{cor:reclord} and \ref{cor:firstmethint}), it follows that the tripod formula  \eqref{eq:tripod} holds. Taking into account the definition of $U_L$ from formula \eqref{eq:UL}, we deduce that for any pair $(A_l, A_m)$ of distinct branches of $A$ different from $L$ we have: 
\begin{equation} \label{eq-ultra}
      U_L (A_l , A_m) 
       = \frac{1}{\ic_L (\langle L, A_l, A_m \rangle )}. 
\end{equation}
This implies that $\Theta_L (A)$ is equal to the unique end-rooted tree associated with the ultrametric distance $U_L$ defined on the set of 
branches of $A$ different from $L$ (see \cite[Th. 111 and Remark 114]{GBGPPP 18}). 

\begin{example} 
    Let us consider the plane curve singularity  $A$ defined by the polynomial $f_3 (y) := y^3 - x^2 y - x^2 $. As we explained in Example \ref{ex:noNProot}, this polynomial does not have roots in the ring of Newton-Puiseux series in $x$ when the characteristic of $\Kk$ is $3$. The germ $A$ has the same minimal embedded resolution process by blowups of points which is described in Figure \ref{fig:embrescusp} as the cusp of Example \ref{ex:assconstex}. Consider the active constellation of crosses associated to this process when the initial cross is defined by $L := Z(x)$ and $L_1 := Z(y)$. Its associated lotus is indicated in Figure \ref{fig:asslotuscusp}. In Figure \ref{fig:dualgrcusp} is  represented the weighted dual graph of the minimal embedded resolution of $A$ as a subset of the lotus. The difference with Example \ref{ex:assconstex}, is that  here we have to swap the labels $L$ and $L_1$ in the Figures \ref{fig:embrescusp}, \ref{fig:asslotuscusp} and \ref{fig:dualgrcusp}. In Figure \ref{fig:logdisord1-carp} we represent the lotus when the vertices are decorated with the pairs $(\lambda(E), \ord_E (L))$. In Figure \ref{fig:logdistoEW1-carp} is explained how to pass from this decorated lotus to the Eggers-Wall tree $\Theta_L (A+L+L_1)$. Figure \ref{fig:logdistoEW1} represents also the Eggers-Wall tree $\Theta_{L_1} (A + L + L_1)$. 
\end{example}

 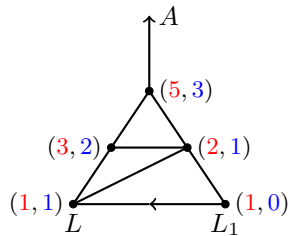
\begin{figure}[h!]
    \begin{center}
\begin{tikzpicture}[scale=1]
   
    \draw [->, color=black, thick](1,0) -- (0,0);
    \draw [-, color=black, thick](0,0) -- (-1,0);
    \draw [-, color=black, thick](0,1.5) -- (-1,0);
      \draw [-, color=black, thick](0,1.5) -- (1,0);
       \draw [-, color=black, thick](-0.5,3/4) -- (0.5,3/4);
       \draw [-, color=black, thick](-1,0) -- (0.5,3/4);
       \draw [->, color=black, thick](0,1.5) -- (0,2.5);
       
  \node[draw,circle, inner sep=1pt,color=black, fill=black] at (-1,0){};
   \node[draw,circle, inner sep=1pt,color=black, fill=black] at (1,0){};
     \node[draw,circle, inner sep=1pt,color=black, fill=black] at (0,1.5){};
      \node[draw,circle, inner sep=1pt,color=black, fill=black] at (-0.5,3/4){};
       \node[draw,circle, inner sep=1pt,color=black, fill=black] at (0.5,3/4){};

\node [right, color=black] at (0,2.5) {$A$};
\node [below, color=black] at (-1,0) {$L$};
\node [below, color=black] at (1,0) {$L_1$};

\node [left, color=black] at (-1,0) {\small{$(\red{1},\blue{1})$}};  
\node [right, color=black] at (1,0) {\small{$(\red{1},\blue{0})$}};  
 \node [right, color=black] at (0,1.5) {\small{$(\red{5},\blue{3}) $}};  
 \node [left, color=black] at (-0.5,3/4) {\small{$(\red{3},\blue{2})$}}; 
  \node [right, color=black] at (0.5,3/4) {\small{$(\red{2},\blue{1})$}}; 
  
 \end{tikzpicture}
\end{center}
 \caption{At each vertex $E \neq A$ of the lotus of Figure \ref{fig:asslotuscusp} is written the pair $(\lambda(E), \ord_E (L))$}
\label{fig:logdisord1-carp}
   \end{figure}

\begin{figure}[h!]
    \begin{center}
\begin{tikzpicture}[scale=1.2]
\begin{scope}[shift={(-20,0)}]

 \draw [->, color=black, thick](1,0) -- (0,0);
    \draw [-, color=black, thick](0,0) -- (-1,0);
    \draw [-, color=black, thick](0,1.5) -- (-1,0);
      \draw [-, color=black, thick](0,1.5) -- (1,0);
       \draw [-, color=black, thick](-0.5,3/4) -- (0.5,3/4);
       \draw [-, color=black, thick](-1,0) -- (0.5,3/4);
       \draw [->, color=black, thick](0,1.5) -- (0,2.5);
       
  \node[draw,circle, inner sep=1pt,color=black, fill=black] at (-1,0){};
   \node[draw,circle, inner sep=1pt,color=black, fill=black] at (1,0){};
     \node[draw,circle, inner sep=1pt,color=black, fill=black] at (0,1.5){};
      \node[draw,circle, inner sep=1pt,color=black, fill=black] at (-0.5,3/4){};
       \node[draw,circle, inner sep=1pt,color=black, fill=black] at (0.5,3/4){};

\node [right, color=black] at (0,2.5) {$A$};

\node [left, color=black] at (-1,0) {\small{$(\red{1},\blue{1})$}};  
\node [right, color=black] at (1,0) {\small{$(\red{1},\blue{0})$}};  
 \node [right, color=black] at (0,1.5) {\small{$(\red{5},\blue{3}) $}};  
 \node [left, color=black] at (-0.5,3/4) {\small{$(\red{3},\blue{2})$}}; 
  \node [right, color=black] at (0.5,3/4) {\small{$(\red{2},\blue{1})$}}; 
  
    \draw[->][thick, color=black](-1,-0.75) .. controls (-1.5,-2) ..(-1,-2.75); 
   
   \end{scope}

   \begin{scope}[shift={(-20,-4)}]
  
 \draw [->, color=black, thick](1,0) -- (0,0);
    \draw [-, color=black, thick](0,0) -- (-1,0);
    \draw [-, color=blue, line width=2.5pt](0,1.5) -- (-1,0);
      \draw [-, color=blue, line width=2.5pt](0,1.5) -- (1,0);
       \draw [-, color=black, thick](-0.5,3/4) -- (0.5,3/4);
       \draw [-, color=black, thick](-1,0) -- (0.5,3/4);
        \draw [->, color=blue, line width=2.5pt](0,1.5) -- (0,2.5);

  \node[draw,circle, inner sep=1.5pt,color=violet, fill=violet] at (-1,0){};
   \node[draw,circle, inner sep=1.5pt,color=violet, fill=violet] at (1,0){};
     \node[draw,circle, inner sep=1.5pt,color=violet, fill=violet] at (0,1.5){};
      \node[draw,circle, inner sep=1.5pt,color=black, fill=black] at (-0.5,3/4){};
       \node[draw,circle, inner sep=1.5pt,color=black, fill=black] at (0.5,3/4){};

\node [right, color=black] at (0.1,2.5) {$A$};
 \node [right, color=black] at (0,1.5) {\small{$(\red{5},\blue{3}) \to \violet{\frac{2}{3}}$}};  
 \node [right, color=black] at (1,0) {\small{$(\red{1},\blue{0}) \to \violet{\infty}$}};  
 \node [left, color=black] at (-1,0) {\small{$(\red{1},\blue{1})\to \violet{0}$}};  

\end{scope}

\begin{scope}[shift={(-14,-3)}]
   
  \draw[->][thick, color=black](-4,0) .. controls (-3,-0.5) ..(-2,0); 
   
     \draw [->, color=blue,  line width=2.5pt](0,0) -- (0,3);
     \draw [->, color=blue,  line width=2.5pt](0,1) -- (0,0);
      \node[draw,circle, inner sep=1.5pt,color=violet, fill=violet] at (0,1.5){};
      \draw [->, color=blue,  line width=2.5pt](0,1.5) -- (-1.4,1.5);

       \node [below, color=black] at (0,0) {$L$};
         \node [left, color=black] at (-1.4,1.5) {$L_{1}$};
           \node [above, color=black] at (0,3) {$A$};

           \node [right, color=violet] at (0,0) {\small{$0$}};  
           \node [right, color=violet] at (0,1.5) {\small{$\frac{2}{3}$}}; 
                \node [below, color=violet] at (-1.3,1.4) {\small{$\infty$}};

                 \node [right, color=blue] at (0,0.75) {\small{$1$}};
                 \node [right, color=blue] at (0,2.25) {\small{$3$}};
                   \node [above, color=blue] at (-0.7,1.5) {\small{$1$}};

      \end{scope}
         
   \end{tikzpicture}
\end{center}
 \caption{Going from $\lambda(E)$ and $\ord_E(L)$ to  $\Theta_L(\hat{A})$ when $\hat{A} =L + L_1 + A$}
\label{fig:logdistoEW1-carp}
   \end{figure}

 \begin{figure}[h!]
    \begin{center}
\begin{tikzpicture}[scale=1]

    \draw [->, color=black, thick](1,0) -- (0,0);
    \draw [-, color=black, thick](0,0) -- (-1,0);
    \draw [-, color=black, thick](0,1.5) -- (-1,0);
      \draw [-, color=black, thick](0,1.5) -- (1,0);
       \draw [-, color=black, thick](-0.5,3/4) -- (0.5,3/4);
       \draw [-, color=black, thick](-1,0) -- (0.5,3/4);

        \draw [->, color=black, thick](0,1.5) -- (-1.5,1.5);
   \draw [-, color=black, thick](-1.5,1.5) -- (-3,1.5);
     \draw [-, color=black, thick](0,1.5) -- (-1.5,4);
      \draw [-, color=black, thick](-1.5,4) -- (-3,1.5);
      \draw [->, color=black, thick](-1.5,4) -- (-1.5,5);
     \draw [-, color=black, thick](-3/4,11/4) -- (-3,1.5);
        \draw [-, color=black, thick](-2,3.17) -- (-1.5,4);
        \draw [-, color=black, thick](-3,1.5) -- (-2.5,2.34);
        \draw [-, color=black, thick](-3,1.5) -- (-2.5,2.34);
     
       \node[draw,circle, inner sep=1pt,color=black, fill=black] at (-1,0){};
   \node[draw,circle, inner sep=1pt,color=black, fill=black] at (1,0){};
     \node[draw,circle, inner sep=1pt,color=black, fill=black] at (0,1.5){};
      \node[draw,circle, inner sep=1pt,color=black, fill=black] at (-0.5,3/4){};
       \node[draw,circle, inner sep=1pt,color=black, fill=black] at (0.5,3/4){};
        \node[draw,circle, inner sep=1pt,color=black, fill=black] at (-3,1.5){};
         \node[draw,circle, inner sep=1pt,color=black, fill=black] at (-1.5,4){};
         \node[draw,circle, inner sep=1pt,color=black, fill=black] at (-3/4,11/4){};

   \node [below, color=black] at (-1,0) {$L_{1}$};
\node [below, color=black] at (1,0) {$L$};
\node [below, color=black] at (-3,1.5) {$A_{1}$};
\node [right, color=black] at (-1.5,5) {$A_2$};

\node [left, color=black] at (-1,0) {\small{$(\red{1},\blue{0})$}};  
\node [right, color=black] at (1,0) {\small{$(\red{1},\blue{1})$}};  
 \node [right, color=black] at (0,1.5) {\small{$(\red{5},\blue{2}) $}};  
 \node [left, color=black] at (-0.5,3/4) {\small{$(\red{3},\blue{1})$}}; 
  \node [right, color=black] at (0.5,3/4) {\small{$(\red{2},\blue{1})$}}; 
     \node [left, color=black] at (-3,1.5) {\small{$(\red{1},\blue{0})$}}; 
     \node [left, color=black] at (-1.5,4.2) {\small{$(\red{7},\blue{2})$}}; 
    \node [right, color=black] at (-3/4,11/4) {\small{$(\red{6},\blue{2})$}}; 
 \end{tikzpicture}
\end{center}
 \caption{The lotus of Example \ref{ex:diff-trees}.    
    Near each vertex  is  written the pair $(\lambda(E), \ord_E (L))$}
\label{fig:E}
   \end{figure}
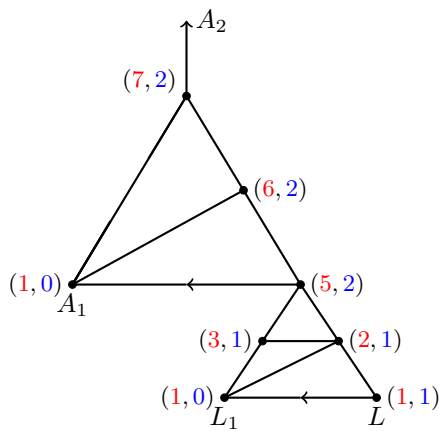

\begin{remark}  \label{rem:fromsgtocharexp}
     If $f \in \Kk[[x,y]]$ is irreducible and $L := Z(x)$ is transversal to the branch $A$ defined by $f$, then we can also determine the Eggers-Wall tree $\Theta_L (A)$ using the minimal generating sequence of the semigroup of $A$. Notice that we can always find a  {\em good parametrization} of $A$  thanks to the {\em Normalisation theorem} (see \cite[Theorem 2.1]{Ploski 13}). Now, from this good parametrization we can construct key polynomials for $A$ and obtain the minimal generating sequence of the semigroup $\Gamma (A)$ (see for example \cite{GB-P 15}).  One may define the {\em characteristic exponents} of $A$ to be equal to the characteristic exponents of a complex branch $A'$ such that $\Gamma(A) = \Gamma(A')$. The multiplicity sequence of the infinitely near points of the strict transforms of a branch $A$ by the successive blowups in its resolution process is known to be equivalent to the sequence of characteristic exponents of a Newton-Puiseux series associated to $A$, whenever the field $\Kk$ is of characteristic zero. When $\Kk$ is of positive characteristic, the characteristic exponents of $A$, or equivalently the characteristic pairs of $A$, were defined by Moh \cite{M 73} and Angerm\"uller \cite{A 77} as the result obtained from the multiplicity sequence by the numerical algorithm used in the case of characteristic zero. A different approach valid in arbitrary characteristic was given in terms of {\em Newton coefficients}, defined in terms of Newton polygons and maximal contact curves by Lejeune-Jalabert in her PhD Thesis and was published in \cite{L 88}. A systematic presentation of the equisingularity theory of algebroid branches defined over an algebraically closed field of arbitrary characteristic was given in Campillo's book \cite{C 80}, by using  {\em Hamburguer-Noether expansions}.  It included a comparison with the previous approaches in the literature. The Eggers-Wall tree $\Theta_L (A)$ of a branch $A$ relative to a transversal smooth branch $L$ in the sense of Definition \ref{Ew-p-lotus} gives rise to the same characteristic exponents as in the previous approaches,  because the computation of the multiplicity sequence from the lotus explained in Section \ref{sec:multstr} is independent of the characteristic. Since the smooth branch $L$ is transversal to $A$, the Eggers-Wall tree $\Theta_L (A)$ is the same as the generic Eggers-Wall tree of a complex branch $A'$ with the same semigroup (see Remark \ref{rem:genericEW}).  
\end{remark}
\medskip 

If $A$ has several branches and $L = Z(x)$ is transversal to all of them, 
then we can use Remark \ref{rem:fromsgtocharexp} to determine the Eggers-Wall tree of each branch $A_l$ of $A$ by computing generators of its semigroup. If $A_l$ and $A_m$ are two branches of $A$, then we determine the value of $\ic_L (\langle L, A_l, A_m \rangle)$ from the intersection multiplicities $(L\cdot A_l)_O$, $(L\cdot A_m)_O$  and $(A_l \cdot A_m)_O$ by the  tripod formula \eqref{eq: tripod2}:
    \[
        \ic_L (\langle L, A_l, A_m \rangle ) = \frac{(A_l\cdot A_m)_O}{(A_l \cdot L)_O (A_m \cdot L)_O}.
    \]
This determines the bifurcation point $\langle L, A_l, A_m \rangle$ on the tree $\Theta_L (A_l+ A_m)$. Then, we can use formula  
\eqref{eq:intcoefint2} to find the value $\ex_L ( \langle L, A_l, A_m \rangle )$.  The Eggers-Wall tree $\Theta_L (A)$ is determined by applying this process for any pair of branches of $A$ different from $L$. 

\begin{example}  \label{ex:2charp}
      The following example is taken from Example 4.11 of the preprint version of \cite{GB-P 15}. Denote by $p \geq 2$ a prime number. 
      Let us consider the branches $A_{1}$ defined by $f_{1}(x,y)=(y- x^{p})^{p} + y^{p^{2}-1}$ and $A_{2}$ defined by $f_{2}(x,y)=(y^{p-1}- x^{p})^{p} + y^{p^{2}-1}$ over an algebraically closed base field of characteristic $p$ and take $L =Z(x)$. By Corollary \ref{cor:NP-p}, the polynomials $f_1$ and $f_2$ do not have roots in the ring of formal Newton-Puiseux series in $x$. A good parametrization of $A_{1}$ is $(\phi_1(t), \psi_1(t)) =(t^{p} + t^{p^{2}-1}, t^{p^{2}})$ and one of $A_{2}$ is $(\phi_2(t), \psi_2(t)) =(t^{p^{2}-p} + t^{p^{2}-1}, t^{p^{2}})$. The minimal generating sequence of the semigroup
      $\Gamma (A_{1})$ is $(p, p^{3} + p^{2}-2p-1)$ while the minimal generating sequence of  $\Gamma (A_{2})$  is $(p^{2}-p, p^{2}, p^{3} -1)$. We have  $\nu_t (f_2 (\phi_1(t), \psi_1(t)) = p^3$, thus $(A_1\cdot A_2)_O = p^3$. 
      Therefore: 
      \[
        \ic_L (\langle L, A_1, A_2 \rangle) = \frac{ p^3}{ p \cdot (p^2 - p) } = \frac{p}{p-1}.
       \] 
      We get in this case $\ex_L (\langle L, A_1, A_2 \rangle) = \frac{p}{p-1}$. 
      In Figure \ref{fig:ExnP+} we represent the Eggers-Wall trees  $\Theta_{L}(A_{1})$ and $\Theta_{L} (A_{2})$ for $p=3$ and in Figure \ref{fig:ExnP++} we represent the corresponding Eggers-Wall tree $\Theta_{L}(A_{1}+A_{2})$. 
\end{example}

When the field $\Kk$ has positive characteristic, the  semivaluation space of the local ring $\Kk[[x, y]]$ was considered in \cite{J 15}. 
The lotus $\Lambda( \hat{\calc}_A )$ associated to an active constellation of crosses adapted to a formal germ $A$ may be also embedded in this semivaluation space, 
by the same arguments as those given in \cite[\S 7]{PP 07} and in  \cite[Remark 1.5.34]{GBGPPP 18}. 
 
Using the methods of \cite[Section 1.6]{GBGPPP 20}, we may prove that Proposition \ref{prop:fromlotustoEW}  is also true in positive characteristic. In \cite[Section 1.5.3]{GBGPPP 20} we associated a lotus to a {\em toroidal pseudo-resolution} of plane curve singularity. In \cite[Section 1.6.6, pages 130-131]{GBGPPP 20}, we gave another definition of the Eggers-Wall tree in terms of the {\em fan tree} of a toroidal pseudo resolution (see \cite[Section 1.4]{GBGPPP 20}). One may check that both definitions are equivalent by using the fact that the contact complexity function  is determined by the tripod formula at any marked point of the tree.

\begin{figure}[h!]
    \begin{center}
\begin{tikzpicture}[scale=1]
\begin{scope}[shift={(0,0)}]
   
     \draw [->, color=blue,  line width=2.5pt](0,0) -- (0,3);
     \draw [->, color=blue,  line width=2.5pt](0,1) -- (0,0);
      \node[draw,circle, inner sep=1.5pt,color=violet, fill=violet] at (0,1.5){};
    
       \node [below, color=black] at (0,0) {$L$};
        \node [above, color=black] at (0,3) {$A_{1}$};    
           
        \node [right, color=violet] at (0,0) {\small{$0$}};  
        \node [right, color=violet] at (0,1.5) {\small{$\frac{29}{3}$}}; 
                
         \node [right, color=blue] at (0,0.75) {\small{$1$}};
        \node [right, color=blue] at (0,2.25) {\small{$3$}};

\end{scope}

\begin{scope}[shift={(4,0)}]
   
     \draw [->, color=blue,  line width=2.5pt](0,0) -- (0,4.5);
     \draw [->, color=blue,  line width=2.5pt](0,1) -- (0,0);
       
    \node [below, color=black] at (0,0) {$L$};
    \node [above, color=black] at (0,4.5) {$A_2$};

    \node[draw,circle, inner sep=1.5pt,color=violet, fill=violet] at (0,1.5){};
    \node[draw,circle, inner sep=1.5pt,color=violet, fill=violet] at (0,3){};

    \node [right, color=blue] at (0,0.75) {\small{$1$}};
    \node [right, color=blue] at (0,2.25) {\small{$2$}};
    \node [right, color=blue] at (0,3.75) {\small{$6$}};                   
    \node [right, color=violet] at (0,0) {\small{$0$}};  
    \node [right, color=violet] at (0,1.5) {\small{$\frac{3}{2}$}}; 
    \node [right, color=violet] at (0,3) {\small{$\frac{17}{6}$}};              
  \end{scope}
   \end{tikzpicture}
\end{center}
 \caption{The Eggers-Wall trees  of the branches $A_1$ and $A_2$ of Example \ref{ex:2charp} in terms of Definition \ref{Ew-p-lotus}, when $p=3$}
\label{fig:ExnP+}
   \end{figure}
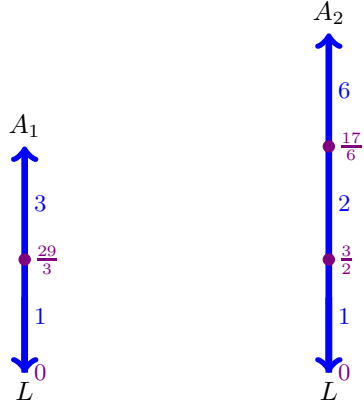
 
  \begin{figure}[h!]
     \begin{center}
\begin{tikzpicture}[scale=1]

      \draw [->, color=blue,  line width=2.5pt](0,0) -- (0,4.5);
      \draw [->, color=blue,  line width=2.5pt](0,1) -- (0,0);
      \draw [->, color=blue,  line width=2.5pt](0,1.5) -- (-1.5,4);

    \node [below, color=black] at (0,0) {$L$};
    \node [above, color=black] at (0,4.5) {$A_2$};
    \node [above, color=black] at (-1.5,4) {$A_1$};

    \node[draw,circle, inner sep=1.5pt,color=violet, fill=violet] at (0,1.5){};
    \node[draw,circle, inner sep=1.5pt,color=violet, fill=violet] at (0,3){};
    \node[draw,circle, inner sep=1.5pt,color=violet, fill=violet] at (-1.5/2,5.5/2) {};

    \node [right, color=blue] at (0,0.75) {\small{$1$}};
    \node [right, color=blue] at (0,2.25) {\small{$2$}};
    \node [right, color=blue] at (0,3.75) {\small{$6$}};
    \node [right, color=violet] at (0,0) {\small{$0$}};
    \node [right, color=violet] at (0,1.5) {\small{$\frac{3}{2}$}};
    \node [right, color=violet] at (0,3) {\small{$\frac{17}{6}$}};
    \node [left, color=violet] at (-1.5/2,5.5/2) {\small{$\frac{29}{3}$}};

    \node [left, color=blue] at (-0.4,2.2) {\small{$1$}};
    \node [left, color=blue] at (-1.1,3.3) {\small{$3$}};

    \end{tikzpicture}
\end{center}
  \caption{The Eggers-Wall tree $\Theta_L (A_1 + A_2)$ of Example \ref{ex:2charp}, when $p =3$}
\label{fig:ExnP++}
    \end{figure}
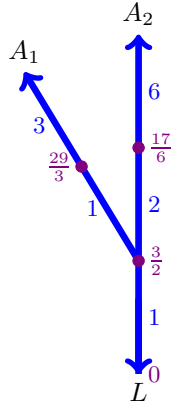

\medskip 

Definition \ref{def:deltainv} of the delta invariant $\delta(A)$ extends to formal germs defined over 
algebraically closed base fields $\Kk$ of characteristic $p >0$. If Definition \ref{def:Milnumber} of $\mu(f)$ extends without problems to series $f \in \Kk[[x,y]]$ when $\Kk$ is of positive characteristic, the definition of the Milnor number  $\mu(A)$ is instead subtler. Indeed, if $u$ is a unit of the local ring $\Kk[[x,y]]$, then, in contrast with the characteristic zero situation described in Proposition \ref{prop:invMiln},  $\mu(f)$ is not necessarily equal to $\mu(uf)$, and in addition $\mu(f)$ may even take an infinite value.  For instance, if $f = y^2 + x^3$, $g = ( 1+ x) \cdot f $ and  $p = 3$, then $\mu(f)=+\infty$ and $\mu(g)=3$. However, in any characteristic,  $\delta(uf) = \delta(f )$ and
$r(uf ) = r(f )$ for any unit $u$ in the formal power series ring $\Kk[[x, y]]$. Theorem \ref{thm:deltamur} does not remain true in positive characteristic, but one has instead the inequality $\mu(f) \geq  2 \delta(f) - r(f) + 1$, and equality means that $f$ has no {\em wild vanishing cycles} (see \cite{D 73} and \cite{M-W 01}). In recent years, several papers were dedicated to Milnor numbers in arbitrary characteristic (see for example \cite{Bou 10}, \cite{GB-P 16}, \cite{Ng 16}, \cite{GB-P 18} \cite{Hefez 18}, \cite{Hefez 19}).

\medskip
\section{Positive characteristic singularities with Newton-Puiseux roots}
\label{sec:existNP}

As in the previous section, we assume that $\Kk$ is an algebraically closed field of characteristic $\boxed{p} > 0$. By analogy with the case of characteristic zero considered in Section \ref{sec:EWfromlot}, we may introduce the local ring 
  \[ \boxed{\Kk[[x^{1/\mathbb N}]]} := \bigcup_{n\geq 1} \Kk[[ x^{1/n}]] \] 
of {\em formal Newton-Puiseux series with coefficients in the field $\Kk$}.
In this section we characterize the monic irreducible polynomials in the ring $\Kk[[x]][y]$ which admit Newton-Puiseux roots in $x$ (see  Corollary \ref{cor:NP-p}).
Then, we introduce a version of Eggers-Wall tree for a plane curve singularity $A$ whose branches have associated Newton-Puiseux series (see Definition \ref{def:EWp}) and we compare it with the Eggers-Wall tree of Definition \ref{Ew-p-lotus} (see Corollary \ref{cor:compartrees}).
\medskip

Let us define first the {\em characteristic exponents} of a series  $\xi \in  \Kk[[x^{1/\mathbb N}]]$. In the case of positive characteristic, we cannot proceed as in the characteristic zero Definition \ref{def:charexpcoinc}.
Instead, we define them 
by looking at the jumps of the minimal denominators of fractional exponents in the expansion of $\xi$ (compare with formulae \eqref{eq:beta0}--\eqref{eq:recbetabar}).

\begin{definition} \label{def:ch-p}
   Let $\xi = \sum_j c_{j} x^{\frac{j}{n}} \in \Kk [[ x^{1/n}]] \smallsetminus \{0\}$, where $n$ is the minimal common denominator of the exponents appearing in the expansion of $\xi$ (that is, of the elements of the support of $\xi$). We set $\boxed{e_0} :=n$, $\boxed{b_1} := \min \{ j : c_j \ne 0, n \nmid  j \}$ and $\boxed{e_1} := \mathrm{gcd} \{ e_0 , b_1 \}$. Assume that $b_r$ and $e_r$ are defined  and that $e_r\neq 1$. Define then  $\boxed{b_{r+1}} := \min \{ j : c_j \ne 0, e_r \nmid  j \}$ and $\boxed{e_{r+1}} := \mathrm{gcd} \{ e_r, b_{r+1} \}$. Since $n$ is the minimal common denominator of the fractional exponents of $\xi$, there is an integer $h \in \Zz_{\geq 0}$ such that $e_h =1$ and the process stops. The {\bf set of characteristic exponents} of the series $\xi$ is then: 
   \begin{equation} \label{eq: ch-exp-p}
      \boxed{\mathrm{Ch} (\xi)} := \left\{ \frac{b_i}{n} : i = 1, \dots, h \right\}.
   \end{equation}
   Set $\boxed{n_j} := e_{j-1}/e_j$ for $j = 1, \dots, h$. We denote by convenience $\boxed{b_0} := 0$ and $\boxed{b_{h+1}} := \infty$. The positive integer $\boxed{m_j}$ is then defined by the equality $\frac{b_j}{n} = \frac{m_j}{n_1 \cdots n_j}$, for every $j \in \{1, \dots, h\}$.
\end{definition}

It is a consequence of the previous definitions that $m_j$ and $n_j$ are coprime, for every $j \in \{1, \dots, h\}$.

\begin{remark}  
If $\xi \in \Kk [[ x^{\frac{1}{n}}]] $ is a Newton-Puiseux root of a formal branch $A$ and if $L$ is the smooth germ defined by $x$, then the characteristic exponents of $\xi$  according to Definition
\ref{eq: ch-exp-p} do not always coincide with the characteristic exponents of $A$ with respect to $L$ from Definition \ref{Ew-p-lotus}. For instance, Campillo  considered in \cite[Example 3.5.4]{C 80} the branch $A$  with Newton-Puiseux root 
     \[ \xi= x^{\frac{p+1}{p}}  + x^{\frac{p^3+p^2+p+1}{p^3}} \in \Kk [[ x^{\frac{1}{p^3}}]] \]
and computed data equivalent to the minimal set of generators of the semigroup $\Gamma(A)$:
\[ 
\overline{\beta}_0 = p^3, \quad
\overline{\beta}_1 =  p^3 + p^2, \quad
\overline{\beta}_2 = p^4 + p^3 + p^2 + p, \quad
\overline{\beta}_3 = p^5 + p^4 + p^3 + p^2 + p + 1.
\]
It follows that the series  $\xi$ has two characteristic exponents according to Definition \ref{eq: ch-exp-p}, while the branch $A$ has three characteristic exponents according to Definition \ref{Ew-p-lotus}.  Teissier mentioned this example in \cite[Section 1]{T 18} and studied this phenomenon in terms of an  {\em overweight deformation} of the monomial curve defined by the semigroup $\Gamma(A)$. Similarly, in the case of a formal germ with branches $A_1$ and $A_2$ admitting Newton-Puiseux roots, the exponent of the ramification point of the tree $\Theta_L (A_1+ A_2)$ of Definition \ref{Ew-p-lotus} may be different from the order of coincidence $k_L (A_1, A_2)$ of the Newton-Puiseux roots 
of $A_1$ and $A_2$ of Definition \ref{def:EWp} (see also Example \ref{ex:diff-trees}). 
\end{remark}

We need the following elementary number-theoretical definition:

\begin{definition} \label{def:partcoprime}
  We denote by $\boxed{\nu_p}$ the {\bf $p$-adic valuation} on the ring $\Zz$ of integers. That is, if $n$ is a nonzero integer, then $\nu_p (n)$ is the largest power of $p$ which divides $n$. We denote by
   \[
       \boxed{{n[:p]}} := n\cdot p^{-\nu_p (n)} \in \Zz
   \]
  the {\bf part of $n$ coprime to $p$}.
\end{definition}

By construction, $n[:p]\not\equiv 0$ (mod $p$). Notice that if $n$ and $m$ are nonzero integers, then: 
\begin{equation} \label{eq: mult-p}
      (nm)[:p] =n[:p] \cdot m[:p].
\end{equation}

\medskip

For every $n \in \Zz_{>0}$, denote by $\boxed{\mathbb U_n} \subset \Kk^*$ the multiplicative group of $n$th roots of unity of $\Kk$. Its order depends on the characteristic $p$ of $\Kk$:

\begin{lemma} \label{lem: Um} 
    Consider $n \in \Zz_{>0}$.   
\begin{enumerate} 
    \item The group $\mathbb{U}_n$ has order $n[:p]$ and  $\mathbb{U}_n = \mathbb{U}_{n[:p]}$.
        \item The extension $\Kk((x)) \subset \Kk((x))[x^{\frac{1}{n}}]$ is Galois if and only if $n=n[:p]$, that is, $p$ does not divide $n$.
\end{enumerate}
\end{lemma}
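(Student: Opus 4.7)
The plan is to dispatch the two assertions separately, both via elementary manipulations that exploit the characteristic $p$ hypothesis.

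For part (1), I will write $n = p^a \cdot m$ with $a = \nu_p(n)$ and $m = n[:p]$, so by definition $\gcd(m,p) = 1$. The key observation is the Frobenius factorization
\[
X^n - 1 = X^{p^a m} - 1 = (X^m)^{p^a} - 1^{p^a} = (X^m - 1)^{p^a},
\]
valid because the map $z \mapsto z^{p^a}$ is a ring homomorphism on any $\mathbb{F}_p$-algebra. Hence the set of roots of $X^n-1$ in $\Kk$ coincides with the set of roots of $X^m-1$. Since $\gcd(m,p)=1$, the derivative $m X^{m-1}$ is coprime to $X^m-1$ in $\Kk[X]$, so $X^m-1$ is separable and has exactly $m$ distinct roots in the algebraically closed field $\Kk$. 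This gives $\mathbb{U}_n = \mathbb{U}_m$ and $|\mathbb{U}_n| = m = n[:p]$.

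For part (2), the extension $\Kk((x)) \subset \Kk((x))[x^{1/n}]$ has degree $n$, since $x^{1/n}$ is a root of the irreducible polynomial $T^n - x \in \Kk((x))[T]$ (irreducibility follows from the fact that $x$ has $x$-adic valuation $1$ in $\Kk((x))$, so $T^n - x$ is Eisenstein). Any $\Kk((x))$-automorphism of $\Kk((x))[x^{1/n}]$ must send $x^{1/n}$ to another root of $T^n - x$ in this field; such roots are of the form $\zeta \cdot x^{1/n}$ with $\zeta \in \mathbb{U}_n$, and conversely each such $\zeta$ yields a well-defined automorphism. Hence $|\mathrm{Aut}(\Kk((x))[x^{1/n}]/\Kk((x)))| = |\mathbb{U}_n| = n[:p]$ by part (1). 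The extension is Galois if and only if this number of automorphisms equals the degree $n$, that is, if and only if $n[:p] = n$, equivalently $p \nmid n$.

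The only point that requires minor care is the Eisenstein irreducibility of $T^n - x$ used in the degree computation; everything else is formal. I expect no serious obstacle, since both statements reduce to standard facts about separability of $X^m-1$ in characteristic $p$ and about Kummer-type extensions $\Kk((x))[x^{1/n}]$.
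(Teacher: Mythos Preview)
Your proof is correct and follows essentially the same approach as the paper: both use the Frobenius factorization $X^n-1=(X^{n[:p]}-1)^{p^{\nu_p(n)}}$ together with separability of $X^{n[:p]}-1$ for part (1), and both compare the automorphism count $|\mathbb{U}_n|=n[:p]$ with the extension degree $n$ for part (2). Your version is slightly more explicit (the derivative check and the Eisenstein argument), but the underlying ideas are identical.
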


\begin{proof}
  Notice that $x^{n} - 1 = (x^{n[:p]} -1)^{p^{\nu_p(n)}}$ and that the polynomial $x^{n[:p]} -1$ has $n[:p]$ different roots in $\Kk^*$, since $p$ does not divide $n[:p]$. This proves the first assertion.

  Any element of the automorphism group of the field extension 
    \[ \Kk((x)) \subset \Kk((x))[x^{\frac{1}{n}}]  \]  
  is determined by the image of $x^{\frac{1}{n}}$, which is of the form $\rho \cdot x^{\frac{1}{n}}$, for some $\rho \in \mathbb U_n$. This automorphism group is therefore of order $n[:p]$. The field extension is Galois if and only if this order is equal to the degree $n$ of the extension, that is, if and only if $n[:p] =n$.
\end{proof}

\begin{proposition} \label{prop: degree-p}
    Assume that $\xi \in  \Kk[[x^{\frac{1}{n}}]] \smallsetminus \{0\}$ and that $n$ is the minimal common denominator of the exponents of $\xi$. Then:
   \begin{enumerate}
      \item \label{pr:1} There are exactly $n[:p]$ different conjugates of $\xi$ under the action of the group $\mathbb{U}_n$.

      \item \label{pr:2}The minimal polynomial $f$ of $\xi$ over $\Kk((x))$ belongs to $\Kk[[x]][y]$ and has degree $n$. 

      \item \label{pr:3} $\Kk((x))[\xi]= \Kk((x))[ x^{\frac{1}{n}}]$.

      \item \label{pr:4} The series $\xi$ is a root of multiplicity $p^{\nu_p (n)}$ of the polynomial $f$.
   \end{enumerate}
\end{proposition}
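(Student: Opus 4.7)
The plan is to dispatch (1) by a direct stabilizer computation and to deduce (2)--(4) from an analysis of the tower $F := \Kk((x)) \subset F_s := \Kk((x^{1/n[:p]})) \subset F' := \Kk((x^{1/n}))$, in which the first extension is Galois of degree $n[:p]$ with group $\mathbb{U}_n$ and the second is purely inseparable of degree $p^e$, with $e := \nu_p(n)$. For (1), writing $\xi = \sum_j c_j x^{j/n}$, the element $\rho \in \mathbb{U}_n$ sends $\xi$ to $\sum_j \rho^j c_j x^{j/n}$, so the stabilizer of $\xi$ consists of $\rho \in \mathbb{U}_n$ satisfying $\rho^j = 1$ for every $j \in \mathrm{supp}(\xi)$, equivalently $\rho^d = 1$ with $d := \gcd(\mathrm{supp}(\xi))$. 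By Lemma \ref{lem: Um}, $\mathbb{U}_n = \mathbb{U}_{n[:p]}$, and the minimality of $n$ forces $\gcd(d,n) = 1$, hence $\gcd(d, n[:p]) = 1$; so the stabilizer is trivial and the orbit has size $n[:p]$.

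The crux for (2)--(4) is the claim that the smallest $i \geq 0$ with $\xi^{p^i} \in F_s$ is $i = e$. Using Frobenius additivity $(a+b)^p = a^p + b^p$ in characteristic $p$, one has $\xi^{p^i} = \sum_j c_j^{p^i} x^{jp^i/n}$, which lies in $F_s$ exactly when each fraction $jp^i/n$ has denominator dividing $n[:p]$, i.e.\ when $p^{e-i} \mid j$ for every $j \in \mathrm{supp}(\xi)$, i.e.\ when $p^{e-i} \mid d$. Since $\gcd(d,n)=1$ forces $p \nmid d$ whenever $e > 0$, this holds if and only if $i \geq e$. In particular $\xi^{p^e} \in F_s$, and the exponents of $\xi^{p^e}$ still have $n[:p]$ as their minimal common denominator. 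Applying the stabilizer computation of (1) to $\xi^{p^e}$ inside the Galois extension $F_s/F$ (where $p \nmid n[:p]$) yields $n[:p]$ distinct $\mathbb{U}_n$-conjugates of $\xi^{p^e}$, so that $F[\xi^{p^e}] = F_s$.

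Next, since $F'/F_s$ is purely inseparable, the minimal polynomial of $\xi$ over $F_s$ must have the form $T^{p^k} - \xi^{p^k}$, where $p^k$ is the smallest power of $p$ such that $\xi^{p^k} \in F_s$; by the preceding paragraph, $k = e$. Hence $[F_s[\xi] : F_s] = p^e$, and since $F_s = F[\xi^{p^e}] \subseteq F[\xi]$, we have $F[\xi] = F_s[\xi]$ and $[F[\xi]:F] = p^e \cdot n[:p] = n$. Combined with $F[\xi] \subseteq F'$ and $[F':F] = n$, this forces $F[\xi] = F'$, proving (3) and the degree part of (2). For (4), $f$ has exactly $n[:p]$ distinct roots in an algebraic closure (the $\mathbb{U}_n$-conjugates of $\xi$), so by symmetry each has common multiplicity $n/n[:p] = p^e$. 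The remaining assertion $f \in \Kk[[x]][y]$ follows because every root of $f$ lies in $\Kk[[x^{1/n}]]$, so the coefficients of $f$ belong to $\Kk[[x^{1/n}]] \cap F = \Kk[[x]]$.

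The main obstacle I foresee is establishing the Frobenius computation of the second paragraph, together with invoking the general fact that in a purely inseparable extension every element's minimal polynomial has the form $T^{p^k} - \alpha$. Once these are in hand, the remaining steps reduce to the tower formula and to the Galois-theoretic stabilizer argument already used for (1).
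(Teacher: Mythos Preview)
Your proof is correct and follows essentially the same route as the paper: both establish the tower $\Kk((x)) \subset \Kk((x^{1/n[:p]})) \subset \Kk((x^{1/n}))$, push $\xi$ into the middle field via Frobenius, use (1) on $\xi^{p^e}$ to pin down the separable part, and handle the purely inseparable top by showing $y^{p^e}-\xi^{p^e}$ is the minimal polynomial of $\xi$ over $F_s$. The differences are cosmetic: for (1) you use $d=\gcd(\mathrm{supp}(\xi))$ and the fact that minimality of $n$ is equivalent to $\gcd(n,d)=1$, whereas the paper uses the characteristic exponents $b_i$ and a B\'ezout identity; for $f\in\Kk[[x]][y]$ you argue via the intersection $\Kk[[x^{1/n}]]\cap\Kk((x))=\Kk[[x]]$, whereas the paper writes down the explicit factorization $f(y)=g(y^{p^e})$ with $g\in\Kk[[x]][y]$ the minimal polynomial of $\xi^{p^e}$; and for the inseparable step you invoke the general shape of minimal polynomials in purely inseparable extensions, whereas the paper shows directly that $\xi^{p^e}$ has no $p$-th root in $F_s$ and cites \cite{ZS 75} for irreducibility of $y^{p^e}-\xi^{p^e}$. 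None of these changes the architecture of the argument.
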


\begin{proof}
\eqref{pr:1}
  In order to prove the first assertion, it is enough to verify  that if $\rho \in \mathbb{U}_n$ is such that $\xi(x^{\frac{1}{n}}) = \xi (\rho \cdot x^{\frac{1}{n}})$, then $\rho = 1$. Consider the set ${\mathrm{Ch}} (\xi) := \left\{ \frac{b_i}{n} : i = 1, \dots, h \right\}$ of characteristic exponents of $\xi$ (see Definition \ref{def:ch-p}). We must have $x^{\frac{b_i}{n}} = (\rho \cdot x^{\frac{1}{n}})^{b_i}$, therefore   $\rho^{b_i} = 1$ for all  $i \in \{1, \dots, h\}$. Since $n$ and $b_1, \dots, b_h$ are coprime, there exist $a_0, \dots, a_h \in \Zz$ such that one has the B\'ezout identity:
    \[ a_0 \cdot n + a_1 \cdot b_1 + \cdots + a_h \cdot b_h = 1. \]
Therefore:
  \[ \rho = \rho^{a_0 \cdot n + a_1 \cdot b_1 + \cdots + a_h \cdot b_h} = 
  (\rho^n)^{a_0} \cdot (\rho^{b_1})^{a_1} \cdots (\rho^{b_h})^{a_h} = 1.\] 

  \medskip
  \eqref{pr:2}
  Let us prove the second assertion. Denote $\xi = \sum_j c_j x^{\frac{j}{n}}$. Set $q := p^{\nu_p (n)}$. As $q$ is a power of the characteristic $p$ of $\Kk$, we get: 
   \begin{equation} \label{eq:powerell}
     \xi^q = 
      \sum_j c_j^q \ x^{\frac{j q}{n}} = \sum_j c_j^q \ x^{\frac{j}{n[:p]}} = \tau(x^{\frac{1}{n[:p]}}), 
   \end{equation}
   where
   \[  \tau(t) := \sum_j c_j^{q}\  t^j \in \Kk[[t]].\]
   The hypothesis that $n$ is the minimal common denominator of the exponents of $\xi$ implies that $n[:p]$ is the minimal common denominator of the exponents of $\xi^q$. By Lemma \ref{lem: Um}, the extension 
  \begin{equation} \label{eq:np}
     \Kk((x)) \subset \Kk((x))[x^{\frac{1}{n[:p]}}]
   \end{equation}
  is Galois of degree $n[:p]$. By the first assertion of the present proposition applied to $\xi^q = \tau(x^{\frac{1}{n[:p]}})$, there are exactly $(n[:p])[:p] = n[:p]$ different conjugates of $\xi^q$ under the action of $\mathbb{U}_{n[:p]}$. The polynomial
\begin{equation} \label{eq: np6}
    g(y) := \prod_{\rho \in \mathbb U_{n[:p]}} (y - \tau (\rho  \cdot x^{\frac{1}{n[:p]}})) \in \Kk[[x^{\frac{1}{n[:p]}}]][y]
\end{equation}
  belongs to $\Kk[[x]][y]$, since its coefficients are invariant by the action of the Galois group $\mathbb{U}_{n[:p]}$ of the extension \eqref{eq:np}. Since $\xi^q$ belongs to $\Kk((x))[x^{\frac{1}{n[:p]}}]$ and the extension \eqref{eq:np} is Galois,  it follows that $g(y)$ is the minimal polynomial of $\xi^q$ over the field $\Kk((x))$. This implies that 
\begin{equation} \label{eq:np4}
    \Kk((x))[\xi^{q}] = \Kk((x))[x^{\frac{1}{n[:p]}}]
\end{equation} and 
\begin{equation} \label{eq: np2}
    [\Kk((x))[\xi^{q}] : \Kk ((x))] = n[:p].
\end{equation}
  Hence, if $p$ does not divide $n$, the statement \eqref{pr:2} is proved. 

\medskip
  {\em Therefore, in what follows we assume that $p$ divides $n$.}

  We claim that the series $\xi^q$ has no $p$-th root $\gamma$ in the ring $\Kk((x))[x^{\frac{1}{n[:p]}}]$. If this were the case, writing $\gamma = \sum_j d_j \ x^{\frac{j}{n[:p]}}$, we obtain that $\gamma^p = \sum  d_j^p  \ x^{\frac{jp}{n[:p]}} = \xi^q$. In particular, the support of the series $\xi^q$ is contained in $\frac{p}{n[:p]} \mathbb{Z}$. It follows that the support of the series $\xi$ is contained in $\frac{p}{q \cdot n[:p]} \mathbb{Z} = \frac{1}{p^{\nu_p (n) -1} \cdot n[:p] }\mathbb{Z} $. This would  contradict the hypothesis that $n = p^{\nu_p (n)} \cdot n[:p]$ is the minimal denominator of the exponents of $\xi$. 

   Combining the previous claim and equality \eqref{eq:np4}, we see that the series $\xi^q $ has no  $p$-th root in the field $\Kk((x))[\xi^{q}]$. Since $q$ is a power of $p$, this implies that the polynomial  $y^{q} - \xi^{q} \in \Kk((x))[\xi^{q}][y] $ is irreducible (see \cite[ Ch.II, \S 5, Th.7]{ZS 75}). This means that $y^{q} - \xi^{q} $ is the minimal polynomial of $\xi$ over $\Kk((x))[\xi^{q}]$. Thus: 
    \begin{equation} \label{eq: np3}
        [\Kk((x))[\xi]: \Kk((x))[\xi^{q}]] = q.
    \end{equation}
   Then, the tower formula for the extensions 
    \[
      \Kk((x))  \subset 
      \Kk((x))[\xi^{q}]
       \subset 
       \Kk((x))[\xi], 
    \]
  and the equations \eqref{eq: np2} and \eqref{eq: np3} imply that
\[
      [\Kk((x))[\xi]: \Kk((x)) ] = 
      [ \Kk((x))[\xi^{q}] :  \Kk((x))] \cdot
      [\Kk((x))[\xi]: \Kk((x))[\xi^{q}]] = n[:p] \cdot q = n.
\]
    As a consequence, the minimal polynomial $f$ of $\xi$ over $\Kk ((x))$ has degree $n$. 
    
    In order to get the whole assertion \eqref{pr:2}, it remains to prove that $f\in K[[x]][y]$. We set $G(y) := g(y^q) \in \Kk[[x]][y]$. Notice that $G(\xi) = g(\xi^q ) = 0$, by equations \eqref{eq:powerell}   and \eqref{eq: np6}.  Since $f$ and $G$ are monic and have the same degree,  this implies that $f= G$, therefore $f \in \Kk[[x]][y]$.

  \medskip
  \eqref{pr:3}
  In order to prove the third assertion, notice that $\Kk((x))[\xi]$ and $ \Kk((x))[x^{\frac{1}{n}}]$ are extensions of degree $n$ of $\Kk((x))$. 
  Since $\xi \in  \Kk((x))[x^{\frac{1}{n}}]$,  
  we deduce that $\Kk((x))[\xi] = \Kk((x))[x^{\frac{1}{n}}]$.

  \medskip
  \eqref{pr:4}
  The last assertion is a consequence of the first and the second one.
\end{proof}

By Definition \ref{def:ch-p}, all the Newton-Puiseux roots of the irreducible polynomial $f$ in Proposition \ref{prop: degree-p} have the same characteristic exponents. This holds because those roots are conjugated under the action of $\mathbb{U}_n$, which implies that they are power series with the same support.

\begin{remark}
      We speak here of the  characteristic exponents of a Newton-Puiseux series $\xi$ using Definition     \ref{def:ch-p}. They do not coincide in general with the characteristic exponents of the branch $A$ admitting $\xi$ as a Newton-Puiseux root, which are those obtained from the Eggers-Wall tree $\Theta_L (A)$ of Definition \ref{Ew-p-lotus} (see Example \ref{ex:diff-trees} below). 
\end{remark}

\medskip 

The following consequence of Proposition \ref{prop: degree-p} provides a characterization of the polynomials in $\Kk[[x]][y]$ which have a root in the ring $\Kk[[x^{\frac{1}{n}}]]$ of Newton-Puiseux series. One may compare it to the characterization given in \cite[Remark 2.1.11]{C 80} in terms of the order of inseparability of a field extension (see \cite[Chapter 2, \S 16]{ZS 75}).

\begin{corollary} \label{cor:NP-p}
   Let $f \in \Kk[[x]][y]$ be a monic irreducible polynomial of degree $n \geq 1$. The following are equivalent: 
   \begin{enumerate} 
      \item \label{item a cor:NP-p} The polynomial $f$ has a root in the ring $\Kk[[x^{\frac{1}{n}}]]$ of  formal Newton-Puiseux series.
        \item \label{item b cor:NP-p}
          The polynomial $f$ belongs to the ring $\Kk[[x]][y^{q}]$, where $q := p^{\nu_p(n)}$. 
    \end{enumerate}
\end{corollary}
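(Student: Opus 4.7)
The plan is to extract one direction almost for free from Proposition~\ref{prop: degree-p}, and to prove the converse by combining the tame Newton--Puiseux theorem (available in characteristic $p$ when the degree is coprime to $p$) with the fact that the Frobenius endomorphism $z\mapsto z^p$ is additive in characteristic $p$, which lets us extract $q$-th roots of Newton--Puiseux series term-by-term.

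For the implication \eqref{item a cor:NP-p}$\Rightarrow$\eqref{item b cor:NP-p}, I would invoke Proposition~\ref{prop: degree-p} directly. If $\xi\in\Kk[[x^{1/n}]]$ is a root of $f$, let $m\le n$ be the minimal common denominator of the exponents of $\xi$. By Proposition~\ref{prop: degree-p}\eqref{pr:2}, the minimal polynomial of $\xi$ over $\Kk((x))$ lies in $\Kk[[x]][y]$ and has degree $m$; since $f$ is monic irreducible of degree $n$ and vanishes on $\xi$, this forces $m=n$ and identifies $f$ with that minimal polynomial. The construction in the proof of Proposition~\ref{prop: degree-p} then exhibits $f$ explicitly as $f(y)=g(y^{q})$ with $g\in\Kk[[x]][y]$ (namely $g$ is the minimal polynomial of $\xi^{q}$, cf.\ equation~\eqref{eq: np6}). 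In particular $f\in\Kk[[x]][y^{q}]$, as required.

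For the implication \eqref{item b cor:NP-p}$\Rightarrow$\eqref{item a cor:NP-p}, I would write $f(y)=g(y^{q})$ with $g\in\Kk[[x]][y]$ of degree $n/q=n[:p]$. First observe that $g$ is irreducible: any non-trivial factorization $g=g_{1}g_{2}$ would yield a non-trivial factorization $f(y)=g_{1}(y^{q})\,g_{2}(y^{q})$. Since $\deg g=n[:p]$ is coprime to $p$, the tame version of the Newton--Puiseux theorem applies and produces a root $\eta\in\Kk[[x^{1/n[:p]}]]$ of $g$. Writing $\eta=\sum_{j}c_{j}\,x^{j/n[:p]}$, I would then set
\[
\xi \;:=\; \sum_{j} c_{j}^{1/q}\, x^{j/n}\ \in\ \Kk[[x^{1/n}]],
\]
using that $\Kk$ is algebraically closed (hence perfect) to take $q$-th roots of each coefficient. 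Because the Frobenius $z\mapsto z^{p}$, and therefore also $z\mapsto z^{q}$, is additive in characteristic $p$, one computes
\[
\xi^{q}=\sum_{j} c_{j}\, x^{jq/n}=\sum_{j} c_{j}\, x^{j/n[:p]}=\eta,
\]
and hence $f(\xi)=g(\xi^{q})=g(\eta)=0$, so $\xi$ is the desired Newton--Puiseux root of $f$.

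The only real obstacle is making sure to invoke a version of the Newton--Puiseux theorem that is valid in positive characteristic; this is why we needed to first factor out the inseparable part $y\mapsto y^{q}$ in order to reduce the problem to a polynomial $g$ of degree coprime to $p$, where the tame Newton--Puiseux theorem (separability of the induced field extension and its tameness) delivers roots in $\Kk[[x^{1/n[:p]}]]$. The rest is a bookkeeping computation made transparent by the additivity of Frobenius.
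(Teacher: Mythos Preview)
Your proof is correct and follows essentially the same route as the paper: both directions use Proposition~\ref{prop: degree-p} for \eqref{item a cor:NP-p}$\Rightarrow$\eqref{item b cor:NP-p}, and for \eqref{item b cor:NP-p}$\Rightarrow$\eqref{item a cor:NP-p} both write $f(y)=g(y^{q})$, apply the tame Newton--Puiseux theorem (the paper cites P\l oski's \cite[Corollary~2.4]{Ploski 13}) to the degree-$n[:p]$ polynomial $g$, and then take term-by-term $q$-th roots using additivity of Frobenius. Your write-up is slightly more explicit about why $m=n$ in the first direction and why $g$ is irreducible in the second, but the argument is the same.
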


\begin{proof}
  The implication  \eqref{item a cor:NP-p} $\Longrightarrow$ \eqref{item b cor:NP-p} is a consequence of the proof of Proposition \ref{prop: degree-p}. 

  Let us prove the implication \eqref{item b cor:NP-p} $\Longrightarrow$ \eqref{item a cor:NP-p}. By hypothesis, there exists a monic polynomial $G \in \Kk[[x]][y]$ such that $f(y) = G( y^q)$. Notice that $G$ is irreducible, as $f$ is irreducible. As the degree $n[:p]$ of $G$ is not divisible by $p$,
  \cite[Corollary 2.4]{Ploski 13} 
  implies that there exists a Newton-Puiseux series $\tau= \sum_j a_j \ x^{\frac{j}{n[:p]}}  \in \Kk[[x^{\frac{1}{n[:p]}}]]$ such that $G(\tau) =0$. Since $\Kk$ is an algebraically closed field, for every $j \geq 1$ there exists $b_j\in \Kk$ such that $b_j^q=a_j$. Consider the series 
    \[ \xi:= \sum_j b_j x^{\frac{j}{q \cdot n[:p]}}\in \Kk [[x^{\frac{1}{n}}]]. \]  
  As $q$ is a power of $p$, one has $\xi^q = \sum_j b_j^{q} \ x^{\frac{j}{n[:p]}} = \tau$. Therefore: 
  \[ f(\xi) = G(\xi^{q}) = G(\tau) = 0. \] 
\end{proof}

Corollary \ref{cor:NP-p}  is  a generalization of \cite[Corollary 2.4]{Ploski 13}, where it is proven that a monic irreducible polynomial $f \in \Kk[[x]][y]$ of degree $n$ coprime to $p$ has Newton-Puiseux roots satisfying \eqref{fmla:NewtPuiseux}, the set \eqref{eq:Zer} has $n$ elements and we have a Galois extension, as in the case of a base field of characteristic zero. This is related to the fact that if $n$ is coprime to the characteristic $p$ of the algebraically closed field $\Kk$, then the polynomial $x^n - 1$ has $n$ pairwise distinct roots in $\Kk$.

\begin{example}
 Using Corollary \ref{cor:NP-p}, we deduce that the polynomials $f_p$ and $h_p$ considered in Example \ref{ex:noNProot} do not have roots in the ring $\Kk[[x^{\frac{1}{n}}]]$ of formal Newton-Puiseux series in $x$.
\end{example}

Next we introduce a version $\Theta_L (A)[:p]$ of Eggers-Wall tree based on Definition \ref{def:EW}, when all the branches of the reduced plane curve singularity $A$  admit Newton-Puiseux roots. Then we compare it to the tree $\Theta_L (A) $ introduced in Definition \ref{Ew-p-lotus} starting from the lotus.

\begin{definition} \label{def:EWp}
  Let $\Kk$ be an algebraically closed field of characteristic $p >0$ and $f \in (x,y)\Kk[[x, y]] \smallsetminus \{0\}$.  Denote by $L$ the branch $(Z(x),O)$ and by $A$ the formal plane curve  singularity defined by $f$. {\em Assume that all the irreducible factors of $f$ admit Newton-Puiseux roots}.

\noindent
     $\bullet$ The {\bf Newton-Puiseux-Eggers-Wall tree} $\boxed{\Theta_L(A_l)[:p]}$ of a branch $A_l \neq L$ of $A$ relative to $L$ is a compact segment endowed with a homeomorphism $\boxed{\ex_L[:p]}: \Theta_L(A_l)[:p] \to [0, \infty]$ called the {\bf exponent function},  and with {\bf marked points}, which are the preimages by the exponent function of the characteristic exponents of any Newton-Puiseux root of $A_l$ relative to $L$. The point $(\ex_L[:p])^{-1}(0)$ is {\bf labeled by $L$} and $(\ex_L[:p])^{-1}(\infty)$ is {\bf labeled by $A_l$}. The tree $\Theta_L (A_l)[:p]$ is also endowed with the  {\bf auxiliary index function}   $\boxed{\de_L^{(a)}}: \Theta_L(A_l)[:p] \to \Zz_{>0}$, which is defined exactly as the index function in Definition \ref{def:EW} by considering the variation of the smallest denominators of the characteristic exponents. The {\bf index function} $\boxed{\de_L[:p]} : \Theta_L(A_l)[:p] \to \Zz_{>0}$ is specified using Definition \ref{def:partcoprime} by: 
     \[
        \de_L[:p] (P) := \left\{ 
                         \begin{array}{lcl}
                            (\de_L^{(a)} (P))[:p] & if & P \ne A_l,
                              \\
                            (L\cdot A_l)_O & if & P =  A_l. 
                         \end{array}
                        \right.
      \]

    \noindent 
       $\bullet$ 
          The {\bf Eggers-Wall tree} $\boxed{\Theta_L(L)[:p]}$ is a point labeled by $L$, at which $\ex_L[:p](L) = 0$ and $\de_L{[:p]}(L) = 1$.

     \noindent
        $\bullet$  The {\bf Newton-Puiseux-Eggers-Wall tree $\boxed{\Theta_L (A)[:p]}$ of $A$ relative to $L$} is obtained from the disjoint union of the trees $\Theta_L (A_l)[:p]$ of its branches $A_l$ by identifying for each pair of distinct branches $A_l$ and $A_m$ of $A$ their points with equal exponent function not greater than the order of coincidence $k(A_l, A_m)$. The {\bf order of coincidence} $\boxed{k(A_l, A_m)}$ is defined as in the case of characteristic zero,  by comparing the Newton-Puiseux roots of the branches (see Definition \ref{def:charexpcoinc}). The {\bf exponent function} $\boxed{\ex_L[:p]} : \Theta_L (A)[:p] \to [0, \infty]$ and the {\bf index function} $\boxed{\de_L[:p]} : \Theta_L (A)[:p] \to \Zz_{>0}$ are obtained by gluing the exponent functions of the trees $\Theta_L (A_l)[:p]$, for $A_l$ running through the branches of $A$. The {\bf marked points} and the {\bf labeled points} of the tree $\Theta_L (A)[:p]$ are specified as in Definition \ref{def:EW}.
\end{definition}

The following proposition is crucial in order to understand the properties of the index function $\de_L[:p]$. It is analogous to the description of the index function in the case of characteristic zero given in Remark \ref{rem-conjugates}. 

\begin{proposition}
    Let $ \xi = \sum_j c_j x^{\frac{j}{n}} \in  \Kk[[x^{\frac{1}{n}}]]$ be a Newton-Puiseux series associated to a branch $A$. Assume that $n$ is the least common denominator of the exponents of $\xi$. Take $P \in (L A)$ and set $\beta: = \ex_{L}[:p] (P) \in (0, \infty)$. Consider the truncation 
    \[ \xi_P := \sum_{\frac{j}{n} < \beta} c_j x^{\frac{j}{n}} \in \Kk[x^{\frac{1}{n}}] \] 
    of the series $\xi$. Then, $\de_L[:p] (P)$ is equal to the number of  different conjugates of $ \xi_P$ under the action of $\mathbb{U}_n$. 
\end{proposition}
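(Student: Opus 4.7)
\medskip
\noindent\textit{Proof plan.}
The plan is to compute both sides of the asserted equality in terms of the integers $e_0 = n, e_1, \ldots$ of Definition~\ref{def:ch-p} and then match them. Let $b_1/n < \cdots < b_h/n$ be the characteristic exponents of $\xi$, and let $r \in \{0, 1, \ldots, h\}$ be the number of these that are strictly less than $\beta$. I would first observe, from the recursive formulae defining the $e_i$, that the least common multiple of the denominators (in lowest terms) of $b_1/n, \ldots, b_r/n$ equals $n/e_r$, where $e_r = \gcd(n, b_1, \ldots, b_r)$; a short induction on $r$ using $e_i = \gcd(e_{i-1}, b_i)$ does it. Since the marked points of $\Theta_L(A_l)[:p]$ inside the half-open segment $[LP)$ correspond to these $r$ characteristic exponents, this gives
\[
  \de_L^{(a)}(P) = n/e_r, \qquad \de_L[:p](P) = (n/e_r)[:p] = n[:p]/e_r[:p],
\]
the last equality using multiplicativity of $[\,{:}\,p]$ (equation~\eqref{eq: mult-p}).

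Next, I would count the conjugates of $\xi_P$. By Lemma~\ref{lem: Um} the group $\mathbb{U}_n = \mathbb{U}_{n[:p]}$ is cyclic of order $n[:p]$. Writing $S := \{j : c_j \neq 0 \text{ and } j/n < \beta\}$ for the support of $\xi_P$, the stabilizer under the $\mathbb{U}_n$-action is $G = \{\sigma \in \mathbb{U}_n : \sigma^j = 1 \ \forall j \in S\}$. Since $\mathbb{U}_n$ is cyclic of order $n[:p]$, a standard calculation gives $|G| = \gcd(n[:p], g_S)$ where $g_S := \gcd(j : j \in S)$, so the number of conjugates of $\xi_P$ is $n[:p]/\gcd(n[:p], g_S)$.

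The main step is then to show the number-theoretic identity
\begin{equation}\label{eq:key-id}
  \gcd(n[:p], g_S) = e_r[:p].
\end{equation}
For the divisibility $e_r[:p] \mid \gcd(n[:p], g_S)$, I would use that Definition~\ref{def:ch-p} forces $e_i \mid j$ for every $j \in S$ with $b_i \leq j < b_{i+1}$ (taking $b_0 := 0$), together with $e_r \mid e_i$ for $i \leq r$; this yields $e_r \mid g_S$, and combined with $e_r \mid n$ gives $e_r[:p] \mid \gcd(n[:p], g_S)$. For the reverse, I would use that $b_1, \ldots, b_r \in S$, so $g_S \mid \gcd(b_1, \ldots, b_r)$, and hence $\gcd(n[:p], g_S)$ (being coprime to $p$ and dividing both $n$ and $\gcd(b_1, \ldots, b_r)$) divides $\gcd(n, b_1, \ldots, b_r) = e_r$, so it divides $e_r[:p]$.

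Combining these two computations gives
\[
  \#\bigl(\mathbb{U}_n \cdot \xi_P\bigr) \;=\; \frac{n[:p]}{\gcd(n[:p], g_S)} \;=\; \frac{n[:p]}{e_r[:p]} \;=\; \de_L[:p](P),
\]
which is the desired statement. The delicate part will be the identity \eqref{eq:key-id}, which has to carefully separate the $p$-part (irrelevant on the left, trivially dominated on the right) from the coprime-to-$p$ part using Definition~\ref{def:partcoprime} and the structural constraints on the support $S$ inherited from the iterative definition of characteristic exponents.
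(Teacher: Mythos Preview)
Your argument is correct, but it takes a somewhat different and more laborious route than the paper's. The paper's proof is very short: it observes that $m := \de_L^{(a)}(P)$ is precisely the minimal common denominator of the exponents of the truncation $\xi_P$, and then applies Proposition~\ref{prop: degree-p}\eqref{pr:1} directly to $\xi_P$ (viewed in $\Kk[[x^{1/m}]]$) to conclude that $\xi_P$ has exactly $m[:p] = \de_L[:p](P)$ conjugates, the passage from $\mathbb{U}_m$ to $\mathbb{U}_n$ being immediate since $m \mid n$. In other words, the paper recycles the orbit count already established for a series with minimal denominator, rather than redoing it.

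Your approach instead computes the $\mathbb{U}_n$-stabilizer of $\xi_P$ from scratch and reduces the problem to the arithmetic identity $\gcd(n[:p], g_S) = e_r[:p]$, which you then prove by a two-sided divisibility argument. This is perfectly valid and entirely self-contained (it does not need Proposition~\ref{prop: degree-p}), but it essentially reproves a special case of that proposition together with the fact that $\de_L^{(a)}(P)$ equals the minimal common denominator of $\xi_P$, both of which are packaged into your identity~\eqref{eq:key-id}. The paper's route is more economical; yours makes the arithmetic behind the index function more explicit.
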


\begin{proof}
  Consider the characteristic exponents $\frac{b_1}{n}, \dots, \frac{b_h}{n}$ of the series $\xi$ (see Definition \ref{def:ch-p}). Let $i \in \{0, \dots, h \}$ be the unique integer such that $\frac{b_{i}}{n}  \leq \beta < \frac{b_{i+1}}{n}$, where we denote by convenience $b_0 := 0$ and $b_{h+1} := \infty$. Set $m := \de_L^{(a)} (P)$. Definition \ref{def:EWp} implies that $ \xi_P \in \Kk[[ x^{\frac{1}{m}}]]$ and $m$ is the minimal common denominator of the exponents of $\xi_P$. In particular,  $m$ divides $n$. By Proposition \ref{prop: degree-p}, there are exactly $m[:p]$ different conjugates of $\xi_P$ in $\Kk[[x^{\frac{1}{m}}]]$, which are the different roots of the minimal polynomial of $\xi$ over $\Kk((x))$. The proposition follows, since $m[:p] = \de_L[:p] (P)$ and $m$ divides $n$, which implies that $\mathbb U_m \subset \mathbb U_n$. 
\end{proof}

\begin{remark}
   Let $A$ be  a branch which has a Newton-Puiseux root $\xi \in \Kk[[x^{1/\Nn}]]$. Notice that the index function  $\de_L[:p]$ may be continuous at a marked point $P$ of $\Theta_L (A)[:p]$ and the end $A$ of $\Theta_L (A)[:p]$ is a point of discontinuity of $\de_L[:p]$ if $p$ divides $(L \cdot A)_O$ (see Example \ref{ex:diff-trees}). These phenomena cannot happen for the index function in the case of characteristic zero. 
\end{remark}

Next, we define a contact complexity function on the tree $\Theta_L (A)[:p]$ in the same way as we did in the case of characteristic zero (compare with Definition \ref{def:contact complexity}).

\begin{definition}
  The {\bf contact complexity} function ${\ic}_L[:p]: \Theta_L (A)[:p] \to [0, \infty]$ is defined at every point $P \in \Theta_L (A)[:p]$ by the formula 
  \begin{equation}  
    \label{eq:intcoefint-p}
      \boxed{{\ic}_L[:p](P)} := \int_L^{P} \frac{d \: {\ex}_L[:p]}{{\de}_L[:p]}. 
  \end{equation}
\end{definition}

One may express  the value of the integral \eqref{eq:intcoefint-p} in terms of the characteristic exponents of any branch $A_l$ of $A$ such that $P \in (L A_l]$. Denote by $\xi \in \Kk[[x^{\frac{1}{n}}]]$ a Newton-Puiseux series associated to
$A_l$, where we assume that $n$ is  minimal. We use the notations of Definition \ref{def:ch-p} for the characteristic exponents of $\xi$. Define  $\boxed{P_j} :=  \ex_L^{-1}[:p](b_j/n)$, for each $j \in \{ 0, \dots, h+1 \}$. The index function ${\de}_L[:p]$ has value $(n_1 \cdots n_j)[:p]$ in the half-open segment  $ ( P_j  P_{j+1}]$. Denote by $r \in \{0, \dots,  h\}$ the unique integer such that  $P\in (P_r P_{r+1} ] $ and $\alpha = \ex_L[:p](P)$.

 Formula \eqref{eq:intcoefint-p} implies that:  
\begin{equation} 
        \label{eq:intcoefint2-p}
     {\ic}_L[:p](P) =      
      \left(
    \sum_{j =1}^{r}
    \frac{1}{(n_1 \dots n_{j-1})[:p]} \frac{b_j - b_{j-1}}{n} 
    \right)  + 
    \frac{1}{(n_1 \dots n_{r})[:p]} \left( \alpha - \frac{b_r}{n} \right).
\end{equation}

In the next two lemmas, we use the notations of Definition \ref{def:ch-p} for the characteristic exponents of the Newton-Puiseux series $\xi$ and the notion of order of coincidence from Definition \ref{def:charexpcoinc}.

\begin{lemma} \label{lem:counting} 
   Let  $\xi \in \Kk[[x^{\frac{1}{n}}]]$ be a Newton-Puiseux series. Assume that $n$ is the least common denominator of the exponents of $\xi$. Let $\zeta \in \Kk[[t]]$ be such that $\xi = \zeta(x^{\frac{1}{n}})$. Then, for every $j \in \{1, \dots, h\}$, we have: 
   \begin{enumerate}
       \item \label{item counting a}  
           $\# \{ \rho \in \mathbb U_n : \nu_x( {\zeta} ( \rho \cdot x^{\frac{1}{n}}) - {\zeta}(x^{\frac{1}{n}} )) \geq \frac{b_j}{n} \}  =   e_{j-1}[:p]$,
         \item \label{item counting b}
            $\# \{ \rho \in \mathbb U_n : \nu_x( {\zeta} ( \rho \cdot x^{\frac{1}{n}}) - {\zeta}(x^{\frac{1}{n}}))  =  \frac{b_j}{n} \}  =  e_{j-1}[:p] -e_j[:p]$.
   \end{enumerate}
\end{lemma}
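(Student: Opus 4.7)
The plan is to translate the statement into an explicit $x$-adic valuation computation on the difference $\zeta(\rho \cdot x^{1/n}) - \zeta(x^{1/n})$, reduce the problem to determining which roots of unity kill certain exponents, and then apply Lemma \ref{lem: Um}.

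First I would write $\zeta(t) = \sum_i c_i t^i$ and expand
\[
\zeta(\rho \cdot x^{\frac{1}{n}}) - \zeta(x^{\frac{1}{n}}) = \sum_i c_i (\rho^i - 1)\, x^{\frac{i}{n}},
\]
so that $\nu_x\bigl(\zeta(\rho \cdot x^{\frac{1}{n}}) - \zeta(x^{\frac{1}{n}})\bigr) = \min\{i/n : c_i \neq 0 \text{ and } \rho^i \neq 1\}$. The condition $\nu_x \geq b_j/n$ is therefore equivalent to the simultaneous conditions $\rho^i = 1$ for every $i < b_j$ with $c_i \neq 0$.

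The key combinatorial step is to decode these conditions using Definition \ref{def:ch-p}. By construction, for $b_r \leq i < b_{r+1}$ with $c_i \neq 0$ one has $e_r \mid i$, with the convention $b_0 := 0$. Hence any $i < b_j$ with $c_i \neq 0$ satisfies $e_r \mid i$ for some $r \leq j-1$, and since $e_{j-1} \mid e_r$ for $r \leq j-1$, each such $i$ is a multiple of $e_{j-1}$. Moreover, the specific indices $b_1, \dots, b_{j-1}$ and all multiples of $n$ less than $b_1$ occur among the exponents we must control (the $b_r$ because $c_{b_r} \neq 0$ by definition and $e_{j-1} \mid b_r$ for $r \leq j-1$). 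Thus the condition ``$\rho^i = 1$ for all $i < b_j$ with $c_i \neq 0$'' is equivalent to $\rho^n = \rho^{b_1} = \cdots = \rho^{b_{j-1}} = 1$, i.e.\ $\rho^{\gcd(n, b_1, \dots, b_{j-1})} = 1$. A telescoping computation using the recursion $e_{r+1} = \gcd(e_r, b_{r+1})$ yields $\gcd(n, b_1, \dots, b_{j-1}) = e_{j-1}$, so the condition is $\rho \in \mathbb{U}_{e_{j-1}}$. Applying Lemma \ref{lem: Um}, the cardinality of this set is $e_{j-1}[:p]$, which gives assertion \eqref{item counting a}.

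For assertion \eqref{item counting b}, I would add the extra requirement that the valuation equals $b_j/n$, which amounts to $\rho^{b_j} \neq 1$ in addition to $\rho \in \mathbb{U}_{e_{j-1}}$. For $\rho \in \mathbb{U}_{e_{j-1}}$, the equality $\rho^{b_j} = 1$ is equivalent to $\rho^{\gcd(e_{j-1}, b_j)} = \rho^{e_j} = 1$, i.e.\ $\rho \in \mathbb{U}_{e_j} \subset \mathbb{U}_{e_{j-1}}$. Subtracting, and again invoking Lemma \ref{lem: Um}, the count of $\rho$ with valuation exactly $b_j/n$ is $|\mathbb{U}_{e_{j-1}}| - |\mathbb{U}_{e_j}| = e_{j-1}[:p] - e_j[:p]$, as asserted.

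The only delicate point is the combinatorial lemma $\gcd(n, b_1, \dots, b_{j-1}) = e_{j-1}$ and the careful verification that the set $\{i < b_j : c_i \neq 0\}$ is controlled by the finitely many indices $n\mathbb{Z}\cap[0, b_1), b_1, b_2, \dots, b_{j-1}$ as far as the constraints on $\rho$ are concerned; this is essentially a bookkeeping exercise with the recursive definition of the $e_r$'s.
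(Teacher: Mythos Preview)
Your proof is correct and follows essentially the same approach as the paper's: both arguments reduce the condition $\nu_x \geq b_j/n$ to the requirement that $\rho$ fixes the coefficients of $x^{b_1/n},\dots,x^{b_{j-1}/n}$, and then count such $\rho$. Your version is slightly more explicit, identifying the relevant set directly as the subgroup $\mathbb{U}_{e_{j-1}}$ via the gcd computation and Lemma~\ref{lem: Um}, whereas the paper phrases the count as the quotient $e_0[:p]/(n_1[:p]\cdots n_{j-1}[:p])$; these are the same computation viewed dually, and part \eqref{item counting b} is obtained by subtraction in both cases.
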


\begin{proof}
  For each $j \in \{ 1, \dots, h\}$, the number of different coefficients of the term $x^{\frac{b_j}{n}}$ in the series ${\zeta} (\rho \cdot x^{\frac{1}{n}})$, when $\rho$ varies among the elements of the group $\mathbb U_n$, is equal to $n_j[:p]$. 

  If $j =1$, then the equality in Formula \eqref{item counting a} is clear, as Proposition \ref{prop: degree-p} implies that $e_0[:p] = n[:p]$ is equal to the number of different conjugates of ${\zeta} (x^{\frac{1}{n}}) $ by the action of $\mathbb{U}_n$. Next, take $1 < j \leq h$. If $\rho \in \mathbb{U}_n$ and if  $\nu_x( {\zeta} ( \rho \cdot x^{\frac{1}{n}}) - {\zeta}(x^{\frac{1}{n}} )) \geq \frac{b_j}{n}$, then the coefficients of the terms $x^{\frac{b_1}{n}}, \dots, x^{\frac{b_{j-1}}{n}}$ in the series ${\zeta}(x^{\frac{1}{n}})$ and ${\zeta}(\rho\cdot x^{\frac{1}{n}})$ coincide. This implies that the number of elements in the set defined by the left-hand side of Formula \eqref{item counting a} is equal to $e_0[:p] /( n_1[:p] \cdots n_{j-1} [:p])=e_{j-1}[:p]$, an equality which follows from equality \eqref{eq: mult-p} and the notations of Definition \ref{def:ch-p}.

  Finally, Formula  \eqref{item counting b} is a  consequence of Formula \eqref{item counting a}. 
\end{proof}

\begin{lemma} \label{lem: comparison}
   Let $f, g \in \Kk[[x]][y]$ be two distinct monic irreducible polynomials. Assume that $f$ and $g$ have Newton-Puiseux roots $\xi $ and $\tau$, respectively. Let $n \in \Zz_{>0}$ be the least common denominator of the exponents of $\xi$. 
   Denote by $\alpha$ the order of coincidence of the branches $Z(f)$ and $Z(g)$ relative to $L=Z(x)$. Let $r \in \{0, \dots, h \}$ be the unique integer such that $\frac{b_r}{n} < \alpha \leq \frac{b_{r+1}}{n}$. Then, we have: 
     \begin{enumerate}
        \item \label{item countg a}
          $\# \{ \eta \in \cZ_x( f)  : \nu_x( \eta - \tau ) =  \frac{b_j}{n} \}  =   e_{j-1}[:p] - e_j[:p]$, for all $j \in \{ 1, \dots, r\}$.  
        \item \label{item countg b}
          $\# \{ \eta \in \cZ_x( f)  : \nu_x( \eta - \tau ) =  \alpha \}  =  e_r[:p]$.
     \end{enumerate}
\end{lemma}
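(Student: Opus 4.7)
The plan is to single out a best approximation $\eta^* \in \cZ_x(f)$ of $\tau$, that is, a Newton-Puiseux root of $f$ satisfying $\nu_x(\eta^* - \tau) = \alpha$, and to express everything in terms of $\nu_x(\eta - \eta^*)$ for $\eta \in \cZ_x(f)$. The existence of $\eta^*$ follows from a Galois transitivity argument. By definition of $\alpha$, there is a pair $(\eta_0, \tau_0) \in \cZ_x(f) \times \cZ_x(g)$ with $\nu_x(\eta_0 - \tau_0) = \alpha$. Choosing $N$ to be a common multiple of $n$ and $\deg g$, one embeds both $\cZ_x(f)$ and $\cZ_x(g)$ in $\Kk[[x^{1/N}]]$, and the group $\mathbb{U}_N$ acts on this ring via $x^{1/N} \mapsto \rho \cdot x^{1/N}$, preserving $\nu_x$ and mapping $\cZ_x(f)$ (respectively $\cZ_x(g)$) into itself. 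A direct verification using Proposition \ref{prop: degree-p} and the fact that $\Kk$ is algebraically closed shows that this action is transitive on $\cZ_x(g)$, so some $\sigma \in \mathbb{U}_N$ sends $\tau_0$ to $\tau$; its image $\eta^* := \sigma(\eta_0) \in \cZ_x(f)$ then satisfies $\nu_x(\eta^* - \tau) = \alpha$.

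I would then exploit the non-Archimedean triangle inequality applied to the identity $\eta - \tau = (\eta - \eta^*) + (\eta^* - \tau)$ for $\eta \in \cZ_x(f)$. This yields a trichotomy for $\nu_x(\eta - \tau)$: it equals $\nu_x(\eta - \eta^*)$ when $\nu_x(\eta - \eta^*) < \alpha$, and equals $\alpha$ when $\nu_x(\eta - \eta^*) \geq \alpha$ (in the borderline case $\nu_x(\eta - \eta^*) = \alpha$, one only gets $\nu_x(\eta - \tau) \geq \alpha$, but the maximality of $\alpha$ over our fixed $\tau$ forces the reverse inequality). Since $\eta^*$ is a $\mathbb{U}_n$-conjugate of $\xi$ and hence shares its characteristic exponents $b_1/n, \dots, b_h/n$, Lemma \ref{lem:counting} applies verbatim to $\eta^*$ and yields the following counts: the number of $\eta \in \cZ_x(f)$ satisfying $\nu_x(\eta - \eta^*) = b_j/n$ is $e_{j-1}[:p] - e_j[:p]$ for every $j \in \{1, \dots, h\}$, and the number satisfying $\nu_x(\eta - \eta^*) \geq b_j/n$ is $e_{j-1}[:p]$.

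Part \eqref{item countg a} would then follow directly from the first branch of the trichotomy: when $j \leq r$, the condition $b_j/n < \alpha$ ensures that $\nu_x(\eta - \tau) = b_j/n$ is equivalent to $\nu_x(\eta - \eta^*) = b_j/n$, giving the count $e_{j-1}[:p] - e_j[:p]$. For part \eqref{item countg b}, the trichotomy identifies the set to be counted with $\{\eta \in \cZ_x(f) : \nu_x(\eta - \eta^*) \geq \alpha\}$; since the values of $\nu_x(\eta - \eta^*)$ lie in the discrete set $\{b_1/n, \dots, b_h/n, \infty\}$ and $b_r/n < \alpha \leq b_{r+1}/n$ (with the convention $b_{h+1}/n = \infty$), this set coincides with $\{\eta \in \cZ_x(f) : \nu_x(\eta - \eta^*) \geq b_{r+1}/n\}$, of cardinality $e_r[:p]$ (in the edge case $r = h$ this count is $1 = e_h[:p]$, corresponding to $\eta = \eta^*$ alone). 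The main obstacle is the equality case $\nu_x(\eta - \eta^*) = \alpha$ of the trichotomy, whose resolution depends on the maximality of $\alpha$ relative to the fixed $\tau$ and not just on the overall maximum over all pairs $(\eta, \tau')$; this is precisely what makes the initial Galois transitivity argument indispensable in positive characteristic.
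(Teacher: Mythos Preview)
Your approach is correct and essentially the same as the paper's. Both arguments first use the $\mathbb{U}_m$- (or $\mathbb{U}_N$-) transitivity on $\cZ_x(g)$ to show that $\alpha = \max\{\nu_x(\eta-\tau):\eta\in\cZ_x(f)\}$ for the fixed $\tau$, then reduce the counts $\#\{\eta:\nu_x(\eta-\tau)=b_j/n\}$ to the self-comparison counts of Lemma~\ref{lem:counting} via the ultrametric inequality. The only cosmetic differences are that you make the best approximation $\eta^*$ and the triangle-inequality trichotomy explicit, whereas the paper compresses this into a single sentence; and for part~\eqref{item countg b} the paper deduces the count by complementarity (the sets in~\eqref{item countg a} exhaust $\cZ_x(f)$ up to the remaining $e_r[:p]$ roots, since $|\cZ_x(f)|=e_0[:p]$), while you identify the set directly with $\{\eta:\nu_x(\eta-\eta^*)\geq b_{r+1}/n\}$ and invoke Lemma~\ref{lem:counting} once more.
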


\begin{proof}
    Let $m \in \Zz_{>0}$ be the least common denominator of the exponents of $\tau$.
   Since the roots of $g$ are conjugated under the action of $\mathbb U_m$, we get that 
   \begin{equation} \label{eq-co-fg}
        \alpha = \max \{\nu_x (\eta - \tau) : \eta \in \cZ_x (f)  \}.
   \end{equation}
   
  Fix now $j \in \{1, \dots,r \}$. 
  As in Lemma \ref{lem:counting}, let $\zeta \in \Kk[[t]]$ be such that $\xi = \zeta(x^{\frac{1}{n}})$. The roots of $f$ being conjugated under the action of $\mathbb U_n$, the equality \eqref{eq-co-fg} for the order of coincidence implies that 
   \begin{eqnarray*}
        \# \left\{ \eta \in Z_x(f) : \nu_x (\eta - \tau) = \frac{b_j}{n} \right \} 
        &= &\# \left \{ \rho \in \mathbb U_n : \nu_x ({\zeta}(x^{\frac{1}{n}}) - {\zeta}(\rho \cdot x^{\frac{1}{n}})) = \frac{b_j}{n} \right \}\\
         &= &e_{j-1}[:p] -e_j[:p], 
   \end{eqnarray*}
  where the last equality is a consequence of Lemma \ref{lem:counting}. Equality  \eqref{item countg a} is proved. 

  Equality \eqref{item countg b} results from the fact that the set whose cardinal is to be computed is the complement of the disjoint union of the sets whose cardinals were computed at point \eqref{item countg a}, and from the fact that the total number of roots of $f$ is equal to $n[:p] = e_0[:p]$. 
\end{proof}

The following theorem shows that an analog of the \textbf{tripod formula} of Theorem \ref{thm:intfromEW} holds on the Newton-Puiseux-Eggers-Wall tree $\Theta_L(A)[:p]$ when 
all the branches of $A$ have associated Newton-Puiseux roots in $\Kk[[x^{1/\Nn}]]$: 

\begin{theorem}  \label{th:tripod-p}
    Let $f \in \Kk[[x, y]] \smallsetminus \{0\}$ be a reduced power series defining the plane curve singularity $A$. Denote by $L$ the smooth branch $(Z(x),O)$. Assume that all the branches of $A$ different from $L$ have associated Newton-Puiseux roots in $\Kk[[x^{1/\Nn}]]$. 
    If $A_l$ and $A_m$ are two branches of $A$ different from $L$, we consider the center $P:= \langle L,  A_l, A_m \rangle $ of the tripod determined by $A_l$, $A_m$ and $L$ on the Newton-Puiseux-Eggers-Wall tree $\Theta_L (A_l \cup A_m)[:p]$. Then: 
    \[
       (A_l \cdot A_m)_O =\de_L[:p] (A_l) \cdot \de_L[:p](A_m) \cdot  \ic_L[:p] (P). 
     \]
In particular: 
     \begin{equation} \label{eq:tripod-p}
         \ic_L[:p] (\langle L, A_l, A_m \rangle) =  \frac{ (A_l \cdot A_m)_O }{(A_l \cdot L)_O \cdot (A_m \cdot L)_O}. 
      \end{equation}
\end{theorem}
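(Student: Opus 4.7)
The plan is to transpose the characteristic-zero proof of the tripod formula (Theorem \ref{thm:intfromEW}) to positive characteristic, but working directly with Newton-Puiseux roots and replacing the Galois-theoretic counting by the enumeration lemmas \ref{lem:counting} and \ref{lem: comparison}. Throughout, I set $n_l := (A_l \cdot L)_O$ and $n_m := (A_m \cdot L)_O$, so that the Weierstrass polynomials $f_{A_l}, f_{A_m} \in \Kk[[x]][y]$ defining the two branches have degrees $n_l$ and $n_m$ in $y$.

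First I would express $(A_l \cdot A_m)_O = \nu_x(\mathrm{Res}_y(f_{A_l}, f_{A_m}))$ via the resultant. By Proposition \ref{prop: degree-p}\eqref{pr:4}, each $f_{A_i}$ factors in $\Kk[[x^{1/\Nn}]][y]$ as $\prod_{\eta \in \cZ_x(A_i)} (y - \eta)^{p^{\nu_p(n_i)}}$, whence collecting $p$-adic multiplicities yields
\[
(A_l \cdot A_m)_O \;=\; p^{\nu_p(n_l)+\nu_p(n_m)} \sum_{\eta \in \cZ_x(A_l)} \sum_{\tau \in \cZ_x(A_m)} \nu_x(\eta - \tau).
\]
Then I would fix $\tau \in \cZ_x(A_m)$ and a Newton-Puiseux root $\xi \in \cZ_x(A_l) \subset \Kk[[x^{1/n_l}]]$, denote its characteristic exponents by $b_1/n_l < \dots < b_h/n_l$ (in the sense of Definition \ref{def:ch-p}), set $\alpha := k(A_l, A_m)$, and take $r \in \{0, \dots, h\}$ with $b_r/n_l < \alpha \leq b_{r+1}/n_l$. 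Lemma \ref{lem: comparison} gives
\[
\sum_{\eta \in \cZ_x(A_l)} \nu_x(\eta - \tau) \;=\; \sum_{j=1}^{r} \bigl(e_{j-1}[{:}p] - e_j[{:}p]\bigr)\frac{b_j}{n_l} \;+\; e_r[{:}p]\,\alpha.
\]

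The heart of the argument is the identification of this sum with $n_l[{:}p] \cdot \ic_L[:p](P)$, where $P := \langle L, A_l, A_m \rangle$ and $\ex_L[:p](P) = \alpha$. Using $(n_1\cdots n_j)[{:}p] = n_l[{:}p]/e_j[{:}p]$ (a consequence of $e_j = n_l/(n_1\cdots n_j)$ and the multiplicativity \eqref{eq: mult-p}), an Abel summation telescopes the right-hand side into the piecewise-linear formula \eqref{eq:intcoefint2-p} for $\ic_L[:p](P)$, rescaled by the factor $n_l[{:}p]$. Since this identity is independent of the choice of $\tau$, and $|\cZ_x(A_m)| = n_m[{:}p]$ by Proposition \ref{prop: degree-p}, combining with the resultant computation above gives
\[
(A_l \cdot A_m)_O \;=\; \bigl(p^{\nu_p(n_l)}n_l[{:}p]\bigr)\bigl(p^{\nu_p(n_m)}n_m[{:}p]\bigr)\,\ic_L[:p](P) \;=\; n_l\cdot n_m\cdot \ic_L[:p](P),
\]
which is the desired tripod formula, since $\de_L[:p](A_l) = n_l$ and $\de_L[:p](A_m) = n_m$ by Definition \ref{def:EWp}; equation \eqref{eq:tripod-p} is then immediate by division.

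The main obstacle I expect is the Abel summation step: one must check that the jumps $e_{j-1}[{:}p] - e_j[{:}p]$ in the $[:p]$-filtration truly match the jumps of $\de_L[:p]$ at the marked points of $\Theta_L(A_l)[:p]$, and that the piecewise-constant denominators $n_l[{:}p]/e_j[{:}p]$ on the segment $[L\,A_l]$ coincide stage-by-stage with the values $(n_1\cdots n_j)[{:}p]$ of $\de_L[:p]$ appearing in \eqref{eq:intcoefint2-p}. This bookkeeping is delicate because the multiplicativity of the $[:p]$ operation is only available along the telescoping factors $n_j = e_{j-1}/e_j$, not for arbitrary divisors of $n_l$, so the identification has to be done in a precise order along the characteristic exponents. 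A secondary, essentially routine point is the justification of the resultant identity in step one when the Newton-Puiseux roots are inseparable, which is handled by a direct degree count using Proposition \ref{prop: degree-p}\eqref{pr:4}.
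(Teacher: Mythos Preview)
Your proposal is correct and follows essentially the same route as the paper's proof: both compute $(A_l\cdot A_m)_O$ via the resultant, reduce to a single sum over $\cZ_x(A_l)$ for a fixed root $\tau$ of $A_m$, evaluate that sum using Lemma~\ref{lem: comparison}, and then identify the result with $(L\cdot A_l)_O\cdot\ic_L[:p](P)$ via the telescoping/Abel summation that turns formula~\eqref{eq:intcoefint2-p} into~\eqref{eq:intcoefint3-p}. The only cosmetic difference is that you extract the inseparable multiplicities $p^{\nu_p(n_l)}$ and $p^{\nu_p(n_m)}$ from both factors at the outset and carry them explicitly, whereas the paper absorbs the $A_m$-side multiplicity into the factor $\deg g=(L\cdot A_m)_O$ by using the $\mathbb{U}_m$-action directly, and only invokes the $A_l$-side multiplicity at the end. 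Your worry about the Abel summation is unfounded: since $n_1\cdots n_j=e_0/e_j$ and $[:p]$ is multiplicative by~\eqref{eq: mult-p}, one has $e_0[:p]/(n_1\cdots n_{j-1})[:p]=e_{j-1}[:p]$ on the nose, and the telescoping is then immediate.
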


\begin{proof} 
  Let us fix Newton-Puiseux roots $\xi, \tau \in \Kk[[x^{1/\Nn}]]$ corresponding to $A_l$ and $A_m$ respectively. We denote by $f, g \in \Kk[[x]][y]$ the minimal polynomials of $\xi$ and $\tau$ respectively (see Proposition \ref{prop: degree-p}). Let $\boxed{\mathrm{Res}_y(f,g)}$  be the {\em resultant in $y$} of the polynomials $f$ and $g$. Then: 
  \[
      (A_l \cdot A_m)_O = \nu_x (\mathrm{Res}_y (f, g)) 
        = \nu_x \left( 
       \prod_{\tau_j \in Z_x(g) } \prod_{\xi_s \in Z_x( f) } (\xi_s - \tau_j) 
                \right)
  \]
 where the first equality follows from \cite[Theorem 4.17, Section 4.2]{Hefez 03}. Using the action of the roots of unity, we get 
\begin{equation} \label{eq: im-p}
    (A_l \cdot A_m)_O =  (\deg g) \cdot
      \nu_x \left( \prod_{\xi_s \in Z_x( f) }
         (\xi_s - \tau)  \right).
\end{equation}

  Notice that $  \deg g  = (L\cdot A_m)_O$. We use now the notation introduced in Definition  \ref{def:ch-p} for the characteristic exponents of $\xi$. By Proposition \ref{prop: degree-p}, we have the following formula for the previous intersection multiplicity
  \begin{equation}\label{eq:imlal}
      (L\cdot A_l)_O = n = e_0 [:p] \cdot p^{\nu_p (n)}.
   \end{equation}

  Denote by $\alpha \in \Qq_{>0}$ the order of coincidence of $A_l$ and $A_m$ relative to $L$. By \eqref{eq:intcoefint2-p} and \eqref{eq:imlal}, we get: 
\begin{equation} \label{eq:intcoefint3-p}
    (L \cdot A_l)_O \cdot {\ic}_L[:p](P) = p^{\nu_p (n)}
          \left(
             (e_0[:p] - e_1[:p]) \frac{b_1}{n}  + \cdots + 
             (e_{r-1}[:p] - e_r[:p]) \frac{b_r}{n} +  e_r[:p] \alpha
           \right). 
\end{equation}    

  The result follows by comparing Formula \eqref{eq:intcoefint3-p} with Formula \eqref{eq: im-p} and by applying Lemma \ref{lem: comparison}, which describes the orders of the series $(\xi_k - \tau)$,  for the $n[:p]$ different roots $\xi_k$ of $f$.  Notice that we have to count each root of $f$ with multiplicity $p^{\nu_p (n)}$ (see Proposition \ref{prop: degree-p}).
\end{proof}

Let $A$ be a reduced plane curve singularity such that all its branches different from $L$ admit Newton-Puiseux roots. We compare now the Eggers-Wall trees   $\Theta_L(A)$ of Definition \ref{Ew-p-lotus} with $\Theta_L(A)[:p]$ of Definition \ref{def:EWp}:

\begin{corollary}  \label{cor:compartrees}
    With the hypotheses of Theorem \ref{th:tripod-p}, the trees $\Theta_L (A) $ and $\Theta_L (A) [:p]$ are equal as trees with ends labeled by $L$ and by the branches of $A$. Moreover, $\ic_L = \ic_L[:p]$. 
\end{corollary}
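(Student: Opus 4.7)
The plan is to exploit the fact that both trees are characterized as the end-rooted $\mathbb{R}$-tree associated with the same ultrametric on the set of branches of $A$ different from $L$, and then apply a general uniqueness statement to conclude.

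First, I would observe that the data determining the ultrametric $U_L$ defined in formula \eqref{eq:UL} --- namely the intersection numbers $(L \cdot A_l)_O$ and $(A_l \cdot A_m)_O$ for pairs of distinct branches of $A$ different from $L$ --- are intrinsic invariants of the plane curve singularity and make sense independently of the Eggers-Wall tree used. Hence one and the same ultrametric $U_L$ is associated with $A$ whether we approach it through the lotus or through Newton-Puiseux roots.

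Next, I would invoke the two available tripod formulas: equation \eqref{eq-ultra}, which holds for the lotus-based tree $\Theta_L(A)$ and gives
\[
\ic_L(\langle L, A_l, A_m\rangle) \;=\; \frac{1}{U_L(A_l,A_m)},
\]
and equation \eqref{eq:tripod-p} of Theorem \ref{th:tripod-p}, which under the standing Newton-Puiseux hypothesis gives the analogous identity
\[
\ic_L[:p](\langle L, A_l, A_m \rangle) \;=\; \frac{1}{U_L(A_l,A_m)}
\]
on $\Theta_L(A)[:p]$. Thus the two contact complexity functions take the same value at every tripod center, and these values are exactly those prescribed by $U_L$.

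Then I would appeal to the uniqueness statement cited from \cite[Th.~111 and Remark~114]{GBGPPP 18}: an ultrametric on a finite set determines, up to label-preserving isomorphism, a unique end-rooted $\mathbb{R}$-tree together with its distance-to-root function, namely the tree whose ramification points are indexed by the closed balls of positive diameter with the distance from the root recovered from the ball diameters via the tripod relation. I would verify that $(\Theta_L(A)[:p], \ic_L[:p])$ satisfies the hypotheses of this uniqueness statement: $\ic_L[:p]$ vanishes at $L$, tends to $+\infty$ at each leaf $A_l$ (immediate from Definition~\ref{def:EWp} and formula \eqref{eq:intcoefint-p}), is strictly monotone along paths from $L$ as the integral of a strictly positive function, and realizes the prescribed ultrametric by the tripod formula just established. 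The same verification for $(\Theta_L(A), \ic_L)$ is exactly what was recorded in the paragraph following \eqref{eq-ultra}. Uniqueness then gives a label-preserving isomorphism between the two end-rooted metric trees, which identifies $\ic_L$ with $\ic_L[:p]$ under the canonical end-labelings by $L$ and the branches of $A$, yielding both assertions of the corollary.

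The main obstacle is ensuring that agreement of the two distance-to-root functions at the tripod centers really suffices to identify the entire metric trees. This is handled by the cited uniqueness statement, which encodes precisely that on an end-rooted $\mathbb{R}$-tree associated with a finite ultrametric, every interior ramification point arises as a tripod center and the edges between consecutive ramification points are parametrized canonically by their endpoints' distances to the root; once this is in hand, no analysis of the individual functions $\ex_L$, $\ex_L[:p]$, $\de_L$, $\de_L[:p]$ is required, which is fortunate since these auxiliary functions genuinely differ in positive characteristic (as Example~\ref{ex:diff-trees} illustrates).
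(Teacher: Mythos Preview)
Your proof is correct and follows essentially the same approach as the paper's: both arguments use the two tripod formulas \eqref{eq-ultra} and \eqref{eq:tripod-p} to identify each tree with the unique end-rooted tree associated to the ultrametric $U_L$, citing \cite[Th.~111 and Remark~114]{GBGPPP 18} for uniqueness. Your version is more detailed in verifying the hypotheses of the uniqueness statement, but the underlying strategy is identical.
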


\begin{proof}
   By  \eqref{eq:tripod-p}, we have the following formula
  \begin{equation} \label{eq-ultra-p}
         U_L (A_l , A_m) 
         = \frac{1}{\ic_L[:p] (\langle L, A_l, A_m \rangle )}, 
   \end{equation}
  for the ultrametric distance $U_L$ defined on the set of branches of $A$ different from $L$. This implies that $\Theta_L (A)[:p]$ is equal to the unique end-rooted tree associated with the ultrametric distance $U_L$ defined on the set of branches of $A$ different from $L$ (see \cite[Th. 111 and Remark 114]{GBGPPP 18}). 

  By \eqref{eq-ultra}, the tree $\Theta_L (A)$ has the same property.  By comparing the formulas \eqref{eq-ultra-p} and \eqref{eq-ultra} we get that $\Theta_L (A) = \Theta_L (A)[:p] $ and that on this tree the contact complexity functions are equal, that is, $\ic_L = \ic_L[:p]$.
\end{proof}

 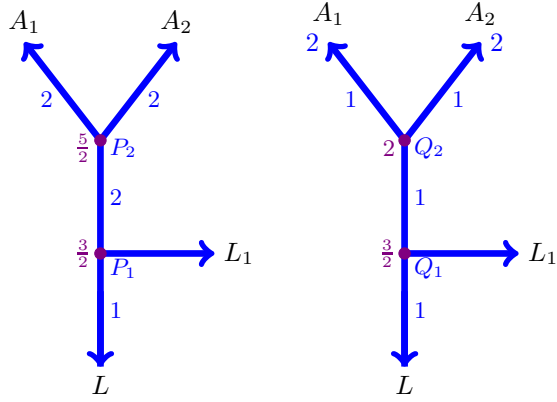
\begin{figure}
    \begin{center}
\begin{tikzpicture}[scale=1]
\begin{scope}[shift={(0,0)}]
   
     \draw [-, color=blue,  line width=2.5pt](0,0) -- (0,3);
     \draw [->, color=blue,  line width=2.5pt](0,3) -- (1,4.3);
      \draw [->, color=blue,  line width=2.5pt](0,3) -- (-1,4.3);
     \draw [->, color=blue,  line width=2.5pt](0,1) -- (0,0);
     \draw [->, color=blue,  line width=2.5pt](0,1.5) -- (1.5,1.5);
         
       \node [below, color=black] at (0,0) {$L$};
       \node [above, color=black] at (-1,4.3) {$A_1$};
        \node [above, color=black] at (1,4.3) {$A_2$};
        \node [right, color=black] at (1.5,1.5) {$L_{1}$};
       
    \node[draw,circle, inner sep=1.5pt,color=violet, fill=violet] at (0,1.5){};
    \node[draw,circle, inner sep=1.5pt,color=violet, fill=violet] at (0,3){};             
    \node [right, color=blue] at (0,0.75) {\small{$1$}};
    \node [right, color=blue] at (0,2.25) {\small{$2$}};
    \node [left, color=blue] at (-0.5,3.55) {\small{$2$}};
    \node [right, color=blue] at (0.5,3.55) {\small{$2$}};
    \node [left, color=violet] at (0,1.5) {\small{$\frac{3}{2}$}}; 
    \node [left, color=violet] at (0,2.9) {\small{$\frac{5}{2}$}}; 
    \node [right, color=blue] at (0,2.9) {\small{$P_{2}$}};
    \node [right, color=blue] at (0,1.3) {\small{$P_{1}$}};
                   
\end{scope}

\begin{scope}[shift={(4,0)}]
     \draw [-, color=blue,  line width=2.5pt](0,0) -- (0,3);
     \draw [->, color=blue,  line width=2.5pt](0,3) -- (1,4.3);
      \draw [->, color=blue,  line width=2.5pt](0,3) -- (-1,4.3);
     \draw [->, color=blue,  line width=2.5pt](0,1) -- (0,0);
     \draw [->, color=blue,  line width=2.5pt](0,1.5) -- (1.5,1.5);
         
       \node [below, color=black] at (0,0) {$L$};
       \node [above, color=black] at (-1,4.4) {$A_1$};
        \node [above, color=black] at (1,4.4) {$A_2$};
        \node [right, color=black] at (1.5,1.5) {$L_{1}$};
       
    \node[draw,circle, inner sep=1.5pt,color=violet, fill=violet] at (0,1.5){};
    \node[draw,circle, inner sep=1.5pt,color=violet, fill=violet] at (0,3){};

    \node [right, color=blue] at (0,0.75) {\small{$1$}};
    \node [right, color=blue] at (0,2.25) {\small{$1$}};
    \node [left, color=blue] at (-0.5,3.55) {\small{$1$}};
    \node [right, color=blue] at (0.5,3.55) {\small{$1$}};
           \node [left, color=violet] at (0,1.5) {\small{$\frac{3}{2}$}}; 
            \node [left, color=violet] at (0,2.9) {\small{$2$}}; 
            \node [right, color=blue] at (0,2.9) {\small{$Q_{2}$}};
              \node [right, color=blue] at (0,1.3) {\small{$Q_{1}$}};
               \node [left, color=blue] at (-1,4.3) {$2$};
           \node [right, color=blue] at (1,4.3) {$2$};          
              
\end{scope}
  \end{tikzpicture}
\end{center}
 \caption{Let $A := A_1+ A_2 + L_1$ in  Example \ref{ex:diff-trees}.
    On the left is represented the Eggers-Wall tree $\Theta_L (A)$
     obtained from the lotus in Figure \ref{fig:E} by applying 
     Definition \ref{Ew-p-lotus}. On the right is drawn the tree Newton-Puiseux-Eggers-Wall tree $\Theta_L (A)[:2]$ introduced in Definition \ref{def:EWp}
     }
\label{fig:ExE}
   \end{figure}

\begin{example} \label{ex:diff-trees}
     Let $A_{1}$ and $A_{2}$ be the branches defined by $f_{1} :=y^{2}+x^{3}$ and 
     $f_{2} :=y^{2}+x^{3}+  x^{4}$ over an algebraically closed field $\Kk$ of characteristic $2$. Resolving $A:=A_{1}+A_{2}$ by blowups of points, we see that there exists an active constellation of crosses adapted to $A$ whose lotus is as shown in Figure \ref{fig:E}, where $L=Z(x)$ and $L_{1}=Z(y)$.
     Following Definition \ref{Ew-p-lotus}, we get the Eggers-Wall tree of $A$ on the left of Figure \ref{fig:ExE}. The tripod formula holds since  
     $ \ic_{L}(P_{2})=\frac{3}{2}+\frac{1}{2}\left(\frac{5}{2}-\frac{3}{2}\right)=
         \frac{3}{2}+\frac{1}{2}=2$, therefore $\de_L (A_1) \cdot \de_L (A_2)  \cdot \ic_{L}(P_{2})=8 = (A_{1} \cdot A_{2})_O$.

     Both $f_1$ and $f_2$ are irreducible in $\Kk[[x]][y]$ and have double roots in $\Kk[[x^{\frac{1}{2}}]][y]$: 
      \[  y^{2}+x^{3} = (y - x^{\frac{3}{2}})^2, \   y^{2}+x^{3}+  x^{4} = (y - (x^{\frac{3}{2}}+x^{2}))^2. \]
    The order of coincidence $k_{L}(A_1,  A_2)$ of $A_1$ and $A_2$ is the order of the difference of the two Newton-Puiseux roots $x^{\frac{3}{2}}$ and $x^{\frac{3}{2}}+x^{2}$, therefore  $k_{L}(A_1, A_2)=2$. This shows that the tree ${\Theta}_L (A)[:2]$ obtained from Definition \ref{def:EWp} is as shown on the right of Figure \ref{fig:ExE}. We get  ${\ic}_{L}[:2](Q_{2})= 2  = \ic_L(Q_2)$.  Since $ {\ic}_{L}[:2](Q_{2}) \cdot \de_{L}(A_{1}) \cdot  \de_{L}(A_{2})=8$, the tripod formula \eqref{eq:tripod} also holds for this tree.
\end{example}

Example \ref{ex:diff-trees} illustrates the fact that in positive characteristic,
when we have Newton-Puiseux roots, even if the tree $\Theta_L (A)$ of Definition \ref{Ew-p-lotus} and the tree $\Theta_L (A)[:p]$ of Definition \ref{def:EWp} coincide and their contact complexity functions $\ic_L$ and $\ic_L[:p]$ are equal (see Corollary \ref{cor:compartrees}), nevertheless the index functions $\de_L$ and $\de_L[:p]$ and the exponent functions $\ex_L $ and $\ex_L[:p] $ may be different.

\medskip

\end{document}